\numberwithin{equation}{section}
\providecommand{\bwd}{\leftarrow}
\providecommand{\fwd}{\rightarrow}
\newtheorem{theorem}{Theorem}
\newtheorem*{problem}{Wave Propagation Problem}
\newtheorem{proposition}{Proposition}[section]
\newtheorem{remark}{Remark}
\newtheorem{lemma}{Lemma}[section]
\newtheorem{corollary}{Corollary}[section]
\begin{document}

\title{Wave propagation for 1-dimensional 
reaction-diffusion equations with nonzero random drift}

\author{
Dihang Guan
\thanks{School of Mathematical Sciences, Beijing Normal University, Beijing, 100875, P.~R. China. Email: \texttt{202221130125@mail.bnu.edu.cn}}
\ , 
Hui He
\thanks{School of Mathematical Sciences, Beijing Normal University, Beijing, 100875, P.~R. China. Email: \texttt{hehui@bnu.edu.cn}}
\ ,
Wenqing Hu
\thanks{Department of Mathematics and Statistics, Missouri University of Science and Technology
(formerly University of Missouri, Rolla), Rolla, MO, 65409, USA. Email: \texttt{huwen@mst.edu}}
\ ,
Jiaojiao Yang
\thanks{School of Mathematics and Statistics, Anhui Normal University, Wuhu, 241002, P.~R. China. Email: \texttt{y.jiaojiao1025@ahnu.edu.cn}.}
\ \thanks{Alphabetical Order.}
}

\date{}

\maketitle

\begin{abstract}
We consider the wave propagation for a reaction-diffusion equation on the real line, with a random drift and Fisher-Kolmogorov-Petrovskii-Piscounov (FKPP) type nonlinear reaction. We show that when the average drift is positive, the asymptotic wave fronts propagating to the positive and negative directions are both pushed in the negative direction, leading to the possibility that both wave fronts propagate toward negative infinity. Our proof is based on the Large Deviations Principle for diffusion processes in random environments, as well as an analysis of the Feynman-Kac formula. Such probabilistic arguments also reveal the underlying physical mechanism of the wave fronts formation: the drift acts as an external field that shifts the (quenched) free-energy reference level without altering the intrinsic fluctuation structure of the system.
\end{abstract}

\textit{Key words}: reaction-diffusion equation, wave front propagation, large deviations, random environment.

\textit{2020 Mathematics Subject Classification Numbers}: 35K57, 60J60, 60F10.

\tableofcontents

\section{Introduction}\label{Sec:Introduction}

We consider the solution $u(t,x)$ of a 1-dimensional reaction-diffusion equation (RDE) with a random drift:

\begin{equation}\label{Eq:RDERandomDrift}
\dfrac{\partial u}{\partial t}=\dfrac{1}{2}\dfrac{\partial^2 u}{\partial x^2}+b(x)\dfrac{\partial u}{\partial x}+ f(u) \ , \ t>0 \ , \ x\in \mathbb{R} \ .
\end{equation}

Here the nonlinearity $f(u)$ is of Fisher-Kolmogorov-Petrovskii-Piscounov (FKPP or KPP in short, see \cite{Fisher}, \cite{KPP}) type: $f\in \mathbf{C}^{(1)}([0,1])$, $f(0)=f(1)$, $0\leq f(u)\leq \beta u$, $\beta = f'(0) >0$ for all $u\in (0,1)$. For example, $f(u)=\beta u(1-u)$, $\beta>0$ satisfies these assumptions, which corresponds to the classical Fisher type nonlinearity, and $\beta>0$ is called the \textit{reaction rate}. The initial data $u(0,x)=u_0(x)$ is non-negative, bounded and compactly supported, with  $\text{supp}u_0\subset (-\delta, \delta)$ for some $\delta>0$. The drift term $b(x)=b(x, \omega): \mathbb{R}\times \Omega \rightarrow \mathbb{R}$ is random, and can be regarded as a random environment. We assume that: 
\begin{itemize}
\item[(1)] $b$ is a stationary random process on $\mathbb{R}$ defined over the probability space $(\Omega, \mathcal{F}, \mathbf{P})$, with $\mathbf{E} b(x)=\mathbf{E} b>0$ \footnote{If $\mathbf{E} b(x)<0$, the results are just following by flipping positive and negative axis.}, where $ \mathbf{E} b(x)=\displaystyle{\int_\Omega b(x;\omega)\mathbf{P}({\rm d}\omega)}$ is the expectation with respect to $\mathbf{P}$;
\item[(2)] $b(\bullet, \omega)$ is almost surely locally Lipschitz continuous and the translation with respect to $x$ generates an ergodic transformation of the space $\Omega$;
\item[(3)] the process $b(x,\omega)$ satisfies
\begin{equation}\label{Eq:AssumptionOnDrift:Boundedness}
{\mathbf P}( |b|\leq B)=1,
\end{equation}
for some deterministic constant $B>0$. 
\end{itemize}

It can be easily shown that equation (\ref{Eq:RDERandomDrift}) admits a unique  solution $u(t,x)$ for all $t>0$ and $x\in \mathbb{R}$. Equation (\ref{Eq:RDERandomDrift}) appears in many application problems such as turbulent combustion (see \cite{RDEAppFlameClavinEtAl}, \cite{RDEAppFlameMajdaEtAl}, \cite{RDEAppFlamePetersBook}, \cite{RDEAppFlameRonney} and references therein), interacting particle systems (see \cite{RDEAppInteractingParticleMullerSowers}, \cite{RDEAppInteractingParticleColonDoering} and references therein) and population biology (see \cite{RDEAppInteractingParticleShenW} and references therein). Moreover, physical advection in reactive or diffusive media is inherently random: turbulence generates erratic local velocities, micro-scale particle motion introduces stochastic biases, and biological movement is affected by environmental variability.
These effects are well described by a stationary ergodic drift field, as used in combustion theory, stochastic particle systems, and population biology.
Thus, the random drift assumption in (\ref{Eq:RDERandomDrift}) captures the physical mechanism in a more realistic way. On the other hand, the stationary ergodic assumption is standard in the analysis of transport in random media and provides the correct mathematical framework to describe typical realizations of heterogeneous environments (see \cite{GartnerFreidlin1979}).

A central concern is to describe the long-time behavior as $t\rightarrow \infty$ of the solution $u(t,x)$, and to characterize the influence of randomness on such long-time behavior. It is easy to see, by maximum principle, that $0\leq u(t,x)\leq 1$ for all $t\geq 0$ and all $x\in \mathbb{R}$. The main focus can be summarized as the following:

\begin{problem}
Is it possible to show that, as time $t\rightarrow \infty$, the solution $u(t,x)$ to \emph{(\ref{Eq:RDERandomDrift})} has profiles of traveling waves? That is, we want to determine domains $R_0, R_1\subset \mathbb{R}$ such that, almost surely with respect to the environment probability $\mathbf{P}$, we have \begin{equation}\label{Problem:RDEWavePropagation:Eq:DomainsOfc}
\lim\limits_{t\rightarrow\infty}\sup\limits_{c\in R_0} u(t,ct) = 0 \ , \ 
\lim\limits_{t\rightarrow\infty}\inf\limits_{c\in R_1} u(t,ct) = 1 \ .
\end{equation}
If so, is it possible to characterize $R_0, R_1$ from the drift term $b(x)$ and the reaction term $f(x)$ in \emph{(\ref{Eq:RDERandomDrift})}? 
\end{problem}

For the deterministic version of (\ref{Eq:RDERandomDrift}) and many of its modified versions (such as general diffusion term/multidimensional/time-dependent coefficients, etc.), the above Wave Propagation Problem has been discussed thoroughly in the literature, and we refer to a recent monograph \cite{Berestycki2015AsymptoticSF} and its references therein. However, the random case has been discussed less, where the Wave Propagation Problem and similar problems of the same type have been considered in \cite[Sections 7.4-7.6, Chapter VII]{FreidlinFunctionalBook}, \cite{NolenXinDCDS}, \cite{Nolen-XinCMP2007}, \cite{FreidlinHu13}, \cite{FanHuTerlovCMP}, among others. These works have been discussing the formation of one unique wavefront of the solution $u(t,x)$ as $t\rightarrow\infty$ respectively in the positive and negative axis, separating the domains of the solution where its value is close to $0$ and is close to $1$, i.e., $R_0=(-\infty, -c^*_1)\cup (c^*_2, +\infty)$ and $R_1=(-c^*_1, c^*_2)$ for some $c^*_1, c^*_2>0$.  Variational formula for the wave speed has also been given in these works, so that the relation between the wave speeds and the equations' coefficients are discussed. However, all these works consider some form of ``zero drift". For example, $b(x)\equiv 0$ in \cite[Sections 7.4-7.6, Chapter VII]{FreidlinFunctionalBook}, and in \cite{NolenXinDCDS} it is assumed that $\mathbf{E} b(x)=0$. 

In this work, we consider the genuinely different regime where the drift $b(x)$ is ``essentially nonzero" in the sense that $\mathbf{E} b(x)>0$ (as in our Assumption (1)), and provide a complete characterization of wave propagation for general stationary ergodic random drift terms $b(x)$. Actually, in his monograph \cite[Section 7.6, Remark 4, pp.524-525]{FreidlinFunctionalBook}, Freidlin made a remark mentioning the case when there is a non-zero drift, without detailed proof. Freidlin's original problem aims to assume that together with the random drift term $b(x)$, the general diffusion term $a(x)$ and reaction term $f(x,u)$ are all random. However, the most relevant assumption would be that to assume the drift $b(x)$ is random and generally non-zero (in expectation).
To the best of our knowledge, this is the first work completely addressing wave propagation of (\ref{Eq:RDERandomDrift}) under the effects of ``essentially nonzero" random drift.

Actually, when the reaction rate $\beta>0$ is large enough, some related results can be derived in a similar way as in \cite{FanHuTerlovCMP} discussing wave propagation for RDEs on infinite random trees. In that work, by projecting the random tree to a one-dimensional axis, the authors can show that the solution to the RDE on the tree is the same as the solution to an RDE on the real-line with additional ``random drift-effects" caused by the branching structure of the random tree (for details see \cite[Section 2.3]{FanHuTerlovCMP}). Such ``random drift-effects" are nonzero, symmetric with respect to the origin $O$, and directing towards $\pm \infty$. Their role played in the dynamics can be regarded as conceptually similar to the random drift term $b(x)$ presented in (\ref{Eq:RDERandomDrift}) here, with $b(-x)\stackrel{d.}{=}-b(x)$. The results of \cite{FanHuTerlovCMP} show that if $f(u)=\beta u(1-u)$ and the reaction rate $\beta=f'(0)>0$ is larger than some $\beta_c>0$, then there will still be a unique wave front formed by the solution to the RDE as $t\rightarrow\infty$. 

However, the results in \cite{FanHuTerlovCMP} cannot answer the question when $\beta$ is smaller than $\beta_c$ or when $b(x)$ is not symmetrically distributed with respect to the origin $O$. This is because we cannot directly apply the Large Deviations Principle for the underlying diffusion process of (\ref{Eq:RDERandomDrift}) with restricted parameter set as in \cite{FanHuTerlovCMP}; and when $b(x)$ is not symmetrically distributed with respect to the origin $O$, we cannot simply flip the wave front from the positive axis to the negative axis, etc.

We overcome the above difficulties in this work. Using probabilistic arguments such as the Large Deviation Principle for random processes in a random environment (see \cite{CometsGantertZeitouni2000}), as well as the functional representation of the solution $u(t,x)$ in terms of the underlying diffusion process (Feynman-Kac formula), we show that, for arbitrarily fixed reaction rate $\beta>0$, depending on the ``strength" of $b(x)$ \footnote{The exact quantitative meaning of the ``strength" of $b(x)$ is characterized by a critical value $\eta_c$ that we will describe in Section \ref{Sec:BasicIdeaMainResults}.}, the wave propagation demonstrates different behavior: if $b(x)$ is  shifting to the right not very strongly, then one sees again two wavefronts with one wave traveling to $+\infty$ at speed $c^*_2$ and the other wave traveling to $-\infty$ at speed $c^*_1$, similar to \cite{Nolen-XinCMP2007}; however, if $b(x)$ is shifting to the right very strongly, then one will see two wavefronts both traveling to $-\infty$, with one wave traveling at the front and another ``negative" wave chasing the front wave and eliminating the effects of the first wave, and in between these two wavefronts there is an expanding region where the limit of $u(t,x)$ as $t\rightarrow \infty$ is $1$, i.e., i.e., $R_0=(-\infty, -c^*_1)\cup (c^*_2, +\infty)$ and $R_1=(-c^*_1, c^*_2)$ for some $c^*_1>0, c^*_2<0$ and $c^*_1+c^*_2>0$. The latter surprising phenomenon can be imagined to come from some strong enough ``push in the negative direction" of the whole wave profile in the first case (i.e. having one positive wavefront and one negative wavefront), which from a PDE perspective is due to the interaction between the drift $b(x)$, the diffusion term and the reaction term. In addition, our rigorous probabilistic arguments also justify the underlying physical mechanism: from Statistical Mechanics perspective, the drift enters as an external field that biases propagation only through a gauge shift of the (quenched) free energy, while leaving the intrinsic fluctuation structure governing the wave formation unchanged.

While preparing this paper, we have noticed a very recent literature  \cite{LIANG2022108568}, which uses purely analytic method involving generalized principle eigenvalues (see \cite{Berestycki2015AsymptoticSF}), and obtains similar predictions on the possibility to have two wavefronts propagating in the same negative direction (assuming the drift on average is pointing in the positive direction). The setting of \cite{LIANG2022108568}, however, concerns (almost) periodic drifts $b(x)$, whereas our results treat
general stationary ergodic random drifts and provide a complete characterization
of wave propagation in this quenched random regime. Since almost periodic functions and ergodic stationary random processes are distinct classes of functions, our work should be considered in parallel with \cite{LIANG2022108568}. Interestingly, we also find that one of the main results of \cite{LIANG2022108568}, namely their Theorem 1.3, which in our case can be stated as $\text{sgn}(c^*_1-c^*_2)=\text{sgn}(\mathbf{E} b(x))$, can also be recovered from our arguments specifically in the case when $\mathbf{E} b(x) \neq 0$ (in our case $\mathbf{E} b(x)>0$, but the $\mathbf{E} b(x)<0$ case follows the same reasoning by just flipping the sign).

The paper is organized as follows: In Section \ref{Sec:BasicIdeaMainResults} we present the basic idea how we solve the Wave Propagation Problem in case when $\mathbf{E} b(x)>0$ is nonzero and the reaction rate $\beta$ has arbitrary magnitude, and we summarize our main results. In Section \ref{Sec:LDP} we present technical results regarding the long-time probabilistic behavior of the diffusion process underlying (\ref{Eq:RDERandomDrift}), especially the needed Large Deviation Principle for the analysis of the wave propagation. In Section \ref{Sec:WavePropagation} we analyze the Wave Propagation Problem and we provide the detailed proof of the wave propagation results announced in Section \ref{Sec:BasicIdeaMainResults}. In Section \ref{Sec:Shape} we describe the exact shape of the wave fronts.

\

\noindent \textbf{Acknowledgment.} We would like to thank Louis Waitong Fan and Greg Terlov for early discussions on this topic. While working on this paper, Dihang Guan and Hui He are supported by the National Key R$\&$D Program of China (No. 2020YFA0712900), NSFC (No. 12271043) of China; Wenqing Hu is supported by the Simons Foundation Collaboration Grant No. 711999; and Jiaojiao Yang is supported by the National Natural Science Foundation of China Grant No.11801199, Natural Science Foundation of Anhui Province (CN) Grant No.1908085QA30. 

\section{Basic Idea of the Proof and Main Results}\label{Sec:BasicIdeaMainResults}
Our approach is probabilistic (see \cite{CometsGantertZeitouni2000}, \cite{FreidlinFunctionalBook}, \cite{NolenXinDCDS} \cite{FanHuTerlovCMP}). We consider the underlying diffusion process given by the following stochastic differential equation written in integral form

\begin{equation}\label{Eq:SDE-RandomDrift}
X^x(t)=x+\int_0^t b(X^x(s)){\rm d}s+W_t \ ,
\end{equation}
where $W_t$ is a standard Brownian motion, with given filtration $(\Omega^W, \mathcal{F}_t^W, P^W)$. 

Notice that we now have two stages of randomness in our set-up: the environmental randomness (quenched probability, see \cite{CometsGantertZeitouni2000}) is characterized by $(\Omega, \mathcal{F}, \mathbf{P})$ and for each fixed environment sample $\omega \in \Omega$, the randomness of the underlying diffusion process (annealed probability \cite{CometsGantertZeitouni2000}) characterized by $(\Omega^W, \mathcal{F}^W, P^W)$.

A probabilistic representation of the solution $u(t,x)$ to \eqref{Eq:RDERandomDrift} is given by the Feynman-Kac formula (see \cite[Section 5.1]{FreidlinFunctionalBook})

\begin{equation}\label{Eq:FeynmannKacRDERandomDrift}
u(t,x)=E^W\left[\underbrace{u_0(X^x(t))}_{\text{cooling}} 
\overbrace{\exp\left(\int_0^t c(u(t-s, X^x(s))){\rm d}s\right)}^{\text{heating}}\right] \ ,
\end{equation}
where $c(u)=\dfrac{f(u)}{u}$ is defined for $u>0$ and $c(0)=\lim\limits_{u\rightarrow 0}c(u)$. By the FKPP property of $f(u)$, we know that $c(u)\geq 0$ for $u\in (0,1]$ and $c(0)=\sup\limits_{u\in (0,1]}c(u)=\beta>0$.

Set $x=ct$, $c>0$\footnote{When $c<0$, the same arguments shown here lead to the formation of wave fronts along the negative direction. This will be considered later in this Section.} From the equation \eqref{Eq:FeynmannKacRDERandomDrift} we see that for those regions of $(t,x)$
that the value of $u(t,x)$ is small (close to $0$), and while $(t-s, X^x(s))$ is still lying in that region, then the contribution of the term  $\displaystyle{\exp\left(\int_0^t c(u(t-s, X^x(s))){\rm d}s\right)}$ in (\ref{Eq:FeynmannKacRDERandomDrift}) is expected be asymptotically $\sim \exp(\beta t)$. This means that the reaction term $f(u)$ will create an exponential
birth of the particles at a rate of $\beta$ in those domains where the solution $u$ is small, i.e., it provides a ``heating" mechanism to $u(t,x)$ (see the overbrace in (\ref{Eq:FeynmannKacRDERandomDrift})).
However, in \eqref{Eq:FeynmannKacRDERandomDrift} this exponential term $\displaystyle{\exp\left(\int_0^t c(u(t-s, X^x(s))){\rm d}s\right)}$ is
multiplied by $u_0(X^x(t))$, which is zero outside its compact support. Taking $E^W$-expectation, this implies that the exponential growth $\exp(\beta t)$ is compensated by $P^W(X^x(t)\in \text{supp} u_0)$, which, as $t\rightarrow\infty$ for $x=ct, c>0$
can be characterized by Large Deviations Principle (LDP) as an exponentially contracting term $\exp\left(-t S(c)\right)$. The function $S(c)$ is called the \textit{rate function} (or \textit{action function}) of the process $X^x(t)$ in LDP theory. Such an exponentially contracting term can be regarded as providing a ``cooling" mechanism to $u(t,x)$ (see the underbrace in (\ref{Eq:FeynmannKacRDERandomDrift})).
Thus when the competition between the ``heating" and ``cooling" effects,
namely, the exponential growth due to reaction and the exponential contraction due to 
the large deviation effect of the underlying diffusion process,
reaches a balance, we will see the propagation of a wavefront separating domains where the solution is close to $0$ and is close to $1$. 

As a rough statement, to compute the wave speed, we consider the LDP rate function $S(c)$ for $X^{ct}(t)$, the underlying diffusion process (\ref{Eq:SDE-RandomDrift}) starting from $x=ct$. The ``heating v.s. cooling" balance as mentioned above can then be formally written as $S(c)=\beta$, which can be shown to have a unique solution $c=c^*>0$ provided that $S(c)$ satisfies some monotonicity conditions. The quantity $c^*>0$ is the desired wave speed. 

To carry out the above program, we need to obtain some LDP for the underlying diffusion process $X^x(t)$. Notice that since the drift $b(x)$ is random, the process $X^x(t)$ is a random process in a random environment. LDP for such processes has been considered in many previous works, such as \cite{CometsGantertZeitouni2000}, \cite{SolomonRWREAnnProb}, \cite{Taleb2001}. Here we use the approach that was first introduced in \cite{CometsGantertZeitouni2000} and was further developed in \cite{NolenXinDCDS}, \cite{FanHuTerlovCMP}. This method introduces the hitting times 
$$T^s_r=\inf\{t>0: X^s(t)\leq r\} \ , \ T^r_s = \inf\{t>0: X^r(t)\geq s\} \ , \ r\leq s \ .$$

Let $c>v>0$. We will first obtain the LDP of $\dfrac{T^{ct}_{vt}}{t}\mathbf{1}_{\{T^{ct}_{vt}<+\infty\}}$ and 
$\dfrac{T^{vt}_{ct}}{t}\mathbf{1}_{\{T^{vt}_{ct}<+\infty\}}$ as $t\rightarrow\infty$,
and then we revert them to the LDP for the process $X^x(t)$. 
To this end we introduce the Lyapunov functions
$$
\mu(\eta)\equiv \mu^\bwd(\eta) \stackrel{\text{def}}{=} \mathbf{E}\left[\ln E^W\left(
e^{\eta T^1_0}\mathbf{1}_{\{T^1_0<+\infty\}}\right)\right] \ , \ 
\mu^\fwd(\eta) \stackrel{\text{def}}{=} \mathbf{E}\left[\ln E^W\left(
e^{\eta T^0_1}\mathbf{1}_{\{T^0_1<+\infty\}}\right)\right] \ .$$

\noindent Let
$$
\eta_c\equiv \eta_c^\bwd =\sup\{\eta\in \mathbb{R}: \mu(\eta)<+\infty\} \ , \ 
\eta_c^\fwd =\sup\{\eta\in \mathbb{R}: \mu^\fwd(\eta)<+\infty\} \ .
$$

\noindent It can be easily seen that in general $\eta_c, \eta_c^\fwd \geq 0$.
Define the Legendre transform of $\mu(\eta), \mu^\fwd(\eta)$ as 

$$
I(a)\equiv I^\bwd(a)=\sup\limits_{\eta\leq \eta_c}(a\eta-\mu(\eta)) \ , \ 
I^\fwd(a)=\sup\limits_{\eta\leq \eta_c^\fwd}(a\eta-\mu^\fwd(\eta)) \ .
$$

In \cite[Theorem 2.3]{NolenXinDCDS}, the function $(v-c)I\left(\dfrac{a}{v-c}\right)$ ($0<c<v$) is shown to be the rate function of the family of random variables $\dfrac{T^{vt}_{ct}}{t}\mathbf{1}_{\{T^{vt}_{ct}<+\infty\}}$ as $t\rightarrow \infty$. In turn, by a duality argument that helps to pass from the LDP of hitting time to the LDP for process $X^x(t)$, the LDP rate function $S(c)$ is given by $S(c)=cI\left(\dfrac{1}{c}\right)$. However, the argument of \cite{NolenXinDCDS} can only conclude that the LDP holds for (open and closed) subsets of $(0, (v-c)\mu'(0))$ instead of $(0, +\infty)$. Under the assumption that the averaged drift is $0$, i.e., $\mathbf{E} b(x)=0$, \cite{NolenXinDCDS} showed that $\eta_c=0$, $\mu'(0)=+\infty$ and thus such LDP domain restrictions will not present in the particular case of zero (mean) drift. Due to this fact, \cite{NolenXinDCDS} was able to obtain the existence of a unique traveling wave front of \eqref{Eq:RDERandomDrift} as $t\rightarrow \infty$.

In general, when the mean drift is non-zero and may vary with respect to the $x$-variable ($\mathbf{E} b(x)\neq 0$), the value $\mu'(0)$ is finite and thus the LDP with domain restrictions as was used in \cite{NolenXinDCDS} will not be able to provide the necessary estimates for the arguments leading to the existence of a unique traveling wave. As was mentioned above, the wave speed is a result of the balance equation $S(c)=\beta$. So if we stick to the method of obtaining LDP restricted only to subsets of $(0, (v-c)\mu'(0))$ as in \cite{NolenXinDCDS}, then we can take a compensation in which we let the quantity $\beta$ to be large. This is because when $\beta$ is larger than some threshold value depending on the drift $b(x)$, we can still show that $S(c)=\beta$ has a unique solution $c^*>0$, and the value $c^*$ just lies in the domain $(0, (v-c)\mu'(0))$ where the LDP works. This argument, done in \cite{FanHuTerlovCMP}, needs a very careful exhaustive investigation of all possible shapes of $I(a)$. Thus under the assumption that $\beta$ is large, a unique wave front is formulated as $t\rightarrow \infty$. 

The above argument used in \cite{NolenXinDCDS}, \cite{FanHuTerlovCMP} becomes invalid in the case when $\beta$ is small and $\mathbf{E} b(x)\neq 0$, since in this case the LDP for the process can only provide useful estimates for (open and closed) subsets of $(0, (v-c)\mu'(0))$ instead of $(0, \infty)$, and such parameter domain restrictions prevent the use of LDP to derive the ``heating v.s. cooling" arguments. Moreover, even if we could apply the LDP, we need to consider two rate functions $S^\bwd(c)$ and $S^\fwd(c)$ that correspond to $I^\bwd(a)$ and $I^\fwd(a)$, respectively. Given a fixed $\beta$, the rate functions in each direction will generate a balance equation, i.e., $S(c)\equiv S^\bwd(c)=\beta$ (for the wave front propagating in the positive direction) and $S^\fwd(c)=\beta$ (for the wave front propagating in the negative direction), whose solutions determine the wave propagation, but the structure of the solutions (i.e. no solution, one solution or two solutions) and how they determine the shape of the wave need to be carefully examined.

We overcome all the above difficulties in this work, and our main results and contributions are summarized as follows:

\begin{itemize}
\item[(1)] We derive full LDP for (open and closed) subsets of $(0, +\infty)$ without any parameter restrictions. To this end, we first obtain the LDP for $\dfrac{T^{vt}_{ct}}{t}\mathbf{1}_{T^{vt}_{ct}<\infty}$ and $\dfrac{T^{ct}_{vt}}{t}\mathbf{1}_{T^{ct}_{vt}<\infty}$ as $t\rightarrow \infty$ (see Section \ref{Sec:LDP:HittingTime}, Theorems \ref{Thm:LDPHittingTime}, \ref{Thm:LDPHittingTime:Forward}). We remove the restriction of this LDP from sets in $(0, (v-c)\mu'(0))$ as in \cite{NolenXinDCDS} to $(0, +\infty)$ by using a truncation argument developed in \cite{CometsGantertZeitouni2000} adapted to our case (Lemma \ref{Lm:PropertiesMuAndTruncatedMu}). Given the full LDP for $\dfrac{T^{vt}_{ct}}{t}\mathbf{1}_{\{T^{vt}_{ct}<+\infty\}}$, $\dfrac{T^{ct}_{vt}}{t}\mathbf{1}_{\{T^{ct}_{vt}<+\infty\}}$, we use a duality argument to turn them into the LDP of the random variables $\dfrac{vt-X^{vt}(\kappa t)}{\kappa t}$ and $\dfrac{-vt-X^{-vt}(\kappa t)}{\kappa t}$ as $t\rightarrow\infty$, where $v>0$ and $\kappa\in (0,1]$ (See Section \ref{Sec:LDP:ProcessX}, Theorem \ref{Thm:LDPProcessX}). Unlike \cite{NolenXinDCDS}, this duality argument needs a much more involved stopping time segmentation technique to overcome the problem of domain restriction (compare with \cite{CometsGantertZeitouni2000} which is for the discrete random walk case). We resolve various new challenges when carrying this argument to the continuous case for our underlying diffusion process.

\item[(2)] We extend the ``heating v.s. cooling" argument based on Feynmann-Kac formula (\ref{Eq:SDE-RandomDrift}) to derive the wave propagation (see Section \ref{Sec:WavePropagation}, Theorem \ref{Thm:WavePropagation}). Using our LDP results in Theorem \ref{Thm:LDPProcessX}, we can derive that $S(c)\equiv S^\bwd(c)=cI\left(\dfrac{1}{c}\right)$ and $S^\fwd(c)=cI^\fwd\left(\dfrac{1}{c}\right)$. In response to our Wave Propagation Problem, we show that $R_0=\{c: S(c)<\beta\}\cup \{-c: S^\fwd(c)<\beta\}$ and $R_1=\{c: S(c)>\beta\}\cup \{-c: S^\fwd(c)>\beta\}$. 

\item[(3)] To determine the exact shapes of $R_0$ and $R_1$, we analyze the relations between $\mu(\eta)$ and $\mu^{\fwd}(\eta)$ (see Section \ref{Sec:LDP:RelationMuAndMufwd}, Lemma \ref{Lm:BwdAndFwdCriticalEtaForMuAreTheSame}, Proposition \ref{Prop:MuBwdMinusMuFwd}, Lemma \ref{Lm:ExpectationLnRhoIsConstant}). Interestingly, we can prove that $\eta_c\equiv \eta_c^\bwd=\eta_c^\fwd$ (Lemma \ref{Lm:BwdAndFwdCriticalEtaForMuAreTheSame}), so that $\mu(\eta)$ and $\mu^\fwd(\eta)$ shares the same domain of finiteness whose interior is $(-\infty, \eta_c)$. Furthermore, in Lemma \ref{Lm:ExpectationLnRhoIsConstant} we demonstrated that within this common domain of finiteness, the difference $\mu(\eta)-\mu^\fwd(\eta)$ is actually a constant. This follows from the fact that the corresponding annealed difference admits a cocycle representation over the spacial shift (see equation (\ref{Lm:ExpectationLnRhoIsConstant:Eq:Cocycle})), whose contribution vanishes while taking the quenched expectation, due to stationarity of the external drift $b(x)$. 

The above cocycle over the annealed difference corresponding to $\mu(\eta)-\mu^\fwd(\eta)$ is analogous to entropy-production functionals in Non-equilibrium Statistical Mechanics \cite{evans1993probability}. Although a nonzero drift $b(x)$ drives the system out of equilibrium, stationarity implies that this cocycle reduces to a boundary (gauge) term, shifting the (quenched) free-energy level without affecting its derivatives (compare with \cite[Chapters II and XIV]{landau1980statistical}). Consequently, the Scaled Cumulant Generating Function (SCGF) for backward and forward propagation (i.e. $\mu(\eta)$ and $\mu^\fwd(\eta)$) differ only by an additive constant and therefore possess identical derivatives ($\mu'(\eta)=(\mu^\fwd)'(\eta)$). In thermodynamic terms, the non-equilibrium driving modifies the (quenched) free energy but leaves the fluctuation symmetry and linear-response structure invariant (compare with the Green-Kubo theory \cite{green1954markoff}, \cite{kubo1957statistical}).

\item[(4)] From our results in (2) and (3) discussed above, in Section \ref{Sec:Shape} we characterize the exact shapes of the wave front (Theorem \ref{Thm:ShapeAsymptoticWave}). First, using our derived properties of $\mu(\eta), \mu^\fwd(\eta)$, we characterize the relative positional configurations of $I(a)$ and $I^\fwd(a)$ in Lemma \ref{Lm:FurtherPropertiesOfMuAndIUsedInWaveFrontShape}. By considering intersections of the line $a\mapsto \beta a$ with the curves $a\mapsto I(a)$ and $a\mapsto I^\fwd(a)$ (see Lemma \ref{Lm:SolutionStructureBalanceEquation}), we conclude the following table regarding the Wave Propagation Problem:

\begin{table}[H]
\centering
\begin{tabular}{|c|c|c|c|}
\hline
Case of $\beta$ & $R_0$ & $R_1$ & Properties of $c_1^*, c_2^*$
\\
\hline 
$\beta\in (0, \eta_c)$ & $(-\infty, -c^*_1)\cup (c^*_2, +\infty)$ & $(-c^*_1, c^*_2)$ & $c^*_1>0, c^*_2<0, c^*_1+c^*_2>0$
\\
\hline 
$\beta = \eta_c$ & $(-\infty, -c_1^*)\cup(0,+\infty)$ & $(-c_1^*, 0)$ & $c_1^*>0$
\\
\hline 
$\beta\in (\eta_c, +\infty)$ & $(-\infty, -c^*_1)\cup (c^*_2, +\infty)$ & $(-c^*_1, c^*_2)$ & $c^*_1>0, c^*_2>0, c^*_1-c^*_2>0$
\\
\hline
\end{tabular}
\caption{Limit behavior of solution $\lim\limits_{t\rightarrow\infty} u(t,ct)$ based on $\beta$.}
\label{Table:BehaviorOfSolution}
\end{table}

We depict a picture describing the possible shapes of our wave front in Figure \ref{Fig:TwoWaves}.

\begin{figure}[H]
\centering
\includegraphics[height=5cm, width=13cm]{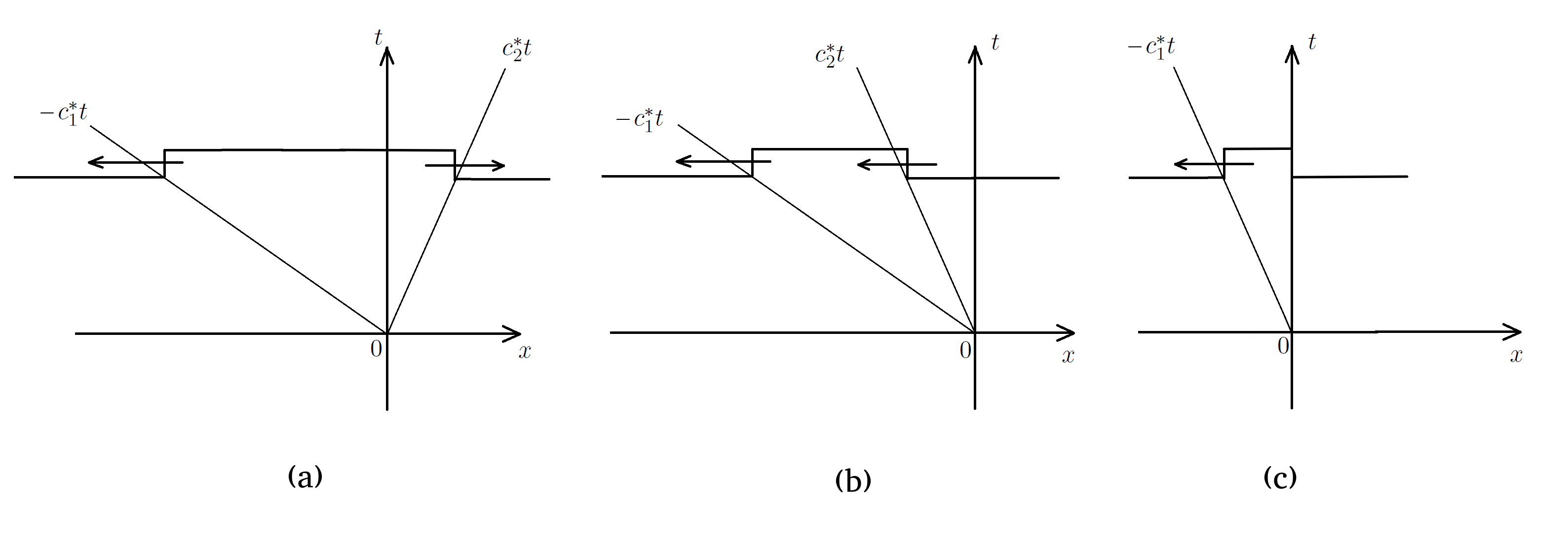}
\caption{The asymptotic wave formation: (a) $\beta\in (0, \eta_c)$; (b) $\beta =\eta_c$; (c) $\beta\in (\eta_c, +\infty)$.}
\label{Fig:TwoWaves}
\end{figure}

To understand the physical intuition of wave phenomenon described above, we recall that the ``heating" mechanism is a result of reaction, so that the term $\exp\left(\displaystyle{\int}_0^t c(u(t-s, X^x(s))){\rm d}s\right)$ pushes $u$ up when $X^x(t)\in \text{supp} u_0$. Given $c\in \mathbb{R}$ and set $x=ct$, then as $t\rightarrow\infty$ the process $X^{ct}(t)$ starts from the initial point $x=ct$ traveling at speed $c$, returns the compact set $\text{supp} u_0$ at an exponentially small probability $\exp(-t S(c))$ (``cooling" mechanism). If $c>0$, $S(c)$ characterizes the exponential probability of coming back around the origin when $X^{ct}(t)$ is at the positive far end; and reversely, if $c<0$, then $S(c)$ is measuring the return back probability from the negative far end. Since our drift term $b(x)$ is stationary and ergodic with positive average, the process $X^x(t)$ travels to the positive direction faster, and thus coming back close to the origin is harder from the positive far end than from the negative far end. In other words, cooling is stronger on the positive far end than the negative far end. Therefore, with the drift $b(x)$, wave propagation tends to be ``slowed down" towards $+\infty$ and ``accelerated" towards $-\infty$, leading to case (a) in Figure \ref{Fig:TwoWaves}. Given fixed drift $b(x)$, if the reaction rate $\beta$ is small (this is equivalent to letting $\beta$ fixed and tune up the drift $b(x)$ strong enough to the positive direction), then it can happen that the whole of positive real axis is eliminated in the wave propagation due to the strong cooling effect in that direction (as compared to the heating). In other words, even when the process $X^{ct}(t)$ starts from $ct$ with $c<0$ but $|c|$ being moderate, the drift $b(x)$ will quickly drag it to the positive far end and the process can hardly come back close to the origin, leading to $c_2^*<0$ as depicted in case (b) of Figure \ref{Fig:TwoWaves}. Of course, between these two phases there is a critical $\beta$ (which we show to be equal to $\eta_c$), at which the positive wave becomes stagnant ($c_2^*=0$), that we show at part (c) of Figure \ref{Fig:TwoWaves}.

\item[(5)] We discuss the relation $\text{sgn}(c^*_1-c^*_2)=\text{sgn}(\mathbf{E} b(x))$ and compare it with Theorem 1.3 of \cite{LIANG2022108568}. It turns out that when $\mathbf{E} b(x)>0$ as in our case, due to the shapes of and relative position between $I(a)$ and $I^\fwd(a)$, it is a direct result that $c_1^*>c_2^*$, or in other words $\text{sgn}(c_1^*-c_2^*)=\text{sgn}(\mathbf{E} b(x))>0$ (see Lemma \ref{Lm:SolutionStructureBalanceEquation}). The case when $\mathbf{E} b(x)<0$ can be obtained similarly by just flipping the sign. 

However, when $\mathbf{E} b(x)=0$, since this does not guarantee $b(x)\stackrel{d.}{=}-b(-x)$, and $\mu(\eta), \mu^\fwd(\eta)$ are  logarithms of the moment generating functions of the hitting times $T^1_0$ and $T^0_1$, it can happen that $c_1^*\neq c_2^*$, i.e., $\text{sgn}(c_1^*-c_2^*)\neq 0$.  And $c_1^*=c_2^*$ may hold if the corresponding scale function integrals behave the same as the case of ${\mathbf E}b(x)>0$ (\eqref{eqn: generalintecond} holds); see Remark \ref{Rem:scalefunction} in  Section \ref{Sec:LDP:RelationMuAndMufwd}.

\end{itemize}

The rest of this article consists of technical arguments addressing the above issues.

\section{Probabilistic Behavior of the Process $X^x(t)$ at large time $t\rightarrow \infty$}\label{Sec:LDP}

\subsection{Large Deviations for the Hitting Time}\label{Sec:LDP:HittingTime}
In this section, we obtain LDP for certain hitting times of the process $X^x(t)$ defined in (\ref{Eq:SDE-RandomDrift}), which will enable us to later turn them into the LDP of the process $X^x(t)$ using a duality argument. 

Assume $0<c<v$. For $s>r$ we define the (backward) hitting time $T^s_r$ as 
\begin{equation}\label{Eq:HittingTimeSDE-RandomDrift:Backward}
T^s_r = \inf\{t>0: X^s(t)\leq r\} \ .
\end{equation}

\noindent We aim to obtain LDP of $\dfrac{T^{vt}_{ct}}{t}\mathbf{1}_{\{T^{vt}_{ct}<\infty\}}$ as $t\rightarrow\infty$. Define the Lyapunov function \begin{equation}\label{Eq:LyapunovFunctionHittingTime:Backward}
\mu^\bwd(\eta)\equiv \mu(\eta)=\mathbf{E}\left[\ln E^W\left(e^{\eta T^1_0}\mathbf{1}_{\{T^1_0<+\infty\}}\right)\right]
\end{equation}
\footnote{We use superscript $\bwd$ here to stand for backward. Unless otherwise specified, the superscript $\bwd$ is usually suppressed for notational convenience.} and the critical value
\begin{equation}\label{Eq:CriticalEta:Backward}
\eta_c^\bwd\equiv \eta_c=\sup\{\eta\in \mathbb{R}: \mu(\eta)<+\infty\} \ .
\end{equation}

\noindent Define the Legendre transform of $\mu(\eta)$ as 
\begin{equation}\label{Eq:EntropyHittingTime:Backward}
I^\bwd(a)\equiv I(a)=\sup\limits_{\eta\leq \eta_c}(a\eta-\mu(\eta)) \ .
\end{equation}

\noindent For any $\eta\in \mathbb{R}$, we define

\begin{equation}\label{Eq:LDPHittingTimeFreeEnergyUnscaled}
H^{t,\bwd}(\eta)\equiv H^t(\eta)=\ln E^W \exp\left(\eta \cdot \dfrac{T^{vt}_{ct}}{t}\mathbf{1}_{\{T^{vt}_{ct}<\infty\}}\right) \ ,
\end{equation}
and 
\begin{equation}\label{Eq:LDPHittingTimeFreeEnergy}
H^\bwd(\eta)\equiv H(\eta)=\lim\limits_{t\rightarrow\infty}\dfrac{1}{t}H^t(t\eta) \ .
\end{equation}

\begin{lemma}[Lyapunov Exponent Identity]\label{Lm:LDPHittingTimeFreeEnergyExist}
Let $\eta\in\mathbb{R}$ be such that 
\begin{equation}\label{Lm:LDPHittingTimeFreeEnergyExist:AbsoluteIntegrabilityCondition}
\mathbf{E}\left[\left|\ln E^{W}\left(e^{\eta T_0^1}\mathbf{1}_{\{T_0^1<\infty\}}\right)\right|\right]<\infty \ .
\end{equation}
Assume $0<c<v$. Then $\mathbf{P}$-almost surely the limit \emph{(\ref{Eq:LDPHittingTimeFreeEnergy})} holds and we have $$H(\eta)=(v-c)\mu(\eta) \ ,$$
where $\mu(\eta)$ is defined in \emph{(\ref{Eq:LyapunovFunctionHittingTime:Backward})}.  
Thus $H(\eta)<\infty$ when $\eta<\eta_c$ and $H(\eta)=\infty$ when $\eta>\eta_c$, where $\eta_c$ is defined in \emph{(\ref{Eq:CriticalEta:Backward})}. 
\end{lemma}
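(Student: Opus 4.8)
The plan is to decompose the descent of the diffusion $X^{vt}(\cdot)$ from level $vt$ down to level $ct$ into consecutive legs across the integer levels lying in between, to use the strong Markov property (legitimate because local Lipschitz continuity of $b$, assumption (2), makes \eqref{Eq:SDE-RandomDrift} a well-posed strong Markov diffusion for every fixed environment $\omega$) to factorise the quenched moment generating function $E^W[e^{\eta T^{vt}_{ct}}\mathbf{1}_{\{T^{vt}_{ct}<\infty\}}]$ as a product over the legs, and then to recognise its logarithm as a Birkhoff sum to which the ergodic theorem applies.

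In detail: first, the random variable appearing in $H^t$ is understood to vanish on $\{T^{vt}_{ct}=\infty\}$ — consistently with the convention used to define $\mu$ in \eqref{Eq:LyapunovFunctionHittingTime:Backward} — so $H^t(t\eta)=\ln E^W[e^{\eta T^{vt}_{ct}};\,T^{vt}_{ct}<\infty]$. Since $X^{vt}$ is continuous and starts above $ct$, on $\{T^{vt}_{ct}<\infty\}$ it visits every level in $[ct,vt]$; setting $\sigma_a=\inf\{s\ge 0:X^{vt}(s)=a\}$ for $a\le vt$ (so that $0=\sigma_{vt}\le\sigma_{\lfloor vt\rfloor}\le\cdots\le\sigma_{\lceil ct\rceil}\le\sigma_{ct}=T^{vt}_{ct}$) and telescoping across the integer levels $\lfloor vt\rfloor,\lfloor vt\rfloor-1,\dots,\lceil ct\rceil$ together with the two fractional end legs $vt\to\lfloor vt\rfloor$ and $\lceil ct\rceil\to ct$, the strong Markov property at each $\sigma_m$ shows that, conditionally on $\omega$, the leg durations are $P^W$-independent and the leg from level $m$ to $m-1$ has the $P^W$-law of $T^1_0$ in the shifted environment $\theta_{m-1}\omega$ (spatial translation covariance of \eqref{Eq:SDE-RandomDrift}). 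Factorising $E^W$ of the corresponding product of $e^{\eta(\mathrm{leg})}\mathbf{1}_{\{\mathrm{leg}<\infty\}}$ and taking logarithms gives
$$
H^t(t\eta)=\sum_{j=\lceil ct\rceil}^{\lfloor vt\rfloor-1}g(\theta_j\omega)+R_t(\omega),\qquad
g(\omega):=\ln E^W\bigl[e^{\eta T^1_0};\,T^1_0<\infty\bigr],
$$
where $R_t(\omega)$ is the sum of the logarithms of the two fractional-leg factors.

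Next I would invoke ergodicity. Hypothesis \eqref{Lm:LDPHittingTimeFreeEnergyExist:AbsoluteIntegrabilityCondition} says precisely that $g\in L^1(\mathbf{P})$, so Birkhoff's ergodic theorem for the (ergodic) unit shift $\theta_1$ gives $n^{-1}\sum_{j=0}^{n-1}g(\theta_j\omega)\to\mathbf{E}\,g=\mu(\eta)$ $\mathbf{P}$-a.s.; evaluating this at $n=\lfloor vt\rfloor$ and at $n=\lceil ct\rceil$ and using $\lfloor vt\rfloor/t\to v$, $\lceil ct\rceil/t\to c$, the main sum tends $\mathbf{P}$-a.s. to $(v-c)\mu(\eta)$. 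It remains to check $R_t(\omega)/t\to 0$ $\mathbf{P}$-a.s. Each fractional leg spans a gap $\delta\in[0,1)$, and by the comparison principle, sandwiching $X$ between Brownian motions with constant drifts $\pm B$ (assumption \eqref{Eq:AssumptionOnDrift:Boundedness}), one bounds $\ln E^W[e^{\eta T^\delta_0};\,T^\delta_0<\infty]$ above by $g(\theta_r\omega)+C(B,\eta)$ for some $r$ within distance $1$ of the relevant integer level (and two-sidedly by absolute constants when $\eta\le 0$); since $g\in L^1$ the same comparison makes $\sup_{|r|\le1}|g(\theta_{n+r}\omega)|$ integrable, hence $o(n)$ $\mathbf{P}$-a.s.\ by Borel--Cantelli, so $R_t(\omega)=o(t)$. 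Combining, $\tfrac1t H^t(t\eta)\to(v-c)\mu(\eta)$ $\mathbf{P}$-a.s., which is \eqref{Eq:LDPHittingTimeFreeEnergy} with $H(\eta)=(v-c)\mu(\eta)$; the last assertion follows from $v>c$ and the definition \eqref{Eq:CriticalEta:Backward} of $\eta_c$ (and for $\eta>\eta_c$ one still has $g$ bounded below with $\mathbf{E}\,g=+\infty$, so the Birkhoff sum diverges to $+\infty$).

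I expect the main obstacle to be the honest treatment of the two fractional boundary legs: the comparison estimate controlling $\ln E^W[e^{\eta T^\delta_0};\,T^\delta_0<\infty]$ uniformly over $\delta\in[0,1)$ when $\eta>0$, and the bookkeeping needed to push $R_t/t\to 0$ simultaneously along the whole continuum $t\to\infty$ rather than only along integers; a secondary point is making the quenched strong-Markov leg factorisation completely precise, in particular handling the restriction to $\{\,\cdot<\infty\}$ correctly in each factor and identifying $\{T^{vt}_{ct}<\infty\}$ with the intersection of the leg-finiteness events.
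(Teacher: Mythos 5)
Your proposal is correct and follows essentially the route the paper itself takes: the paper's proof simply invokes \cite{FanHuTerlovCMP} (Theorem 3), which rests on exactly the two ingredients you use, namely the quenched strong-Markov factorisation of $E^W\bigl[e^{\eta T^{vt}_{ct}}\mathbf{1}_{\{T^{vt}_{ct}<\infty\}}\bigr]$ over unit-level legs and Birkhoff's ergodic theorem for the spatial shift, with the boundary legs treated as negligible. The one step to tighten is your bound on the fractional end legs when $\eta>0$: a pathwise sandwich between constant-drift Brownian motions does not directly give the upper bound because the finiteness indicators do not match up, but your claimed estimate $\ln E^W[e^{\eta T^\delta_0};\,T^\delta_0<\infty]\le g(\theta_r\omega)+C(B,\eta)$ does follow by instead writing the adjacent unit-leg factor, via the strong Markov property at the fractional level, as the product of the fractional-leg factor and a sub-unit-gap factor which is bounded below by a deterministic constant $p_B>0$ (a crossing of gap at most one within unit time, using $|b|\le B$), so the overall argument goes through as you outline.
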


\begin{proof}
This lemma can be proved in a similar way as Theorem 3 in \cite{FanHuTerlovCMP}, using the fact that $X^x(t)$ is a strong Markov process and the ergodicity of $b(x)$ with respect to spacial shifts.
\end{proof}

The Legendre transform of $H(\eta)$ is defined by 

\begin{equation}\label{Eq:LDPHittingTimeEntropy}
H^*(a):=L(a)= \sup\limits_{\eta\in (-\infty, +\infty)}(a\eta-H(\eta)) = \sup\limits_{\eta\leq \eta_c}(a\eta-H(\eta)) \ .
\end{equation}

\noindent It is easy to see that 

$$\begin{array}{ll}
\sup\limits_{\eta\leq \eta_c} (a\eta - H(\eta))
& = \sup\limits_{\eta\leq \eta_c} (a\eta - (v-c)\mu(\eta))
\\
& = (v-c)\sup\limits_{\eta\leq \eta_c}\left(\dfrac{a}{v-c}\eta-\mu(\eta)\right)
\\
& = (v-c)I\left(\dfrac{a}{v-c}\right) \ ,
\end{array}$$
where $I(a)$ is the Legendre transform of $\mu(\eta)$ defined in (\ref{Eq:EntropyHittingTime:Backward}). Thus 
\begin{equation}\label{Eq:LDPHittingTimeEntropyUsingLegendreOfLyapunov} L(a)=(v-c)I\left(\dfrac{a}{v-c}\right) \ .
\end{equation}

\begin{theorem}[LDP for the Hitting Time] \label{Thm:LDPHittingTime}
Let $0<c<v$. Then $\mathbf{P}$-almost surely the following two estimates hold: 

\begin{itemize}
\item[\emph{(a)}] For
any closed set $G\subset (0,+\infty)$ we have
\begin{equation}\label{Thm:LDPHittingTime:Eq:UpperBound}
\limsup\limits_{t\rightarrow\infty}\dfrac{1}{t}\ln P^W
\left(\dfrac{T_{ct}^{vt}}{t}\in G\right)\leq -(v-c)\inf\limits_{a\in G}I\left(\dfrac{a}{v-c}\right) \ ;
\end{equation}
\item[\emph{(b)}] For any open set $F\subset (0,+\infty)$ we have
\begin{equation}\label{Thm:LDPHittingTime:Eq:LowerBound}
\liminf\limits_{t\rightarrow\infty}\dfrac{1}{t}\ln P^W
\left(\dfrac{T_{ct}^{vt}}{t}\in F\right)\geq -(v-c)\inf\limits_{a\in F}I\left(\dfrac{a}{v-c}\right) \ .
\end{equation}
\end{itemize}
\end{theorem}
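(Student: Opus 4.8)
The plan is to deduce Theorem \ref{Thm:LDPHittingTime} from the Gärtner-Ellis theorem applied to the scaled cumulant generating functions $H^t(t\eta)/t$, combined with the Lyapunov exponent identity of Lemma \ref{Lm:LDPHittingTimeFreeEnergyExist}. First I would record that Lemma \ref{Lm:LDPHittingTimeFreeEnergyExist} gives, $\mathbf{P}$-almost surely, the pointwise limit $H(\eta) = \lim_{t\to\infty} \tfrac{1}{t}H^t(t\eta) = (v-c)\mu(\eta)$ whenever $\eta$ satisfies the absolute integrability condition \eqref{Lm:LDPHittingTimeFreeEnergyExist:AbsoluteIntegrabilityCondition}, and that $H(\eta) < \infty$ for $\eta < \eta_c$ while $H(\eta) = +\infty$ for $\eta > \eta_c$. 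Since $\mu$ is a (quenched-averaged) logarithmic moment generating function it is convex and lower semicontinuous in $\eta$, hence so is $H$; moreover $H(\eta) \le 0$ trivially fails but $H$ is finite in a left-neighbourhood of the origin, so $0$ lies in the interior of $\{H < \infty\}$ precisely when $\eta_c > 0$, and one checks $\eta_c \ge 0$ as remarked in the text. The essential-smoothness/steepness hypotheses of Gärtner-Ellis then have to be verified for $H$ restricted to the relevant range, which is where the restriction to subsets of $(0,+\infty)$ — and implicitly of $(0,(v-c)\mu'(0))$ in the unbounded-derivative regime — enters.

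The key steps, in order, would be: (i) establish convexity and lower semicontinuity of $H$ and identify its effective domain $\mathcal{D}_H = \{\eta : H(\eta) < \infty\}$, whose interior is $(-\infty,\eta_c)$; (ii) invoke the Gärtner-Ellis upper bound, which holds for all closed sets without any smoothness assumption on $H$, to get \eqref{Thm:LDPHittingTime:Eq:UpperBound} with rate function $H^*(a) = L(a) = (v-c)I(a/(v-c))$, using the computation \eqref{Eq:LDPHittingTimeEntropyUsingLegendreOfLyapunov} that rewrites the Legendre transform of $H$ in terms of that of $\mu$; (iii) for the lower bound \eqref{Thm:LDPHittingTime:Eq:LowerBound}, verify that every point $a$ in the open set $F \subset (0,+\infty)$ is an exposed point of $H^*$ with an exposing hyperplane in the interior of $\mathcal{D}_H$, or alternatively that $H$ is essentially smooth (differentiable on the interior of $\mathcal{D}_H$ and steep at $\eta_c$ when $\eta_c < \infty$), so that Gärtner-Ellis yields the matching lower bound; (iv) if $H$ is not steep at $\eta_c$ — precisely the obstruction flagged in the introduction when $\mathbf{E} b(x) \ne 0$ — then the lower bound as stated can only be argued for open subsets on which the exposed-point criterion still applies, and here I would either restrict attention as in \cite{NolenXinDCDS} or appeal to the truncation argument (Lemma \ref{Lm:PropertiesMuAndTruncatedMu}) to recover the full range; one should also handle the event $\{T^{vt}_{ct} = +\infty\}$ separately, noting that on $F \subset (0,+\infty)$ bounded away from $0$ and $+\infty$ the indicator $\mathbf{1}_{\{T^{vt}_{ct}<\infty\}}$ is automatically active.

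A cleaner route for the lower bound, avoiding delicate essential-smoothness bookkeeping, is the standard change-of-measure (tilting) argument: for a target speed $a$ with $a = H'(\eta_0)$ for some $\eta_0$ in the interior of $\mathcal{D}_H$, tilt the quenched law of $T^{vt}_{ct}/t$ by $\exp(\eta_0 \cdot (T^{vt}_{ct}/t) \cdot t)/E^W[\cdots]$, observe that under the tilted measure $T^{vt}_{ct}/t \to a$ in probability (by a law-of-large-numbers / subadditivity argument analogous to the proof of Lemma \ref{Lm:LDPHittingTimeFreeEnergyExist}), and estimate $P^W(T^{vt}_{ct}/t \in F)$ from below by restricting to a small ball around $a$ inside $F$ and paying the Radon-Nikodym cost $\exp(-t(\eta_0 a - H(\eta_0)) + o(t)) = \exp(-t L(a) + o(t))$. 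I expect the main obstacle to be exactly the behaviour of $H$ — equivalently $\mu$ — at the right endpoint $\eta_c$ of its domain: when $\mu'(\eta_c^-) < \infty$ the Legendre transform $I$ becomes linear beyond $\mu'(\eta_c^-)$ and the Gärtner-Ellis lower bound fails to reach those speeds directly, so the honest work is to show that for $a$ in the "good" range the exposing hyperplane lies strictly inside $(-\infty,\eta_c)$, and to transfer the quenched pointwise convergence of Lemma \ref{Lm:LDPHittingTimeFreeEnergyExist} (which is only asserted under the integrability hypothesis \eqref{Lm:LDPHittingTimeFreeEnergyExist:AbsoluteIntegrabilityCondition}) to a genuine large-deviation statement uniformly in $t$, which requires the bounded-drift assumption \eqref{Eq:AssumptionOnDrift:Boundedness} to control the fluctuations of $\ln E^W(e^{\eta T^1_0}\mathbf{1}_{\{T^1_0<\infty\}})$ across the i.i.d.-like blocks produced by the strong Markov property at the integer hitting sites.
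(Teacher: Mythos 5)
Your upper-bound plan is fine in substance and close to what the paper does, except that the paper does not invoke G\"{a}rtner--Ellis: it proves the two tail estimates directly by exponential Chebyshev (split at $\alpha=(v-c)\mu'(0)$, optimizing over $\eta\le 0$ for the left tail and $\eta\ge 0$ for the right tail) together with Lemma \ref{Lm:LDPHittingTimeFreeEnergyExist}. This matters because, as Remark \ref{Remark:ExplanationOfLDPHittingTime} points out, when $\eta_c=0$ the origin need not be interior to $\mathcal{D}_H$, so the G\"{a}rtner--Ellis upper bound you quote as holding ``for all closed sets'' is not directly applicable in that case; the elementary Chebyshev version is what actually covers it.

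The genuine gap is in the lower bound. Your ``cleaner route'' tilts at an $\eta_0$ in the interior of $\mathcal{D}_H$ with $H'(\eta_0)=a$, which can only reach speeds $a<(v-c)\mu'(\eta_c-)$; when $\mu'(\eta_c-)<\infty$ (the situation the theorem is designed for, since $\mathbf{E}b>0$ typically makes $\mu'$ finite up to $\eta_c$), the speeds on the affine part of $I$ are not exposed points and your tilting produces nothing for them --- yet covering all of $(0,+\infty)$ is precisely the content of the theorem beyond \cite{NolenXinDCDS}. You name Lemma \ref{Lm:PropertiesMuAndTruncatedMu} and ``the truncation argument'' as a fallback, but you do not carry it out, and it is not a routine patch: the paper's proof (i) decomposes $T^{vt}_{ct}$ into hitting times across consecutive integers and shows the two boundary blocks are negligible on the exponential scale (equation (\ref{Thm:LDPHittingTime:Eq:LogarithmicTailBound})); (ii) conditions on the event that every block is at most $M$ and tilts the \emph{truncated} blocks with parameter $\eta_u^M$ solving $(\mu^M)'(\eta_u^M)=u$, which is solvable for every $u>0$ exactly because $\mu^M$ is finite on all of $\mathbb{R}$; (iii) proves concentration of the block average under the tilted measure via a fourth-moment bound of order $M^4/n^2$ and Borel--Cantelli, where the truncation is essential --- your appeal to a LLN/subadditivity argument for the untruncated $T^{vt}_{ct}/t$ under the tilted quenched law is not justified, since without the block structure and the bound $M$ there is no usable moment control; and (iv) removes the truncation by letting $M\to\infty$ and showing $\limsup_M I_M(u)\le I(u)$ through the nested-level-set and monotone-convergence argument leading to (\ref{Thm:LDPHittingTime:Eq:IStarEstimateI}). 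None of these steps appears in your proposal, and your alternative fallback (``restrict attention as in \cite{NolenXinDCDS}'') would prove a strictly weaker statement than the theorem asserts.
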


\begin{remark}\label{Remark:ExplanationOfLDPHittingTime}
\rm 
In \cite[Theorem 2.3]{NolenXinDCDS} and \cite[Theorem 4]{FanHuTerlovCMP}, similar quenched LDP results were proved. We know by part (4) of Lemma 5.1 in \cite{FanHuTerlovCMP}, that $\eta_c\in [0, \infty)$ $\mathbf{P}$-almost surely. The argument used in \cite[Theorem 2.3]{NolenXinDCDS} and \cite[Theorem 4]{FanHuTerlovCMP} requires that $G, F\subset (0, (v-c)\mu'(0))=(0, H'(0))$. So if $H'(0)=+\infty$, then the above Theorem \ref{Thm:LDPHittingTime} can be proved using the same argument in \cite[Theorem 4]{FanHuTerlovCMP}, \cite[Theorem 2.3]{NolenXinDCDS}, and we do not have to repeat the proof. 

Note that in our case, since there is a positive mean of the drift term $b(x)$, it might be common that the case $\eta_c>0$ does happen (see Section \ref{Sec:BasicIdeaMainResults}). When $\eta_c>0$, usually we will have $\mu'(0)<+\infty$ and therefore $H'(0)<+\infty$, since $H'(0)=(v-c)\mu'(0)$. Moreover, even if $\eta_c=0$, it can still happen that $\mu'(0)<+\infty$ and therefore $H'(0)<+\infty$.  In these cases, the previous arguments used in \cite[Theorem 2.3]{NolenXinDCDS} and \cite[Theorem 4]{FanHuTerlovCMP} do not apply, and so some extra efforts have to be made. 

Let us first consider the case when $\eta_c>0$. Our first attempt is to dig into the classical G\"{a}rtner-Ellis lemma for LDP of general probability measures (see \cite[Section 2.3]{DemboZeitouniLDPBook}, \cite[Theorem 3.4]{OlivieryVaresLDPBook}, \cite{GartnerLemma}, \cite{EllisGartnerLemma}). Define
\begin{equation}\label{Thm:LDPHittingTime:Eq:SetOfEtaWithFiniteH}
\mathcal{D}_{H}:=\{\eta: H(\eta)<+\infty\} \ .
\end{equation}
In the G\"{a}rtner-Ellis lemma, it is first required that $0$ is an interior point of $\mathcal{D}_H^\circ$. 
By Lemma \ref{Lm:LDPHittingTimeFreeEnergyExist} we have $\mathcal{D}_{H}=\{\eta: \mu(\eta)<\infty\}=(-\infty, \eta_c)$ or $(-\infty, \eta_c]$, and therefore $\mathcal{D}_H^\circ =(-\infty, \eta_c)$. So if $\eta_c>0$, then $0$ is indeed an interior point of the set $\mathcal{D}_{H}^{\circ}$. We can the apply the upper bound argument of G\"{a}rtner-Ellis to have the LDP. However, the lower bound of G\"{a}rnter-Ellis requires technical conditions such as $H(\eta)$ be an essentially smooth (see \cite[Definition 2.3.5]{DemboZeitouniLDPBook}), lower semi-continuous function. Such a restriction on $H(\eta)$ enables the lower bound to hold with an $\inf$ on $a\in F$ instead of $a\in F\cap \mathcal{F}$, where $F$ is the open set in part (b) of Theorem \ref{Thm:LDPHittingTime} and $\mathcal{F}$ is the set of exposed points of $H(\eta)$ (see \cite[Definition 2.3.3]{DemboZeitouniLDPBook}). In this case $H(\eta)$ is a good rate function for the LDP. The requirement of $H(\eta)$ being essentially smooth is to enable the use of Rockafellar's Lemma (see \cite[Lemma 2.3.12]{DemboZeitouniLDPBook}) to pass from $a\in F\cap \mathcal{F}$ to any $a\in F$ (see part (c) of \cite[Theorem 2.3.6]{DemboZeitouniLDPBook}). 
In our case, we find that due to Lemma \ref{Lm:LDPHittingTimeFreeEnergyExist}, the condition $\mu'(\eta_c)=+\infty$ is equivalent to the fact that $H(\eta)$ is essentially smooth. This means, in order to have the G\"{a}rtner-Ellis argument work for LDP lower bound, we must require that $\mu'(\eta_c-)=+\infty$.

In the case when $\eta_c=0$ but $\mu'(0)<+\infty$, G\"{a}rtner-Ellis also cannot be applied to the upper bound, since in this case $0$ may not be an interior point of $\mathcal{D}_H^\circ$.

To remedy these problems, we make use of the fact that we are working in dimension $1$ and we adapt the symmetry and the truncation arguments used in \cite{CometsGantertZeitouni2000}, but with careful modifications to our case. These arguments finally help us to settle the entire LDP without additional assumptions such as $\mu'(\eta_c-)<+\infty$ and $0$ being in the interior of $\mathcal{D}_H^\circ$.

Note that when $\eta_c>0$, \cite{FanHuTerlovCMP} needs to work with finite $\mu'(0)$ and thus $G, F\subset (0, (v-c)\mu'(0))$. This forces the authors of \cite{FanHuTerlovCMP} to work with large reaction (see \cite[Assumption 4]{FanHuTerlovCMP}) to argue the wave front formation. In our case, we can prove LDP when $G, F\subset (0, +\infty)$ and therefore we will not be restricted to large reaction. 
\end{remark}

\begin{proof}[Proof of Theorem \ref{Thm:LDPHittingTime}]

We separate the proof into two parts: the upper bound and lower bound.

\noindent{\textit{Upper Bound.}} We reduce (\ref{Thm:LDPHittingTime:Eq:UpperBound}) to the following two inequalities: \begin{equation}\label{Thm:LDPHittingTime:Eq:UpperBoundReducedToIntervalPart1} 
\limsup\limits_{t\rightarrow\infty}\dfrac{1}{t}\ln P^W\left(\dfrac{T^{vt}_{ct}}{t}< \alpha\right)\leq -(v-c)I\left(\dfrac{\alpha}{v-c}\right) \text{ when } 0<\alpha\leq (v-c)\mu'(0) \ ,
\end{equation}
and
\begin{equation}\label{Thm:LDPHittingTime:Eq:UpperBoundReducedToIntervalPart2} 
\limsup\limits_{t\rightarrow\infty}\dfrac{1}{t}\ln P^W\left(\dfrac{T^{vt}_{ct}}{t}> \alpha\right)\leq -(v-c)I\left(\dfrac{\alpha}{v-c}\right) \text{ when } \alpha> (v-c)\mu'(0) \ .
\end{equation}
Given (\ref{Thm:LDPHittingTime:Eq:UpperBoundReducedToIntervalPart1}) and (\ref{Thm:LDPHittingTime:Eq:UpperBoundReducedToIntervalPart2}), the large-deviation upper bound (\ref{Thm:LDPHittingTime:Eq:UpperBound}) for arbitrary closed set $G$ follows by partitioning $G$ and approximating it by such intervals.

To prove (\ref{Thm:LDPHittingTime:Eq:UpperBoundReducedToIntervalPart1}) we apply Chebyshev inequality, so that for any $\alpha>0$ and any $\eta\leq 0$:
$$\begin{array}{ll}
\limsup\limits_{t\rightarrow\infty} \dfrac{1}{t}\ln P^W\left(\dfrac{T_{ct}^{vt}}{t}<\alpha\right)
&\leq \limsup\limits_{t\rightarrow\infty}\dfrac{1}{t}\ln P^W\left(e^{\eta T^{vt}_{ct}}>e^{\eta \alpha t}\right)
\\
&\leq \limsup\limits_{t\rightarrow\infty}\dfrac{1}{t}\ln \left\{e^{-\eta \alpha t}\cdot E^W\left[e^{\eta T^{vt}_{ct}}\mathbf{1}_{\{T^{vt}_{ct}<+\infty\}}\right]\right\}
\\
& = -\eta \alpha +\limsup\limits_{t\rightarrow\infty}\dfrac{1}{t}H^t(t\eta)
\\
& = -\eta \alpha +(v-c)\mu(\eta) \ ,
\end{array}$$
where the last equality relies on Lemma \ref{Lm:LDPHittingTimeFreeEnergyExist}. This gives, for any $\eta \leq 0$, the estimate
$$\begin{array}{ll}
\limsup\limits_{t\rightarrow\infty} \dfrac{1}{t}\ln P^W\left(\dfrac{T^{vt}_{ct}}{t}<\alpha\right) &
\leq \inf\limits_{\eta\leq 0}(-\eta \alpha + (v-c)\mu(\eta))
\\
& = -\sup\limits_{\eta \leq 0}(\eta \alpha - (v-c)\mu(\eta))
\\
& = - (v-c)\sup\limits_{\eta\leq 0}\left(\eta\dfrac{\alpha}{v-c}-\mu(\eta)\right) \ .
\end{array}$$
When $\alpha\leq (v-c)\mu'(0)$, the supremum in the above estimate is achieved at some $\eta^*$ such that $\mu'(\eta^*)=\dfrac{\alpha}{v-c}\leq \mu'(0)$, implying $\eta^* \leq 0 \leq \eta_c$, so we have 
$$\limsup\limits_{t\rightarrow\infty}\dfrac{1}{t}\ln P^W\left(\dfrac{T^{vt}_{ct}}{t}<\alpha\right)\leq -(v-c)\sup\limits_{\eta\leq\eta_c}\left(\eta\dfrac{\alpha}{v-c}-\mu(\eta)\right) \ ,$$
which is (\ref{Thm:LDPHittingTime:Eq:UpperBoundReducedToIntervalPart1}) due to (\ref{Eq:EntropyHittingTime:Backward}). 

Next we show (\ref{Thm:LDPHittingTime:Eq:UpperBoundReducedToIntervalPart2}). We apply Chebyshev inequality again, so that for any $\alpha>0$ and any $\eta\geq 0$ we have

$$\begin{array}{ll}
\limsup\limits_{t\rightarrow\infty} \dfrac{1}{t}\ln P^W\left(\dfrac{T^{vt}_{ct}}{t}> \alpha\right) & \leq \limsup\limits_{t\rightarrow\infty}\dfrac{1}{t} \ln P^W\left(e^{\eta T^{vt}_{ct}}> e^{\eta \alpha t}\right)
\\
& \leq \limsup\limits_{t\rightarrow\infty}\dfrac{1}{t}\ln \left\{e^{-\eta \alpha t}\cdot E^W\left[e^{\eta T^{vt}_{ct}}\mathbf{1}_{\{T^{vt}_{ct}<+\infty\}}\right]\right\}
\\
& = -\eta \alpha +\limsup\limits_{t\rightarrow\infty}\dfrac{1}{t}H^t(t\eta)
\\
& = -\eta \alpha +(v-c)\mu(\eta) \ ,
\end{array}$$
which, in a similar fashion as the previous case, will give the estimate

$$\limsup\limits_{t\rightarrow\infty}\dfrac{1}{t}\ln P^W\left(\dfrac{T^{vt}_{ct}}{t}> \alpha\right)\leq -(v-c)\sup\limits_{\eta\geq 0}\left(\eta\dfrac{\alpha}{v-c}-\mu(\eta)\right) \ .$$

When $\alpha>(v-c)\mu'(0)$, the supremum in the above estimate is achieved at some $\eta^*$ such that $\mu'(\eta^*)=\dfrac{\alpha}{v-c}\geq \mu'(0)$, implying $0\leq \eta^*\leq \eta_c$, so we have
$$\limsup\limits_{t\rightarrow\infty}\dfrac{1}{t}\ln P^W\left(\dfrac{T^{vt}_{ct}}{t}>\alpha\right)\leq -(v-c)\sup\limits_{\eta\leq\eta_c}\left(\eta\dfrac{\alpha}{v-c}-\mu(\eta)\right) \ ,$$
which is (\ref{Thm:LDPHittingTime:Eq:UpperBoundReducedToIntervalPart2}) due to (\ref{Eq:EntropyHittingTime:Backward}). 

\noindent{\textit{Lower Bound.}} For the lower bound (\ref{Thm:LDPHittingTime:Eq:LowerBound}) it suffices to show that for any $u\in (0, +\infty)$, 
\begin{equation}\label{Thm:LDPHittingTime:Eq:LowerBoundReduceToLocalBall}
\lim\limits_{\delta\rightarrow 0} \liminf\limits_{t\rightarrow\infty} \dfrac{1}{t} \ln P^W\left(\dfrac{T_{ct}^{vt}}{t}\in B_{u,\delta}\right)\geq -L(u) \ , 
\end{equation}
where $B_{u,\delta}=(u-\delta, u+\delta)$.
Recall that $0<c<v$. Suppose $t$ is large, we find $i\in \mathbb{Z}$ such that 
$$i-1<ct\leq i <i+1<...<i+n-1\leq vt <i+n$$
where $n=n(v,c,t)$ is the number of integers between $ct$ and $vt$. This implies $n-1\leq (v-c)t<n+1$, which is saying 
\begin{equation}\label{Thm:LDPHittingTime:Eq:BoundOnt}
\dfrac{n}{v-c}-\dfrac{1}{v-c}<t<\dfrac{n}{v-c}+\dfrac{1}{v-c} \ .
\end{equation}
We then decompose $T^{vt}_{ct}$ as 
\begin{equation}\label{Thm:LDPHittingTime:Eq:TctvtDecomposeIntoFiniteSum}
T^{vt}_{ct}=T^{vt}_{i+n-1}+T^{i+n-1}_{i+n-2}+...+T^{i+1}_i+T^i_{ct} = T^{vt}_{i+n-1}+\sum\limits_{k=1}^{n-1} T^{i+k}_{i+k-1}+T^i_{ct} \ .
\end{equation}
Note that $0\leq T_{i+n-1}^{vt}+T_{ct}^i\leq T_{i+n-1}^{i+n}+T_{i-1}^i$. So 
\begin{equation}\label{Thm:LDPHittingTime:Eq:BoundOnTctvt}
\sum\limits_{k=1}^{n-1} T_{i+k-1}^{i+k}<T_{ct}^{vt}<\sum\limits_{k=0}^n T_{i+k-1}^{i+k} \ .
\end{equation}
Thus, we have

\begin{equation}\label{Thm:LDPHittingTime:Eq:LowerBoundDecomposeIntoTwoParts}
\begin{array}{ll}
& P^W\left(\dfrac{T^{vt}_{ct}}{t}\in B_{u, \delta}\right)
\\
= & P^W\left(t(u-\delta)<T^{vt}_{ct}<t(u+\delta)\right)
\\
\stackrel{{\rm(a)}}{\geq} & P^W\left(t(u-\delta)<\sum\limits_{k=1}^{n-1} T_{i+k-1}^{i+k}<\sum\limits_{k=0}^n T_{i+k-1}^{i+k}<t(u+\delta)\right)
\\
\stackrel{{\rm(b)}}{\geq} & P^W\left(t(u-\frac{\delta}{2})<\sum\limits_{k=1}^{n-1} T_{i+k-1}^{i+k}<t(u+\frac{\delta}{2})\right)\cdot P^W\left(0<T_{i+n-1}^{i+n}+T^i_{i-1}<t \frac{\delta}{2}\right)
\\
\stackrel{{\rm(c)}}{\geq} & P^W\left((\frac{n}{v-c}+\frac{1}{v-c})(u-\frac{\delta}{2})<\sum\limits_{k=1}^{n-1} T_{i+k-1}^{i+k}<(\frac{n}{v-c}-\frac{1}{v-c})(u+\frac{\delta}{2})\right)
\\
& \qquad \cdot P^W\left(0<T_{i+n-1}^{i+n}+T^i_{i-1}<(\frac{n}{v-c}-\frac{1}{v-c}) \frac{\delta}{2}\right) \ .
\end{array}
\end{equation}
Here (a) is due to (\ref{Thm:LDPHittingTime:Eq:BoundOnTctvt}), (b) is due to (\ref{Thm:LDPHittingTime:Eq:TctvtDecomposeIntoFiniteSum}), (c) is due to (\ref{Thm:LDPHittingTime:Eq:BoundOnt}). 

We will show that \begin{equation}\label{Thm:LDPHittingTime:Eq:LogarithmicTailBound}
\lim\limits_{n\rightarrow\infty}\dfrac{1}{n}\ln P^W\left(0<T_{i+n-1}^{i+n}+T^i_{i-1}<(\frac{n}{v-c}-\frac{1}{v-c}) \frac{\delta}{2}\right)=0 \ .
\end{equation}

\noindent Combining (\ref{Thm:LDPHittingTime:Eq:LowerBoundDecomposeIntoTwoParts}) and (\ref{Thm:LDPHittingTime:Eq:LogarithmicTailBound}) we see that
\begin{equation}\label{Thm:LDPHittingTime:Eq:LowerBoundReducedToOnlyInTermsOfFiniteSum}
\lim\limits_{\delta\rightarrow 0}\liminf\limits_{t\rightarrow\infty}\dfrac{1}{t}\ln P^W\left(\dfrac{T^{vt}_{ct}}{t}\in B_{u,\delta}\right)\geq (v-c)\lim\limits_{\delta\rightarrow 0}\liminf\limits_{n\rightarrow\infty}\dfrac{1}{n}\ln P^W\left(\dfrac{1}{n}\sum\limits_{k=1}^{n-1} T_{i+k-1}^{i+k}\in B_{\frac{u}{v-c}, \delta}\right) \ ,
\end{equation}
so for (\ref{Thm:LDPHittingTime:Eq:LowerBoundReduceToLocalBall}) it suffices to show that 
\begin{equation}\label{Thm:LDPHittingTime:Eq:LowerBoundOnlyInTermsOfFiniteSum}
\lim\limits_{\delta\rightarrow 0}\liminf\limits_{n\rightarrow\infty}\dfrac{1}{n}\ln P^W\left(\dfrac{1}{n}\sum\limits_{k=1}^{n-1} T_{i+k-1}^{i+k}\in B_{u, \delta}\right)\geq -I(u) \ .
\end{equation}

So it remains to show (\ref{Thm:LDPHittingTime:Eq:LogarithmicTailBound}). Let $\eta<0$, and consider splitting the event $$\left\{e^{\eta(T_{i+n-1}^{i+n}+T_{i-1}^i)}\mathbf{1}_{T_{i+n-1}^{i+n}+T_{i-1}^i<+\infty}\right\}$$ into two parts depending on whether or not $T_{i+n-1}^{i+n}+T_{i-1}^i>(\frac{n}{v-c}-\frac{1}{v-c})\frac{\delta}{2}$. This gives
$$\begin{array}{ll}
& E^W\left(e^{\eta(T_{i+n-1}^{i+n}+T_{i-1}^i)}\mathbf{1}_{T_{i+n-1}^{i+n}+T_{i-1}^i<+\infty}\right)
\\
\leq & P^W\left(T_{i+n-1}^{i+n}+T_{i-1}^i\leq (\frac{n}{v-c}-\frac{1}{v-c})\frac{\delta}{2}\right) 
+
e^{\eta(\frac{n}{v-c}-\frac{1}{v-c})\frac{\delta}{2}}\cdot P^W\left(T_{i+n-1}^{i+n}+T_{i-1}^i> (\frac{n}{v-c}-\frac{1}{v-c})\frac{\delta}{2}\right) \ .
\end{array}$$
So we have the estimate
\begin{equation}\label{Thm:LDPHittingTime:Eq:TailHittingTimeProbabilityEstimate1}
\begin{array}{ll}
& P^W\left(0<T_{i+n-1}^{i+n}+T_{i-1}^i< (\frac{n}{v-c}-\frac{1}{v-c})\frac{\delta}{2}\right) 
\\
\geq & 
E^W\left(e^{\eta(T_{i+n-1}^{i+n}+T_{i-1}^i)}\mathbf{1}_{T_{i+n-1}^{i+n}+T_{i-1}^i<+\infty}\right) -
e^{\eta(\frac{n}{v-c}-\frac{1}{v-c})\frac{\delta}{2}} P^W\left(T_{i+n-1}^{i+n}+T_{i-1}^i> (\frac{n}{v-c}-\frac{1}{v-c})\frac{\delta}{2}\right)
\\
\geq & E^W\left(e^{\eta(T_{i+n-1}^{i+n}+T_{i-1}^i)}\mathbf{1}_{T_{i+n-1}^{i+n}+T_{i-1}^i<+\infty}\right) - e^{\eta(\frac{n}{v-c}-\frac{1}{v-c})\frac{\delta}{2}} \ .
\end{array}
\end{equation}

\noindent By the ergodic theorem, $\mathbf{P}$-almost surely, we have
$$\lim\limits_{n\rightarrow\infty}\dfrac{1}{n}\sum\limits_{k=1}^{n-1}\ln E^W \left(e^{\eta T_{i+n-1}^{i+n}}\mathbf{1}_{T_{i+n-1}^{i+n}<+\infty}\right)=\mathbf{E}\left[\ln E^W \left(e^{\eta T_0^1}\mathbf{1}_{T_0^1<+\infty}\right)\right]=\mu(\eta) \ ,$$
which is finite by Lemma \ref{Lm:LDPHittingTimeFreeEnergyExist}. This gives, $\mathbf{P}$-almost surely
$$\lim\limits_{n\rightarrow\infty}\dfrac{1}{n}\ln E^W\left(e^{\eta(T_{i+n-1}^{i+n}+T_{i-1}^i)}\mathbf{1}_{\{T_{i+n-1}^{i+n}+T_{i-1}^i<+\infty\}}\right) = 0 \ .$$ In other words, $\mathbf{P}$-almost surely, for any $\varepsilon>0$ and $n$ large enough we have $$E^W\left(e^{\eta(T_{i+n-1}^{i+n}+T_{i-1}^i)}\mathbf{1}_{\{T_{i+n-1}^{i+n}+T_{i-1}^i<+\infty\}}\right)\geq e^{-\varepsilon n} \ .$$ So if $0<\varepsilon<-\eta (\frac{n}{v-c}-\frac{1}{v-c})\frac{\delta}{2}$ by (\ref{Thm:LDPHittingTime:Eq:TailHittingTimeProbabilityEstimate1}) we will have 
$$\begin{array}{ll}
0 & \geq \liminf\limits_{n\rightarrow\infty} \dfrac{1}{n}\ln P^W\left(0<T_{i+n-1}^{i+n}+T_{i-1}^i< (\frac{n}{v-c}-\frac{1}{v-c})\frac{\delta}{2}\right)
\\
& \geq \liminf\limits_{n\rightarrow\infty}\dfrac{1}{n}\ln(e^{-\varepsilon n}-e^{\eta (\frac{n}{v-c}-\frac{1}{v-c})\frac{\delta}{2}}) \geq -\eta \delta \ ,
\end{array}$$
which implies (\ref{Thm:LDPHittingTime:Eq:LogarithmicTailBound}).

Now we turn to the proof of (\ref{Thm:LDPHittingTime:Eq:LowerBoundOnlyInTermsOfFiniteSum}). Define the event $\mathcal{Y}_{n,\delta}=\left\{\dfrac{1}{n}\sum\limits_{k=1}^{n-1} T_{i+k-1}^{i+k}\in B_{u, \delta}\right\}$. Let 
$$\mathcal{T}_0=0 \ , \ \mathcal{T}_\ell=\sum\limits_{k=1}^{\ell}T^{i+k}_{i+k-1} \ , \ \ell=1,2,...,n-1 \ .$$
For any $M>0$, we define the event
$$A_n^M = \bigcap\limits_{1\leq k\leq n-1}\{T_{i+k-1}^{i+k}\leq M\} \ .$$
Let $P^{W,M}$ stand for the conditional probablity of $P^W$ given $A_n^M$, with the corresponding expectation written as $E^{W,M}$. Then we get
\begin{equation}\label{Thm:LDPHittingTime:Eq:DecomposeNeighborHoodProbIntoConditionalMPlusTail}
\dfrac{1}{n}\ln P^W(\mathcal{Y}_{n,\delta})=\dfrac{1}{n}\ln P^{W,M}(\mathcal{Y}_{n,\delta})+\dfrac{1}{n}\ln P^W(A_n^M) \ , \ \text{ for all } \delta>0 \ .
\end{equation}

\noindent Given $M>0$, set
\begin{equation}\label{Thm:LDPHittingTime:Eq:muM}
\mu^M(\eta)=\mathbf{E}\left[\ln E^W\left(e^{\eta T_0^1}\mathbf{1}_{\{T_0^1<M\}}\right)\right] \ , 
\end{equation}
and 
\begin{equation}\label{Thm:LDPHittingTime:Eq:muCondM}
\mu^{|M}(\eta) = \mathbf{E}\left[\ln E^{W,M}\left(e^{\eta T_0^1}\mathbf{1}_{\{T_0^1<M\}}\right)\right]
\ .
\end{equation}

\noindent We consider the following equation
\begin{equation}\label{Thm:LDPHittingTime:Eq:EquationForetauM}
\mathbf{E}\left[
\dfrac{E^{W,M}(T_0^1e^{\eta_u^M T_0^1}\mathbf{1}_{T_0^1<\infty})}{E^{W,M}(e^{\eta_u^M T_0^1}\mathbf{1}_{T_0^1<\infty})}\right]=u \ .
\end{equation}
By Lemma \ref{Lm:PropertiesMuAndTruncatedMu} part (2) we know that (\ref{Thm:LDPHittingTime:Eq:EquationForetauM}) is just $(\mu^M)'(\eta_u^M)=u$. By part (3) of that Lemma, we know that the solution satisfies $\limsup\limits_{M\rightarrow \infty} \eta_u^M \leq \eta_c$. 

Given the solution $\eta_u^M$, we make the following change of measure

\begin{equation}\label{Thm:LDPHittingTime:Eq:ChangeOfMeasure}
\dfrac{d\widehat{P}^{W,M}}{d P^{W,M}}=\dfrac{e^{\eta_u^M\mathcal{T}_{n-1}}}{E^{W,M}[e^{\eta_u^M \mathcal{T}_{n-1}}]} \ .
\end{equation}

\noindent Then we can estimate 

\begin{equation}\label{Thm:LDPHittingTime:Eq:LowerBoundOfNeighborhoodUsingChangedMeasure:1}
\begin{array}{ll}
& \dfrac{1}{n}\ln P^{W,M}(\mathcal{Y}_{n,\delta})
\\
= & \dfrac{1}{n}\ln \displaystyle{\int_{\mathcal{Y}_{n,\delta}} e^{-\eta_u^M \mathcal{T}_{n-1}}\cdot E^{W,M}[e^{\eta_u^M \mathcal{T}_{n-1}}]{\rm d}\widehat{P}^{W,M}}
\\
= & \dfrac{1}{n}\ln E^{W,M}[e^{\eta_u^M \mathcal{T}_{n-1}}] + \dfrac{1}{n}\ln\displaystyle{\int_{\mathcal{Y}_{n,\delta}}e^{-\eta_u^M\mathcal{T}_{n-1}}{\rm d}\widehat{P}^{W,M}}
\\
\geq & 
\left\{
\begin{array}{ll}
\dfrac{1}{n}\ln E^{W,M}[e^{\eta_u^M \mathcal{T}_{n-1}}]-\eta_u^M(u+\delta)+\dfrac{1}{n}\ln \widehat{P}^{W,M}(\mathcal{Y}_{n,\delta}) \ , & \text{ if } \eta_u^M>0 \ ;
\\
\dfrac{1}{n}\ln E^{W,M}[e^{\eta_u^M \mathcal{T}_{n-1}}]-\eta_u^M(u-\delta)+\dfrac{1}{n}\ln \widehat{P}^{W,M}(\mathcal{Y}_{n,\delta}) \ , & \text{ if } \eta_u^M\leq 0 \ .
\end{array}
\right.
\end{array}
\end{equation}
Without loss of generality below we will consider only the case $\eta_u^M>0$, and the case when $\eta_u^M\leq 0$ can be argued similarly, since we finally will send $\delta\rightarrow 0$ (see (\ref{Thm:LDPHittingTime:Eq:LowerBoundNeighborhoodUsingIM})). 

Since $X^x(t)$ is a strong Markov process, the independence of $\{T_{i+k-1}^{i+k}\}_{k=1}^{n-1}$ yields, $\mathbf{E}$ almost surely,
\begin{equation}\label{Thm:LDPHittingTime:Eq:LowerBoundOfNeighborhoodUsingChangedMeasure:2}
\begin{array}{ll}
& \dfrac{1}{n}\ln E^{W,M}[e^{\eta_u^M \mathcal{T}_{n-1}}]
= \dfrac{1}{n} \ln E^{W,M} [e^{\eta_u^M\sum\limits_{k=1}^{n-1} T^{i+k}_{i+k-1}}]
= \dfrac{1}{n}\sum\limits_{k=1}^{n-1} \ln E^{W,M} [e^{\eta_u^M T_{i+k-1}^{i+k}}]
\\
\stackrel{n\rightarrow \infty}{\longrightarrow} &  \mathbf{E}\left[\left.\ln E^W\left(e^{\eta_n^M T_0^1}\mathbf{1}_{\{T_0^1<M\}}\right)\right|T_0^1<M\right] = \mu^M(\eta_u^M) - \mathbf{E}\left[\ln P^W\left(T_0^1<M\right)\right] \ .
\end{array}
\end{equation}
Similarly we have 
\begin{equation}\label{Thm:LDPHittingTime:Eq:LowerBoundOfNeighborhoodUsingChangedMeasure:3}
\dfrac{1}{n}\ln P^W(A_n^M)
= \dfrac{1}{n} \sum\limits_{k=1}^{n-1} \ln P^W\left(T_{i+k-1}^{i+k}\leq M\right)
\stackrel{n \rightarrow \infty}{\longrightarrow}  \mathbf{E}\left[\ln P^W\left(T_0^1<M\right)\right] \ .
\end{equation}

\noindent Combining (\ref{Thm:LDPHittingTime:Eq:DecomposeNeighborHoodProbIntoConditionalMPlusTail}), (\ref{Thm:LDPHittingTime:Eq:LowerBoundOfNeighborhoodUsingChangedMeasure:1}), (\ref{Thm:LDPHittingTime:Eq:LowerBoundOfNeighborhoodUsingChangedMeasure:2}), (\ref{Thm:LDPHittingTime:Eq:LowerBoundOfNeighborhoodUsingChangedMeasure:3}) we get
\begin{align}\label{Thm:LDPHittingTime:Eq:LowerBoundOfNeighborhoodUsingChangedMeasure:Last}
 \dfrac{1}{n}\ln P^W(\mathcal{Y}_{n,\delta})
&\geq  \mu^M(\eta_u^M)-\mathbf{E}[\ln P^W\left(T_0^1<M\right)]-\eta_u^M(u+\delta)
\cr & \quad+\dfrac{1}{n}\ln \widehat{P}^{W,M}(\mathcal{Y}_{n,\delta})+\mathbf{E}\left[\ln P^W\left(T_0^1<M\right)\right]
\cr
& = \mu^M(\eta_u^M)-\eta_u^M(u+\delta)+\dfrac{1}{n}\ln \widehat{P}^{W,M}(\mathcal{Y}_{n,\delta}) \ .
\end{align}

Next we prove that, $\mathbf{P}$-almost surely, for any $\delta>0$, we have
\begin{equation}\label{Thm:LDPHittingTime:Eq:ChangedMeasureTendsToOne} \lim\limits_{n\rightarrow \infty}\widehat{P}^{W,M}(\mathcal{Y}_{n,\delta})=1 \ .
\end{equation}
First, by the ergodic theorem, we have
$$\dfrac{1}{n}\widehat{E}^{W,M}\left(\sum\limits_{k=1}^{n-1} T_{i+k-1}^{i+k}\right)=
\dfrac{1}{n}\sum\limits_{k=1}^{n-1}\widehat{E}^{W,M}( T_{i+k-1}^{i+k}) \stackrel{n\rightarrow\infty}{\longrightarrow} \mathbf{E}\left(\widehat{E}^{W,M}[T_0^1]\right)=u$$
by the definition of $\eta_u^M$ and the changed measure $\widehat{P}^{W,M}$. Moreover
$$\begin{array}{ll} & \widehat{E}^{W,M}\left[\left(\dfrac{1}{n}\sum\limits_{k=1}^{n-1} T_{i+k-1}^{i+k} - \widehat{E}^{W,M}\left(\dfrac{1}{n}\sum\limits_{k=1}^{n-1} T_{i+k-1}^{i+k}\right)\right)^4\right]
\\
= & \widehat{E}^{W,M}\left[\left(\dfrac{1}{n}\sum\limits_{k=1}^{n-1}\left( T_{i+k-1}^{i+k} - \widehat{E}^{W,M}[T_{i+k-1}^{i+k}]\right)\right)^4\right]
\leq  \dfrac{3M^4}{n^2} \ ,
\end{array}$$
where we have used the independence of $\{T_{i+k-1}^{i+k}\}_{k=1}^{n-1}$ under $\widehat{P}^{W,M}$. So the conclusion comes from Borel-Cantelli Lemma.

In summary of (\ref{Thm:LDPHittingTime:Eq:LowerBoundOfNeighborhoodUsingChangedMeasure:Last}), (\ref{Thm:LDPHittingTime:Eq:ChangedMeasureTendsToOne}), it follows that $\mathbf{P}$-a.s. we have 
\begin{align}\label{Thm:LDPHittingTime:Eq:LowerBoundNeighborhoodUsingIM}
\lim\limits_{\delta\rightarrow 0} \liminf\limits_{n\rightarrow\infty} \dfrac{1}{n}\ln P^W(\mathcal{Y}_{n,\delta}) & \geq -\eta_u^M u + \mu^M(\eta_u^M)
 \cr& \geq -\sup\limits_{\eta\in (-\infty, +\infty)} (\eta u -\mu^M(\eta))\stackrel{\text{def}}{=}-I_M(u) \ .
\end{align}
Let $I^*(u)=\limsup\limits_{M\rightarrow \infty} I_M(u)$. Then by the definition of $I_M(u)$ we know that $$I_M(u)=\sup\limits_{\eta\in (-\infty, +\infty)}(\eta u - \mu^M(\eta))\geq -\mu^M(0)\geq 0 \ .$$
This gives $I^*(u)\geq 0$. Moreover, since the sup in $I_M(u)$ is achieved at $\eta_u^M$, and we have $\limsup\limits_{M \rightarrow \infty}\eta_u^M\leq \eta_c$ by part (3) of Lemma \ref{Lm:PropertiesMuAndTruncatedMu}, this means for large $M>0$ we have $I_M(u)<+\infty$ and so is $I^*(u)$. Hence, the level sets $\{\eta: \eta u-\mu^M(\eta)\geq I^*(u)\}$ are non-empty, compact, nested sets implying that their intersection as $M\rightarrow+\infty$ contains some $\eta^*\in (-\infty, \eta_c]$. Since $\mu^M(\eta)$ is increasing in $M$ for every fixed $\eta\leq \eta_c$, by Lebesgue's monotone convergence this gives
$$-\mu(\eta^*)=-\lim\limits_{M\rightarrow+\infty} \mu^M(\eta^*) \geq -\eta^* u + I^*(u) \ .$$
Thus \begin{equation}\label{Thm:LDPHittingTime:Eq:IStarEstimateI}
-I^*(u)\geq -(\eta^* u - \mu(\eta^*))\geq -\sup\limits_{\eta\leq \eta_c}(\eta u - \mu(\eta))=-I(u) \ .
\end{equation}
The desired (\ref{Thm:LDPHittingTime:Eq:LowerBoundOnlyInTermsOfFiniteSum}) comes from combining (\ref{Thm:LDPHittingTime:Eq:LowerBoundNeighborhoodUsingIM}) and (\ref{Thm:LDPHittingTime:Eq:IStarEstimateI}).
\end{proof}

\begin{lemma}[Properties of $\mu(\eta)$, $\mu^M(\eta)$ and $\mu^{|M}(\eta)$]\label{Lm:PropertiesMuAndTruncatedMu}
Let $\mu(\eta)$ be defined in \emph{(\ref{Eq:LyapunovFunctionHittingTime:Backward})}, $\mu^M(\eta)$ be defined in \emph{(\ref{Thm:LDPHittingTime:Eq:muM})}, and $\mu^{|M}(\eta)$ be defined in \emph{(\ref{Thm:LDPHittingTime:Eq:muCondM})}. Then 
\begin{itemize}
\item[\emph{(1)}] $\mu(\eta)$, $\mu'(\eta)$ exists when $\eta<\eta_c$ and $\mu(\eta)=+\infty$ when $\eta>\eta_c$. When $\eta<\eta_c$ we have
$$\mu'(\eta)=\mathbf{E}\left[
\dfrac{E^{W}(T_0^1e^{\eta T_0^1}\mathbf{1}_{T_0^1<\infty})}{E^{W}(e^{\eta T_0^1}\mathbf{1}_{T_0^1<\infty})}\right].$$ Moreover, $\mu(\eta)$ is convex for $\eta<\eta_c$, and $\mu'(\eta)$ is strictly increasing for $\eta<\eta_c$;
\item[\emph{(2)}] For every fixed $M>0$, $\mu^M(\eta)$, $\mu^{|M}(\eta)$ are well defined for all $\eta \in (-\infty, +\infty)$, with
$$\mu^{|M}(\eta) = \mu^M(\eta)-\mathbf{E}\left[\ln P^W\left(T_0^1<M\right)\right]
= \mathbf{E}\left[\ln E^W\left(\left.e^{\eta T_0^1}\mathbf{1}_{\{T_0^1<M\}}\right| T_0^1<M\right)\right] \ ,$$
and $$(\mu^M)'(\eta)=(\mu^{|M})'(\eta)=\mathbf{E}\left[
\dfrac{E^W(T_0^1e^{\eta T_0^1}\mathbf{1}_{T_0^1<M})}{E^W(e^{\eta T_0^1}\mathbf{1}_{T_0^1<M})}\right]=\mathbf{E}\left[
\dfrac{E^{W,M}(T_0^1e^{\eta T_0^1}\mathbf{1}_{T_0^1<\infty})}{E^{W,M}(e^{\eta T_0^1}\mathbf{1}_{T_0^1<\infty})}\right] \ ,$$
where $E^{W,M}$ is the conditional expectation corresponding to $P^{W,M}$ defined right before \emph{(\ref{Thm:LDPHittingTime:Eq:DecomposeNeighborHoodProbIntoConditionalMPlusTail})}. Moreover, $\mu^M(\eta)$, $\mu^{|M}(\eta)$ are convex functions in $\eta$, and $(\mu^M)'(\eta)=(\mu^{|M})'(\eta)$ are strictly increasing in $\eta\in (-\infty, \infty)$;
\item[\emph{(3)}] For every fixed $\eta<\eta_c$ we have $\lim\limits_{M\rightarrow \infty} \mu^M(\eta)=\mu(\eta)$; For every fixed $\eta>\eta_c$ we have $\lim\limits_{M\rightarrow\infty}\mu^M(\eta)=+\infty$. Moreover, the solution $\eta_u^M$ to the equation $(\mu^M)'(\eta_u^M)=u$
has the limit bound $\limsup\limits_{M\rightarrow\infty}\eta_u^M \leq \eta_c$.
\end{itemize}
\end{lemma}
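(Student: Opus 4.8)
The plan is to run all three parts through the single quenched functional $\phi(\eta;\omega):=\ln E^W\!\big(e^{\eta T_0^1}\mathbf{1}_{\{T_0^1<\infty\}}\big)$, so that $\mu(\eta)=\mathbf{E}[\phi(\eta;\omega)]$, together with its truncation $\phi^M(\eta;\omega):=\ln E^W\!\big(e^{\eta T_0^1}\mathbf{1}_{\{T_0^1<M\}}\big)$. The estimate that makes every interchange below legitimate is a \emph{deterministic} lower bound on the one-step return probability: since $|b|\le B$ almost surely, comparing $X^1(t)\le 1+Bt+W_t$ shows that on $\{\tau\le 1\}$, where $\tau:=\inf\{t:W_t=-1-2B\}$, the continuous path $X^1$ must cross $0$ by time $\tau$, hence $P^W(T_0^1\le 1)\ge c_B$ for \emph{every} $\omega$, with $c_B>0$ depending only on $B$. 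Consequently $\phi(\eta;\omega)\ge\min(\eta,0)+\ln c_B$ and, for $M\ge 1$, $\phi^M(\eta;\omega)\ge\min(\eta,0)+\ln c_B$ pointwise; together with $\phi(\eta;\omega)\le 0$ for $\eta\le 0$ and $\phi^M(\eta;\omega)\le|\eta|M$ this yields $\mathbf{E}\big|\phi(\eta;\omega)\big|<\infty$ whenever $\mu(\eta)<\infty$ and $\mathbf{E}\big|\phi^M(\eta;\omega)\big|<\infty$ for all $\eta$. In particular the absolute-integrability hypothesis (\ref{Lm:LDPHittingTimeFreeEnergyExist:AbsoluteIntegrabilityCondition}) is automatic for $\eta<\eta_c$.

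For part (1): for each $\omega$, $\phi(\cdot;\omega)$ is the logarithm of the Laplace transform of the sub-probability law of $T_0^1$ on $\{T_0^1<\infty\}$, hence convex by H\"{o}lder and nondecreasing since $T_0^1\ge 0$, so its domain of finiteness is a left ray; since $\{\mu<\infty\}$ is an interval with supremum $\eta_c$, a countable-exhaustion argument shows that for $\mathbf{P}$-a.e. $\omega$ the function $\phi(\cdot;\omega)$ is finite and smooth on $(-\infty,\eta_c)$, while $\mu\equiv+\infty$ on $(\eta_c,\infty)$. To get $\mu\in C^1(-\infty,\eta_c)$ with the stated formula I would differentiate under $\mathbf{E}$: on a small interval around any $\eta<\eta_c$, convexity bounds $|\phi'(\zeta;\omega)|$ by a fixed linear combination of $|\phi(\eta\pm\epsilon;\omega)|$ and $|\phi(\eta\pm 2\epsilon;\omega)|$ (difference quotients of a convex function), which are integrable, so dominated convergence applies and gives $\mu'(\eta)=\mathbf{E}\big[E^W(T_0^1e^{\eta T_0^1}\mathbf{1}_{\{T_0^1<\infty\}})/E^W(e^{\eta T_0^1}\mathbf{1}_{\{T_0^1<\infty\}})\big]$. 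Convexity of $\mu$ is inherited from $\phi(\cdot;\omega)$; strictness of $\mu'$ follows because $\phi''(\eta;\omega)$ is the variance of $T_0^1$ under the $\eta$-tilted law, strictly positive for a.e. $\omega$ because $T_0^1$ is genuinely nondegenerate, and the $\mathbf{E}$-average of an a.s. strictly increasing family is strictly increasing.

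For part (2): the two-sided deterministic bounds make $\mu^M$ and $\mu^{|M}$ finite, indeed absolutely integrable, for \emph{all} $\eta\in\mathbb{R}$, so the differentiation-under-$\mathbf{E}$ argument of part (1) runs with no domain restriction and gives the derivative formula, and convexity and strict monotonicity of $(\mu^M)'=(\mu^{|M})'$ follow exactly as before (the relevant tilted law now lives on $\{T_0^1<M\}$, still nondegenerate). The identity $\mu^{|M}(\eta)=\mu^M(\eta)-\mathbf{E}[\ln P^W(T_0^1<M)]$ is the observation that, by the strong Markov property and stationarity of $b$, conditioning a single increment on $A_n^M$ amounts to conditioning $T_0^1$ on $\{T_0^1<M\}$, so that $E^{W,M}(e^{\eta T_0^1}\mathbf{1}_{\{T_0^1<M\}})=E^W(e^{\eta T_0^1}\mathbf{1}_{\{T_0^1<M\}})/P^W(T_0^1<M)$; taking $\ln$ and $\mathbf{E}$ produces the additive constant, and since numerator and denominator of the derivative ratio rescale by the same factor, the three expressions for $(\mu^M)'$ coincide.

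For part (3): $\phi^M(\eta;\omega)\uparrow\phi(\eta;\omega)$ pointwise as $M\uparrow\infty$ by monotone convergence inside $E^W$, and since $\phi^M(\eta;\cdot)$ is monotone in $M$ and bounded below by the integrable constant $\min(\eta,0)+\ln c_B$, the monotone convergence theorem gives $\mu^M(\eta)\uparrow\mu(\eta)$, finite when $\eta<\eta_c$ and $+\infty$ when $\eta>\eta_c$ (using that $\mu$ is nondecreasing). For the bound on $\eta_u^M$: $(\mu^M)'$ is continuous and strictly increasing with range $(0,M)$, so $\eta_u^M$ is uniquely defined once $M>u$; if $\limsup_M\eta_u^M>\eta_c$, pick $\eta_0<\eta_c<\eta'<\limsup_M\eta_u^M$, so $\eta_u^M>\eta'$ along a subsequence, and then monotonicity together with the convex difference-quotient bound gives $u=(\mu^M)'(\eta_u^M)\ge(\mu^M)'(\eta')\ge[\mu^M(\eta')-\mu^M(\eta_0)]/(\eta'-\eta_0)$, whose right-hand side tends to $+\infty$ (since $\mu^M(\eta')\to+\infty$ while $\mu^M(\eta_0)\to\mu(\eta_0)<\infty$), a contradiction. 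I expect the only genuinely delicate points to be bookkeeping: keeping all the monotone/dominated-convergence interchanges and the convex difference-quotient estimates free of hidden integrability gaps, and verifying the nondegeneracy and support facts about $T_0^1$ (that $P^W(T_0^1<\epsilon)>0$ and $P^W(M-\epsilon<T_0^1<M)>0$, needed for strict convexity and for the range of $(\mu^M)'$) from small-time estimates on the underlying diffusion; once the deterministic return bound $P^W(T_0^1\le 1)\ge c_B$ is secured, the rest is routine convex analysis plus the strong Markov property.
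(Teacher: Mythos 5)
Your proposal is correct, and all three parts go through as you describe; the skeleton (quenched convexity of the log-moment-generating function, difference-quotient domination to differentiate under $\mathbf{E}$, monotone convergence in $M$, and the convexity contradiction for $\limsup_M\eta_u^M\le\eta_c$) is the same convex-analysis machinery the paper relies on, except that the paper simply cites Lemma 2.2 of Nolen--Xin for part (1) and for the derivative/convexity claims in part (2), whereas you make the argument self-contained. Your genuinely distinctive ingredient is the deterministic return bound: using $|b|\le B$ to compare $X^1(t)\le 1+Bt+W_t$ and conclude $P^W(T_0^1\le 1)\ge c_B>0$ uniformly in $\omega$. This single estimate supplies the integrable minorant $\min(\eta,0)+\ln c_B$ that makes every dominated/monotone convergence interchange airtight, shows the absolute-integrability hypothesis of Lemma \ref{Lm:LDPHittingTimeFreeEnergyExist} is automatic for $\eta<\eta_c$, and quietly repairs a loose point in the paper's own part (2), where the bound ``$|\mu^M(\eta)|\le|\eta|M$'' ignores the contribution $-\ln P^W(T_0^1<M)$ for $\eta<0$ (harmless once one has a uniform lower bound on that probability, which is exactly what your $c_B$ provides). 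Your part (3) contradiction via $(\mu^M)'(\eta')\ge[\mu^M(\eta')-\mu^M(\eta_0)]/(\eta'-\eta_0)\to+\infty$ for $\eta'>\eta_c>\eta_0$ is precisely the justification for the step the paper only asserts, namely that $(\mu^M)'(\eta_c+\delta)$ can be made larger than any $U$ for $M$ large; your existence/uniqueness remark about the range $(0,M)$ of $(\mu^M)'$ is a bonus the paper does not address. The loose ends you flag yourself (nondegeneracy of the law of $T_0^1$ on $\{T_0^1<\infty\}$ and on $\{T_0^1<M\}$, $P^W(T_0^1<\epsilon)>0$ and $P^W(M-\epsilon<T_0^1<M)>0$, and using a time horizon strictly less than $M$ when $M$ is close to $1$) are indeed routine consequences of the support theorem or comparison with Brownian motion with drift $\pm B$, so they do not constitute gaps.
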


\begin{proof}
(1) This follows from the same argument in the proof of Lemma 2.2 in \cite{NolenXinDCDS}.

(2) Since $M>0$ is fixed, it is easy to see from (\ref{Eq:LyapunovFunctionHittingTime:Backward}) that $$|\mu^M(\eta)|\leq
\mathbf{E}\left[\ln \left|E^W\left(e^{\eta T_0^1}\mathbf{1}_{\{T_0^1<M\}}\right)\right|\right] \leq |\eta|\cdot M \ ,$$ and so it is well defined for all $\eta\in (-\infty, \infty)$. Then we have 
$$\mu^{|M}(\eta)=
\mathbf{E}\left[\ln E^W\left(\left.e^{\eta T_0^1}\mathbf{1}_{\{T_0^1<+\infty\}}\right| T_0^1<M\right)\right]=
\mathbf{E}\left[\ln \dfrac{E^W\left(e^{\eta T_0^1}\mathbf{1}_{\{T_0^1<M\}}\right)}{P^W\left(T_0^1<M\right)}\right] \ ,$$ 
so that $\mu^{|M}(\eta)=\mu^M(\eta)-\mathbf{E}\left[\ln P^W\left(T_0^1<M\right)\right]$ is also well defined for all $\eta\in (-\infty, \infty)$. The last equality also gives $(\mu^M)'(\eta)=(\mu^{|M})'(\eta)$. Lastly, using the same arguments as in part (vi) of \cite[Lemma 2.2]{NolenXinDCDS}, based on dominated convergence and the chain rule, we can show 

$$(\mu^M)'(\eta)=\mathbf{E}\left[
\dfrac{E^W(T_0^1e^{\eta T_0^1}\mathbf{1}_{T_0^1<M})}{E^W(e^{\eta T_0^1}\mathbf{1}_{T_0^1<M})}\right] \ , \ (\mu^{|M})'(\eta)=\mathbf{E}\left[
\dfrac{E^{W,M}(T_0^1e^{\eta T_0^1}\mathbf{1}_{T_0^1<\infty})}{E^{W,M}(e^{\eta T_0^1}\mathbf{1}_{T_0^1<\infty})}\right] \ ,$$
as well as both $\mu^M(\eta)$, $\mu^{|M}(\eta)$ are convex with strictly increasing derivatives $(\mu^M)'(\eta)$, $(\mu^{|M})'(\eta)$ for all $\eta\in (-\infty, \infty)$.

(3) We can calculate $$\begin{array}{ll}
\mu^M(\eta)-\mu(\eta)& = \mathbf{E}\left[\ln E^W\left(e^{\eta T_0^1}\mathbf{1}_{\{T_0^1<M\}}\right)\right]-\mathbf{E}\left[\ln E^W\left(e^{\eta T_0^1}\mathbf{1}_{\{T_0^1<+\infty\}}\right)\right]
\\
& = \mathbf{E}\left[\ln\dfrac{ E^W\left(e^{\eta T_0^1}\mathbf{1}_{\{T_0^1<M\}}\right)}{E^W\left(e^{\eta T_0^1}\mathbf{1}_{\{T_0^1<+\infty\}}\right)}\right]
\end{array} \ ,$$
and so $\lim\limits_{M\rightarrow\infty}\mu^M(\eta)=\mu(\eta)$ for every fixed $\eta<\eta_c$. When $\eta>\eta_c$, we know by (\ref{Eq:CriticalEta:Backward}) that $\mu(\eta)=+\infty$, which gives $\lim\limits_{M\rightarrow\infty}\mu^M(\eta)=\lim\limits_{M\rightarrow\infty}\mathbf{E}\left[\ln E^W\left(e^{\eta T_0^1}\mathbf{1}_{\{T_0^1<M\}}\right)\right]=\mathbf{E}\left[\ln E^W\left(e^{\eta T_0^1}\mathbf{1}_{\{T_0^1<+\infty\}}\right)\right]=+\infty$. 

Let us now consider the solution $\eta_u^M$ to the equation $(\mu^M)'(\eta_u^M)=u$. Fix some small $\delta>0$. Then for any $U>0$, there exists some $M_0=M_0(\delta, U)$ such that $(\mu^M)'(\eta_c+\delta)>U$ whenever $M\geq M_0$. Since $(\mu^M)'(\eta)$ is strictly increasing in $\eta$, we know that for any $\eta\geq \eta_c+\delta$, the inequality $(\mu^M)'(\eta)>U$ still hold. This means when we pick $U>u$ and when $M>M_0(U, \delta)$, we must have $\eta^M_u\leq \eta_c+\delta$. Since $\delta$ is arbitrary, we have $\limsup\limits_{M\rightarrow\infty} \eta_u^M\leq\eta_c$ as desired.
\end{proof}

\noindent{\textbf{The case of forward hitting time.}}

Recall that we set $0<c<v$. Similarly as (\ref{Eq:HittingTimeSDE-RandomDrift:Backward}), we define, for $s<r$, the forward hitting time
\begin{equation}\label{Eq:HittingTimeSDE-RandomDrift:Forward}
T^s_r=\inf\{t>0: X^s(t)\geq r\} \ .
\end{equation}
Now we aim to obtain the LDP of $\dfrac{T^{ct}_{vt}}{t}\mathbf{1}_{\{T^{ct}_{vt}<\infty\}}$
as $t\rightarrow\infty$. Similarly as (\ref{Eq:LyapunovFunctionHittingTime:Backward}), (\ref{Eq:CriticalEta:Backward}), we define the Lyapunov function

\begin{equation}\label{Eq:LyapunovFunctionHittingTime:Forward}
\mu^{\fwd}(\eta)=\mathbf{E}\left[\ln E^W\left(e^{\eta T^{-1}_0}\mathbf{1}_{\{T^{-1}_0<+\infty\}}\right)\right]
\end{equation}
\footnote{We use the superscript $\fwd$ to stand for forward. Unlike the backward case, the superscript $\fwd$ is in general \textit{not} suppressed.} and the critical value
\begin{equation}\label{Eq:CriticalEta:Forward}
\eta_c^{\fwd}=\sup\{\eta\in \mathbb{R}: \ \mu^{\fwd}(\eta)<+\infty\} \ .
\end{equation}
Define the Legendre transform of $\mu^\fwd(\eta)$ as
\begin{equation}\label{Eq:EntropyHittingTime:Forward}
I^\fwd(a)=\sup\limits_{\eta\leq \eta_c^\fwd}(a\eta-\mu^\fwd(\eta)) \ .
\end{equation}

For any $\eta\in \mathbb{R}$ we define

\begin{equation}\label{Eq:LDPHittingTimeFreeEnergyUnscaled:Forward}
H^{t,\fwd}(\eta)=\ln E^W \exp\left(\eta \cdot \dfrac{T^{ct}_{vt}}{t}\mathbf{1}_{\{T^{ct}_{vt}<\infty\}}\right) \ ,
\end{equation}
and 
\begin{equation}\label{Eq:LDPHittingTimeFreeEnergy:Forward}
H^\fwd(\eta)=\lim\limits_{t\rightarrow\infty}\dfrac{1}{t}H^{t,\fwd}(t\eta) \ .
\end{equation}
Then similarly as Lemma \ref{Lm:LDPHittingTimeFreeEnergyExist} we have the following
\begin{lemma}[Lyapunov Exponent Identity for the Forward Hitting Time]\label{Lm:LDPHittingTimeFreeEnergyExist:Forward}
Let $\eta\in\mathbb{R}$ be such that 
\begin{equation}\label{Lm:LDPHittingTimeFreeEnergyExist:Forward:AbsoluteIntegrabilityCondition}
\mathbf{E}\left[\left|\ln E^{W}\left(e^{\eta T_0^{-1}}\mathbf{1}_{\{T_0^{-1}<\infty\}}\right)\right|\right]<\infty \ .
\end{equation}
Assume $0<c<v$. Then $\mathbf{P}$-almost surely the limit \emph{(\ref{Eq:LDPHittingTimeFreeEnergy:Forward})} holds and we have $$H^\fwd(\eta)=(v-c)\mu^\fwd(\eta) \ ,$$
where $\mu^\fwd(\eta)$ is defined in \emph{(\ref{Eq:LyapunovFunctionHittingTime:Forward})}.  
Thus $H^\fwd(\eta)<\infty$ when $\eta<\eta_c^\fwd$ and $H^\fwd(\eta)=\infty$ when $\eta>\eta_c^\fwd$, where $\eta_c^\fwd$ is defined in \emph{(\ref{Eq:CriticalEta:Forward})}. 
\end{lemma}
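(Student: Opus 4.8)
The plan is to prove Lemma \ref{Lm:LDPHittingTimeFreeEnergyExist:Forward} by transcribing, almost verbatim, the argument behind Lemma \ref{Lm:LDPHittingTimeFreeEnergyExist} (itself modeled on Theorem 3 of \cite{FanHuTerlovCMP}), with the forward hitting times $T^{ct}_{vt}$ and $T^{-1}_0$ in place of the backward ones $T^{vt}_{ct}$ and $T^1_0$. The structural input is the same level-crossing decomposition used in the proof of Theorem \ref{Thm:LDPHittingTime}: since $c<v$ and $X^x(\cdot)$ has continuous paths, on the event $\{T^{ct}_{vt}<\infty\}$ the process must cross every integer level strictly between $ct$ and $vt$. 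Picking $i\in\mathbb Z$ with $i-1<ct\le i$ and letting $n=n(v,c,t)\asymp(v-c)t$ count the integers in $(ct,vt]$, the strong Markov property of $X^x(\cdot)$ splits
\[
T^{ct}_{vt}=T^{ct}_{i}+\sum_{k=0}^{n-2}T^{i+k}_{i+k+1}+T^{i+n-1}_{vt}
\]
into pieces that are independent under $P^W$ for each fixed environment, and, by stationarity of $b(\bullet,\omega)$, each unit crossing $T^{i+k}_{i+k+1}$ has under $P^W$ the law of $T^{-1}_0$ in the spatially shifted environment. Taking $E^W$ turns this sum into a product and the logarithm into a sum.

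Next I would divide by $t$ and let $t\to\infty$. Under the integrability hypothesis \eqref{Lm:LDPHittingTimeFreeEnergyExist:Forward:AbsoluteIntegrabilityCondition}, the map $\omega\mapsto\ln E^W\!\big(e^{\eta T^{-1}_0}\mathbf 1_{\{T^{-1}_0<\infty\}}\big)$ is $\mathbf P$-integrable, so the (Birkhoff) ergodic theorem — applicable since the spatial shift acts ergodically on $\Omega$ by assumption~(2) — yields, $\mathbf P$-a.s.,
\[
\frac1n\sum_{k=0}^{n-2}\ln E^W\!\big(e^{\eta T^{i+k}_{i+k+1}}\mathbf 1_{\{\cdot\}}\big)\xrightarrow[n\to\infty]{}\mathbf E\!\left[\ln E^W\!\big(e^{\eta T^{-1}_0}\mathbf 1_{\{T^{-1}_0<\infty\}}\big)\right]=\mu^\fwd(\eta).
\]
Together with $n/t\to v-c$ this produces the factor $(v-c)$, giving $\tfrac1t H^{t,\fwd}(t\eta)\to(v-c)\mu^\fwd(\eta)$, i.e. \eqref{Eq:LDPHittingTimeFreeEnergy:Forward} holds with $H^\fwd(\eta)=(v-c)\mu^\fwd(\eta)$; the dichotomy $H^\fwd(\eta)<\infty$ for $\eta<\eta_c^\fwd$ and $H^\fwd(\eta)=\infty$ for $\eta>\eta_c^\fwd$ is then immediate from the definition \eqref{Eq:CriticalEta:Forward} of $\eta_c^\fwd$ together with the convexity/monotonicity of $\mu^\fwd$ (the forward analogue of Lemma \ref{Lm:PropertiesMuAndTruncatedMu}(1)).

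The one place needing genuine care is the two fractional-level boundary terms $T^{ct}_i$ and $T^{i+n-1}_{vt}$, which must be shown not to contribute to the exponential rate. I would bound each pathwise by a single unit crossing — $0\le T^{ct}_i\le T^{i-1}_i$ and $0\le T^{i+n-1}_{vt}\le T^{i+n-1}_{i+n}$, using monotonicity of the stochastic flow in the initial condition — and then argue that $\tfrac1t\ln E^W$ of such a term vanishes $\mathbf P$-a.s., reusing exactly the truncation/tail estimate that produced \eqref{Thm:LDPHittingTime:Eq:LogarithmicTailBound} in the proof of Theorem \ref{Thm:LDPHittingTime}; finiteness of $\mu^\fwd(\eta)$ under \eqref{Lm:LDPHittingTimeFreeEnergyExist:Forward:AbsoluteIntegrabilityCondition} is what makes this estimate go through. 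This boundary bookkeeping, rather than anything conceptual, is the main obstacle — everything else is the backward argument with the inequalities in the hitting-time definitions reversed.
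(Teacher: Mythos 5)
Your proposal is correct and is essentially the paper's own route: the paper proves this lemma by declaring it ``similar to'' Lemma \ref{Lm:LDPHittingTimeFreeEnergyExist}, whose proof in turn is the level-crossing decomposition of \cite{FanHuTerlovCMP} (Theorem 3) using the strong Markov property and Birkhoff's ergodic theorem under the spatial shift, exactly as you transcribe with the inequalities reversed. Your handling of the two fractional boundary crossings (pathwise domination by a unit crossing via monotonicity of the flow, then showing their contribution to $\tfrac1t\ln E^W$ vanishes a.s.) is precisely the bookkeeping the cited argument performs, so no new ideas are needed.
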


Using Corollary \ref{Corrollary:MuBwdMFwdDifferByConstant} that we will prove in the next Section, we can obtain a few properties of $\mu^\fwd$ that are in parallel to $\mu$ as in Lemma \ref{Lm:PropertiesMuAndTruncatedMu}:

\begin{lemma}\label{Lm:PropertiesMuFwd}
The function $\mu^\fwd(\eta)$ and its corresponding truncated versions have the same properties as $\mu(\eta)$, which are demonstrated in \emph{Lemma \ref{Lm:PropertiesMuAndTruncatedMu}}.
\end{lemma}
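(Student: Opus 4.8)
The plan is to prove Lemma \ref{Lm:PropertiesMuFwd} by showing that every structural property listed in Lemma \ref{Lm:PropertiesMuAndTruncatedMu} for $\mu(\eta)$ transfers verbatim to $\mu^\fwd(\eta)$, using two ingredients: first, the complete symmetry of the construction between the backward hitting time $T^1_0$ and the forward hitting time $T^0_{-1}$ (one is obtained from the other by the reflection $x\mapsto -x$ applied to the drift field, which preserves the stationary ergodic hypotheses on $b$, though with a different law); and second, Corollary \ref{Corrollary:MuBwdMFwdDifferByConstant}, which tells us that $\mu(\eta)-\mu^\fwd(\eta)$ is a constant on the common interior of finiteness. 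The latter is the clean shortcut: once we know the two functions differ by an additive constant, convexity of $\mu$, finiteness and differentiability of $\mu$ for $\eta<\eta_c$, strict monotonicity of $\mu'$, and the value $+\infty$ for $\eta>\eta_c$ all pass immediately to $\mu^\fwd$, and moreover $(\mu^\fwd)'(\eta)=\mu'(\eta)$. We also need $\eta_c^\fwd=\eta_c$, which is exactly Lemma \ref{Lm:BwdAndFwdCriticalEtaForMuAreTheSame} (forward-referenced in the introduction); combined with the constant-difference statement this pins down part (1) of Lemma \ref{Lm:PropertiesMuAndTruncatedMu} for $\mu^\fwd$.

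For the truncated versions I would define, in exact parallel with \eqref{Thm:LDPHittingTime:Eq:muM} and \eqref{Thm:LDPHittingTime:Eq:muCondM},
\[
\mu^{M,\fwd}(\eta)=\mathbf{E}\left[\ln E^W\left(e^{\eta T_0^{-1}}\mathbf{1}_{\{T_0^{-1}<M\}}\right)\right],\qquad
\mu^{|M,\fwd}(\eta)=\mathbf{E}\left[\ln E^{W,M}\left(e^{\eta T_0^{-1}}\mathbf{1}_{\{T_0^{-1}<M\}}\right)\right],
\]
and then re-run the proof of Lemma \ref{Lm:PropertiesMuAndTruncatedMu}(2)--(3). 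Crucially, parts (2) and (3) of that lemma do \emph{not} use any symmetry of $b$ beyond stationarity and ergodicity: the bound $|\mu^{M}(\eta)|\le |\eta|M$ holds because $T_0^{-1}\le M$ on the truncation event just as $T_0^1\le M$ does; the differentiation-under-the-expectation argument (dominated convergence plus chain rule, as in part (vi) of \cite[Lemma 2.2]{NolenXinDCDS}) is insensitive to the direction of the hitting time; and the monotone-convergence argument for $\lim_{M\to\infty}\mu^{M,\fwd}(\eta)=\mu^\fwd(\eta)$ for $\eta<\eta_c^\fwd$, together with $=+\infty$ for $\eta>\eta_c^\fwd$, is word-for-word the same once we substitute $\eta_c^\fwd$ for $\eta_c$. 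The limit bound $\limsup_{M\to\infty}\eta_u^{M,\fwd}\le\eta_c^\fwd$ then follows identically from strict monotonicity of $(\mu^{M,\fwd})'$ in $\eta$ and the convergence $(\mu^{M,\fwd})'(\eta_c^\fwd+\delta)\to+\infty$.

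Concretely the steps, in order, are: (i) invoke Lemma \ref{Lm:BwdAndFwdCriticalEtaForMuAreTheSame} to get $\eta_c^\fwd=\eta_c$, so $\mu$ and $\mu^\fwd$ share the domain of finiteness with interior $(-\infty,\eta_c)$; (ii) invoke Corollary \ref{Corrollary:MuBwdMFwdDifferByConstant} to write $\mu^\fwd(\eta)=\mu(\eta)-\text{const}$ on $(-\infty,\eta_c)$, and transport part (1) of Lemma \ref{Lm:PropertiesMuAndTruncatedMu} (existence, the explicit derivative formula with $T_0^{-1}$ in place of $T_0^1$ — which one also checks directly by the same dominated-convergence computation — convexity, strict monotonicity of the derivative); (iii) define $\mu^{M,\fwd}$, $\mu^{|M,\fwd}$ and repeat verbatim the proofs of parts (2) and (3), noting nothing there relied on $b$ being symmetric. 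The only genuine obstacle is making sure that Corollary \ref{Corrollary:MuBwdMFwdDifferByConstant} and Lemma \ref{Lm:BwdAndFwdCriticalEtaForMuAreTheSame} are indeed available at this point of the paper and do not themselves secretly depend on Lemma \ref{Lm:PropertiesMuFwd} (a circularity risk): the text says they are proved "in the next Section", so one must cite them as stated and not re-derive. Given that, the whole proof is a disciplined bookkeeping exercise with no new estimates — the substantive probabilistic content lives entirely in the backward case and in the constant-difference Corollary.
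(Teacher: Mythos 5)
Your proposal is correct and matches the paper's (essentially unwritten) argument: the paper simply states the lemma after invoking Corollary \ref{Corrollary:MuBwdMFwdDifferByConstant} and Lemma \ref{Lm:BwdAndFwdCriticalEtaForMuAreTheSame}, exactly the two ingredients you use, together with the observation that the truncation arguments of Lemma \ref{Lm:PropertiesMuAndTruncatedMu}(2)--(3) never use the direction of the hitting time, and your circularity check (those forward-section results do not rely on this lemma) is the right thing to verify. The only caveat is your opening reflection remark: replacing $b(x)$ by $-b(-x)$ flips the sign of $\mathbf{E}b$, so that symmetry is not literally available under the standing assumption $\mathbf{E}b>0$ — but since your actual steps (i)--(iii) never use it, the proof stands as written.
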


Therefore using the same argument as we did in the proof of Theorem \ref{Thm:LDPHittingTime}, we can derive, in parallel with Theorem \ref{Thm:LDPHittingTime}, the following

\begin{theorem}[LDP for the Hitting Time in the Forward case] \label{Thm:LDPHittingTime:Forward}
Let $0<c<v$. Then $\mathbf{P}$-almost surely the following two estimates hold: 

\begin{itemize}
\item[\emph{(a)}] For
any closed set $G\subset (0,+\infty)$ we have
\begin{equation}\label{Thm:LDPHittingTime:Forward:Eq:UpperBound}
\limsup\limits_{t\rightarrow\infty}\dfrac{1}{t}\ln P^W
\left(\dfrac{T_{vt}^{ct}}{t}\in G\right)\leq -(v-c)\inf\limits_{a\in G}I^\fwd\left(\dfrac{a}{v-c}\right) \ ;
\end{equation}
\item[\emph{(b)}] For any open set $F\subset (0,+\infty)$ we have
\begin{equation}\label{Thm:LDPHittingTime:Forward:Eq:LowerBound}
\liminf\limits_{t\rightarrow\infty}\dfrac{1}{t}\ln P^W
\left(\dfrac{T_{vt}^{ct}}{t}\in F\right)\geq -(v-c)\inf\limits_{a\in F}I^\fwd\left(\dfrac{a}{v-c}\right) \ ,
\end{equation}
\end{itemize}
\end{theorem}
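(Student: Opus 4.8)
The plan is to repeat the proof of Theorem~\ref{Thm:LDPHittingTime} verbatim, replacing every backward object by its forward counterpart: the hitting time $T^{vt}_{ct}$ by $T^{ct}_{vt}$, the unit hitting time $T^1_0$ by $T^{-1}_0$ (equivalently, along the chain of integers, the downward unit hitting time $T^{i+k}_{i+k-1}$ by the upward unit hitting time $T^{i+k-1}_{i+k}$), the Lyapunov function $\mu$ by $\mu^\fwd$ from \eqref{Eq:LyapunovFunctionHittingTime:Forward}, the critical value $\eta_c$ by $\eta_c^\fwd$ from \eqref{Eq:CriticalEta:Forward}, and the rate function $I$ by $I^\fwd$ from \eqref{Eq:EntropyHittingTime:Forward}. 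The two external inputs that the backward proof relies on and that must be available in the forward setting are: (i) the Lyapunov-exponent identity $H^\fwd(\eta)=(v-c)\mu^\fwd(\eta)$, which is Lemma~\ref{Lm:LDPHittingTimeFreeEnergyExist:Forward}; and (ii) the regularity package of Lemma~\ref{Lm:PropertiesMuAndTruncatedMu} (convexity of $\mu^\fwd$, strict monotonicity of $(\mu^\fwd)'$, the truncated versions $\mu^{\fwd,M}$ and $\mu^{\fwd,|M}$ with the same properties, and the bound $\limsup_{M\to\infty}\eta_u^{\fwd,M}\le\eta_c^\fwd$), which is exactly Lemma~\ref{Lm:PropertiesMuFwd}, itself resting on Corollary~\ref{Corrollary:MuBwdMFwdDifferByConstant} (that $\mu-\mu^\fwd$ is constant on the common domain of finiteness).

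For the upper bound \eqref{Thm:LDPHittingTime:Forward:Eq:UpperBound}, I would first reduce to the two one-sided estimates for $P^W\!\big(T^{ct}_{vt}/t<\alpha\big)$ (testing with $\eta\le 0$) and $P^W\!\big(T^{ct}_{vt}/t>\alpha\big)$ (testing with $\eta\ge 0$), apply Chebyshev's inequality, invoke $\frac1t H^{t,\fwd}(t\eta)\to(v-c)\mu^\fwd(\eta)$ from Lemma~\ref{Lm:LDPHittingTimeFreeEnergyExist:Forward}, and optimize over $\eta$. Exactly as in the backward case, when $\alpha$ lies on the correct side of $(v-c)(\mu^\fwd)'(0)$ the optimizing $\eta^*$ lands in $(-\infty,\eta_c^\fwd]$, so the two estimates recombine into $-(v-c)I^\fwd\big(\tfrac{\alpha}{v-c}\big)$, and the bound for an arbitrary closed $G\subset(0,+\infty)$ follows by partitioning $G$ into such intervals.

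For the lower bound \eqref{Thm:LDPHittingTime:Forward:Eq:LowerBound}, I would localize to balls $B_{u,\delta}$, pick $i\in\mathbb{Z}$ with $i-1<ct\le i<i+1<\cdots<i+n-1\le vt<i+n$, and decompose, along the \emph{increasing} chain of integers (valid on $\{T^{ct}_{vt}<+\infty\}$, where the diffusion crosses each intermediate integer from below),
\[
T^{ct}_{vt}=T^{ct}_{i}+\sum_{k=1}^{n-1}T^{i+k-1}_{i+k}+T^{i+n-1}_{vt},
\qquad
0\le T^{ct}_{i}+T^{i+n-1}_{vt}\le T^{i-1}_{i}+T^{i+n-1}_{i+n},
\]
the last inequality using pathwise monotonicity of $x\mapsto X^x(\cdot)$ in the initial point. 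The boundary term $T^{ct}_i+T^{i+n-1}_{vt}$ is handled by the $\eta<0$ Chebyshev-plus-ergodic-theorem argument that produced \eqref{Thm:LDPHittingTime:Eq:LogarithmicTailBound}, now using the almost-sure ergodic limit $\frac1n\sum_{k=1}^{n-1}\ln E^W\!\big(e^{\eta T^{i+k-1}_{i+k}}\mathbf 1_{\{T^{i+k-1}_{i+k}<\infty\}}\big)\to\mu^\fwd(\eta)$ (finite by Lemma~\ref{Lm:LDPHittingTimeFreeEnergyExist:Forward}). The heart is then the same truncation scheme: condition on $A_n^M=\bigcap_k\{T^{i+k-1}_{i+k}\le M\}$, introduce $\mu^{\fwd,M}$, $\mu^{\fwd,|M}$, solve $(\mu^{\fwd,M})'(\eta_u^{\fwd,M})=u$, tilt the measure by $\exp(\eta_u^{\fwd,M}\mathcal T_{n-1})$, use the fourth-moment / Borel--Cantelli estimate to get $\widehat P^{\fwd,W,M}(\mathcal Y_{n,\delta})\to1$, and finally let $M\to\infty$ via monotone convergence $\mu^{\fwd,M}\uparrow\mu^\fwd$ and $\limsup_M\eta_u^{\fwd,M}\le\eta_c^\fwd$ (Lemma~\ref{Lm:PropertiesMuFwd}) to replace $I^\fwd_M(u)$ by $I^\fwd(u)$.

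I do not expect a genuinely new obstacle here: the proof of Theorem~\ref{Thm:LDPHittingTime} uses only the structural facts just listed and never any backward-specific feature (in particular not the sign of $\mathbf{E}b$), and all of these transfer to the forward orientation — the unit hitting times $T^{i+k-1}_{i+k}$ remain independent under $P^W$ by the strong Markov property and identically distributed under $\mathbf{P}$ by stationarity and ergodicity of $b$ under spatial shifts, and $\mu^\fwd$ remains finite to the left of $\eta_c^\fwd\ge0$ and infinite to its right. The only point warranting care is bookkeeping: Lemma~\ref{Lm:PropertiesMuFwd} is established through Corollary~\ref{Corrollary:MuBwdMFwdDifferByConstant}, proved in the next section, so one should confirm (as is the case) that that corollary does not itself invoke Theorem~\ref{Thm:LDPHittingTime:Forward}, so there is no circularity — merely a non-linear ordering of the exposition. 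With Lemma~\ref{Lm:LDPHittingTimeFreeEnergyExist:Forward} and Lemma~\ref{Lm:PropertiesMuFwd} in hand, the argument of Theorem~\ref{Thm:LDPHittingTime} goes through word for word with $(\mu,\eta_c,I)$ replaced by $(\mu^\fwd,\eta_c^\fwd,I^\fwd)$, yielding \eqref{Thm:LDPHittingTime:Forward:Eq:UpperBound} and \eqref{Thm:LDPHittingTime:Forward:Eq:LowerBound}.
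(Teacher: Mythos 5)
Your proposal is correct and matches the paper's treatment: the paper proves Theorem \ref{Thm:LDPHittingTime:Forward} by exactly this route, stating that the argument of Theorem \ref{Thm:LDPHittingTime} carries over verbatim once Lemma \ref{Lm:LDPHittingTimeFreeEnergyExist:Forward} and Lemma \ref{Lm:PropertiesMuFwd} (via Corollary \ref{Corrollary:MuBwdMFwdDifferByConstant}) supply the forward analogues of the Lyapunov identity and the regularity/truncation properties. Your check that the corollary's proof does not invoke the forward LDP, so no circularity arises, is also consistent with the paper's logical structure.
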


\begin{proof}
The proof follows the same arguments as in Theorem \ref{Thm:LDPHittingTime} and we omit it.
\end{proof}

\subsection{Relation between $\mu^\fwd(\eta)$ and $\mu^\bwd(\eta)$}\label{Sec:LDP:RelationMuAndMufwd}
This subsection is dedicated to an investigation of the relation between $\mu^\bwd(\eta)$ and $\mu^\fwd(\eta)$. In particular, we will show (see Lemma \ref{Lm:BwdAndFwdCriticalEtaForMuAreTheSame}) that $\eta^\bwd=\eta^\fwd$, as well as the fact that $\mu^\bwd$ and $\mu^\fwd$ only differ by a constant (see Proposition \ref{Prop:MuBwdMinusMuFwd}, Lemma \ref{Lm:ExpectationLnRhoIsConstant} and Corollary \ref{Corrollary:MuBwdMFwdDifferByConstant}).

According to \cite[equation (5.61) in Chapter 5]{KSbook}, one has
\begin{align}
\label{eqn: disT12}
P^W\left(T_2^1<T_0^1\right)=\frac{\displaystyle{\int_0^1 e^{-2\int_0^y b(s){\rm d}s}{\rm d}y}}{\displaystyle{\int_0^2 e^{-2\int_0^y b(s){\rm d} s}{\rm d}y}}\ ,\quad
P^W\left(T_0^1<T_2^1\right)=\frac{\displaystyle{\int_1^2 e^{-2\int_0^y b(s){\rm d}s}{\rm d}y}}{\displaystyle{\int_0^2 e^{-2\int_0^y b(s){\rm d} s}{\rm d}y}}\ .
\end{align}

In general, 
\[
P^{W}(T_0^1<\infty)=1 \text{ is equivalent to }\int^{\infty}_0 e^{-2\int_0^y b(s){\rm d}s}{\rm d}y=\infty\ ,
\]
and 
\[
P^{W}(T^0_1<\infty)=1  \text{ is equivalent to }\int_{-\infty}^0 e^{-2\int_0^y b(s){\rm d} s}{\rm d} y=\infty\ .
\]
Note that since $b$ is stationary and ergodic, then $ {\mathbf P}$-a.s,
\[
\lim_{y\rightarrow\infty}\frac{1}{y}\int_0^y b(s) {\rm d} s={\mathbf E}[b]\ .
\]
Thus when ${\mathbf E}[b]>0$, one always has 
\begin{equation}
    \label{eqn: generalintecond}
{\mathbf P}\left(\int_{-\infty}^0 e^{-2\int_0^y b(s){\rm d}s}{\rm d} y=\infty\right)=1  \text{ and }
{\mathbf P}\left(\int_0^{\infty}  
e^{-2\int_0^y b(s){\rm d}s}{\rm d} y<\infty\right)=1 \ .
\end{equation}
and hence, ${\mathbf P}$-a.s,
\begin{align}\label{eqn:probilityT_01}
 P^W(T_1^0<\infty)=1  \text{ and }P^W(T_0^1<\infty)<1\ . 
\end{align}
See Section 5.5 in \cite{KSbook} for more details.  

We remark here that in the following rest part of this Section, local integrability of $b$ would be enough to guarantee all the results we derive. This means that for the rest of Section \ref{Sec:LDP:RelationMuAndMufwd} in particular, our original Assumption \eqref{Eq:AssumptionOnDrift:Boundedness} can be weakened to local integrability of $b$. However, Assumption \eqref{Eq:AssumptionOnDrift:Boundedness} may still be needed in other Sections, such as Section \ref{Sec:LDP:ProcessX} (see Lemma \ref{Lm:WorstCaseEnvironmentEstimateForaInThmLDPProcessX}).

\begin{lemma}\label{lem: basiccomp}
 Assume  ${\mathbf E}\left[\displaystyle{\int_0^1 |b(s)|{\rm d}s}\right]<\infty.$ Then
\[
0> {\mathbf E}[\ln  P^W\left({T_2^1<T_0^1}\right)]>-\infty\ .
\]
  and
   \begin{align*}
{\mathbf E}[\ln {P^W\left(T_0^1>T_2^1\right)}]-{\mathbf E}[\ln{P^W\left(T_0^1<T_2^1\right)}]=2{\mathbf E}[b]\ .
\end{align*}
\end{lemma}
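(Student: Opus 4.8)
The plan is to exploit the explicit formulas for the exit distributions in \eqref{eqn: disT12}, which express $P^W(T_2^1<T_0^1)$ and $P^W(T_0^1<T_2^1)$ as ratios of integrals of the scale-function density $e^{-2\int_0^y b(s){\rm d}s}$. The key observation is that these integrals, once we shift the base point of the inner integral, transform by a deterministic multiplicative factor that can be read off from the drift, and the logarithm of that factor has a simple expectation because of stationarity.

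First I would establish the finiteness/negativity claim. Since $b$ is bounded by $B$ (assumption \eqref{Eq:AssumptionOnDrift:Boundedness}), each of the integrands $e^{-2\int_0^y b(s){\rm d}s}$ for $y\in[0,2]$ is bounded above and below by positive deterministic constants $e^{\mp 4B}$; hence both the numerator $\int_0^1 e^{-2\int_0^y b}{\rm d}y$ and the denominator $\int_0^2 e^{-2\int_0^y b}{\rm d}y$ lie in a fixed compact subinterval of $(0,\infty)$, so their ratio lies in a compact subinterval of $(0,1)$. Taking logarithms and then $\mathbf{E}$ gives $-\infty<\mathbf{E}[\ln P^W(T_2^1<T_0^1)]<0$ immediately; strictly speaking one only needs $\mathbf{E}[\int_0^1|b(s)|{\rm d}s]<\infty$ rather than the full boundedness, in which case the lower bound follows from Jensen's inequality applied to $-\ln$ of the ratio together with the integrability of $\int_0^y b$, and the upper bound from the fact that $\int_1^2 e^{-2\int_0^y b}{\rm d}y>0$ a.s.

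Next I would prove the identity. Write $N:=\int_0^1 e^{-2\int_0^y b(s){\rm d}s}{\rm d}y$ (the numerator for $\{T_2^1<T_0^1\}$) and $N':=\int_1^2 e^{-2\int_0^y b(s){\rm d}s}{\rm d}y$ (the numerator for $\{T_0^1<T_2^1\}$); both have the same denominator, so
\[
\ln P^W(T_0^1>T_2^1)-\ln P^W(T_0^1<T_2^1)=\ln N-\ln N'=\ln\frac{N}{N'}.
\]
In $N'$ substitute $y=z+1$, so $N'=\int_0^1 e^{-2\int_0^{z+1} b(s){\rm d}s}{\rm d}z=\int_0^1 e^{-2\int_0^1 b(s){\rm d}s}\,e^{-2\int_1^{z+1}b(s){\rm d}s}{\rm d}z$. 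Writing $\int_1^{z+1}b(s){\rm d}s=\int_0^z b(s+1){\rm d}s$ and letting $\theta$ denote the spatial shift by $1$ on $\Omega$, this says $N'(\omega)=e^{-2\int_0^1 b(s,\omega){\rm d}s}\,N(\theta\omega)$. Therefore
\[
\ln\frac{N}{N'}=\ln N(\omega)-\ln N(\theta\omega)+2\int_0^1 b(s,\omega){\rm d}s,
\]
which is precisely the cocycle representation alluded to in the introduction: $\ln(N/N')$ is a coboundary $g(\omega)-g(\theta\omega)$ plus the additive term $2\int_0^1 b$. Taking $\mathbf{E}$, the coboundary contributes zero by stationarity of $b$ under $\theta$ (here one uses that $g(\omega)=\ln N(\omega)$ is integrable, which is exactly the finiteness established in the first step together with the same bound applied to $N$ alone), while $\mathbf{E}[2\int_0^1 b(s){\rm d}s]=2\int_0^1\mathbf{E}[b(s)]{\rm d}s=2\mathbf{E}[b]$ by Fubini and stationarity. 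This yields $\mathbf{E}[\ln P^W(T_0^1>T_2^1)]-\mathbf{E}[\ln P^W(T_0^1<T_2^1)]=2\mathbf{E}[b]$.

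The main obstacle is ensuring the integrability needed to split the expectation of the coboundary: one must know $\mathbf{E}|\ln N|<\infty$ before writing $\mathbf{E}[\ln N(\omega)-\ln N(\theta\omega)]=0$, since otherwise the cancellation is illegitimate ($\infty-\infty$). Under the standing boundedness assumption \eqref{Eq:AssumptionOnDrift:Boundedness} this is trivial, as noted above; under only $\mathbf{E}[\int_0^1|b(s)|{\rm d}s]<\infty$ it requires the slightly more careful estimate $|\ln N|\le 2\int_0^1|b(s)|{\rm d}s$ pointwise (since $e^{-2\int_0^1|b|}\le N\le e^{2\int_0^1|b|}$ by monotonicity of the integrand bounds), whose expectation is finite by hypothesis. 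Once this integrability is in hand, everything else is a direct substitution and an application of Fubini together with stationarity, so the identity follows.
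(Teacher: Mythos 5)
Your proposal is correct and follows essentially the same route as the paper: the explicit exit-probability formulas \eqref{eqn: disT12}, the substitution $y\mapsto y+1$ turning $\int_1^2 e^{-2\int_0^y b}\,{\rm d}y$ into $e^{-2\int_0^1 b}$ times the shifted copy of $\int_0^1 e^{-2\int_0^y b}\,{\rm d}y$, and stationarity of $b$ to cancel the shifted term in expectation. Your explicit pointwise bound $|\ln N|\le 2\int_0^1|b(s)|\,{\rm d}s$, which justifies the coboundary cancellation under the stated hypothesis alone, is a slightly cleaner version of the paper's integrability estimate and is perfectly adequate.
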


\begin{proof}
First, note that 
\[
-\int_0^1 |b(s)|{\rm d}s \leq \int_0^y b(s){\rm d} s\leq \int_0^1 |b(s)|{\rm d} s,\quad 0\leq y\leq 1\ ,
\]
and 
 \[
\int_{1}^{2} e^{-2\int_0^y b(s){\rm d}s}{\rm d}y=e^{-2\int_0^1 b(s) {\rm d}s}\int_0^1 e^{-2\int_0^y b(s+1){\rm d}s}{\rm d} y\ .
 \]
We also notice that
\[
 \int_0^{2} e^{-2\int_0^y b(s){\rm d}s}{\rm d}y \leq e^{2\int_0^1 |b(s)| {\rm d}s}  e^{2\int_0^1 |b(s+1)|{\rm d}s}+ e^{2\int_0^1 |b(s)|{\rm d}s}\ .
\]
Using elementary inequality $\ln(x+y)\leq \ln (2x)+ \ln (2y)$ for $x\geq 1, y\geq 1$, one has 
 \begin{align}
   0\leq -{\mathbf E}\ln P^W\left(T_2^1<T_0^1\right)
& \leq {\mathbf E}\ln{\int_0^1 e^{-2\int_0^y b(s){\rm d}s}{\rm d}y}+{\mathbf    E}\ln{\int_0^2 e^{-2\int_0^y b(s){\rm d}s}{\rm d}y} 
\cr
& \leq  2\ln 2+ 8{\mathbf E}\int_0^1 b(s){\rm d}s\ .
 \end{align}
Then
    \begin{align}
\label{eqn: disrho0}
\frac{P^W\left(T_0^1<T_2^1\right)}{P^W\left(T_0^1>T_2^1\right)}
 =\frac{\displaystyle{\int_1^2 e^{-2\int_0^y b(s){\rm d}s}{\rm d}y}}{\displaystyle{\int_0^1 e^{-2\int_0^y b(s){\rm d}s}{\rm d}y}}
=\frac{ \displaystyle e^{-2\int_0^1 b(s){\rm d}s}\int_0^1 e^{-2\int_0^y b(s+1){\rm d}s}{\rm d}y }{\displaystyle{\int_0^1 e^{-2\int_0^y b(s){\rm d}s}{\rm d}y}}\ .
\end{align}
Since $b$ is stationary, then
 \begin{align}
\label{eqn: disrho0a}
{\mathbf E}[\ln{P^W\left(T_0^1<T_2^1\right)}]-{\mathbf E}[\ln {P^W\left(T_0^1>T_2^1\right)}]=-2{\mathbf E}[b]\ .
\end{align}
 The desired result follows readily.

\end{proof}

\noindent Define
\begin{align}\label{eqn:defrho}
    \rho_{0,2}(\eta):=\frac{  E^W\left[ e^{\eta T_0^1}\mathbf{1}_{\left\{T_0^1<T^1_2\right\}} \right] }{E^W\left[ e^{\eta T_2^1}\mathbf{1}_{\left\{T_2^1<T^1_0\right\}} \right] }\ ,
\end{align}
and \[\Lambda^{\bwd}=\{\eta\in {\mathbb R}: \mu^{\bwd}(\eta)<\infty\},\quad \Lambda^{\fwd}=\{\eta\in {\mathbb R}: \mu^{\fwd}(\eta)<\infty\}\ .\]
Recall from (\ref{Eq:CriticalEta:Backward}) and (\ref{Eq:CriticalEta:Forward}) that
\[\eta_c^{\bwd}=\sup\Lambda^{\bwd},\quad \eta_c^{\fwd}=\sup\Lambda^{\fwd}\ .\]
\begin{lemma}\label{Lm:BwdAndFwdCriticalEtaForMuAreTheSame}
    Assume ${\mathbf E}[b]>0$ and ${\mathbf E}\left[\displaystyle{\int_0^1 |b(s)|{\rm d}s}\right]<\infty.$ Then 
    \begin{align}\label{eqnLamfwdbwd}
    \Lambda:= \Lambda^{\fwd}=\Lambda^{\bwd} \ , \end{align}
    and the interior of $\Lambda$ is $(-\infty, \eta_c)$, with $\eta_c:=\eta_c^\bwd=\eta_c^\fwd.$ Furthermore,  for every $\eta\in \Lambda$, $\ln \rho_{0,2}(\eta)$ is integrable  and 
    \begin{align}\label{eqnmufwdbwd} 
   \mu^{\bwd}(\eta)\leq  -\mu^{\fwd}(\eta) \ .
    \end{align}
    
    \noindent In particular,
\begin{align}\label{eqnmuetac} 
\mu^{\bwd}(\eta_c-)\leq -\mu^{\fwd}(\eta_c-)\leq 0 \ .
\end{align}
    If $\eta_c>0$, then  $\mu^{\bwd}(\eta_c-)\leq -\mu^{\fwd}(\eta_c-)< 0 $.
\end{lemma}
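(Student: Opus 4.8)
The plan is to treat three assertions in turn — integrability of $\ln\rho_{0,2}$, equality of the two domains, and the inequality $\mu^{\bwd}\le-\mu^{\fwd}$ — and then read off the $\eta_c-$ statements by monotonicity. The main tool is the classical representation $E^W_x\!\left(e^{\eta\tau_a}\mathbf{1}_{\{\tau_a<\infty\}}\right)=\psi_\eta(x)/\psi_\eta(a)$, valid whenever the left side is finite, where $\psi_\eta>0$ solves $\tfrac12\psi''+b\psi'+\eta\psi=0$ and is the \emph{minimal} positive solution on the pertinent half-line; write $\psi^{+}_\eta$ for the minimal positive solution at $+\infty$ and $\psi^{-}_\eta$ for the minimal positive solution at $-\infty$. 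Integrability of $\ln\rho_{0,2}(\eta)$ is elementary given $|b|\le B$: since the exit time of $(0,2)$ has a deterministic positive exponential-moment threshold, the numerator $A(\eta):=E^W\!\left(e^{\eta T_0^1}\mathbf{1}_{\{T_0^1<T_2^1\}}\right)$ and the denominator $B(\eta):=E^W\!\left(e^{\eta T_2^1}\mathbf{1}_{\{T_2^1<T_0^1\}}\right)$ of $\rho_{0,2}(\eta)$ are bounded above by a deterministic constant for $\eta\in\Lambda$, while the probability of crossing $(0,2)$ in the prescribed direction within unit time is bounded below by a deterministic constant, so $A(\eta),B(\eta)\ge c\,e^{-|\eta|}$; hence $\ln A(\eta),\ln B(\eta)\in L^1(\mathbf{P})$ and $\ln\rho_{0,2}(\eta)=\ln A(\eta)-\ln B(\eta)\in L^1(\mathbf{P})$. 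I also record: $\mu^{\fwd}(0)=0$ by \eqref{eqn:probilityT_01}; $\mu^{\bwd}(0)\in(-\infty,0]$ by Lemma \ref{lem: basiccomp} (via $P^W(T^1_0<\infty)\ge P^W(T^1_0<T^1_2)$); $\mu^{\bwd},\mu^{\fwd}$ are convex and strictly increasing on their (interval) domains; and the case $\eta\le0$ of the inequality is immediate, since $e^{\eta T}\le1$ forces $\mu^{\bwd}(\eta)\le0$ and $\mu^{\fwd}(\eta)\le0$, so $\mu^{\bwd}(\eta)\le0\le-\mu^{\fwd}(\eta)$.

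For equality of the domains I use the Liouville substitution $\psi=e^{-\int_0^x b}\phi$, which turns $\tfrac12\psi''+b\psi'+\eta\psi=0$ into the Schr\"odinger equation $-\phi''+(b^2+b')\phi=2\eta\phi$. Since $b$ is stationary and ergodic, $V:=b^2+b'$ is stationary and ergodic, so $-\partial_x^2+V$ is an ergodic one-dimensional Schr\"odinger operator; by the standard spectral fact that, $\mathbf{P}$-a.s., its spectral bottom on $(0,\infty)$, on $(-\infty,0)$ and on $\mathbb{R}$ coincide, the quenched critical levels below which $E^W\!\left(e^{\eta T^1_0}\mathbf{1}_{\{T^1_0<\infty\}}\right)$ and $E^W\!\left(e^{\eta T^{-1}_0}\mathbf{1}_{\{T^{-1}_0<\infty\}}\right)$ are finite agree a.s. Combining this with the fact that below this common level these quenched quantities have integrable logarithm (upper bound via the associated ODE Lyapunov exponent, lower bound via $|b|\le B$ and Lemma \ref{lem: basiccomp}) and with the convexity/monotonicity of $\mu^{\bwd},\mu^{\fwd}$, we obtain $\Lambda^{\bwd}=\Lambda^{\fwd}=:\Lambda$ with interior $(-\infty,\eta_c)$, $\eta_c:=\eta_c^{\bwd}=\eta_c^{\fwd}$.

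For the inequality, fix $\eta\in\Lambda$, so $\psi^{\pm}_\eta$ both exist ($\eta$ lies below the common quenched critical level). By the strong Markov property,
\[
E^W\!\left(e^{\eta T^N_0}\mathbf{1}_{\{T^N_0<\infty\}}\right)=\prod_{k=1}^{N}\frac{\psi^{+}_\eta(k)}{\psi^{+}_\eta(k-1)},\qquad
E^W\!\left(e^{\eta T^0_N}\mathbf{1}_{\{T^0_N<\infty\}}\right)=\prod_{k=1}^{N}\frac{\psi^{-}_\eta(k-1)}{\psi^{-}_\eta(k)},
\]
so the product of the two telescopes to $r_\eta(N)/r_\eta(0)$ with $r_\eta:=\psi^{+}_\eta/\psi^{-}_\eta$. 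Below criticality $\psi^{+}_\eta$ is strictly subdominant to $\psi^{-}_\eta$ at $+\infty$, hence $r_\eta(N)\to0$, so $\tfrac1N\ln\!\big(r_\eta(N)/r_\eta(0)\big)\le0$ for large $N$. On the other hand, writing $\tfrac1N\ln E^W\!\left(e^{\eta T^N_0}\mathbf{1}\right)=\tfrac1N\sum_{k=1}^N\ln\!\big(\psi^{+}_\eta(k)/\psi^{+}_\eta(k-1)\big)$ as an ergodic average over the spatial shift (compare the proof of Lemma \ref{Lm:LDPHittingTimeFreeEnergyExist}, and Lemma \ref{Lm:LDPHittingTimeFreeEnergyExist:Forward} for the forward one) shows that $\tfrac1N\ln E^W\!\left(e^{\eta T^N_0}\mathbf{1}\right)\to\mu^{\bwd}(\eta)$ and $\tfrac1N\ln E^W\!\left(e^{\eta T^0_N}\mathbf{1}\right)\to\mu^{\fwd}(\eta)$ a.s. Both limits existing a.s., their sum $\mu^{\bwd}(\eta)+\mu^{\fwd}(\eta)$ equals $\lim_N\tfrac1N\ln\!\big(r_\eta(N)/r_\eta(0)\big)\le0$, i.e.\ $\mu^{\bwd}(\eta)\le-\mu^{\fwd}(\eta)$.

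Finally, letting $\eta\uparrow\eta_c$: $\mu^{\bwd}$ is nondecreasing and $-\mu^{\fwd}$ nonincreasing, so the left limits exist and $\mu^{\bwd}(\eta_c-)\le-\mu^{\fwd}(\eta_c-)$; since $\mu^{\fwd}(\eta_c-)\ge\mu^{\fwd}(0)=0$, also $-\mu^{\fwd}(\eta_c-)\le0$. If $\eta_c>0$, then $\mu^{\fwd}$ is strictly increasing on $\Lambda$ — its derivative $\mathbf{E}\!\left[E^W(T^{-1}_0e^{\eta T^{-1}_0}\mathbf{1})/E^W(e^{\eta T^{-1}_0}\mathbf{1})\right]$ is strictly positive — so $\mu^{\fwd}(\eta_c-)>\mu^{\fwd}(0)=0$ and hence $-\mu^{\fwd}(\eta_c-)<0$. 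I expect the main obstacle to be the inequality for $\eta>0$: the delicate steps are the identification of the hitting-time transforms with ratios of the extremal solutions $\psi^{\pm}_\eta$ and the claim $r_\eta(N)\to0$ throughout $(-\infty,\eta_c)$ — which forces one to control the gap between the quenched critical level and the annealed threshold $\eta_c$, using $|b|\le B$ and Lemma \ref{lem: basiccomp} — together with the spectral input that an ergodic one-dimensional Schr\"odinger operator has the same spectral bottom on a half-line as on the whole line.
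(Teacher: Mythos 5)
Your plan for the inequality and for the endpoint statements is reasonable, but the central step — the equality $\Lambda^\fwd=\Lambda^\bwd$ — has a genuine gap. These sets are defined through finiteness of the \emph{averaged} quantities $\mu^{\bwd}(\eta)=\mathbf{E}\bigl[\ln E^W\bigl(e^{\eta T^1_0}\mathbf{1}_{\{T^1_0<\infty\}}\bigr)\bigr]$ and $\mu^{\fwd}(\eta)$, not through quenched finiteness. Your spectral argument (Liouville transform, coincidence of the spectral bottoms of the half-line and whole-line ergodic operators) at best shows that the \emph{quenched} critical levels for the two hitting-time transforms agree a.s.; it says nothing about whether the logarithm of the quenched transform is $\mathbf{P}$-integrable below that level. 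You assert this integrability (``upper bound via the associated ODE Lyapunov exponent'') without proof, and it amounts to the much stronger claim that the annealed threshold $\eta_c$ coincides with the quenched critical value — a claim the lemma does not make and which is not controlled by $|b|\le B$ alone, since the quenched moment can be arbitrarily large across environments and $\mathbf{E}\ln^{+}$ could a priori diverge strictly below the quenched level. The same issue undermines your claimed deterministic upper bound on the numerator and denominator of $\rho_{0,2}(\eta)$ for all $\eta\in\Lambda$: that would require $\eta_c$ to lie strictly below the worst-case Dirichlet principal eigenvalue of $(0,2)$, which you have not established (for $\eta\le 0$ it is fine). Moreover, your route through $\psi^{\pm}_\eta$ presupposes the very quenched finiteness you are trying to transfer (the gauge identity $E^W_x(e^{\eta\tau_a}\mathbf{1})=\psi_\eta(x)/\psi_\eta(a)$ and the subdominance $r_\eta(N)\to0$ are exactly the delicate points you flag), and the substitution $V=b^2+b'$ needs more regularity than the standing assumption that $b$ is locally Lipschitz. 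Finally, your Lyapunov-exponent derivation of $\mu^\bwd(\eta)+\mu^\fwd(\eta)\le 0$ is only available once $\eta\in\Lambda^\bwd\cap\Lambda^\fwd$ and two-sided integrability are already known, so it cannot be used to establish the domain equality; as written the argument is circular at that point.

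What is missing is the paper's key observation, which bypasses all of this: a \emph{pointwise quenched} inequality obtained by a path decomposition. On the event $\{T^1_0<T^1_M\}$ one writes $T^1_M=T^1_0+\hat T^0_1+\hat T^1_M$, and the strong Markov property gives $E^W\bigl[e^{\eta T^1_M}\mathbf{1}_{\{T^1_0<T^1_M\}}\bigr]=E^W\bigl[e^{\eta T^1_0}\mathbf{1}_{\{T^1_0<T^1_M\}}\bigr]\,E^W\bigl[e^{\eta T^0_1}\bigr]\,E^W\bigl[e^{\eta T^1_M}\bigr]$; since $E^W\bigl[e^{\eta T^1_M}\bigr]<\infty$ a.s. for $\eta\in\Lambda^\fwd$ (stationarity plus the strong Markov property), this yields $E^W\bigl[e^{\eta T^1_0}\mathbf{1}_{\{T^1_0<T^1_M\}}\bigr]\,E^W\bigl[e^{\eta T^0_1}\bigr]\le 1$, and letting $M\to\infty$, $\ln E^W\bigl[e^{\eta T^1_0}\mathbf{1}_{\{T^1_0<\infty\}}\bigr]\le-\ln E^W\bigl[e^{\eta T^0_1}\bigr]$ in every environment. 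Combined with the elementary lower bounds from Lemma \ref{lem: basiccomp}, this single inequality and its mirror image (decomposing $T^0_{-M}$ on $\{T^0_1<T^0_{-M}\}$) simultaneously give $\Lambda^\fwd\subseteq\Lambda^\bwd$, $\Lambda^\bwd\subseteq\Lambda^\fwd$, the inequality \eqref{eqnmufwdbwd} on all of $\Lambda$, and the integrability of $\ln\rho_{0,2}(\eta)$ — precisely the items your proposal leaves hanging. Your treatment of the $\eta_c-$ limits by monotonicity, $\mu^\fwd(0)=0$, and strict monotonicity of $\mu^\fwd$ when $\eta_c>0$ is fine once the earlier parts are in place.
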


    \begin{proof}  Fix $M>2$. Define $T^1_M:=\inf\{t\geq0: X^1(t)\geq M\}.$ Since ${\mathbf E}[b]>0$,  then ${\mathbf P}$-almost surely
$P^W(T_M^1<\infty)=1$ and $P^W(T^0_1<\infty)=1$; see \eqref{eqn:probilityT_01} and Section 5.5 in \cite{KSbook}. In particular, $P^W$-almost surely, $T_M^1\rightarrow\infty$ as $M\rightarrow\infty$.
Recall that by (\ref{Eq:LyapunovFunctionHittingTime:Backward})
\[
\mu^{\fwd}(\eta)={\mathbf E}\left[\ln E^W\left[e^{\eta T_1^0}\mathbf{1}_{\left\{T_1^0<\infty\right\}}\right]\right]={\mathbf E}\left[\ln E^W\left[e^{\eta T_1^0}\right]\right]\ .
\]
  The remaining proof will be divided into three steps.

{\bf Step 1.} Assume $\eta\in \Lambda^{\fwd}$. We shall show that for all $\eta\in \Lambda^{\fwd}$, $\mu^{\fwd}(\eta)>-\infty$ and ${\mathbf P}$-almost surely,    $E^W[e^{\eta T_M^1}]$ is finite.  Since $\eta\in \Lambda^{\fwd}$, then ${\mathbf P}$-almost surely, $E^W[e^{\eta T^0_{1}}]$ is finite. According to the strong Markov property, one has
\[E^W[e^{\eta T_M^1}]=E^W[e^{\eta T^1_2}]E^W[e^{\eta T^2_3}] \cdots E^W[e^{\eta T_M^{M-1}}]\ .
\]
Since $b$ is stationary, then $E^W[e^{\eta T^n_{n+1}}]$ has the same distribution for every $n$. Thus ${\mathbf P}$-almost surely,  $E^W[e^{\eta T^n_{n+1}}]$ is finite  and hence $E^W[e^{\eta T_M^1}]$ is finite. On the other hand, since 
$
{\mathbf E}\left[\ln P^W\left[T_1^0\leq n\right]\right]$ converges to $0$ increasingly, as $n\rightarrow\infty$, then for any $\eta\in \Lambda^{\fwd}$,
\[
\mu^{\fwd}(\eta)\geq (\eta\wedge0) n+ {\mathbf E}\left[\ln P^W\left[T_1^0\leq n\right]\right]>-\infty\ . 
\]
Hence for all $\eta\in {\mathbb R}$, $\mu^{\fwd}(\eta)>-\infty\ .$

{\bf Step 2. } We shall prove $ \Lambda^{\fwd}\subseteq \Lambda^{\bwd}$. 
Assume $\eta\in \Lambda^{\fwd}$. On
$\{T_0^1< T^1_M\}$, we can decompose $T^1_M$ as
\[
T^1_M=T_0^1+\hat{T}_1^0+\hat{T}^1_M\ ,
\]
where $T_0^1+\hat{T}_1^0:=\inf\{t\geq T_0^1: X^1(t) \geq1\}$ is the first hitting time of $1$ after $T_0^1$, and $\hat{T}^1_M$ is independent of $T_0^1$ and $\hat{T}_1^0$, with the same distribution of $T^1_M$. By strong Markov property, conditioning on $T_0^1$, $\hat{T}_1^0$ has the same distribution  as $T_1^0$.  Denote by ${\mathcal F}^W_{t}$ the $\sigma$-field generated by the events before time $t$. Notice that $\{T_0^1< T^1_M\}\in {\mathcal F}^W_{T_0^1}$. Therefore, on $\{T_0^1< T^1_M\} $,
\begin{align}
  E^W\left[e^{\eta T_M^1}\mathbf{1}_{\left\{T_0^1<T^1_M\right\}}\big{|} {\mathcal F}^W_{T_0^1}\right ]
&=\mathbf{1}_{\left\{T_0^1<T^1_M\right\}}E^W\left[e^{\eta  T_0^1+\hat{T}_1^0+\hat{T}^1_M}\big{|} {\mathcal F}^W_{T_0^1}\right ]
\cr &= \mathbf{1}_{\left\{T_0^1<T^1_M\right\}} e^{\eta T_0^1}E^W\left[e^{\eta T^0_1}\right]E^W\left[e^{\eta T_M^1}\right]\ .  
\end{align}
Thus
\begin{align}\label{eqn:decomposition}
  E^W\left[e^{\eta T_M^1}\mathbf{1}_{\left\{T_0^1<T^1_M\right\}}  \right]
=E^W\left[ e^{\eta T_0^1}\mathbf{1}_{\left\{T_0^1<T^1_M\right\}} \right]E^W\left[e^{\eta T^0_1}\right]E^W\left[e^{\eta T_M^1}\right]\ .  
\end{align}

\noindent  Since ${\mathbf P}$-almost surely,  $E^W[e^{\eta T_M^1}]$ is finite, then \eqref{eqn:decomposition} implies that for every $\eta\in \Lambda^{\fwd}$, ${\mathbf P}$-almost surely,
\begin{align}
 1\geq \frac{E^W\left[e^{\eta T_M^1}\mathbf{1}_{\left\{T_0^1<T^1_M\right\}}  \right]}{E^W\left[e^{\eta T_M^1}\right]} 
=E^W\left[ e^{\eta T_0^1}\mathbf{1}_{\left\{T_0^1<T^1_M\right\}} \right]E^W\left[e^{\eta T^0_1} \right]\ .
\end{align}
Hence 
\[
-\ln E^W\left[e^{\eta T^0_1}\right]\geq \ln E^W\left[ e^{\eta T_0^1}\mathbf{1}_{\left\{T_0^1<T^1_M\right\}} \right]\ .
\]
Letting ${M\rightarrow\infty}$, one has 
\begin{align}
\label{leq:Tfb}
-\ln E^W\left[e^{\eta T^0_1}\right]\geq  \ln E^W\left[ e^{\eta T_0^1}\mathbf{1}_{\left\{T_0^1<\infty\right\}}\right],\quad \eta \in \Lambda^{\fwd}\ .
\end{align}

\noindent Moreover, according to Lemma \ref{lem: basiccomp}, one has
\[
 P^W\left({T^1_0<T_M^1}\right)
 \geq P^W\left({T^1_0<T_2^1}\right)>-\infty\ .
\]
By monotone convergence theorem,  there exists $n_0$ such that 
\[
{\mathbf E}[\ln  P^W\left({T^1_0<T_2^1}\right)]\geq {\mathbf E}[\ln  P^W\left({T^1_0<T_2^1\wedge n_0}\right)]>-\infty\ .
\]
Then for any $\eta\in \Lambda^{\fwd}$ (in fact for all $\eta\in {\mathbb R}$),
\[{\mathbf E}\left[\ln E^W\left[ e^{\eta T_0^1}\mathbf{1}_{\left\{T_0^1<T^1_M\right\}} \right]\right]
\geq (\eta\wedge 0) n_0+ {\mathbf E}\left[\ln P^W\left(T_0^1<T^1_2\wedge n_0\right)\right]>-\infty\ .
\]
Therefore, \eqref{leq:Tfb} yields that for  $\eta\in \Lambda^{\fwd}$, 
\begin{align}\label{eqn:mucomparsion}
-\mu^{\fwd}(\eta)\geq \mu^{\bwd}(\eta) \geq {\mathbf E}\left[\ln E^W\left[ e^{\eta T_0^1}\mathbf{1}_{\left\{T_0^1<T^1_M\right\}} \right]\right]>-\infty\ . \end{align}

 \noindent One can further conclude that ${\mathbf E}[\ln \rho_{0,2}(\eta)]$ is also finite. Therefore
\begin{align}
\label{fwdsubbwd}
\Lambda^{\fwd}\subseteq \Lambda^{\bwd}\ .
\end{align}

{\bf Step 3.} We shall show $\Lambda^{\bwd}\subseteq \Lambda^{\fwd}$. Assume $\eta\in \Lambda^{\bwd}$. The proof is similar to {\bf Step 2.} We only give an outline. Using sample path decomposition and strong Markov property, one has $E^W\left[e^{\eta T_{-M}^0}\mathbf{1}_{\left\{T^0_{-M}<\infty\right\}}\right]$ is finite and
\begin{align}\label{eqn:decompositionb}
&  E^W\left[e^{\eta T_{-M}^0}\mathbf{1}_{\left\{T^0_1<T^0_{-M}<\infty\right\}}  \right]\cr
&=E^W\left[ e^{\eta T^0_1}\mathbf{1}_{\left\{T^0_1<T^0_{-M}\right\}} \right]E^W\left[e^{\eta T^1_0} \mathbf{1}_{\left\{T^1_0<\infty\right\}}\right]E^W\left[e^{\eta T_{-M}^0}\mathbf{1}_{\left\{T^0_{-M}<\infty\right\}}\right]\ .  
\end{align}
Thus ${\mathbf P}$-a.s,
\[
1\geq \frac{ E^W\left[e^{\eta T_{-M}^0}\mathbf{1}_{\left\{T^0_1<T^0_{-M}<\infty\right\}}  \right]}{E^W\left[e^{\eta T_{-M}^0}\mathbf{1}_{\left\{T^0_{-M}<\infty\right\}}\right]}= E^W\left[ e^{\eta T^0_1}\mathbf{1}_{\left\{T^0_1<T^0_{-M}\right\}} \right]E^W\left[e^{\eta T^1_0} \mathbf{1}_{\left\{T^1_0<\infty\right\}}\right]\ .
\]
 and hence
\begin{align}\label{eqn:decompositionc}
-\ln E^W\left[e^{\eta T^1_0} \mathbf{1}_{\left\{T^1_0<\infty\right\}}\right]\geq \ln E^W\left[ e^{\eta T^0_1}\mathbf{1}_{\left\{T^0_1<T^0_{-M}\right\}} \right]\ . 
\end{align}

\noindent Letting $M\rightarrow\infty$, we further have
\begin{align}\label{eqn:decompositiond}
-\ln E^W\left[e^{\eta T^1_0} \mathbf{1}_{\left\{T^1_0<\infty\right\}}\right]\geq \ln E^W\left[ e^{\eta T^0_1}\right]\ ,
\end{align}
because of $P^W(T^0_1<\infty)=1.$ Therefore,
$\mu^{\fwd}(\eta)<+\infty. $ Moreover, if $\eta\geq 0$, then $ \ln E^W\left[ e^{\eta T^0_1}\right]\geq 1$. If $\eta<0$, then
\begin{align*}
 \mu^{\fwd}(\eta)={\mathbf E}\left[\ln E^W\left[ e^{\eta T^0_1}\right]\right]
 & \geq 
   {\mathbf E}\left[\ln E^W\left[ e^{\eta T^0_1}\mathbf{1}_{\{T^0_1\leq n\}}\right]\right]\cr
  &\geq \eta n+ {\mathbf E}\left[\ln P^W\left(T^0_1\leq n\}\right)\right] >-\infty\ ,
\end{align*}
for $n$ large enough by noting that
\[
\lim_{n\rightarrow\infty}{\mathbf E}\left[\ln P^W\left( T^0_1\leq n\right)\right]= {\mathbf E}\left[\ln P^W\left(T^0_1<\infty\right)\right]=0\ .
\]

\noindent Therefore, we can conclude that for any $\eta\in\Lambda^{\bwd}$, $\mu^{\fwd}(\eta)$ is finite. Thus
\[ \Lambda^{\bwd}\subseteq \Lambda^{\fwd}\ ,\]
which, together with \eqref{fwdsubbwd}, gives \eqref{eqnLamfwdbwd}. Then \eqref{eqnmufwdbwd} also holds for all $\eta\in \Lambda$ because of \eqref{eqn:mucomparsion}.  Furthermore, noting that for $\eta<0$, both $\mu^{\fwd}(\eta)$ and $\mu^{\bwd}(\eta)$ are negative; and for $\eta\geq 0$, $\mu^{\fwd}\geq 0$. Then from \eqref{eqn:mucomparsion}, we see that $\mu^{\bwd}(\eta)\leq 0$ for all $\eta\in \Lambda.$
We are done.
\end{proof}

\begin{proposition}\label{Prop:MuBwdMinusMuFwd} Assume ${\mathbf E}[b]>0$ and ${\mathbf E}\left[\displaystyle{\int_0^1|b(s)|{\rm d}s}\right]<\infty.$ Then for every $\eta\in \Lambda$,
\begin{align}\label{eqn:relationmu}
\mu^{\bwd}(\eta)=\mu^{\fwd}(\eta)+ {\mathbf E}\left[\ln\rho_{0,2}(\eta)\right]\ ,
\end{align}
\end{proposition}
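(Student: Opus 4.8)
The plan is to obtain a pointwise (in the environment $\omega$) identity of the form $\ln\rho_{0,2}(\eta)=\ln\phi^{\bwd}-\ln(\phi^{\fwd}\circ\theta_1)+(\text{a coboundary for }\theta_1)$, and then to take the quenched expectation $\mathbf{E}$: stationarity of $b$ under the unit spatial shift $\theta_1$ annihilates the coboundary and turns the remaining two terms into $\mu^{\bwd}(\eta)$ and $\mu^{\fwd}(\eta)$. Notation: write $\phi^{\bwd}:=E^W[e^{\eta T_0^1}\mathbf{1}_{\{T_0^1<\infty\}}]$ and $\phi^{\fwd}:=E^W[e^{\eta T^0_1}\mathbf{1}_{\{T^0_1<\infty\}}]$, so that $\mu^{\bwd}(\eta)=\mathbf{E}[\ln\phi^{\bwd}]$ and $\mu^{\fwd}(\eta)=\mathbf{E}[\ln\phi^{\fwd}]$; and $p^-:=E^W[e^{\eta T_0^1}\mathbf{1}_{\{T_0^1<T_2^1\}}]$, $p^+:=E^W[e^{\eta T_2^1}\mathbf{1}_{\{T_2^1<T_0^1\}}]$, so $\rho_{0,2}(\eta)=p^-/p^+$. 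Translating coordinates by one unit identifies $\phi^{\bwd}\circ\theta_1$ with the backward passage weight $E^W[e^{\eta T_1^2}\mathbf{1}_{\{T_1^2<\infty\}}]$ and $\phi^{\fwd}\circ\theta_1$ with the forward passage weight $E^W[e^{\eta T_2^1}]$. Since $\eta\in\Lambda$, Lemma \ref{Lm:BwdAndFwdCriticalEtaForMuAreTheSame} together with $\mathbf{E}[b]>0$ makes all of these quantities $\mathbf{P}$-a.s.\ finite (and $\phi^{\fwd}=E^W[e^{\eta T^0_1}]$ because the forward hitting time is a.s.\ finite, cf.\ \eqref{eqn:probilityT_01}), so the manipulations below are legitimate.

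First I would establish two one-cell renewal identities, via the strong Markov property at the first exit of $X^1$ from $(0,2)$, using that by path-continuity every passage between consecutive integers factors through the intermediate integer. Splitting $\phi^{\bwd}$ according to whether $X^1$ first reaches $0$ or $2$, and factoring the $2\to 0$ passage through the site $1$, gives $\phi^{\bwd}=p^-+p^+(\phi^{\bwd}\circ\theta_1)\phi^{\bwd}$, i.e.
\[
\phi^{\bwd}\bigl(1-p^+\,\phi^{\bwd}\circ\theta_1\bigr)=p^- .
\]
Splitting $\phi^{\fwd}\circ\theta_1=E^W[e^{\eta T_2^1}]$ the same way — this is the path decomposition \eqref{eqn:decomposition} specialised to $M=2$ — gives $\phi^{\fwd}\circ\theta_1=p^++p^-\phi^{\fwd}(\phi^{\fwd}\circ\theta_1)$, i.e.
\[
(\phi^{\fwd}\circ\theta_1)\bigl(1-p^-\,\phi^{\fwd}\bigr)=p^+ .
\]
Dividing the two right-hand sides yields the pointwise formula $\rho_{0,2}(\eta)=\dfrac{\phi^{\bwd}}{\phi^{\fwd}\circ\theta_1}\cdot\dfrac{1-p^+\phi^{\bwd}\circ\theta_1}{1-p^-\phi^{\fwd}}$.

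Next I would recognise the last ratio as a coboundary. A short substitution using the two renewal identities shows that both $(1-p^+\phi^{\bwd}\circ\theta_1)(1-\phi^{\bwd}\phi^{\fwd})$ and $(1-p^-\phi^{\fwd})\bigl(1-(\phi^{\bwd}\phi^{\fwd})\circ\theta_1\bigr)$ equal the same quantity $1-p^-\phi^{\fwd}-p^+\phi^{\bwd}\circ\theta_1$; hence, with $G:=\ln(1-\phi^{\bwd}\phi^{\fwd})$, one gets $\ln(1-p^+\phi^{\bwd}\circ\theta_1)-\ln(1-p^-\phi^{\fwd})=G\circ\theta_1-G$, and therefore
\[
\ln\rho_{0,2}(\eta)=\ln\phi^{\bwd}-\ln(\phi^{\fwd}\circ\theta_1)+G\circ\theta_1-G\qquad\mathbf{P}\text{-a.s.}
\]
Taking $\mathbf{E}$ and using stationarity of $b$ under $\theta_1$, so that $\mathbf{E}[\ln(\phi^{\fwd}\circ\theta_1)]=\mathbf{E}[\ln\phi^{\fwd}]=\mu^{\fwd}(\eta)$ and $\mathbf{E}[G\circ\theta_1]=\mathbf{E}[G]$, together with $\mathbf{E}[\ln\phi^{\bwd}]=\mu^{\bwd}(\eta)$, yields $\mathbf{E}[\ln\rho_{0,2}(\eta)]=\mu^{\bwd}(\eta)-\mu^{\fwd}(\eta)$, which rearranges to \eqref{eqn:relationmu}. (This pointwise cocycle representation is also what drives the subsequent Lemma \ref{Lm:ExpectationLnRhoIsConstant}.)

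The step I expect to be the main obstacle is the last one: justifying that the coboundary contributes nothing, i.e.\ $\mathbf{E}[G\circ\theta_1-G]=0$, which requires both integrability of $G\circ\theta_1-G$ and a zero-mean argument for the coboundary. For $\eta$ in the interior $(-\infty,\eta_c)$ of $\Lambda$ one sandwiches $\ln\phi^{\bwd}$ and $\ln\phi^{\fwd}$ between their values at points $\eta_0<\eta<\eta_1$ of $\Lambda$ (monotonicity in $\eta$, plus finiteness of $\mu^{\bwd},\mu^{\fwd}$ on $\Lambda$ from Lemma \ref{Lm:BwdAndFwdCriticalEtaForMuAreTheSame}) to get $\ln\phi^{\bwd},\ln\phi^{\fwd}\in L^1(\mathbf{P})$; then $G\le 0$ is bounded above, $G\circ\theta_1-G=\ln\rho_{0,2}(\eta)-\ln\phi^{\bwd}+\ln(\phi^{\fwd}\circ\theta_1)$ is integrable (each summand is, the first by Lemma \ref{Lm:BwdAndFwdCriticalEtaForMuAreTheSame}), and applying Birkhoff's ergodic theorem to the telescoping average $\tfrac{1}{n}\sum_{k=0}^{n-1}(G\circ\theta_1^{k+1}-G\circ\theta_1^{k})=\tfrac{1}{n}(G\circ\theta_1^{n}-G)$ — combined with $G\le 0$ and $\tfrac{1}{n}G\circ\theta_1^{n}\to 0$ (which uses $\ln(\phi^{\bwd}\phi^{\fwd})\in L^1(\mathbf{P})$ to prevent $\phi^{\bwd}\phi^{\fwd}$ from approaching $1$ exponentially along the orbit) — forces the limiting mean to be $0$; the boundary case $\eta=\eta_c\in\Lambda$ follows by monotone passage $\eta\uparrow\eta_c$. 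One should also check the a.s.\ finiteness underlying the renewal identities, which is exactly where $\mathbf{E}[b]>0$, $\mathbf{E}[\int_0^1|b(s)|\,{\rm d}s]<\infty$, and Lemma \ref{Lm:BwdAndFwdCriticalEtaForMuAreTheSame} enter.
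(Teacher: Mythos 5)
Your proposal is correct in substance, and in its key step it takes a genuinely different route from the paper. Both proofs start from the same two strong-Markov renewal decompositions of the cell $[0,2]$ (the paper's displays leading to \eqref{rho01} and \eqref{rho02}); but the paper keeps the backward passage truncated at level $M$, takes $\mathbf{E}$, uses stationarity to match the shifted log-term at level $M$ with the unshifted one at level $M-1$, and removes the residue by Ces\`aro-averaging over $M=3,\dots,K$ and letting $K\to\infty$ with monotone convergence. You work untruncated: with $x=\phi^{\bwd}=E^W[e^{\eta T^1_0}\mathbf{1}_{\{T^1_0<\infty\}}]$, $y=\phi^{\fwd}=E^W[e^{\eta T^0_1}]$, $p=p^-$, $q=p^+$, your identities $x(1-q\,x\circ\theta_1)=p$ and $(y\circ\theta_1)(1-py)=q$ indeed give $(1-q\,x\circ\theta_1)(1-xy)=(1-py)(1-(xy)\circ\theta_1)=1-py-q\,x\circ\theta_1$, hence the exact pointwise cocycle $\ln\rho_{0,2}=\ln\phi^{\bwd}-\ln(\phi^{\fwd}\circ\theta_1)+G\circ\theta_1-G$ with $G=\ln(1-\phi^{\bwd}\phi^{\fwd})$, and the Proposition follows once $\mathbf{E}[G\circ\theta_1-G]=0$. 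The paper's truncation avoids ever touching the possibly non-integrable $G$ and treats all $\eta\in\Lambda$ at once; your route is pointwise-exact and makes the gauge/cocycle structure explicit, at the cost of two details you must tighten: (a) finiteness of $G$, i.e.\ $\phi^{\bwd}\phi^{\fwd}<1$ a.s., used tacitly — at interior $\eta$ it follows from strict monotonicity in $\eta$ plus the bound $\phi^{\bwd}(\eta')\phi^{\fwd}(\eta')\le 1$ of Lemma \ref{Lm:BwdAndFwdCriticalEtaForMuAreTheSame} at some $\eta<\eta'<\eta_c$, with $\eta=\eta_c$ handled by your monotone passage; (b) your justification of $\tfrac1n G\circ\theta_1^{\,n}\to 0$ is off, since integrability of $\ln(\phi^{\bwd}\phi^{\fwd})$ does not control $\ln(1-\phi^{\bwd}\phi^{\fwd})$; the right argument is equidistribution: $G\circ\theta_1^{\,n}$ has the law of $G$, so $\tfrac1n G\circ\theta_1^{\,n}\to 0$ in probability, while Birkhoff applied to $G\circ\theta_1-G\in L^1(\mathbf{P})$ makes $\tfrac1n G\circ\theta_1^{\,n}$ converge a.s.\ (to the conditional expectation of $G\circ\theta_1-G$ on shift-invariant sets), and comparing the two limits forces the mean to vanish — ergodicity of the unit shift is not even needed. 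With these repairs your argument is complete.
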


\begin{remark}  We shall show in Lemma \ref{Lm:ExpectationLnRhoIsConstant} below that $
{\mathbf E}[\ln \rho_{0,2}(\eta)]=-2{\mathbf E}[b]$. 
\end{remark}

\begin{proof}

\noindent Using strong Markov property again, one has for $M\geq 3$,
\begin{align}
&  E^W\left[ e^{\eta T_0^1}\mathbf{1}_{\left\{T_0^1<T^1_M\right\}} \right]\cr
& = E^W\left[ e^{\eta T_0^1}\mathbf{1}_{\left\{T_0^1<T^1_2\right\}} \right]+E^W\left[ e^{\eta T_0^1}\mathbf{1}_{\left\{T^1_2<T_0^1<T^1_M\right\}} \right]\cr
& =   E^W\left[ e^{\eta T_0^1}\mathbf{1}_{\left\{T_0^1<T^1_2\right\}} \right] \cr
&\qquad +E^W\left[ e^{\eta T_2^1}\mathbf{1}_{\left\{T_2^1<T^1_0\right\}} \right]E^W\left[ e^{\eta T_1^2}\mathbf{1}_{\left\{T_1^2<T^2_M\right\}} \right]E^W\left[ e^{\eta T_0^1}\mathbf{1}_{\left\{T_0^1<T^1_M\right\}} \right]\ .
\end{align}

\noindent Then ${\mathbf P}$-a.s,
\begin{align}\label{rho01}
E^W\left[ e^{\eta T_0^1}\mathbf{1}_{\left\{T_0^1<T^1_M\right\}} \right]E^W\left[ e^{\eta T_1^2}\mathbf{1}_{\left\{T_1^2<T^2_M\right\}} \right]
 = \frac{ E^W\left[ e^{\eta T_0^1}\mathbf{1}_{\left\{T_0^1<T^1_M\right\}} \right] }{E^W\left[ e^{\eta T_2^1}\mathbf{1}_{\left\{T_2^1<T^1_0\right\}} \right] }-\rho_{0,2}(\eta)\ .
\end{align}

\noindent By similar reasoning,
\begin{align}
&  E^W\left[ e^{\eta T^1_2} \right]\cr
& = E^W\left[ e^{\eta T^1_2}\mathbf{1}_{\left\{T_1^2<T_0^1\right\}} \right]+ E^W\left[ e^{\eta T^1_2}\mathbf{1}_{\left\{T_0^1<T^1_2\right\}} \right]\cr
& =   E^W\left[ e^{\eta T^1_2}\mathbf{1}_{\left\{T_1^2<T_0^1\right\}} \right]  +E^W\left[ e^{\eta T^1_0}\mathbf{1}_{\left\{T^1_0<T^2_1\right\}} \right]E^W\left[ e^{\eta T^0_1} \right]E^W\left[ e^{\eta T_2^1}\right]\ ,
\end{align}
and 
\begin{align}\label{rho02}
\rho_{0,2}(\eta) E^W\left[ e^{\eta T^0_1} \right]E^W\left[ e^{\eta T_2^1}\right]=
\frac{E^W\left[ e^{\eta T^1_2} \right]}{ E^W\left[ e^{\eta T^1_2}\mathbf{1}_{\left\{T^1_2<T_0^1\right\}} \right]  }-1\ .
\end{align}

\noindent Using \eqref{rho02} and then
\eqref{rho01}, one can get, ${\mathbf P}$-a.s,
\begin{align}
&\rho_{0,2}(\eta)
\left(1-E^W\left[ e^{\eta T_1^0}\right] E^W\left[ e^{\eta T^1_0}\mathbf{1}_{\left\{T_0^1<T^1_M\right\}} \right] \right) E^W\left[ e^{\eta T^1_2}\right]
\cr
&=\rho_{0,2}(\eta) E^W\left[ e^{\eta T^1_2}\right]-\left(\frac{E^W\left[ e^{\eta T^1_2} \right]}{ E^W\left[ e^{\eta T^1_2}\mathbf{1}_{\left\{T^1_2<T_0^1\right\}} \right]  }-1\right) E^W\left[ e^{\eta T_0^1}\mathbf{1}_{\left\{T_0^1<T^1_M\right\}} \right] 
\cr
&= \left(1-  E^W\left[ e^{\eta T_2^1} \right]E^W\left[ e^{\eta T_1^2}\mathbf{1}_{\left\{T_1^2<T^2_M\right\}} \right]\right)E^W\left[ e^{\eta T^1_0}\mathbf{1}_{\left\{T_0^1<T^1_M\right\}} \right]\ .
\end{align}

\noindent Therefore, ${\mathbf P}$-a.s,
\begin{align}
&\ln \rho_{0,2}(\eta)+
\ln  E^W\left[ e^{\eta T^1_2}\right]-\ln E^W\left[ e^{\eta T_0^1}\mathbf{1}_{\left\{T_0^1<T^1_M\right\}} \right]
\cr
&\quad= \ln\left(1-  E^W\left[ e^{\eta T^0_1} \right]E^W\left[ e^{\eta T_0^1}\mathbf{1}_{\left\{T_0^1<T^1_M\right\}} \right] \right)
\cr &\qquad\qquad-\ln \left(1-E^W\left[ e^{\eta T_2^1}\right]E^W\left[ e^{\eta T_1^2}\mathbf{1}_{\left\{T_1^2<T^2_M\right\}} \right] \right)\ .
\end{align}
Since $x\mapsto b(x)$ is stationary, then 
\[
E^W\left[ e^{\eta T_2^1}\right]E^W\left[ e^{\eta T_1^2}\mathbf{1}_{\left\{T_1^2<T^2_{M+1}\right\}} \right]\overset{d}{=} E^W\left[ e^{\eta T^0_1} \right]E^W\left[ e^{\eta T_0^1}\mathbf{1}_{\left\{T_0^1<T^1_M\right\}}\right]\ .
\]
Then
\begin{align}\label{rho03}
&{\mathbf E}[\ln \rho_{0,2}(\eta)]+
{\mathbf E}\left[\ln  E^W\left[ e^{\eta T^1_2}\right]\right]-\frac{1}{K-2}\sum_{M=3}^{K}{\mathbf E}\left[ \ln E^W\left[ e^{\eta T_0^1}\mathbf{1}_{\left\{T_0^1<T^1_M\right\}} \right]\right]
\cr
&= \frac{1}{K-2}{\mathbf E}\bigg{[} \ln\left(1-  E^W\left[ e^{\eta T^0_1} \right]E^W\left[ e^{\eta T_0^1}\mathbf{1}_{\left\{T_0^1<T^1_2\right\}} \right] \right)
\cr &\qquad\qquad\qquad-\ln \left(1-E^W\left[ e^{\eta T_2^1}\right]E^W\left[ e^{\eta T_1^2}\mathbf{1}_{\left\{T_1^2<T^2_K\right\}} \right] \right)\bigg{]}\ .
\end{align}
Since, by monotone convergence theorem, ${\mathbf P}$-a.s,
\[
 \lim_{M\rightarrow\infty}E^W\left[ e^{\eta T_0^1}\mathbf{1}_{\left\{T_0^1<T^1_M\right\}}\right]= E^W\left[ e^{\eta T_0^1}\mathbf{1}_{\left\{T_0^1<\infty\right\}}\right]\ ,
\]
and
\[\lim_{K\rightarrow\infty}E^W\left[ e^{\eta T_1^2}\mathbf{1}_{\left\{T_1^2<T^2_K\right\}}\right] =E^W\left[ e^{\eta T_1^2}\mathbf{1}_{\left\{T_1^2<\infty\right\}} \right]\ . 
\]
then the r.h.s. of \eqref{rho03} will converge to $0$ as $K\rightarrow\infty $ and 
\[
\frac{1}{K-2}\sum_{M=3}^{K}{\mathbf E}\left[ \ln E^W\left[ e^{\eta T_0^1}\mathbf{1}_{\left\{T_0^1<T^1_M\right\}} \right]\right]\rightarrow {\mathbf E}\left[ \ln E^W\left[ e^{\eta T_0^1}\mathbf{1}_{\left\{T_0^1<\infty\right\}} \right]\right]=\mu^{\bwd}(\eta)\ .
\]
And hence
\begin{align*}
&{\mathbf E}[\ln \rho_{0,2}(\eta)]+
\mu^{\fwd}(\eta)=\mu^{\bwd}(\eta),\quad \eta\in \Lambda^{\fwd}\ .
\end{align*}
We have completed the proof.
\end{proof}

Next, we establish a lemma showing that $\mathbf{E}[\ln\rho_{0,2}(\eta)]$ is actually a constant. Our approach is analytic.

\begin{lemma}\label{Lm:ExpectationLnRhoIsConstant}
 Assume ${\mathbf E}[b]>0$ and ${\mathbf E}\left[\displaystyle{\int_0^1|b(s)|{\rm d}s}\right]<\infty.$ Then we have
\begin{equation}\label{Lm:ExpectationLnRhoIsConstant:Eq:FinalRelation}
\mathbf{E}[\ln \rho_{0, 2}(\eta)] = -2\mathbf{E} [b] \ ,
\end{equation}
for all $\eta\in \Lambda$.
\end{lemma}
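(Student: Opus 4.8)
The plan is to write $\ln\rho_{0,2}(\eta)$, for each fixed $\eta\in\Lambda$, as $-2\int_0^1 b(s,\omega)\,{\rm d}s$ plus a \emph{coboundary} for the unit spatial shift $\theta_1$ (where $b(\cdot,\theta_x\omega)=b(x+\cdot,\omega)$), and then to integrate against $\mathbf P$: the coboundary has mean zero by stationarity of $b$, and $\mathbf E\big[\int_0^1 b(s)\,{\rm d}s\big]=\mathbf E[b]$ by Fubini. At $\eta=0$ this is exactly the computation already in Lemma \ref{lem: basiccomp}: there $\rho_{0,2}(0)$ is the ratio of scale-function increments $\int_1^2 e^{-2\int_0^y b}/\int_0^1 e^{-2\int_0^y b}$ coming from \eqref{eqn: disT12}, the substitution $y\mapsto y+1$ factors out $e^{-2\int_0^1 b}$ and leaves $\int_0^1 e^{-2\int_0^y b(s+1)\,{\rm d}s}\,{\rm d}y$, which is the same functional evaluated at $\theta_1\omega$ and hence cancels under $\mathbf E\ln$. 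The task is to show that this mechanism survives for $\eta\neq 0$ once the scale function is replaced by the right $\eta$-dependent object.

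\emph{The cocycle identity.} Fix $\omega$ and $\eta<\eta_c$ (the endpoint $\eta_c$, if it lies in $\Lambda$, is obtained afterwards by letting $\eta\uparrow\eta_c$, using monotone convergence inside $E^W$ together with Proposition \ref{Prop:MuBwdMinusMuFwd}). With $\tau$ the exit time of $(0,2)$, the functions $x\mapsto E^W_x[e^{\eta\tau};X_\tau=0]$ and $x\mapsto E^W_x[e^{\eta\tau};X_\tau=2]$ are the two solutions of $\tfrac12\psi''+b(x,\omega)\psi'+\eta\psi=0$ on $[0,2]$ with boundary data $(1,0)$ and $(0,1)$; their values at $x=1$ are exactly the numerator and denominator of $\rho_{0,2}(\eta)$ in \eqref{eqn:defrho}. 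Expressing them through the solution $\varphi$ with $\varphi(0)=0,\varphi'(0)=1$ and the solution $\chi$ with $\chi(2)=0,\chi'(2)=-1$ gives $\rho_{0,2}(\eta)=\chi(1)\varphi(2)/(\chi(0)\varphi(1))$. Abel's identity for the Wronskian $W=\varphi\chi'-\varphi'\chi$, which satisfies $W'=-2b(x,\omega)W$, yields $\varphi(2)/\chi(0)=e^{-2\int_0^2 b}$; and translation invariance of the equation identifies $\chi(1;\omega)$ with the value at $0$ of the $\theta_1\omega$-solution that vanishes at $1$, so a second application of Abel's identity on $[0,1]$ gives $\chi(1;\omega)=e^{2\int_1^2 b(s,\omega)\,{\rm d}s}\,m(\eta;\theta_1\omega)$ with $m(\eta;\omega):=\varphi(1;\omega)$. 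Combining,
\[
\ln\rho_{0,2}(\eta;\omega)=-2\int_0^1 b(s,\omega)\,{\rm d}s+\ln|m(\eta;\theta_1\omega)|-\ln|m(\eta;\omega)| ,
\]
which is the cocycle representation (\ref{Lm:ExpectationLnRhoIsConstant:Eq:Cocycle}); the absolute values are harmless because $\rho_{0,2}(\eta)>0$ forces $m(\eta;\theta_1\omega)$ and $m(\eta;\omega)$ to have the same sign, and $m(\eta;\cdot)\neq 0$ $\mathbf P$-a.s. since the exit quantities in \eqref{eqn:defrho} are finite and strictly positive.

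\emph{Taking expectations.} Put $V:=\ln|m(\eta;\cdot)|$. By $|b|\le B$ (\eqref{Eq:AssumptionOnDrift:Boundedness}) and Gronwall, $V$ is bounded above by a deterministic constant $C(B,\eta)$; and $V\circ\theta_1-V=\ln\rho_{0,2}(\eta)+2\int_0^1 b$ lies in $L^1(\mathbf P)$, because $\ln\rho_{0,2}(\eta)\in L^1$ by Lemma \ref{Lm:BwdAndFwdCriticalEtaForMuAreTheSame} and $\int_0^1|b|\le B$. These two facts alone force $\mathbf E[V\circ\theta_1-V]=0$: writing $V=V^+-V^-$, the bounded part gives $\mathbf E[V^+\circ\theta_1-V^+]=0$ by $\theta_1$-invariance, hence $V^-\circ\theta_1-V^-\in L^1$; since $V^-\ge 0$ is finite a.s., so that $\tfrac1K V^-\circ\theta_1^{\pm K}\to 0$ a.s., applying Birkhoff to $\theta_1$ and to $\theta_1^{-1}$ (both measure preserving) squeezes the ergodic averages of $V^-\circ\theta_1-V^-$ to $0$, forcing its mean to be $0$. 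Integrating the cocycle identity then gives $\mathbf E[\ln\rho_{0,2}(\eta)]=-2\mathbf E\big[\int_0^1 b(s)\,{\rm d}s\big]=-2\mathbf E[b]$, which is \eqref{Lm:ExpectationLnRhoIsConstant:Eq:FinalRelation}. (As a by-product, combined with Proposition \ref{Prop:MuBwdMinusMuFwd} this yields $(\mu^\bwd)'=(\mu^\fwd)'$, the linear-response statement highlighted in the introduction.)

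\emph{Where the difficulty is.} The structural identity — the ODE representation of the exit quantities, two uses of Abel's identity, and translation invariance — is elementary bookkeeping. The genuine obstacle is the cancellation of the coboundary, since $V=\ln|m(\eta;\cdot)|$ is only bounded on one side and can be arbitrarily negative (this occurs when $-\eta$ is close to a Dirichlet eigenvalue of $\tfrac12 D^2+bD$ on a unit cell), so one cannot simply invoke $L^1$-invariance of the shift; this is precisely where the boundedness assumption \eqref{Eq:AssumptionOnDrift:Boundedness} (for the one-sided bound) and the integrability of $\ln\rho_{0,2}(\eta)$ from Lemma \ref{Lm:BwdAndFwdCriticalEtaForMuAreTheSame} (for the $L^1$ coboundary) are essential, via the $V^\pm$-splitting and ergodic-averaging argument above. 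A secondary point to verify is that the ODE representation of $\rho_{0,2}(\eta)$ is valid for every $\eta<\eta_c$, i.e.\ that $\eta_c$ never reaches the principal Dirichlet eigenvalue on $(0,2)$, which follows from the finiteness of the relevant exponential moments.
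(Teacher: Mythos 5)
Your proposal is correct, and its structural core is the same as the paper's: you represent the numerator and denominator of $\rho_{0,2}(\eta)$ as solutions of the ODE $\tfrac12\psi''+b\psi'+\eta\psi=0$ on $[0,2]$, use Abel's identity twice together with translation of the environment, and arrive at exactly the cocycle identity (\ref{Lm:ExpectationLnRhoIsConstant:Eq:Cocycle}) of the paper (your $m(\eta;\omega)=\varphi(1;\omega)$ is the paper's $(M_0)_{12}=u_0(1)$, and $m(\eta;\theta_1\omega)=(M_1)_{12}$). Where you genuinely depart from the paper is the final cancellation: the paper argues that $M_1$ and $M_0$ have the same law under $\mathbf P$ and then splits $\mathbf E\ln(M_1)_{12}-\mathbf E\ln(M_0)_{12}=0$, which tacitly presumes that $\ln(M_0)_{12}$ is (quasi-)integrable; you instead invoke the coboundary mechanism, using only that $V=\ln|m(\eta;\cdot)|$ is a.s.\ finite and bounded above (via $|b|\le B$ from (\ref{Eq:AssumptionOnDrift:Boundedness}) and Gronwall) and that $V\circ\theta_1-V\in L^1$ (via the integrability of $\ln\rho_{0,2}(\eta)$ from Lemma \ref{Lm:BwdAndFwdCriticalEtaForMuAreTheSame}), so you never need integrability of $V$ itself; this is a more robust route and in effect supplies a justification the paper leaves implicit. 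One small wording correction: from finiteness a.s.\ of $V^-$ you only get $\tfrac1K V^-\circ\theta_1^{K}\to 0$ \emph{in probability}, not a.s.; the standard fix is to note that Birkhoff gives a.s.\ (and $L^1$) convergence of $\tfrac1K\sum_{k<K}(V^-\circ\theta_1^{k+1}-V^-\circ\theta_1^{k})=\tfrac1K(V^-\circ\theta_1^{K}-V^-)$ to $\mathbf E[V^-\circ\theta_1-V^-\mid\mathcal I]$, and the in-probability convergence to $0$ identifies this limit as $0$, whence the mean is $0$; with this adjustment your argument is complete, and the endpoint case $\eta=\eta_c\in\Lambda$ is indeed handled either by your monotone-limit remark via Proposition \ref{Prop:MuBwdMinusMuFwd} or directly, since the a.s.\ finiteness of both exit quantities for $\eta\in\Lambda$ already places $\eta$ strictly below the principal Dirichlet eigenvalue on $(0,2)$.
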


\begin{proof}
Set $$u_L(x)=u_L(x; \eta)=E^W\left[e^{\eta T^1_0}\mathbf{1}_{\left\{T^1_0<T^1_2\right\}}\right]$$ and $$u_R(x)=u_R(x; \eta)=E^W\left[e^{\eta T^1_2}\mathbf{1}_{\left\{T^1_2<T^1_0\right\}}\right] \ ,$$ where the hitting times $T^1_0, T^1_2$ are for the process $X^x(t)$ defined in (\ref{Eq:SDE-RandomDrift}). Then both $u_L$ and $u_R$ satisfy the ODE
\begin{equation}\label{Lm:ExpectationLnRhoIsConstant:Eq:ODEForRhoNumeratorDenominator}
\dfrac{1}{2}\dfrac{{\rm d}^2 u}{{\rm d}x^2}+b(x)\dfrac{{\rm d}u}{{\rm d}x}+\eta u = 0 \ , \ x\in [0,2]\ ,
\end{equation}
with Sturm-Liouville type boundary conditions $u_L(0)=1, u_L(2)=0$ and $u_R(0)=0, u_R(2)=1$. We have $\rho_{0,2}(\eta)=\dfrac{u_L(1)}{u_R(1)}$.

The general solution to (\ref{Lm:ExpectationLnRhoIsConstant:Eq:ODEForRhoNumeratorDenominator}) has the form
$$u(x)=Au_0(x)+Bu_1(x)$$
with the two fundamental solutions $u_0(x)$ such that $u_0(0)=0, u_0'(0)=1$ and $u_1(x)$ such that $u_1(0)=1, u_1'(0)=0$ (see \cite{ODEHartman}). Matching the boundary conditions for $u_L(x)$, $u_R(x)$ we get
$$u_L(x)=A_L u_0(x)+B_L u_1(x) \ , \ u_R(x)=A_R u_0(x)+B_R u_1(x) \ ,$$
where
$$A_L=-\dfrac{u_1(2)}{u_0(2)} \ , \ B_L=1 \ , \ A_R=-\dfrac{1}{u_0(2)} \ , \ B_R=0 \ .$$

\noindent So we get $$u_L(x)=-\dfrac{1}{u_0(2)}(u_1(2)u_0(x)-u_1(x)u_0(2)) \ , \ u_R(x)=\dfrac{u_0(x)}{u_0(2)} \ .$$
Thus
\begin{equation}\label{Lm:ExpectationLnRhoIsConstant:Eq:RepresentationOfRhoUsingFoundamentalSolutions}
\rho_{0,2}(\eta)=\dfrac{u_1(1)u_0(2)-u_1(2)u_0(1)}{u_0(1)} \ .
\end{equation}

\noindent Let the Wronskian matrix be $W(x)=\begin{pmatrix}u_0(x) & u_1(x) \\ u_0'(x) & u_1'(x)\end{pmatrix}$. Note that $W(0)=\begin{pmatrix}0 & 1 \\ 1 & 0\end{pmatrix}$ so $W(0)=W(0)^{-1}$, and $W(1)^{-1}=\dfrac{1}{\det W(1)}\begin{pmatrix}u_1'(1) & -u_1(1) \\ -u_0'(1) & u_0(1) \end{pmatrix}$. Define \begin{equation}\label{Lm:ExpectationLnRhoIsConstant:Eq:M0M1}
M_0=W(1)W(0)^{-1} \ , \ M_1=W(2)W(1)^{-1} \ .
\end{equation} Then we see that
$$M_0=\begin{pmatrix}u_1(1) & u_0(1) \\ u_1'(1) & u_0'(1)\end{pmatrix} \ ,$$
and
$$\begin{array}{ll}
M_1 & = \begin{pmatrix} u_0(2) & u_1(2) \\ u_0'(2) & u_1'(2)\end{pmatrix}
\cdot\dfrac{1}{\det W(1)}\begin{pmatrix}u_1'(1) & -u_1(1) \\ -u_0'(1) & u_0(1) \end{pmatrix}
\\
& = \dfrac{1}{\det W(1)}\begin{pmatrix}
u_0(2)u_1'(1)-u_1(2)u_0'(1) & -u_0(2)u_1(1)+u_1(2)u_0(1) \\ u_0'(2)u_1'(1)-u_1'(2)u_0'(1) & -u_0'(2)u_1(1)+u_1'(2)u_0(1)\end{pmatrix} \ .
\end{array} 
$$

\noindent The above two calculations combined with (\ref{Lm:ExpectationLnRhoIsConstant:Eq:RepresentationOfRhoUsingFoundamentalSolutions}) give
$$\rho_{0,2}(\eta)=-\det W(1)\dfrac{(M_1)_{12}}{(M_0)_{12}} \ .$$

\noindent The well-celebrated Abel's identity for Wronskian (see \cite{Abel1829}, which can be verified directly from the ODE (\ref{Lm:ExpectationLnRhoIsConstant:Eq:ODEForRhoNumeratorDenominator})) gives us $$\det W(1)=\det W(0)\cdot \exp\left(-2\displaystyle{\int_0^1 b(x){\rm d}x} \right)=- \exp\left(-2\displaystyle{\int_0^1 b(x){\rm d}x} \right) \ .$$ So we have
$$\rho_{0,2}(\eta)=\exp\left(-2\displaystyle{\int_0^1 b(x){\rm d}x} \right)\cdot \dfrac{(M_1)_{12}}{(M_0)_{12}} \ .
$$
In other words
\begin{equation}\label{Lm:ExpectationLnRhoIsConstant:Eq:Cocycle}
\ln \rho_{0,2}(\eta)=-2 \int_0^1 b(x){\rm d}x + \ln (M_1)_{12} - \ln (M_0)_{12} \ ,
\end{equation}
and thus
$$\mathbf{E}[\ln \rho_{0,2}(\eta)]=-2\mathbf{E} [b] + \mathbf{E} \ln (M_1)_{12} - \mathbf{E} \ln (M_0)_{12} \ .$$

We now aim to show that the stationarity of $b(x)$ and the way we define $M_0, M_1$ in (\ref{Lm:ExpectationLnRhoIsConstant:Eq:M0M1}) imply that under $\mathbf{E}$, we actually have $\mathbf{E} \ln (M_1)_{12} - \mathbf{E} \ln (M_0)_{12}=0$, which leads to  (\ref{Lm:ExpectationLnRhoIsConstant}). In fact, from the construction of $W(x)$ using fundamental solutions to (\ref{Lm:ExpectationLnRhoIsConstant:Eq:ODEForRhoNumeratorDenominator}) we see that
$$W'(x)=\begin{pmatrix}0 & 1 \\ -2\eta & -2b(x)\end{pmatrix}W(x) \ .$$
From here, by introducing the general fundamental matrix 
$$M(x,y)=W(x)W(y)^{-1} \ , \ x\geq y $$
we see that
$$
\dfrac{\partial M(x,y)}{\partial x} = \begin{pmatrix}0 & 1 \\ -2\eta & -2b(x)\end{pmatrix}M(x, y) \ , \ M(x,x)=I \ .$$

\noindent Since $b(x)$ is stationary, the law of the coefficient matrix $\begin{pmatrix}0 & 1 \\ -2\eta & -2b(x)\end{pmatrix}$ on $[1,2]$ 
coincides with that on $[0,1]$. This means, $M(x,y)=M(x,y;\omega)$ satisfies
\[
M(2,1;\omega)=M(1,0;\theta_1\omega),
\]
where $\theta_1$ denotes the spatial shift. Since $\theta_1$ is 
measure-preserving under $\mathbf{P}$, it follows that $M_1=M(2,1)$ and $M_0=M(1,0)$ 
have the same distribution under $\mathbf{P}$, which gives the desired result.
\end{proof}

Combining Proposition \ref{Prop:MuBwdMinusMuFwd} and Lemma \ref{Lm:ExpectationLnRhoIsConstant}, we obtain

\begin{corollary}\label{Corrollary:MuBwdMFwdDifferByConstant}
 Assume that  ${\mathbf E}[b]>0$ and ${\mathbf E}\left[\displaystyle{\int_0^1 |b(s)|{\rm d}s}\right]<\infty.$  Then for every $\eta\in \Lambda$, 
\begin{equation}\label{Corrollary:MuBwdMFwdDifferByConstant:Eq:Diff}
\mu^\bwd(\eta) - \mu^\fwd(\eta)= -2\mathbf{E} [b] \ .
\end{equation}
\end{corollary}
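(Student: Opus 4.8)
The plan is to simply chain together the two results just established, with no new work beyond bookkeeping. First I would invoke Proposition \ref{Prop:MuBwdMinusMuFwd}, which gives, for every $\eta\in\Lambda$, the identity $\mu^\bwd(\eta)=\mu^\fwd(\eta)+\mathbf{E}[\ln\rho_{0,2}(\eta)]$; here it matters that Lemma \ref{Lm:BwdAndFwdCriticalEtaForMuAreTheSame} has already guaranteed both that $\Lambda^\bwd=\Lambda^\fwd=\Lambda$ and that $\ln\rho_{0,2}(\eta)$ is $\mathbf{P}$-integrable on all of $\Lambda$, so the right-hand side is a genuine finite quantity on the entire common domain. Then I would substitute the explicit evaluation $\mathbf{E}[\ln\rho_{0,2}(\eta)]=-2\mathbf{E}[b]$ from Lemma \ref{Lm:ExpectationLnRhoIsConstant}, which under the same hypotheses $\mathbf{E}[b]>0$ and $\mathbf{E}\big[\int_0^1|b(s)|\,{\rm d}s\big]<\infty$ holds for all $\eta\in\Lambda$. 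This immediately yields $\mu^\bwd(\eta)-\mu^\fwd(\eta)=-2\mathbf{E}[b]$ for every $\eta\in\Lambda$, which is exactly \eqref{Corrollary:MuBwdMFwdDifferByConstant:Eq:Diff}.

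There is essentially no obstacle at this stage: all the analytic content sits upstream — in Proposition \ref{Prop:MuBwdMinusMuFwd} (the telescoping/cocycle computation that routes the hitting-time decomposition through the intermediate integer points $1,2,\dots,M$ and lets $M\to\infty$) and in Lemma \ref{Lm:ExpectationLnRhoIsConstant} (the representation of $\rho_{0,2}(\eta)$ via fundamental solutions, Abel's identity for the Wronskian, and the stationarity of the shifted coefficient matrix which kills the $\mathbf{E}\ln(M_1)_{12}-\mathbf{E}\ln(M_0)_{12}$ term). The only point deserving a moment's care is matching the three $\eta$-ranges — the range on which the Proposition is proved, the range on which the Lemma is proved, and the common domain of finiteness of $\mu^\bwd$ and $\mu^\fwd$ — but each of these statements is phrased over the same set $\Lambda$ thanks to Lemma \ref{Lm:BwdAndFwdCriticalEtaForMuAreTheSame}, so the combination is valid verbatim and no further limiting or approximation argument is required. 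I would therefore keep the proof to a single sentence citing the two previous results.
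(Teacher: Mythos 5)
Your proposal is correct and coincides with the paper's own argument: the corollary is obtained exactly by combining Proposition \ref{Prop:MuBwdMinusMuFwd} with Lemma \ref{Lm:ExpectationLnRhoIsConstant}, with Lemma \ref{Lm:BwdAndFwdCriticalEtaForMuAreTheSame} ensuring the common domain $\Lambda$. Nothing further is needed.
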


\begin{remark}\label{Rem:scalefunction}
  We  remark here that in the case of ${\mathbf E}[b]=0$, all the results in Section \ref{Sec:LDP:RelationMuAndMufwd} may hold with additional assumption that \eqref{eqn: generalintecond} holds (and hence \eqref{eqn:probilityT_01} also holds).  In particular, $\mu^{\fwd}(\eta)=\mu^{\bwd}(\eta)$ for $\eta\in \Lambda$ .
\end{remark}

\subsection{Large Deviations for the process $X^x(t)$}\label{Sec:LDP:ProcessX}
First, we state some general properties of the function $I^\bwd(a)\equiv I(a)$ defined in (\ref{Eq:EntropyHittingTime:Backward}):

\begin{lemma}\label{Lm:PropertiesOfIInOurCase} The function $I(a)$ has the following properties:

\begin{itemize}
\item[\emph{(1)}] $I(a)$ is convex in $a$ and $I(a)\geq 0$ for $a\in (0,\infty)$;

\item[\emph{(2)}] $I(a)$ is decreasing in $a$ for $a\in (0,\mu'(0)]$ and is increasing in $a$ for $a\in (\mu'(0), \infty)$, with $I(\mu'(0))=-\mu(0)$
to be the minimum point of $I(a)$ as $a\in (0,\infty)$;

\item[\emph{(3)}] $\lim\limits_{a\rightarrow 0+}I(a)=+\infty$;

\item[\emph{(4)}] $I(a)$ is piecewisely differentiable on both intervals $a\in (0, \mu'(\eta_c-))$ and $a\in [\mu'(\eta_c-), \infty)$ and
$I'(a)\leq \eta_c$ for all $a\in (0, \infty)$;

\item[\emph{(5)}] $I(a)\geq a\eta_c-\mu(\eta_c-)$ and $I(a)> a\eta_c-\mu(\eta_c-)$ when $a\in (0, \mu'(\eta_c-))$;

\item[\emph{(6)}] If $\mu'(\eta_c-)<\infty$, then $I(a)=a\eta_c-\mu(\eta_c-)$ for $a\in [\mu'(\eta_c-), \infty)$;

\item[\emph{(7)}] If $\mu'(\eta_c-)=\infty$, then $I(a)>a\eta_c-\mu(\eta_c-)$ for $a\in (0, \infty)$ and
$I(a)-[a\eta_c-\mu(\eta_c-)]$ decreases to $0$ as $a\rightarrow \infty$;

\item[\emph{(8)}] The function $a \mapsto aI\left(\dfrac{1}{a}\right)$ is convex on $a\in (0,\infty)$.
\end{itemize}
\end{lemma}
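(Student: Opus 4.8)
The plan is to read every item off from one-dimensional convex duality, feeding in the structure of $\mu$ recorded in Lemma~\ref{Lm:PropertiesMuAndTruncatedMu} and the endpoint information from Lemma~\ref{Lm:BwdAndFwdCriticalEtaForMuAreTheSame}. By Lemma~\ref{Lm:PropertiesMuAndTruncatedMu}, $\mu$ is convex on $\mathbb{R}$ (equal to $+\infty$ for $\eta>\eta_c$), nondecreasing, and differentiable on $(-\infty,\eta_c)$ with $\mu'$ strictly increasing and --- being the derivative of a finite convex function --- continuous; hence $\mu'$ is a homeomorphism of $(-\infty,\eta_c)$ onto $(m_-,m_+)$, with $m_-:=\lim_{\eta\to-\infty}\mu'(\eta)$ and $m_+:=\mu'(\eta_c-)\in(0,+\infty]$. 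Here $m_-=0$, because the non-degenerate Brownian part of \eqref{Eq:SDE-RandomDrift} makes $\operatorname{ess\,inf}_{P^W}T_0^1=0$ for $\mathbf{P}$-a.e.\ $\omega$, whence the $\eta$-tilted mean of $T_0^1$ decreases to $0$ and $\mu'(\eta)\downarrow 0$ as $\eta\to-\infty$; thus for every $a\in(0,m_+)$ there is a unique $\eta^*(a):=(\mu')^{-1}(a)\in(-\infty,\eta_c)$. I also record the scalar facts $\mu(0)=\mathbf{E}[\ln P^W(T_0^1<\infty)]\in(-\infty,0)$ by \eqref{eqn:probilityT_01}; $\mu(\eta)\to-\infty$ as $\eta\to-\infty$ (since $E^W(e^{\eta T_0^1}\mathbf{1}_{\{T_0^1<\infty\}})\to0$ by dominated convergence, then monotone convergence for $\mathbf{E}$); and $\mu(\eta_c-):=\lim_{\eta\uparrow\eta_c}\mu(\eta)\in(-\infty,0]$, its finiteness and sign being exactly \eqref{eqnmuetac}. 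Finally, since $\mu$ is nondecreasing, the endpoint $\eta=\eta_c$ never improves the supremum in \eqref{Eq:EntropyHittingTime:Backward}, so $I(a)=\sup_{\eta<\eta_c}g_a(\eta)$ where $g_a(\eta):=a\eta-\mu(\eta)$ is concave on $(-\infty,\eta_c)$.

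Items (1) and (8) are instances of ``a supremum of affine functions is convex'': $I(a)=\sup_{\eta<\eta_c}(a\eta-\mu(\eta))$ and, since $a>0$, $a\,I(1/a)=\sup_{\eta<\eta_c}(\eta-a\mu(\eta))$ are each suprema of affine functions of $a$, hence convex in $a\in(0,\infty)$; taking $\eta=0$ gives $I(a)\ge-\mu(0)\ge0$, the nonnegativity in (1). For (3): $I(a)\ge g_a(\eta)\to-\mu(\eta)$ as $a\downarrow0$ for every fixed $\eta<\eta_c$, so $\liminf_{a\downarrow0}I(a)\ge\sup_{\eta<\eta_c}(-\mu(\eta))=+\infty$ as $\mu(\eta)\to-\infty$; together with the monotonicity from (2) near $0$, the limit exists and equals $+\infty$.

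For (2), (4), (5), (6), (7) I would study the concave map $g_a$ on $(-\infty,\eta_c)$ according to the position of $a$ relative to $m_+$. If $0<a<m_+$, then $g_a'=a-\mu'$ vanishes only at $\eta^*(a)$ (positive to its left, negative to its right), so $I(a)=g_a(\eta^*(a))<\infty$ is attained there and, by the Danskin/envelope principle, $I'(a)=\eta^*(a)$ --- continuous, strictly increasing, $<\eta_c$, and $\le0$ exactly for $a\le\mu'(0)$; this gives the $C^1$ regularity and $I'<\eta_c$ on $(0,m_+)$, the monotonicity of $I$ (decreasing on $(0,\mu'(0)]$, increasing on $[\mu'(0),m_+)$), and $I(\mu'(0))=\mu'(0)\cdot0-\mu(0)=-\mu(0)=\min I$, i.e.\ most of (2) and (4). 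Moreover $g_a$ is strictly decreasing on $(\eta^*(a),\eta_c)$, so $I(a)>\lim_{\eta\uparrow\eta_c}g_a(\eta)=a\eta_c-\mu(\eta_c-)$, which is the strict case of (5) and, when $m_+=\infty$, of (7). If $m_+<\infty$ and $a\ge m_+$, then $g_a'\ge a-m_+\ge0$ on $(-\infty,\eta_c)$, so $g_a$ is nondecreasing and $I(a)=a\eta_c-\mu(\eta_c-)$, affine with slope $\eta_c$: this is (6), yields $I'=\eta_c$ there (finishing (4)), the equality case of (5), and the increasing branch of (2). Lastly, if $m_+=\infty$, writing $\eta^*=\eta^*(a)$,
\[
I(a)-\bigl(a\eta_c-\mu(\eta_c-)\bigr)=\bigl(\mu(\eta_c-)-\mu(\eta^*)\bigr)-\mu'(\eta^*)(\eta_c-\eta^*)=\int_{\eta^*}^{\eta_c}\bigl(\mu'(s)-\mu'(\eta^*)\bigr)\,{\rm d}s\ \ge\ 0\,,
\]
which is decreasing in $\eta^*$ (enlarging $\eta^*$ shrinks the interval and lowers the integrand), hence decreasing in $a$ since $\eta^*(a)\uparrow\eta_c$ as $a\uparrow\infty$, and tends to $0$ because $\mu(\eta^*)\to\mu(\eta_c-)$ (finite) and $0\le\mu'(\eta^*)(\eta_c-\eta^*)\le\mu(\eta_c-)-\mu(\eta^*)\to0$ by convexity --- this, with the strict inequality, completes (7).

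The delicate point is the behaviour at the right endpoint $\eta_c$: one must know the supremum defining $I(a)$ is attained for $a<m_+$ and is exactly the left limit $a\eta_c-\mu(\eta_c-)$ for $a\ge m_+$, and one must control the indeterminate product $\mu'(\eta^*)(\eta_c-\eta^*)\to0$ when $m_+=\infty$ --- both rest on $\mu(\eta_c-)$ being finite, which is supplied by \eqref{eqnmuetac} of Lemma~\ref{Lm:BwdAndFwdCriticalEtaForMuAreTheSame} (and, in the other direction, on $m_-=0$ so that $I$ is finite throughout $(0,\infty)$). What remains is bookkeeping over $\eta_c>0$ versus $\eta_c=0$ and $m_+<\infty$ versus $m_+=\infty$; note that in the degenerate case $\eta_c=0$ with $m_+<\infty$, $I$ is constant $=-\mu(0)$ on $[\mu'(0),\infty)$, so the monotonicity assertions of (2) hold there only in the weak sense.
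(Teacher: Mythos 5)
Your proof is correct, but it takes a genuinely different route from the paper: the paper disposes of items (1)--(7) by citing Lemma 5.2 of \cite{FanHuTerlovCMP} and only argues (8) directly, via the perspective transform, whereas you give a self-contained one-dimensional Legendre-duality analysis of $g_a(\eta)=a\eta-\mu(\eta)$, splitting according to the position of $a$ relative to $\mu'(\eta_c-)$ and using the envelope identity $I'(a)=\eta^*(a)$. Your argument for (8) --- writing $a\,I(1/a)=\sup_{\eta<\eta_c}\bigl(\eta-a\mu(\eta)\bigr)$ as a supremum of affine functions of $a$ --- is essentially the standard proof that the perspective of a convex function is convex, so it delivers the same conclusion in a more elementary phrasing. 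What your route makes explicit, and the citation hides, are two inputs: first, $\lim_{\eta\to-\infty}\mu'(\eta)=0$ (needed so that $I$ is finite on all of $(0,\infty)$ and the supremum is attained for $a<\mu'(\eta_c-)$), which you justify via $\operatorname{ess\,inf}_{P^W}T_0^1=0$ and convergence of the tilted mean --- correct, though stated tersely and worth a displayed estimate if written out; second, the finiteness of $\mu(\eta_c-)$ used in (6)--(7), which you correctly import from \eqref{eqnmuetac} of Lemma~\ref{Lm:BwdAndFwdCriticalEtaForMuAreTheSame} (legitimate, since that lemma precedes this one and its hypotheses $\mathbf{E}[b]>0$, $\mathbf{E}\int_0^1|b|<\infty$ are implied by the paper's standing assumptions). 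The only loose end is the degenerate bookkeeping when $\eta_c=0$ and $\mu'(0-)=+\infty$, where the minimum asserted in item (2) is attained only in the limit $a\to\infty$; this is an imprecision of the lemma's statement itself (compare Figure~\ref{Fig:Ifwdbwd_case_cd}) rather than a gap in your argument, and you already flag the companion degenerate case $\eta_c=0$, $\mu'(0-)<\infty$ explicitly.
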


\begin{proof}
Properties (1)-(7) can be proved in the same way as we have already proved in \cite[Lemma 5.2]{FanHuTerlovCMP}. So we omit duplicating the proof. For (8), we define $g(x,a)=aI\left(\dfrac{x}{a}\right)$. Then $g(x,a)$ is the perspective transform (see \cite[Section 3.2.6]{BoydEtAlConvexOptimizationBook}) of $I(a)$, and so $aI\left(\dfrac{1}{a}\right)=g(1,a), a>0$ is convex.
\end{proof}

For the function $I^\fwd(a)$ defined in (\ref{Eq:EntropyHittingTime:Forward}), we have a similar result:

\begin{lemma}\label{Lm:PropertiesOfIInOurCase:Forward} The function $I^\fwd(a)$ has similar properties \emph{(1)}-\emph{(8)} as in \emph{Lemma \ref{Lm:PropertiesOfIInOurCase}}, by replacing $I, \mu$ with $I^\fwd, \mu^\fwd$.
\end{lemma}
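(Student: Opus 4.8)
The plan is to transfer the proof of Lemma \ref{Lm:PropertiesOfIInOurCase} essentially verbatim, after recording the two facts that make $\mu^\fwd$ behave exactly like $\mu^\bwd$. First, by Lemma \ref{Lm:PropertiesMuFwd} the function $\mu^\fwd$ enjoys all the structural properties of $\mu$ listed in Lemma \ref{Lm:PropertiesMuAndTruncatedMu}: it is finite, convex and has a strictly increasing derivative on $(-\infty,\eta_c^\fwd)$, and equals $+\infty$ for $\eta>\eta_c^\fwd$. Second, by Lemma \ref{Lm:BwdAndFwdCriticalEtaForMuAreTheSame} we have $\eta_c^\fwd=\eta_c$, so that $I^\fwd(a)=\sup_{\eta\le\eta_c}(a\eta-\mu^\fwd(\eta))$ is built from $\mu^\fwd$ over precisely the same interval $(-\infty,\eta_c]$ on which $I$ is built from $\mu$. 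The proof of \cite[Lemma 5.2]{FanHuTerlovCMP} used only these abstract properties of the Lyapunov function together with the bound $\mu(0)\le 0$; and for the forward function we have $\mu^\fwd(0)={\mathbf E}\!\left[\ln P^W(T_1^0<\infty)\right]=0$ by \eqref{eqn:probilityT_01} and stationarity, so $\mu^\fwd(0)\le 0$ holds as well. Hence properties (1)--(7), with $I,\mu,\mu'$ replaced by $I^\fwd,\mu^\fwd,(\mu^\fwd)'$, follow by the identical argument; in particular the minimum of $I^\fwd$ on $(0,\infty)$ equals $-\mu^\fwd(0)=0$ and is attained at $a=(\mu^\fwd)'(0)$. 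Property (8) is unchanged: $a\mapsto a I^\fwd(1/a)$ is the perspective transform (see \cite[Section 3.2.6]{BoydEtAlConvexOptimizationBook}) of the convex function $I^\fwd$, hence convex on $(0,\infty)$.

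As an alternative route, and a useful consistency check, one may shortcut the computation via Corollary \ref{Corrollary:MuBwdMFwdDifferByConstant}: since $\mu^\fwd(\eta)=\mu^\bwd(\eta)+2{\mathbf E}[b]$ for every $\eta\in\Lambda$, passing to the Legendre transforms over $(-\infty,\eta_c]$ gives
\begin{equation*}
I^\fwd(a)=\sup_{\eta\le\eta_c}\bigl(a\eta-\mu^\fwd(\eta)\bigr)=\sup_{\eta\le\eta_c}\bigl(a\eta-\mu^\bwd(\eta)\bigr)-2{\mathbf E}[b]=I(a)-2{\mathbf E}[b]\ .
\end{equation*}
Because $I^\fwd$ differs from $I$ only by an additive constant, and $(\mu^\fwd)'=(\mu^\bwd)'$ on $\Lambda$ by the same corollary, every assertion in Lemma \ref{Lm:PropertiesOfIInOurCase} that is invariant under adding a constant transfers at once: convexity in (1) and (8), the monotonicity pattern and the location of the minimizer in (2), piecewise differentiability and the slope bound $I'\le\eta_c$ in (4), the strict/non-strict comparisons with the supporting line and the asymptotics in (5)--(7), and the blow-up $I^\fwd(a)\to+\infty$ as $a\to0+$ in (3). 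The only quantities that are re-normalized are the constants: the minimum value of $I^\fwd$ is $-\mu^\fwd(0)=0$ (equivalently $-\mu(0)-2{\mathbf E}[b]$, consistent with $\mu^\bwd(0)=-2{\mathbf E}[b]$), and the supporting line in (5)--(7) is $a\eta_c-\mu^\fwd(\eta_c-)$.

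There is no genuine obstacle here; the lemma is a routine ``mirror-image'' of Lemma \ref{Lm:PropertiesOfIInOurCase}. The only point demanding care is the bookkeeping of the additive constant $2{\mathbf E}[b]$: one must read off the non-negativity in (1) and the value of the minimum in (2) from $\mu^\fwd(0)=0$ rather than from $\mu^\bwd(0)$, and one must invoke Lemma \ref{Lm:BwdAndFwdCriticalEtaForMuAreTheSame} so that the symbol $\eta_c$ appearing in the Legendre transform, in the slope bound of (4), and in the supporting line of (5)--(7), is understood throughout as the common critical value $\eta_c=\eta_c^\bwd=\eta_c^\fwd$.
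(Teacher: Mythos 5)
Your proposal is correct, and your ``alternative route'' is exactly the paper's proof: it invokes Corollary \ref{Corrollary:MuBwdMFwdDifferByConstant} together with $\eta_c=\eta_c^\fwd$ from Lemma \ref{Lm:BwdAndFwdCriticalEtaForMuAreTheSame} to get $I^\fwd(a)=I(a)-2\mathbf{E}[b]$ and then transfers properties (1)--(8) from Lemma \ref{Lm:PropertiesOfIInOurCase}. Your primary route (re-running the argument of Lemma \ref{Lm:PropertiesOfIInOurCase} using Lemma \ref{Lm:PropertiesMuFwd}, with the normalization $\mu^\fwd(0)=0$) is a harmless variant resting on the same ingredients, so nothing further is needed.
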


\begin{proof}
By Corollary \ref{Corrollary:MuBwdMFwdDifferByConstant}, we have $\mu^\bwd(\eta)-\mu^\fwd(\eta)=-2\mathbf{E}[b]$ for $\eta\in (-\infty, \eta_c)= \Lambda^\circ$. By (\ref{Eq:EntropyHittingTime:Backward}), (\ref{Eq:EntropyHittingTime:Forward}) we know that 
$$I(a)=\sup\limits_{\eta\leq \eta_c}(a\eta - \mu(\eta))=\sup\limits_{\eta\leq \eta_c}(a\eta -\mu^\fwd(\eta))+2\mathbf{E} [b]=I^\fwd(a)+2\mathbf{E}[b] \ ,$$
where we have used $\eta_c=\eta_c^\fwd$ proved in Lemma \ref{Lm:BwdAndFwdCriticalEtaForMuAreTheSame}. Thus \begin{equation}\label{Lm:PropertiesOfIInOurCase:Forward:Eq:DifferenceIIfwd}
I^\fwd(a)=I(a)-2\mathbf{E}[b] \ , a>0
\end{equation} satisfies the same properties as $I(a)$ when replacing $\mu$ by $\mu^\fwd$, etc.
\end{proof}

The next Lemma is a result of convexity of $I^\bwd$ and $I^\fwd$:
\begin{lemma}\label{Lm:ConvexityEstimateIbwdIfwdSum}
For any $\alpha>0, \beta>0$ we have
\begin{equation}\label{Lm:ConvexityEstimateIbwdIfwdSum:Eq:Estimate}
I^\bwd(\alpha)+I^\fwd(\beta)\geq (\alpha+\beta) \lim\limits_{\delta\rightarrow 0}\delta I\left(\dfrac{1}{\delta}\right) \ .
\end{equation}
\end{lemma}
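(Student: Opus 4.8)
The plan is to reduce the claimed inequality to the two ``asymptotic supporting line'' bounds for $I^\bwd$ and $I^\fwd$ supplied by part~(5) of Lemma~\ref{Lm:PropertiesOfIInOurCase} (and its forward analog, Lemma~\ref{Lm:PropertiesOfIInOurCase:Forward}), and then to dispose of the leftover constant using the comparison between $\mu^\bwd$ and $\mu^\fwd$ at the critical point $\eta_c$ recorded in Lemma~\ref{Lm:BwdAndFwdCriticalEtaForMuAreTheSame}. In particular the lemma is, as its placement suggests, just a repackaging of the convexity/large-argument asymptotics of $I^\bwd,I^\fwd$.

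First I would identify the right-hand side. Writing $a=1/\delta$, one has $\delta I\!\left(1/\delta\right)=I(a)/a$, so $\lim_{\delta\to 0}\delta I\!\left(1/\delta\right)=\lim_{a\to\infty}I(a)/a$. By parts (5)--(7) of Lemma~\ref{Lm:PropertiesOfIInOurCase}, treating separately the two regimes $\mu'(\eta_c-)<\infty$ (part~(6)) and $\mu'(\eta_c-)=\infty$ (part~(7)), and using the finiteness of $\mu(\eta_c-)$ --- which follows from monotonicity of $\mu$ in $\eta$ together with Lemma~\ref{lem: basiccomp} (giving $\mu(0)>-\infty$) and \eqref{eqnmuetac} (giving $\mu(\eta_c-)\le 0$) --- one gets $I(a)/a\to\eta_c$. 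Hence the statement to be proved is precisely $I^\bwd(\alpha)+I^\fwd(\beta)\geq(\alpha+\beta)\eta_c$.

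The main argument is then two lines. By part~(5) of Lemma~\ref{Lm:PropertiesOfIInOurCase}, $I^\bwd(\alpha)\geq \alpha\eta_c-\mu^\bwd(\eta_c-)$; by the forward version of the same estimate (Lemma~\ref{Lm:PropertiesOfIInOurCase:Forward}), together with $\eta_c^\fwd=\eta_c$ from Lemma~\ref{Lm:BwdAndFwdCriticalEtaForMuAreTheSame}, $I^\fwd(\beta)\geq \beta\eta_c-\mu^\fwd(\eta_c-)$. Adding these,
\[
I^\bwd(\alpha)+I^\fwd(\beta)\ \geq\ (\alpha+\beta)\eta_c-\bigl(\mu^\bwd(\eta_c-)+\mu^\fwd(\eta_c-)\bigr).
\]
Now \eqref{eqnmuetac} of Lemma~\ref{Lm:BwdAndFwdCriticalEtaForMuAreTheSame} states $\mu^\bwd(\eta_c-)\leq -\mu^\fwd(\eta_c-)$, i.e.\ $\mu^\bwd(\eta_c-)+\mu^\fwd(\eta_c-)\leq 0$; substituting this into the last display gives $I^\bwd(\alpha)+I^\fwd(\beta)\geq(\alpha+\beta)\eta_c$, which by the previous paragraph is exactly \eqref{Lm:ConvexityEstimateIbwdIfwdSum:Eq:Estimate}.

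I do not expect any real obstacle: all ingredients are already in hand, namely the asymptotic tangent estimate (part~(5) of Lemmas~\ref{Lm:PropertiesOfIInOurCase} and \ref{Lm:PropertiesOfIInOurCase:Forward}), the equality of forward and backward critical values $\eta_c^\bwd=\eta_c^\fwd$, and the sign condition $\mu^\bwd(\eta_c-)+\mu^\fwd(\eta_c-)\leq 0$. The only point requiring a little attention is the identification $\lim_{\delta\to 0}\delta I\!\left(1/\delta\right)=\eta_c$: the bound $\delta I\!\left(1/\delta\right)\geq\eta_c$ is immediate from part~(5) and $\mu(\eta_c-)\leq 0$, but the matching upper estimate for the limit must be read off from parts~(6)--(7), which is why the two regimes for $\mu'(\eta_c-)$ have to be mentioned explicitly.
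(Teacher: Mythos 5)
Your argument is correct, but it follows a genuinely different route from the paper's. The paper proves \eqref{Lm:ConvexityEstimateIbwdIfwdSum:Eq:Estimate} by gluing the two perspective functions into a single function $I_g$ on all of $\mathbb{R}$ ($I_g(a)=aI(1/a)$ for $a>0$, $I_g(a)=|a|I^\fwd(1/|a|)$ for $a<0$, $I_g(0)=\lim_{\delta\to0}\delta I(1/\delta)$), using the exact relation $I^\fwd=I-2\mathbf{E}[b]$ from Corollary \ref{Corrollary:MuBwdMFwdDifferByConstant} to get convexity on the negative half-line, checking convexity across $0$ by comparing one-sided derivatives, and then applying the two-point convexity inequality at the point $0=\frac{\alpha\cdot(1/\alpha)+\beta\cdot(-1/\beta)}{\alpha+\beta}$. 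You instead identify the right-hand side as $(\alpha+\beta)\eta_c$ via parts (5)--(7) of Lemma \ref{Lm:PropertiesOfIInOurCase}, and then add the two supporting-line bounds $I^\bwd(\alpha)\ge\alpha\eta_c-\mu^\bwd(\eta_c-)$ and $I^\fwd(\beta)\ge\beta\eta_c-\mu^\fwd(\eta_c-)$, disposing of the constant with $\mu^\bwd(\eta_c-)+\mu^\fwd(\eta_c-)\le0$ from \eqref{eqnmuetac} and $\eta_c^\bwd=\eta_c^\fwd$ (Lemma \ref{Lm:BwdAndFwdCriticalEtaForMuAreTheSame}). Your route needs only the one-sided inequality of Lemma \ref{Lm:BwdAndFwdCriticalEtaForMuAreTheSame} rather than the exact constant $-2\mathbf{E}[b]$ (i.e.\ it can bypass Lemma \ref{Lm:ExpectationLnRhoIsConstant} and Corollary \ref{Corrollary:MuBwdMFwdDifferByConstant}, provided you prove the forward analogue of part (5) directly rather than via Lemma \ref{Lm:PropertiesOfIInOurCase:Forward}, whose stated proof uses that corollary), and it yields the explicit identification $\lim_{\delta\to0}\delta I(1/\delta)=\eta_c$, which the paper never makes. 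The paper's convexity argument, conversely, avoids the bookkeeping about the asymptotic line and the finiteness of $\mu(\eta_c-)$; on that last point your treatment is slightly loose when $\eta_c=0$, where monotonicity plus $\mu(0)>-\infty$ only bounds $\mu(\eta_c-)$ from above, and the lower bound should instead come from finiteness of $\mu$ at some interior point of $\Lambda$ (as in Lemma \ref{Lm:BwdAndFwdCriticalEtaForMuAreTheSame}) --- a minor repair, not a gap.
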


\begin{proof}
Define
\[
I_g(a)=\begin{cases}
    aI\left(\frac{1}{a}\right),\quad & a\geq 0,\cr
    |a|I^{\fwd}\left(\frac{1}{|a|}\right)\stackrel{(\ref{Lm:PropertiesOfIInOurCase:Forward:Eq:DifferenceIIfwd})}{=}|a|\left(I\left(\frac{1}{|a|}\right)-2{\mathbf E}[b]\right),\quad & a\leq 0,
\end{cases}
\]
with the convention that $I_g(0):=\lim\limits_{\delta\rightarrow 0}aI\left(\dfrac{1}{a}\right).$ 
By Property (8) of Lemma \ref{Lm:PropertiesOfIInOurCase} we see that $I_g(a)$ is a convex function for $a>0$. When $a<0$ we use the fact that the linear subtraction preserves convexity, so that $I_g(a)$ for $a<0$ is also convex. It remains to show that $I_g(a)$ remains convex at $a=0$. To this end we consider its left and right derivatives at $0$:
\[
(I_g)'_-(0) = \lim_{a\rightarrow 0-} \frac{I_g(a)-I_g(0)}{a}\ , 
\quad
(I_g)'_+(0) = \lim_{a\rightarrow 0+} \frac{I_g(a)-I_g(0)}{a}\ .
\]
By the definition of $I_g(a)$ we have
\[
(I_g)'_-(0) = (I_g)'_+(0) - 2\mathbf E[b] \le (I_g)'_+(0)\ , 
\]
which implies that $I_g(a)$ is still convex at $a=0$ by the standard one-dimensional convexity criterion.  Using the above fact that $I_g(a)$ is convex for all $a\in \mathbb{R}$, we get
\[\frac{I^\bwd(\alpha)+I^\fwd(\beta)}{\alpha+\beta}=\frac{\alpha I_g(1/\alpha)+\beta I_g(-1/\beta)}{\alpha+\beta}\geq I_g(0) \ ,\]
which is the desired estimate (\ref{Lm:ConvexityEstimateIbwdIfwdSum:Eq:Estimate}).
\end{proof}

\

Using Theorem \ref{Thm:LDPHittingTime} for the LDP of hitting time, we can further obtain LDP for the process $X^x(t)$ as the following 

\begin{theorem}[LDP for the Process $X^x(t)$] \label{Thm:LDPProcessX} Almost surely with respect to $\mathbf{P}$ the following estimates hold. Let $v>0$ and $\kappa\in (0,1]$. For any closed set $G\subset [0, +\infty)$ we have
\begin{equation}\label{Thm:LDPProcessX:Eq:UpperBoundPositive}
\limsup\limits_{t\rightarrow\infty}\dfrac{1}{\kappa t} \ln P^W\left(\dfrac{vt - X^{vt}(\kappa t)}{\kappa t}\in G\right)\leq -\inf\limits_{c\in G}cI\left(\dfrac{1}{c}\right) \ ;
\end{equation}
and for any open set $F\subset [0, +\infty)$ we have
\begin{equation}\label{Thm:LDPProcessX:Eq:LowerBoundPositive}
\liminf\limits_{t\rightarrow\infty}\dfrac{1}{\kappa t} \ln P^W\left(\dfrac{vt - X^{vt}(\kappa t)}{\kappa t}\in F\right)\geq -\inf\limits_{c\in F}cI\left(\dfrac{1}{c}\right) \ .
\end{equation}
For any closed set $G\subset (-\infty, 0]$ we have 
\begin{equation}\label{Thm:LDPProcessX:Eq:UpperBoundNegative}
\limsup\limits_{t\rightarrow\infty}\dfrac{1}{\kappa t} \ln P^W\left(\dfrac{-vt - X^{-vt}(\kappa t)}{\kappa t}\in G\right)\leq -\inf\limits_{-c\in G}cI^\fwd\left(\dfrac{1}{c}\right) \ ;
\end{equation}
and for any open set $F\subset (-\infty, 0]$ we have
\begin{equation}\label{Thm:LDPProcessX:Eq:LowerBoundNegative}
\liminf\limits_{t\rightarrow\infty}\dfrac{1}{\kappa t} \ln P^W\left(\dfrac{-vt - X^{-vt}(\kappa t)}{\kappa t}\in F\right)\geq -\inf\limits_{-c\in F}cI^\fwd\left(\dfrac{1}{c}\right) \ .
\end{equation}
\end{theorem}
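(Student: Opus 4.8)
\noindent\emph{Proof proposal.} The plan is to invert the hitting-time large deviation estimates of Theorems \ref{Thm:LDPHittingTime} and \ref{Thm:LDPHittingTime:Forward} into a position-level estimate via a space--time duality, and to control the part of the trajectory that survives after the relevant hitting time by a continuous-time stopping-time segmentation. Everything below is $\mathbf P$-a.s., on the full-measure set where both hitting-time LDPs hold. I treat the rightward quantity $Y_t:=(vt-X^{vt}(\kappa t))/(\kappa t)$, $G,F\subseteq[0,\infty)$, in detail; the leftward statements \eqref{Thm:LDPProcessX:Eq:UpperBoundNegative}--\eqref{Thm:LDPProcessX:Eq:LowerBoundNegative} follow by the mirror-image argument, starting at $-vt$, using the forward hitting times of Theorem \ref{Thm:LDPHittingTime:Forward} in place of the backward ones and $I^\fwd$ in place of $I$, all structural inputs transferring because $I^\fwd(a)=I(a)-2\mathbf E[b]$ (Lemma \ref{Lm:PropertiesOfIInOurCase:Forward}) and $\eta_c^\fwd=\eta_c$ (Lemma \ref{Lm:BwdAndFwdCriticalEtaForMuAreTheSame}).

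By path continuity, $\{X^{vt}(\kappa t)\le(v-c\kappa)t\}\subseteq\{T^{vt}_{(v-c\kappa)t}\le\kappa t\}$, i.e. $\{Y_t\ge c\}\subseteq\{t^{-1}T^{vt}_{(v-c\kappa)t}\le\kappa\}$, for every $c>0$ (here one extends Theorem \ref{Thm:LDPHittingTime} to levels $\le 0$; the proof only uses the stationary--ergodic structure of $b$ over an interval of length $c\kappa t$). Applying Theorem \ref{Thm:LDPHittingTime}(a) with starting point $vt$ and level $(v-c\kappa)t$ — so that $v-(v-c\kappa)=c\kappa$ and the rate function becomes $c\kappa\,I(\cdot/(c\kappa))$ — and dividing by $\kappa$ yields $\limsup_t\frac1{\kappa t}\ln P^W(Y_t\ge c)\le-c\inf_{0<u\le 1/c}I(u)$. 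By Lemma \ref{Lm:PropertiesOfIInOurCase}(2) this infimum equals $I(1/c)$ once $c\ge 1/\mu'(0)$; since moreover $c\mapsto cI(1/c)=\sup_{\eta\le\eta_c}(\eta-c\mu(\eta))$ is non-decreasing on $(0,\infty)$ (because $\mu\le 0$ on $\Lambda$, Lemma \ref{Lm:BwdAndFwdCriticalEtaForMuAreTheSame}) and grows to $+\infty$, the usual covering argument reduces the upper bound \eqref{Thm:LDPProcessX:Eq:UpperBoundPositive} over a closed $G$ to the single-point bound $\limsup_t\frac1{\kappa t}\ln P^W(Y_t\ge c_0)\le-c_0I(1/c_0)$, which is thereby settled whenever $c_0\ge 1/\mu'(0)$.

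The remaining range $c_0<1/\mu'(0)$ is the crux: there $\{T^{vt}_{(v-c_0\kappa)t}\le\kappa t\}$ is strictly more likely than $\{Y_t\ge c_0\}$, because the process can touch the level $(v-c_0\kappa)t$ quickly and then be dragged back up by the drift, so one must retain the information that $X^{vt}(\kappa t)$ is \emph{still} below it. I would decompose $\{Y_t\ge c_0\}$, with $L_t:=(v-c_0\kappa)t$ and small $\epsilon,\gamma>0$, according to the discretized last visit $\sigma\in[j\gamma t,(j+1)\gamma t)$ of $X^{vt}$ to $L_t+\epsilon t$: on this block $X^{vt}$ reaches $L_t+\epsilon t$ before time $(j+1)\gamma t$ and thereafter stays below $L_t+\epsilon t$ up to time $\kappa t$. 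The first part is estimated by Theorem \ref{Thm:LDPHittingTime}(a) (backward hitting of $L_t+\epsilon t$ from $vt$); the second — a confined excursion of length of order $t$ below a level only $\epsilon t$ above the current position, i.e. the event that a forward hitting time is far out in its right tail — by the upper bound of Theorem \ref{Thm:LDPHittingTime:Forward}, and it is precisely here that the unrestricted LDP (unavailable in \cite{NolenXinDCDS}) is essential. Summing the $O(1/\gamma)$ blocks and optimizing over $\alpha:=\sigma/t$, the total exponential cost is $c_0\kappa\,I(\alpha/(c_0\kappa))+(\kappa-\alpha)\eta_c+o_\epsilon(1)$, whose infimum over $\alpha\in(0,\kappa]$ is attained at $\alpha=\kappa$ because $I'(a)\le\eta_c$ everywhere (Lemma \ref{Lm:PropertiesOfIInOurCase}(4)), giving $c_0\kappa\,I(1/c_0)$ and hence the missing bound after $\epsilon,\gamma\to0$. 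This continuous-time segmentation — with overshoots, re-crossings, and non-stopping last-visit times absent from the discrete random-walk setting of \cite{CometsGantertZeitouni2000} — is the main obstacle of the proof.

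For the lower bound, reduce as usual to local balls $B_{c,\delta}$. Prescribe that $X^{vt}$ first hits $(v-c\kappa)t$ at a time in $((\kappa-\delta'')t,\kappa t)$ — an event of exponential rate tending to $cI(1/c)$ as $\delta''\to0$, by Theorem \ref{Thm:LDPHittingTime}(b) and the substitution above — and then, by the strong Markov property at that hitting time, that the process moves by at most $\delta\kappa t$ over the remaining time of length $<\delta''t$; the latter probability tends to $1$ because $|b|\le B$ bounds this displacement by $B\delta''t+\sup_{s\le\delta''t}|W_s|$, which is negligible against $\delta\kappa t$ for fixed $\delta,\delta''$ as $t\to\infty$. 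On this intersection $X^{vt}(\kappa t)\in((v-(c+\delta)\kappa)t,(v-(c-\delta)\kappa)t)$, i.e. $Y_t\in B_{c,\delta}$, so $\liminf_t\frac1{\kappa t}\ln P^W(Y_t\in B_{c,\delta})\ge-cI(1/c)-o_\delta(1)$, and letting $\delta,\delta''\to0$ gives \eqref{Thm:LDPProcessX:Eq:LowerBoundPositive}. The leftward inequalities \eqref{Thm:LDPProcessX:Eq:UpperBoundNegative}--\eqref{Thm:LDPProcessX:Eq:LowerBoundNegative} follow by the mirror-image argument described at the outset.
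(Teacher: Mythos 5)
Your lower bound and your upper bound in the regime $c\ge 1/\mu'(0)$ are essentially sound (your lower bound, which replaces the paper's appeal to Lemma \ref{Lm:SuperExponentialSmallProbabilityOfHittingTimeAtLinearDistance} by a direct strong-Markov estimate using $|b|\le B$ and a Gaussian bound for $W$ over the short residual time, is a legitimate minor variant of the paper's argument). The genuine gap is in the crux regime $0\le c_0<1/\mu'(0)$, and it is twofold. First, the claimed confinement cost $(\kappa-\alpha)\eta_c$ does not follow from Theorem \ref{Thm:LDPHittingTime:Forward}: after the last visit to $L_t+\epsilon t$ the trajectory starts \emph{at} that level, and at the block boundary it is only known to be somewhere below it, possibly within $o(t)$ of it, so there is no forward hitting time over a distance of order $t$ whose right tail contains the event ``stay below $L_t+\epsilon t$ for duration $(\kappa-\alpha)t$''; moreover the last-visit time is not a stopping time, so the two pieces of your decomposition cannot be multiplied via the strong Markov property as written. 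Showing that staying below a level costs $\eta_c$ per unit time, uniformly over starting points arbitrarily close to the level, is essentially equivalent to the $c_0=0$ case of the theorem itself, so at its key step your argument is circular. Second, even granting that cost, your bookkeeping of the first piece is incorrect: $P^W\left(T^{vt}_{L_t+\epsilon t}\le \alpha t\right)$ does not decay at rate $c_0\kappa I\left(\alpha/(c_0\kappa)\right)$ once $\alpha>c_0\kappa\mu'(0)$ --- the bound (\ref{Thm:LDPHittingTime:Eq:UpperBoundReducedToIntervalPart1}) is only available for $\alpha\le (v-c)\mu'(0)$ --- but saturates at the ``ever-hit'' rate $-c_0\kappa\mu(0)=2c_0\kappa\mathbf{E}[b]$. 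Redoing your optimization over $\alpha$ with this saturation yields a bound of order $\exp\left(-t\left[2c_0\kappa\mathbf{E}[b]+o(1)\right]\right)$, which is strictly weaker than the required $\exp\left(-t\,c_0\kappa I(1/c_0)\right)$ since $I(1/c_0)>-\mu(0)$ strictly when $c_0<1/\mu'(0)$; the information your containment discards --- that the process must be \emph{at} the level at a late last-visit time --- is precisely the recursive quantity being bounded.

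The paper circumvents both issues by a different device: it first treats $c_0=0$ by introducing the environment-dependent rate $a=\limsup_{t}\frac{1}{\kappa t}\sup_{s}\ln P^W\left(vt-X^{vt}(s)\ge 0\right)$, segments the event at genuine first-passage stopping times (descend by $\kappa t\delta$, then return to the start) so that the strong Markov property applies, invokes Theorems \ref{Thm:LDPHittingTime} and \ref{Thm:LDPHittingTime:Forward} together with the convexity estimate $I^\bwd(\alpha)+I^\fwd(\beta)\ge(\alpha+\beta)\lim_{\delta\to 0}\delta I(1/\delta)$ of Lemma \ref{Lm:ConvexityEstimateIbwdIfwdSum}, and thereby derives a closed self-consistent inequality for $a$ whose solution gives $a\le-\lim_{\delta\to 0}\delta I(1/\delta)$; the range $0<c_0<1/\mu'(0)$ is then handled by splicing the hitting-time LDP with this $c_0=0$ estimate and using convexity of $a\mapsto aI(1/a)$ (Lemma \ref{Lm:PropertiesOfIInOurCase}, part (8)). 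Some such self-referential estimate (or an equally substantive substitute) is indispensable here, and your proposal does not supply it.
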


\begin{proof} We first prove separately for (\ref{Thm:LDPProcessX:Eq:UpperBoundPositive}) and (\ref{Thm:LDPProcessX:Eq:LowerBoundPositive}), respectively. 

\noindent{\textit{Proof of \emph{(\ref{Thm:LDPProcessX:Eq:UpperBoundPositive})}}}. 
Consider $c\geq \dfrac{1}{\mu'(0)}$. Note that
\begin{equation}\label{Thm:LDPProcessX:Eq:UpperBoundPositive:InvertedToHittingTime1}
P^W\left(\dfrac{vt-X^{vt}(\kappa t)}{\kappa t}>c\right)
= P^W\left(X^{vt}(\kappa t)<(v-c\kappa)t\right)
\leq P^W\left(\dfrac{T^{vt}_{(v-c\kappa)t}}{t}<\kappa\right) \ .
\end{equation}

Since $\kappa\leq c\kappa \mu'(0)$, using (\ref{Thm:LDPProcessX:Eq:UpperBoundPositive:InvertedToHittingTime1}) and  (\ref{Thm:LDPHittingTime:Eq:UpperBoundReducedToIntervalPart1}) we have

\begin{equation}\label{Thm:LDPProcessX:Eq:UpperBoundReducedToIntervalPart1}
\limsup\limits_{t\rightarrow\infty} \dfrac{1}{\kappa t}\ln P^W\left(\dfrac{vt - X^{vt}(\kappa t)}{\kappa t} > c\right)
\leq \limsup\limits_{t\rightarrow\infty} \dfrac{1}{\kappa t}\ln P^W\left(\dfrac{T^{vt}_{(v-c\kappa)t}}{t}<\kappa\right)
\leq -c I\left(\dfrac{1}{c}\right) \ .
\end{equation}

The case when $0\leq c<\dfrac{1}{\mu'(0)}$, is a bit more involved, and we will use a stopping time segmentation argument (see the proof of \cite[Theorem 1]{CometsGantertZeitouni2000}). We first work out the case when $c=0$. Let $\varepsilon, \nu, \delta>0$ with $\nu<\mu'(0)$ and $\delta<\dfrac{1}{\mu'(0)}$ \footnote{We will later send $t\rightarrow \infty$ first, then $\varepsilon\rightarrow 0$, then $\nu \rightarrow 0$, and in the last $\delta\rightarrow 0$.}. Then we have
\begin{equation}\label{Thm:LDPProcessX:Eq:UpperBoundPositive:SmallcCase:cIs0:Decomposition}
\begin{array}{ll}
& P^W\left(\dfrac{vt-X^{vt}(\kappa t)}{\kappa t}\geq 0\right)
\\
\leq & P^W\left(T^{vt}_{vt-\kappa t\delta}\geq \kappa t\right) + P^W\left(\nu\kappa t \delta \leq T^{vt}_{vt-\kappa t \delta}<\kappa t, \dfrac{vt-X^{vt}(\kappa t)}{\kappa t}\geq 0\right)
\\
& \qquad \qquad \qquad \qquad +P^W(T^{vt}_{vt-\kappa t \delta}< \nu \kappa t \delta)
\\
\leq & P^W\left(T^{vt}_{vt-\kappa t \delta}\geq \kappa t\right) + \sum\limits_{k\geq \frac{\nu}{\varepsilon}; l\geq 0; (k+l+2)\varepsilon\leq \frac{1}{\delta}}P^W\left(\dfrac{T^{vt}_{vt-\kappa t \delta}}{\kappa t \delta}\in [k\varepsilon, (k+1)\varepsilon)\right)
\\
& \qquad \cdot P^W\left(\dfrac{T^{vt-\kappa t \delta}_{vt}}{\kappa t \delta}\in [l\varepsilon, (l+1)\varepsilon)\right)\cdot \sup\limits_{-2\kappa t \delta \varepsilon\leq s-\kappa t(1-(k+l)\delta\varepsilon)\leq 0} P^W(vt-X^{vt}(s)\geq 0)
\\
& \qquad \qquad \qquad \qquad + P^W\left(T^{vt}_{vt-\kappa t \delta}<\nu \kappa t \delta\right) \ .
\end{array}
\end{equation}
Here the event $\left\{\nu \kappa t \delta \leq T^{vt}_{vt-\kappa t \delta}<\kappa t , \dfrac{vt-X^{vt}(\kappa t)}{\kappa t}\geq 0\right\}$ is decomposed into $3$ pieces: the trajectory $X^{vt}(\kappa t)$ running from $vt$ to $vt-\kappa t \delta$ during time $t_1\in [k\varepsilon\kappa t\delta, (k+1)\varepsilon \kappa t \delta)$; running back from $vt-\kappa t\delta$ to $vt$ during time $t_2\in [t_1+l\varepsilon\kappa t \delta, t_1+(l+1)\varepsilon\kappa t \delta)$; running from $vt$ to $X^{vt}(\kappa t)\leq vt$ during time $t_3\in [\kappa t-t_2, \kappa t]$, and we used the strong Markov propety to connect these $3$ pieces of trajectory. We now define the random variable
\begin{equation}\label{Thm:LDPProcessX:Eq:UpperBoundPositive:SmallcCase:cIs0:Decomposition:a}
a=\limsup\limits_{t\rightarrow\infty} \dfrac{1}{\kappa t}\sup\limits_{s: \ -2\kappa t\delta \varepsilon\leq s-\kappa t\leq 0}\ln P^W\left(vt-X^{vt}(s)\geq 0\right) \ .
\end{equation}
Apparently, since $0\leq P^W(vt-X^{vt}(s)\geq 0)\leq 1$, we have
$$\limsup\limits_{t\rightarrow\infty}\dfrac{1}{\kappa t}\ln P^W\left(vt-X^{vt}(\kappa t)\geq 0\right)\leq a \leq 0 \ .$$

Moreover, since the process $X^{vt}(s)$ starts from $vt$ and the environment is stationary and ergodic, and $\kappa t\rightarrow \infty$ as $t\rightarrow\infty$, we observe that $a$ defined in (\ref{Thm:LDPProcessX:Eq:UpperBoundPositive:SmallcCase:cIs0:Decomposition:a}) is actually independent of the choice of $\kappa$.

Using the inequality, for $s\leq \kappa t$, that $$P^W\left(vt-X^{vt}(\kappa t)\geq 0\right)\geq P^W\left(vt-X^{vt}(s)\geq 0\right)\cdot \inf\limits_{w\leq vt}P^W\left(X^{w}(\kappa t-s)\leq w-(\kappa t-s)\right)$$
we further estimate that
$$\begin{array}{ll}
& \limsup\limits_{t\rightarrow\infty} \dfrac{1}{\kappa t}\ln P^W\left(vt-X^{vt}(\kappa t)\geq 0\right)
\\
\geq & a+\limsup\limits_{t\rightarrow\infty}\dfrac{1}{\kappa t} \ln \sup\limits_{s: \  -2t\delta\varepsilon\leq s-\kappa t\leq 0}\inf\limits_{w\leq vt} P^W\left(X^w(\kappa t-s)\leq w-(\kappa t-s)\right)
\\
\geq & a-C\delta \varepsilon \ ,
\end{array}$$
with some $C>0$ using Lemma \ref{Lm:WorstCaseEnvironmentEstimateForaInThmLDPProcessX}, which may depend on $\kappa$. So the above reasoning gives

\begin{equation}\label{Thm:LDPProcessX:Eq:UpperBoundPositive:SmallcCase:cIs0:BoundFora}
a-C\delta\varepsilon\leq \limsup\limits_{t\rightarrow\infty}\dfrac{1}{\kappa t}\ln P^W\left(vt-X^{vt}(\kappa t)\geq 0\right)\leq a \ .
\end{equation}

Using Theorem \ref{Thm:LDPHittingTime}, as well as (\ref{Thm:LDPProcessX:Eq:UpperBoundPositive:SmallcCase:cIs0:BoundFora}), we can estimate term-by-term of the right-hand-side of (\ref{Thm:LDPProcessX:Eq:UpperBoundPositive:SmallcCase:cIs0:Decomposition}) as follows:

\begin{itemize}
\item[(i)] The term $P^W\left(T^{vt}_{vt-\kappa t \delta}\geq \kappa t\right)$. We have, using (\ref{Thm:LDPHittingTime:Eq:UpperBound}), that
$$\begin{array}{ll}
\limsup\limits_{t\rightarrow\infty}\dfrac{1}{\kappa t}\ln P^W\left(T^{vt}_{vt-\kappa t \delta}\geq \kappa t\right)
& = \limsup\limits_{t\rightarrow\infty}\dfrac{1}{\kappa t}\ln P^W\left(\dfrac{T^{vt}_{(v-\kappa \delta)t}}{t}\geq \kappa\right) 
\\
& \leq  -\dfrac{1}{\kappa}(v-(v-\kappa\delta))\inf\limits_{a\in [\kappa, +\infty)} I\left(\dfrac{a}{v-(v-\kappa \delta)}\right)
\\
& = - \delta I\left(\dfrac{1}{\delta}\right) \ .
\end{array}$$
Here in the last inequality we have used part (2) of Lemma \ref{Lm:PropertiesOfIInOurCase}, that $I(a)$ is increasing when $a\geq \mu'(0)$, as well as $\dfrac{a}{\kappa}\geq 1$, $\dfrac{1}{\delta}> \mu'(0)$.

\item[(ii)] The term $P^W\left(\dfrac{T^{vt}_{vt-\kappa t \delta}}{\kappa t \delta}\in [k\varepsilon, (k+1)\varepsilon)\right)$. Similarly as in (i), we use (\ref{Thm:LDPHittingTime:Eq:UpperBound}) to get
$$\begin{array}{ll}
& \limsup\limits_{t\rightarrow\infty}\dfrac{1}{\kappa t}\ln P^W\left(\dfrac{T^{vt}_{vt-\kappa t \delta}}{\kappa t \delta}\in [k\varepsilon, (k+1)\varepsilon)\right)
\\
=& \limsup\limits_{t\rightarrow\infty}\dfrac{1}{\kappa t}\ln P^W\left(\dfrac{T^{vt}_{vt-\kappa t \delta}}{t}\in [k\kappa \delta\varepsilon, (k+1)\kappa \delta\varepsilon)\right)
\\
\leq &  -\dfrac{1}{\kappa}(v-(v-\kappa \delta))\inf\limits_{a\in [k\kappa\delta\varepsilon, (k+1)\kappa\delta\varepsilon)}I\left(\dfrac{a}{(v-(v-\kappa\delta))}\right)
\\
\leq & -\delta I(k\varepsilon)+\delta \omega(\delta;\varepsilon) \ ,
\end{array}$$
where $\omega(\delta;\varepsilon)$ is the oscillation of $I(\bullet)$ defined by 
$$\omega(\delta;\varepsilon)=\max\left\{|I(a)-I(a')|; a, a'\in [0, 1/\delta], |a-a'|\leq \varepsilon\right\} \ ,$$
which tends to $0$ with $\varepsilon$ for any fixed $\delta>0$.

\item[(iii)] The term $P^W\left(\dfrac{T_{vt}^{vt-\kappa t \delta}}{\kappa t \delta}\in [l\varepsilon, (l+1)\varepsilon)\right)$. Similarly as in (i) and (ii), using (\ref{Thm:LDPHittingTime:Forward:Eq:UpperBound}) we get

$$\begin{array}{ll}
& \limsup\limits_{t\rightarrow\infty}\dfrac{1}{\kappa t}\ln P^W\left(\dfrac{T^{vt-\kappa t \delta}_{vt}}{\kappa t \delta}\in [l\varepsilon, (l+1)\varepsilon)\right)
\\
= & \limsup\limits_{t\rightarrow\infty}\dfrac{1}{\kappa t}\ln P^W\left(\dfrac{T^{vt-\kappa t \delta}_{vt}}{t}\in [l\kappa\delta\varepsilon, (l+1)\kappa\delta\varepsilon)\right)
\\
\leq & -\dfrac{1}{\kappa}(v-(v-\kappa \delta))\inf\limits_{a\in [l\kappa\delta\varepsilon, (l+1)\kappa\delta\varepsilon)}I^{\fwd}\left(\dfrac{a}{v-(v-\kappa \delta)}\right)
\\
\leq & -\delta I^\fwd(l\varepsilon)+\delta \omega^\fwd(\delta; \varepsilon) \ ,
\end{array}$$
where $\omega^\fwd(\delta;\varepsilon)$ is the oscillation of $I^\fwd(\bullet)$ defined by
$$\omega^\fwd(\delta;\varepsilon)=\max\{|I^\fwd(a)-I^\fwd(a')|; a, a'\in [0, 1/\delta], |a-a'|\leq \varepsilon\} \ ,$$
which tends to $0$ with $\varepsilon$ for any fixed $\delta>0$. 

\item[(iv)] The term $\sup\limits_{-2\kappa t \delta \varepsilon\leq s-\kappa t(1-(k+l)\delta\varepsilon)\leq 0} P^W(vt-X^{vt}(s)\geq 0)$. Let $\widetilde{\kappa}=\kappa(1-(k+l+2)\delta \varepsilon)$. Assume $\widetilde{\kappa}>0$, the by the invariance of $a$ with respect to the choice of $\kappa$ as we have demonstrated below (\ref{Thm:LDPProcessX:Eq:UpperBoundPositive:SmallcCase:cIs0:Decomposition:a}), we have
\begin{eqnarray}\label{eqn:def}a=\limsup\limits_{t\rightarrow\infty}\dfrac{1}{\widetilde{\kappa}t}\sup\limits_{s: -2\widetilde{\kappa}t\delta\varepsilon\leq s - \widetilde{\kappa}t\leq 0}\ln P^W\left(vt-X^{vt}(s)\geq 0\right) \ .\end{eqnarray}

From here, using (\ref{Lm:SupProbabilityCompareOnVerySmallIntervalDifference:EqDifferenceEqual}) in Lemma \ref{Lm:SupProbabilityCompareOnVerySmallIntervalDifference} we get
$$ \limsup\limits_{t\rightarrow\infty}\dfrac{1}{\kappa t}\ln \sup\limits_{-2\kappa t \delta \varepsilon\leq s-\kappa t(1-(k+l)\delta\varepsilon)\leq 0} P^W(vt-X^{vt}(s)\geq 0)
\\
\leq (1-(k+l+2)\delta\varepsilon)a \ .
$$
If $\widetilde{\kappa}=0$, then the above estimate also following from (\ref{Lm:SupProbabilityCompareOnVerySmallIntervalDifference:EqDifferenceEqual}) in Lemma \ref{Lm:SupProbabilityCompareOnVerySmallIntervalDifference} by setting the right-hand side to be $0$.

\item[(v)] The term $P^W\left(T^{vt}_{vt-\kappa t \delta}<\nu \kappa t \delta\right)$. Since $\nu<\mu'(0)$, we apply (\ref{Thm:LDPHittingTime:Eq:UpperBoundReducedToIntervalPart1}) to get
$$\limsup\limits_{t\rightarrow\infty}\dfrac{1}{\kappa t}\ln P^W\left(T^{vt}_{vt-\kappa t \delta}<\nu \kappa t \delta\right)\leq - \delta I(\nu) \ .$$
\end{itemize}

Combining (i)-(v) together with (\ref{Thm:LDPProcessX:Eq:UpperBoundPositive:SmallcCase:cIs0:Decomposition}), (\ref{Thm:LDPProcessX:Eq:UpperBoundPositive:SmallcCase:cIs0:BoundFora}) we see that for some $C>0$ we have 
$$\begin{array}{ll}
a & \leq C\delta \varepsilon + \max\left\{-\delta I\left(\dfrac{1}{\delta}\right), \max\limits_{k\geq \frac{\nu}{\varepsilon}; l\geq 0; (k+l+2)\varepsilon\leq \frac{1}{\delta}}\left[-\delta I(k\varepsilon)+\delta\omega(\delta;\varepsilon)\right.\right.
\\
& \qquad \left.\left.-\delta I^\fwd(l\varepsilon)+\delta \omega^\fwd(\delta; \varepsilon)+(1-(k+l+2)\delta\varepsilon)a\right], -\delta I (\nu)\right\} \ .
\end{array}$$
By Lemma \eqref{Lm:ConvexityEstimateIbwdIfwdSum} , since $\delta<\dfrac{1}{\mu'(0)}$, we obtain $$I(k\varepsilon)+I^\fwd(l\varepsilon)\geq (k+l)\varepsilon \lim\limits_{\delta\rightarrow 0}\delta I\left(\dfrac{1}{\delta}\right)\geq (k+l)\varepsilon \delta I\left(\dfrac{1}{\delta}\right) \ .$$

\noindent Set $a'=a+\delta I\left(\dfrac{1}{\delta}\right)$. From the above we have the estimate
$$\begin{array}{ll}
a'\leq & C\delta \varepsilon +
\\
& \max\left\{0, \max\limits_{k\geq \frac{\nu}{\varepsilon}; l\geq 0; (k+l+2)\varepsilon\leq \frac{1}{\delta}}\left[\delta\omega(\delta;\varepsilon)+\delta\omega^\fwd(\delta;\varepsilon)+2\delta\varepsilon \cdot \delta I\left(\dfrac{1}{\delta}\right)+(1-(k+l+2)\delta\varepsilon)a'\right],\right.\\
& \qquad \qquad \qquad \left. \delta\left[I\left(\dfrac{1}{\delta}\right)-I(\nu)\right]\right\} \ .
\end{array}$$
Since when $k\geq \dfrac{\nu}{\varepsilon}, l\geq 0, (k+l+2)\varepsilon\leq \dfrac{1}{\delta}$ we must have $1-(k+l+2)\delta\varepsilon\leq 1-\nu\delta$, this gives
\begin{align*}
\nu \delta a'\leq\,  & C\delta \varepsilon +
\\
& \max\left\{0, \left[\delta\omega(\delta;\varepsilon)+\delta\omega^\fwd(\delta;\varepsilon)+2\delta\varepsilon \cdot \delta I\left(\dfrac{1}{\delta}\right)\right], \delta\left[I\left(\dfrac{1}{\delta}\right)-I(\nu)\right]\right\} \ .
\end{align*}
Simplifying the above by canceling $\delta$ and divide by $\nu$ we get
\begin{align*}
a'\leq\, & C\dfrac{\varepsilon}{\nu} +
\\
& \max\left\{0, \dfrac{1}{\nu}\left[\omega(\delta;\varepsilon)+\omega^\fwd(\delta;\varepsilon)+2\varepsilon \cdot \delta I\left(\dfrac{1}{\delta}\right)\right], \dfrac{1}{\nu}\left[I\left(\dfrac{1}{\delta}\right)-I(\nu)\right]\right\} \ .
\end{align*}
We first send $\varepsilon\rightarrow 0$ and we get
$$a'\leq \max\left\{0, \dfrac{1}{\nu}\left[I\left(\dfrac{1}{\delta}\right)- I(\nu)\right]\right\}\ .$$

\noindent By property (3) in Lemma \ref{Lm:PropertiesOfIInOurCase}, we then send $\nu\rightarrow 0$ to ensure that $I\left(\dfrac{1}{\delta}\right)-I(\nu)<0$. This settles $a'\leq 0$, i.e., $a\leq -\delta I\left(\dfrac{1}{\delta}\right)$ for any $\delta>0$. Lastly we send $\delta\rightarrow 0$ and we use (\ref{Thm:LDPProcessX:Eq:UpperBoundPositive:SmallcCase:cIs0:BoundFora}) to conclude that almost surely with respect to $\mathbf{P}$ we have
\begin{equation}\label{Thm:LDPProcessX:Eq:UpperBoundPositive:SmallcCase:cIs0:FinalEstimate}
\limsup\limits_{t\rightarrow\infty} \dfrac{1}{\kappa t}\ln P^W\left(\dfrac{vt-X^{vt}(\kappa t)}{\kappa t}\geq 0\right) \leq -\lim\limits_{\delta\rightarrow 0}\delta I\left(\dfrac{1}{\delta}\right) \ .
\end{equation}

\noindent For any arbitrary $0< c < \dfrac{1}{\mu'(0)}$, using the fact that $$\left\{\dfrac{vt-X^{vt}(\kappa t)}{\kappa t}\geq c\right\}=\left\{X^{vt}(\kappa t)\leq (v-c\kappa)t\right\}\subseteq \{T^{vt}_{vt-c\kappa t}\leq \kappa t\} \ ,$$ we can write
\begin{equation}\label{Thm:LDPProcessX:Eq:UpperBoundPositive:SmallcCase:cIsPositive:Decomposition}
\begin{array}{ll}
P^W\left(\dfrac{vt-X^{vt}(\kappa t)}{\kappa t}\geq c\right) & = P^W\left(T^{vt}_{vt-c\kappa t}\leq \kappa t, X^{vt}(\kappa t)\leq (v-c\kappa)t\right)
\\
& \leq \sum\limits_{0\leq k\leq \frac{1}{\varepsilon}} P^W\left(\dfrac{T^{vt}_{vt-c\kappa t}}{\kappa t}\in [k\varepsilon, (k+1)\varepsilon) \right)
\\
&  \ \ \cdot \sup\limits_{s: \kappa t (1-(k+1)\varepsilon)\leq s\leq \kappa t (1-k \varepsilon)} P^W((v-c\kappa) t-X^{(v-c\kappa) t}(s)\geq 0) \ .
\end{array}
\end{equation}
Similar to part (ii) in the right-hand-side of (\ref{Thm:LDPProcessX:Eq:UpperBoundPositive:SmallcCase:cIs0:Decomposition}) we can estimate
$$\begin{array}{ll}
& \limsup\limits_{t\rightarrow\infty}\dfrac{1}{\kappa t}\ln P^W\left(\dfrac{T^{vt}_{vt-c\kappa t}}{\kappa t}\in [k\varepsilon, (k+1)\varepsilon)\right)
\\
=& \limsup\limits_{t\rightarrow\infty}\dfrac{1}{\kappa t}\ln P^W\left(\dfrac{T^{vt}_{vt-c\kappa t}}{t}\in [k\kappa \varepsilon, (k+1)\kappa\varepsilon)\right)
\\
\leq &  -\dfrac{1}{\kappa}(v-(v-c\kappa))\inf\limits_{a\in [k\kappa\varepsilon, (k+1)\kappa\varepsilon)}I\left(\dfrac{a}{(v-(v-c\kappa))}\right)
\\
\leq & -c I\left(\dfrac{k\varepsilon}{c}\right)+\delta \omega(c;\varepsilon) \ ,
\end{array}$$
where $\omega(c;\varepsilon)$ is the oscillation of $I(\bullet)$ defined by 
$$\omega(c;\varepsilon)=\max\left\{|I(a)-I(a')|; a, a'\in [0, c^{-1}], |a-a'|\leq \varepsilon\right\} \ ,$$
which tends to $0$ with $\varepsilon$ for any fixed $c>0$.

Using (\ref{Thm:LDPProcessX:Eq:UpperBoundPositive:SmallcCase:cIs0:FinalEstimate}) we also obtain
$$\begin{array}{ll}
& \limsup\limits_{t\rightarrow\infty}\dfrac{1}{\kappa t}\sup\limits_{s: \kappa t (1-(k+1)\varepsilon)\leq s\leq \kappa t (1-k \varepsilon)} P^W((v-c\kappa) t-X^{(v-c\kappa) t}(s)\geq 0)
\\
\leq &  -(1-k\varepsilon)\lim\limits_{\delta\rightarrow 0}\delta I\left(\dfrac{1}{\delta}\right) \ .
\end{array}$$
Thus by (\ref{Thm:LDPProcessX:Eq:UpperBoundPositive:SmallcCase:cIsPositive:Decomposition}) we get, for $0<c<\dfrac{1}{\mu'(0)}$, that

\begin{equation}\label{Thm:LDPProcessX:Eq:UpperBoundPositive:SmallcCase:cIsPositive:FinalEstimate}
\begin{array}{ll}
P^W\left(\dfrac{vt-X^{vt}(\kappa t)}{\kappa t}\geq c\right) & \leq \limsup\limits_{\varepsilon\rightarrow 0}\max\limits_{0\leq k \leq \frac{1}{\varepsilon}}\left\{-cI\left(\dfrac{k\varepsilon}{c}\right)-(1-k\varepsilon)\lim\limits_{\delta\rightarrow 0}\delta I\left(\dfrac{1}{\delta}\right)\right\}
\\
& \leq \limsup\limits_{\varepsilon\rightarrow 0}\left[-\min\limits_{0\leq k \leq \frac{1}{\varepsilon}}\left\{k\varepsilon\dfrac{c}{k\varepsilon}I\left(\dfrac{k\varepsilon}{c}\right)+(1-k\varepsilon)\lim\limits_{\delta\rightarrow 0}\delta I\left(\dfrac{1}{\delta}\right)\right\}\right]
\\
& = -cI\left(\dfrac{1}{c}\right) \ ,
\end{array}
\end{equation}
where we have used convexity of the function $a\mapsto aI\left(\dfrac{1}{a}\right)$ (Property (8) of Lemma \ref{Lm:PropertiesOfIInOurCase}). Combining (\ref{Thm:LDPProcessX:Eq:UpperBoundReducedToIntervalPart1}), (\ref{Thm:LDPProcessX:Eq:UpperBoundPositive:SmallcCase:cIs0:FinalEstimate}) and (\ref{Thm:LDPProcessX:Eq:UpperBoundPositive:SmallcCase:cIsPositive:FinalEstimate}), we derive (\ref{Thm:LDPProcessX:Eq:UpperBoundPositive}).

\noindent{\textit{Proof of \emph{(\ref{Thm:LDPProcessX:Eq:LowerBoundPositive})}}}.
To prove \eqref{Thm:LDPProcessX:Eq:LowerBoundPositive}, we follow the method introduced in \cite[Theorem 5]{FanHuTerlovCMP}, \cite[Section 5]{Taleb2001}. Let $u>0$, $\varepsilon>0$ and $\delta>0$ be given. Denote by $B_\delta(u)=(u-\delta, u+\delta)$. We then have the identity 
\begin{equation}\label{Thm:LDPProcessX:LowerBoundPositive:Proof:TurnDeviationToCenterEstimate}
P^W\left(\dfrac{vt - X^{vt}(\kappa t)}{\kappa t}\in B_\delta(u)\right)
= P^W\left(|X^{vt}(\kappa t)- (v-\kappa u)t|<\kappa t \delta\right) \ .
\end{equation}

We split the event $\{(1-\varepsilon)\kappa t < T^{vt}_{(v-\kappa u)t} < \kappa\}$ into two parts depending on whether or not $|X^{vt}(\kappa t) - (v-\kappa u) t| < \kappa t \delta$,

\begin{equation}\label{Thm:LDPProcessX:LowerBoundPositive:Proof:SplitEstimateHittingTimeTwoParts}
\begin{array}{ll}
& P^W\left((1-\varepsilon) \kappa t < T^{vt}_{(v-\kappa u) t} < \kappa t\right)
\\
\leq & P^W\left(|X^{vt}(\kappa t)-(v-\kappa u)t|<\kappa t \delta\right)
\\
& \qquad + P^W\left(|X^{vt}(\kappa t) -(v-\kappa u) t|\geq \kappa t \delta; (1-\varepsilon)\kappa t < T^{vt}_{(v-\kappa u)t}<\kappa t\right) \ . 
\end{array}
\end{equation}

\noindent Combining (\ref{Thm:LDPProcessX:LowerBoundPositive:Proof:TurnDeviationToCenterEstimate}) and (\ref{Thm:LDPProcessX:LowerBoundPositive:Proof:SplitEstimateHittingTimeTwoParts}) we see that

\begin{equation}\label{Thm:LDPProcessX:LowerBoundPositive:Proof:CenterEstimateReductionToHittingTime}
\begin{array}{ll}
& P^W\left(\dfrac{vt-X^{vt}(\kappa t)}{\kappa t}\in B_\delta(u)\right)
\\
\geq & P^W\left((1-\varepsilon)\kappa t < T^{vt}_{(v-\kappa u)t}<\kappa t\right)
\\
& \qquad - P^W\left(|X^{vt}(\kappa t) -(v-\kappa u)t|\geq \kappa t\delta; (1-\varepsilon)\kappa t < T^{vt}_{(v-\kappa u)t}< \kappa t\right) \ .
\end{array}
\end{equation}

\noindent The last term 

\begin{equation}\label{Thm:LDPProcessX:LowerBoundPositive:Proof:HittingTimeRemainderEstimate1}
\begin{array}{ll}
& P^W\left(|X^{vt}(\kappa t) -(v-\kappa u)t|\geq \kappa t\delta; \ (1-\varepsilon)\kappa t < T^{vt}_{(v-\kappa u)t}< \kappa t\right)
\\
\leq & P^W\left(\sup\limits_{0<s-T^{vt}_{(v-\kappa u)t}<\varepsilon \kappa t} |X^{vt}(s)-(v-\kappa u) t|\geq \kappa t \delta\right)
\\
= & P^W\left(\sup\limits_{0<s<\varepsilon \kappa t} |X^{(v-\kappa u)t}(s)-(v-\kappa u)t|\geq \kappa t \delta\right) \ ,
\end{array}
\end{equation}
where the first inequality is due to the fact that $0<\kappa t - T^{vt}_{(v-\kappa u) t}< \varepsilon \kappa t$ and the second inequality is due to the strong Markov property of $X_t$. The last term in (\ref{Thm:LDPProcessX:LowerBoundPositive:Proof:HittingTimeRemainderEstimate1}) is turned into the hitting time by observing that 

\begin{equation}\label{Thm:LDPProcessX:LowerBoundPositive:Proof:HittingTimeRemainderEstimate2}
\begin{array}{ll}
& P^W\left(\sup\limits_{0<s<\varepsilon \kappa t} |X^{(v-\kappa u)t}(s)-(v-\kappa u)t|\geq \kappa t \delta\right)
\\
= & P^W\left(T^{(v-\kappa u)t}_{(v-\kappa u)t-\kappa t \delta}\wedge T^{(v-\kappa u)t}_{(v-\kappa u)t+\kappa t \delta}<\varepsilon \kappa t\right)
\\
\leq & P^W\left(T^{(v-\kappa u)t}_{(v-\kappa u)t-\kappa t \delta}<\varepsilon \kappa t\right) + P^W\left(T^{(v-\kappa u)t}_{(v-\kappa u)t+\kappa t \delta}<\varepsilon \kappa t\right) \ .
\end{array}
\end{equation}

\noindent By Lemma \ref{Lm:SuperExponentialSmallProbabilityOfHittingTimeAtLinearDistance}, we have

\begin{equation}\label{Thm:LDPProcessX:LowerBoundPositive:Proof:HittingTimeRemainderEstimate3}
\lim\limits_{\varepsilon \rightarrow 0} \limsup\limits_{t\rightarrow \infty} \dfrac{1}{t}\ln\left[P^W\left(T^{(v-\kappa u)t}_{(v-\kappa u)t-\kappa t \delta}<\varepsilon \kappa t\right) + P^W\left(T^{(v-\kappa u)t}_{(v-\kappa u)t+\kappa t \delta}<\varepsilon \kappa t\right)\right] = -\infty \ .
\end{equation}

\noindent Then by (\ref{Thm:LDPProcessX:LowerBoundPositive:Proof:SplitEstimateHittingTimeTwoParts}), (\ref{Thm:LDPProcessX:LowerBoundPositive:Proof:HittingTimeRemainderEstimate3}) combined with (\ref{Thm:LDPHittingTime:Eq:LowerBound}) in Theorem \ref{Thm:LDPHittingTime}, we obtain 

$$\begin{array}{ll}
 & \liminf\limits_{t \rightarrow \infty} \dfrac{1}{t} \ln P^W\left(\dfrac{vt - X^{vt}(\kappa t)}{\kappa t} \in B_\delta(u)\right)
\\
\geq & \liminf\limits_{\varepsilon \rightarrow 0}\liminf\limits_{t\rightarrow \infty} \dfrac{1}{t} \ln P^W\left(T^{vt}_{(v-\kappa u)t}\in ((1-\varepsilon)\kappa t, \kappa t)\right)
\\
= & -\kappa u I\left(\dfrac{1}{u}\right) \ ,
\end{array}$$
which proves the lower bound (\ref{Thm:LDPProcessX:Eq:LowerBoundPositive}).

\noindent{\textit{Proof of \emph{(\ref{Thm:LDPProcessX:Eq:UpperBoundNegative})} and \emph{(\ref{Thm:LDPProcessX:Eq:LowerBoundNegative})}}}. This follows the same rationale as we did in the proof of (\ref{Thm:LDPProcessX:Eq:UpperBoundPositive}) and (\ref{Thm:LDPProcessX:Eq:LowerBoundPositive}), so we omit the details.
\end{proof}

\begin{lemma}\label{Lm:WorstCaseEnvironmentEstimateForaInThmLDPProcessX}
For any $\kappa\in (0,1]$, there exists some $C>0$ such that for any $v>0$ and any $\delta>0$, $\varepsilon>0$ small enough we have
$$\limsup\limits_{t\rightarrow\infty}\dfrac{1}{\kappa t} \ln \sup\limits_{s: \  -2t\delta\varepsilon\leq s-\kappa t\leq 0}\inf\limits_{w\leq vt} P^W\left(X^w(\kappa t-s)\leq w - (\kappa t-s)\right)\geq -C\delta\varepsilon \ .$$
\end{lemma}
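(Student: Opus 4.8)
The plan is to reduce the statement to one uniform Gaussian tail estimate, the point being that the boundedness of the drift makes the relevant minorization independent of both the environment $\omega$ and the starting point $w$. First I would use condition (3), i.e. $\eqref{Eq:AssumptionOnDrift:Boundedness}$: for $\mathbf P$-a.e.\ $\omega$ one has $|b(\cdot,\omega)|\le B$ pathwise, so from $\eqref{Eq:SDE-RandomDrift}$, for every $w$ and every $\tau\ge 0$,
$$X^w(\tau)=w+\int_0^\tau b(X^w(r))\,{\rm d}r+W_\tau\ \le\ w+B\tau+W_\tau .$$
Hence $\{W_\tau\le-(B+1)\tau\}\subseteq\{X^w(\tau)\le w-\tau\}$, giving the minorization
$$P^W\big(X^w(\tau)\le w-\tau\big)\ \ge\ P^W\big(W_\tau\le-(B+1)\tau\big)=\Phi\big(-(B+1)\sqrt{\tau}\big),$$
where $\Phi$ is the standard normal distribution function; the right-hand side does not depend on $w$ or on $\omega$.

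Next I would dispose of the two extrema. Writing $\tau=\kappa t-s$, the constraint $-2t\delta\varepsilon\le s-\kappa t\le 0$ is exactly $0\le\tau\le 2t\delta\varepsilon$, and $\tau\mapsto\Phi(-(B+1)\sqrt\tau)$ is non-increasing, so for every admissible $s$ and every $w\le vt$,
$$P^W\big(X^w(\kappa t-s)\le w-(\kappa t-s)\big)\ \ge\ \Phi\big(-(B+1)\sqrt{2t\delta\varepsilon}\big).$$
Consequently $\sup_{s}\inf_{w\le vt}P^W(\cdots)\ge\Phi(-(B+1)\sqrt{2t\delta\varepsilon})$ (in fact even $\inf_s\inf_{w\le vt}P^W(\cdots)$ is bounded below by the same quantity, which is the form that is actually used in the proof of Theorem \ref{Thm:LDPProcessX}). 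Then I would invoke the elementary bound $\Phi(-x)\ge\frac{x}{\sqrt{2\pi}\,(1+x^2)}e^{-x^2/2}$ for $x>0$ with $x=(B+1)\sqrt{2t\delta\varepsilon}$, obtaining
$$\Phi\big(-(B+1)\sqrt{2t\delta\varepsilon}\big)\ \ge\ \frac{(B+1)\sqrt{2t\delta\varepsilon}}{\sqrt{2\pi}\,\big(1+2(B+1)^2t\delta\varepsilon\big)}\,\exp\!\big(-(B+1)^2\,t\,\delta\varepsilon\big).$$

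Applying $\frac{1}{\kappa t}\ln(\cdot)$ and letting $t\to\infty$ (with $\delta,\varepsilon>0$ fixed), the exponential factor contributes exactly $-\frac{(B+1)^2}{\kappa}\delta\varepsilon$, while the algebraic prefactor is of order $t^{-1/2}$ and hence contributes $\frac{1}{\kappa t}\,O(\ln t)\to 0$. This gives the claim with $C=(B+1)^2/\kappa$, a constant depending only on $\kappa$ and the deterministic bound $B$, in particular not on $v,\delta,\varepsilon$; and since every inequality above is deterministic once $|b(\cdot,\omega)|\le B$, it holds $\mathbf P$-almost surely with this non-random $C$.

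The estimate is essentially routine given the bounded-drift hypothesis, so there is no serious obstacle; the only points to watch are that $\tau=\kappa t-s$ is \emph{not} bounded as $t\to\infty$ but grows linearly up to $2t\delta\varepsilon$, so one must keep the genuine Gaussian exponent $-(B+1)^2\tau/2$ rather than a short-time estimate and check that the polynomial prefactor is sub-exponential and therefore vanishes under $\frac{1}{\kappa t}\ln(\cdot)$; and that the Step-1 minorization is uniform in $w\le vt$ and in $\omega$, which is precisely what lets it survive the $\inf_{w\le vt}$ and hold almost surely with a deterministic $C$.
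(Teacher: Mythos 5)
Your proof is correct and takes essentially the same route as the paper: use the uniform bound $|b|\le B$ to dominate $X^w$ by $w+B\tau+W_\tau$, reduce to a Gaussian tail lower bound, and conclude with a constant of order $(1+B)^2/\kappa$ (the paper gets $C=2(1+B)^2/\kappa$ via $1-\Phi(x)\ge e^{-x^2}$, you get $(1+B)^2/\kappa$ via the Mills-ratio bound). The only cosmetic difference is that you work uniformly in $w$ and $s$ directly, whereas the paper first invokes stationarity to replace $X^w-w$ by $X^0$ and then evaluates at $r=2t\delta\varepsilon$; both versions yield the stated estimate $\mathbf{P}$-almost surely with a deterministic $C$.
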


\begin{proof}
By (\ref{Eq:SDE-RandomDrift}) and the spacial stationarity of $b(x)$ we know that $X^w(\kappa t - s)-w$ has the same distribution as $X^0(\kappa t - s)$, where $\displaystyle{X^0(t)=\int_0^t b(X^0(s)){\rm d}s+W_t}$. This gives
$$\begin{array}{ll}
& \sup\limits_{s: \  -2t\delta\varepsilon\leq s-\kappa t\leq 0}\inf\limits_{w\leq vt} P^W\left(X^w(\kappa t-s)\leq w - (\kappa t-s)\right)
\\
= & \sup\limits_{0\leq r\leq 2t\delta\varepsilon} P^W\left(X^0(r)\leq -r\right) 
\\
\geq & P^W\left(X^0(2t\delta\varepsilon)\leq -2t\delta \varepsilon\right)
\\
= & P^W\left(\displaystyle{\int_0^{2t\delta\varepsilon}b(X^0(s)){\rm d}s}+W_{2t\delta\varepsilon}\leq -2 t \delta \varepsilon\right) \ .
\end{array}$$

\noindent Since we have assumed that $b(x)\leq B<+\infty$ for all $x$ (Assumption (3) in Section \ref{Sec:Introduction}), this gives
$$\begin{array}{ll}
P^W\left(\displaystyle{\int_0^{2t\delta\varepsilon}b(X^0(s)){\rm d}s}+W_{2t\delta\varepsilon}\leq -2 t \delta \varepsilon\right) & \geq P^W\left(W_{2t\delta\varepsilon}\leq -2t\delta\varepsilon(1+B)\right)
\\
& = \displaystyle{\int_{-\infty}^{-2t\delta\varepsilon(1+B)} \dfrac{1}{\sqrt{2\pi \cdot 2t\delta\varepsilon}}e^{-\frac{x^2}{2\cdot 2t\delta\varepsilon}}{\rm d}x}
\\
& = 1-\Phi\left(\sqrt{2t\delta\varepsilon}(1+B)\right)
\\
& \geq e^{-2t\delta\varepsilon(1+B)^2} \ ,
\end{array}$$
where we have used the Gaussian tail bound $1-\Phi(x)\geq e^{-x^2}$.
Thus taking the limit $t\rightarrow\infty$ we get
$$\limsup\limits_{t\rightarrow\infty} \dfrac{1}{\kappa t}\ln P^W(X^0(2t\delta\varepsilon)\leq - 2t\delta\varepsilon)\geq \limsup\limits_{t\rightarrow\infty} \dfrac{1}{\kappa t}\cdot \left(-2t\delta\varepsilon(1+B)^2\right)=-\dfrac{2(1+B)^2}{\kappa}\delta\varepsilon \ .$$
Thus we pick $C=\dfrac{2(1+B)^2}{\kappa}>0$ and obtain the conclusion of this Lemma.
\end{proof}

\begin{lemma}\label{Lm:SupProbabilityCompareOnVerySmallIntervalDifference}
Let $\kappa\in (0,1]$, $\varepsilon>0$, $\delta>0$, integers $k,l\geq 0$, $(k+l+2)\varepsilon\delta\leq 1$ and $\widetilde{\kappa}=\kappa(1-(k+l+2)\delta\varepsilon)$. Then for small enough $\varepsilon>0$, $\delta>0$ we have
\begin{equation}\label{Lm:SupProbabilityCompareOnVerySmallIntervalDifference:EqDifferenceEqual}
\begin{array}{ll}
& \limsup\limits_{t\rightarrow\infty }\dfrac{1}{\kappa t}\ln\sup\limits_{-2\kappa t \delta \varepsilon\leq s-\kappa t(1-(k+l)\delta\varepsilon)\leq 0} P^W(vt-X^{vt}(s)\geq 0)
\\
\leq & \limsup\limits_{t\rightarrow\infty}\dfrac{1}{\kappa t}\ln \sup\limits_{s: -2\widetilde{\kappa}t\delta\varepsilon\leq s-\widetilde{\kappa}t\leq 0} P^W\left(vt-X^{vt}(s)\geq 0\right) \ .
\end{array}
\end{equation}
\end{lemma}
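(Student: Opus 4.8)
The plan is to recognize both sides of (\ref{Lm:SupProbabilityCompareOnVerySmallIntervalDifference:EqDifferenceEqual}) as multiples of the constant $a$ introduced in (\ref{Thm:LDPProcessX:Eq:UpperBoundPositive:SmallcCase:cIs0:Decomposition:a}), and to use two facts already obtained in the proof of Theorem \ref{Thm:LDPProcessX}: that $a\leq 0$, and that $a$ does not depend on the scaling parameter $\kappa$, i.e. for every $\kappa'>0$ one has $a=\limsup_{t\rightarrow\infty}\frac{1}{\kappa' t}\ln\sup_{s:\,-2\kappa' t\delta\varepsilon\leq s-\kappa' t\leq 0}P^W(vt-X^{vt}(s)\geq 0)$, which is precisely (\ref{eqn:def}) in the case $\kappa'=\widetilde\kappa$. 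Throughout I assume $\widetilde\kappa>0$; if $(k+l+2)\delta\varepsilon=1$, so $\widetilde\kappa=0$, then both sides of (\ref{Lm:SupProbabilityCompareOnVerySmallIntervalDifference:EqDifferenceEqual}) are $0$ (since $P^W(vt-X^{vt}(0)\geq 0)=1$) and there is nothing to prove.

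First I would make the two $s$-ranges explicit. Put $\kappa_1:=\kappa(1-(k+l)\delta\varepsilon)$. The range on the left of (\ref{Lm:SupProbabilityCompareOnVerySmallIntervalDifference:EqDifferenceEqual}) is $\{s:\,-2\kappa t\delta\varepsilon\leq s-\kappa_1 t\leq 0\}=[\widetilde\kappa t,\kappa_1 t]$, because $\kappa_1 t-2\kappa t\delta\varepsilon=\kappa t(1-(k+l+2)\delta\varepsilon)=\widetilde\kappa t$; the range on the right is $[\widetilde\kappa t(1-2\delta\varepsilon),\widetilde\kappa t]$, which is exactly the range occurring in (\ref{eqn:def}) with $\kappa'=\widetilde\kappa$. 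Hence the right-hand side of (\ref{Lm:SupProbabilityCompareOnVerySmallIntervalDifference:EqDifferenceEqual}) equals $\frac{\widetilde\kappa}{\kappa}a$, and it suffices to prove
\[
\limsup_{t\rightarrow\infty}\frac{1}{\kappa t}\ln\sup_{s\in[\widetilde\kappa t,\kappa_1 t]}P^W(vt-X^{vt}(s)\geq 0)\ \leq\ \frac{\widetilde\kappa}{\kappa}a .
\]

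The heart of the argument is a covering of $[\widetilde\kappa t,\kappa_1 t]$ by finitely many intervals of the form $\mathcal I(\kappa'):=\{s:\,-2\kappa' t\delta\varepsilon\leq s-\kappa' t\leq 0\}=[\kappa' t(1-2\delta\varepsilon),\kappa' t]$ whose parameters $\kappa'$ all lie in $[\widetilde\kappa,\kappa_1]$. Both endpoints of $\mathcal I(\kappa')$ are strictly increasing in $\kappa'$, so $\bigcup_{\kappa'\in[\widetilde\kappa,\kappa_1]}\mathcal I(\kappa')=[\widetilde\kappa t(1-2\delta\varepsilon),\kappa_1 t]\supseteq[\widetilde\kappa t,\kappa_1 t]$; moreover each $\mathcal I(\kappa')$ has length at least $2\widetilde\kappa t\delta\varepsilon$ while $[\widetilde\kappa t,\kappa_1 t]$ has length $(\kappa_1-\widetilde\kappa)t=2\kappa t\delta\varepsilon$, so one can extract a subcover $\mathcal I(\kappa^{(0)}),\dots,\mathcal I(\kappa^{(N)})$ with $\widetilde\kappa=\kappa^{(0)}<\kappa^{(1)}<\cdots<\kappa^{(N)}=\kappa_1$ and $N$ independent of $t$ (one may take $N\leq\lceil\kappa/\widetilde\kappa\rceil+1$; this is the only place the smallness of $\delta,\varepsilon$ is used, merely to keep $N$ bounded). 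Then
\[
\sup_{s\in[\widetilde\kappa t,\kappa_1 t]}P^W(vt-X^{vt}(s)\geq 0)\ \leq\ \max_{0\leq j\leq N}\ \sup_{s\in\mathcal I(\kappa^{(j)})}P^W(vt-X^{vt}(s)\geq 0),
\]
and, applying $\limsup_{t\rightarrow\infty}\frac{1}{\kappa t}\ln(\cdot)$, using that for a fixed finite family the $\limsup$ of a maximum is the maximum of the $\limsup$'s, and writing $\frac{1}{\kappa t}=\frac{\kappa^{(j)}}{\kappa}\cdot\frac{1}{\kappa^{(j)}t}$ together with the $\kappa$-invariance of $a$ applied with $\kappa'=\kappa^{(j)}$ (so the $j$-th term contributes $\frac{\kappa^{(j)}}{\kappa}a$), we obtain
\[
\limsup_{t\rightarrow\infty}\frac{1}{\kappa t}\ln\sup_{s\in[\widetilde\kappa t,\kappa_1 t]}P^W(vt-X^{vt}(s)\geq 0)\ \leq\ \max_{0\leq j\leq N}\frac{\kappa^{(j)}}{\kappa}a .
\]

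Finally each $\kappa^{(j)}\geq\widetilde\kappa>0$ and $a\leq 0$, so multiplying $\kappa^{(j)}\geq\widetilde\kappa$ by $a/\kappa\leq 0$ reverses the inequality and gives $\frac{\kappa^{(j)}}{\kappa}a\leq\frac{\widetilde\kappa}{\kappa}a$ for every $j$; hence $\max_{0\leq j\leq N}\frac{\kappa^{(j)}}{\kappa}a\leq\frac{\widetilde\kappa}{\kappa}a$, which is the required inequality. The only genuinely delicate point is this sign bookkeeping: the covering must use parameters $\kappa^{(j)}\geq\widetilde\kappa$ — possible precisely because $k+l<k+l+2$ — so that $a\leq 0$ pushes $\frac{\kappa^{(j)}}{\kappa}a$ to the favourable side; no probabilistic input beyond the already-established $\kappa$-invariance of $a$ and $a\leq 0$ is needed.
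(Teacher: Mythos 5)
Your proof is correct, and it runs on exactly the two ingredients the paper's own proof uses: the $\kappa$-invariance of the quantity $a$ from (\ref{Thm:LDPProcessX:Eq:UpperBoundPositive:SmallcCase:cIs0:Decomposition:a}) (i.e.\ (\ref{eqn:def})) and the sign fact $a\leq 0$; the only difference is the covering step. The paper splits the window $(\widetilde{\kappa}t,\kappa(1-(k+l)\delta\varepsilon)t]$ into just two canonically shaped intervals $I_{21}=[\kappa_1(1-2\delta\varepsilon)t,\kappa_1 t]$ and $I_{22}=[\kappa_2(1-\delta\varepsilon)t,\kappa_2 t]$ and evaluates each contribution by the $\kappa$-invariance, whereas you cover the same window by a $t$-independent finite chain of intervals $[\kappa^{(j)}(1-2\delta\varepsilon)t,\kappa^{(j)}t]$ with $\widetilde{\kappa}\leq\kappa^{(j)}\leq\kappa(1-(k+l)\delta\varepsilon)$ and take the maximum of the limsups. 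Your version is slightly longer but more robust: the asserted identity $I_2=I_{21}\cup I_{22}$ in the paper need not actually be a covering when $(k+l)\delta\varepsilon$ is close to $1$ (one can have $\kappa_2(1-\delta\varepsilon)>\kappa_1$, leaving a gap), while your geometric chain with $\kappa^{(j+1)}(1-2\delta\varepsilon)\leq\kappa^{(j)}$ covers the window in all admissible cases, and the monotonicity $\kappa^{(j)}\geq\widetilde{\kappa}$ combined with $a\leq 0$ yields the bound $\frac{\kappa^{(j)}}{\kappa}a\leq\frac{\widetilde{\kappa}}{\kappa}a$ exactly as in the paper; the degenerate case $\widetilde{\kappa}=0$ is treated the same way in both arguments. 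So what your route buys is a watertight covering (at the modest cost of tracking a finite, $t$-independent number $N$ of intervals), while the paper's two-interval decomposition is shorter but, as written, incomplete in the extreme range of $k+l$.
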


\begin{proof} 
Consider the consecutive intervals
$$I_1 = \left[\widetilde{\kappa} t(1-2\delta\varepsilon), \widetilde{\kappa} t\right] \ , \ 
I_2 = \left(\widetilde{\kappa} t, \kappa t(1-(k+l)\delta\varepsilon)\right]\ . $$

\noindent We first assume $(k+l+2)\varepsilon\delta<1$, so that $\widetilde{\kappa}>0$. Recall from \eqref{eqn:def} that
\begin{eqnarray*}a=\limsup\limits_{t\rightarrow\infty}\dfrac{1}{\widetilde{\kappa}t}\sup\limits_{s\in I_1}\ln P^W\left(vt-X^{vt}(s)\geq 0\right) \ .\end{eqnarray*}
Thus
\begin{eqnarray}\label{lem3.10a}
a(1-(k+l+2)\delta\varepsilon)=\limsup\limits_{t\rightarrow\infty}\dfrac{1}{{\kappa}t}\sup\limits_{s\in I_1}\ln P^W\left(vt-X^{vt}(s)\geq 0\right) \ .\end{eqnarray}

\noindent One also note that
\[
I_2=[\kappa_1(1-2\delta\varepsilon)t, \kappa_1 t]\cup [\kappa_2(1-\delta\varepsilon)t, \kappa_2 t]=:I_{21}\cup I_{22},
\]
where $\kappa_1=\frac{\widetilde{\kappa}}{1-2\delta\varepsilon}$ 
and $\kappa_2=\kappa (1-(k+l)\delta\varepsilon).$ Then from the invariance of $a$ with respect to $\kappa$ as we demonstrated below (\ref{Thm:LDPProcessX:Eq:UpperBoundPositive:SmallcCase:cIs0:Decomposition:a}), one has
\begin{align}\label{lem3.10b}
\limsup\limits_{t\rightarrow\infty}\dfrac{1}{{\kappa}t}\sup\limits_{s\in I_{21}}\ln P^W\left(vt-X^{vt}(s)\geq 0\right)=a\frac{\kappa_1}{\kappa}=a\frac{1-(k+l+2)\delta\varepsilon}{1-2\delta\varepsilon}    
\end{align}
and
\begin{align}\label{lem3.10c}
\limsup\limits_{t\rightarrow\infty}\dfrac{1}{{\kappa}t}\sup\limits_{s\in I_{22}}\ln P^W\left(vt-X^{vt}(s)\geq 0\right)=a\frac{\kappa_2}{\kappa}=a(1-(k+l+2)\delta\varepsilon)\ .
\end{align}

\noindent Therefore
\begin{align}
 & \limsup\limits_{t\rightarrow\infty}\dfrac{1}{{\kappa}t}\sup\limits_{s\in I_{2}}\ln P^W\left(vt-X^{vt}(s)\geq 0\right) \cr
 & \quad 
 \leq     \limsup\limits_{t\rightarrow\infty}\dfrac{1}{{\kappa}t}\sup\limits_{s\in I_{1}}\ln P^W\left(vt-X^{vt}(s)\geq 0\right)
+\frac{2a(1-(k+l+2)\delta\varepsilon)\delta\varepsilon}{1-2\delta\varepsilon}\ .
\end{align}
Since $(k+l+2)\delta\varepsilon <1$, we have
\[
0<\frac{1-(k+l+2)\delta\varepsilon}{1-2\delta\varepsilon}=1-\frac{(k+l)\delta\varepsilon}{1-2\delta\varepsilon}<1 ,
\]
this together with the fact that $a\leq 0$ yield (\ref{Lm:SupProbabilityCompareOnVerySmallIntervalDifference:EqDifferenceEqual}).

Now we consider the case when $(k+l+2)\varepsilon\delta=1$, in which case $\widetilde{\kappa}=0$, and trivially
\begin{equation}\label{Lm:SupProbabilityCompareOnVerySmallIntervalDifference:EqDifferenceEqual:ZeroCase}
\limsup\limits_{t\rightarrow\infty }\dfrac{1}{\kappa t}\ln\sup\limits_{0\leq s \leq 2\varepsilon\delta\kappa t} P^W(vt-X^{vt}(s)\geq 0)
\leq 0 \ ,
\end{equation}
which automatically yields (\ref{Lm:SupProbabilityCompareOnVerySmallIntervalDifference:EqDifferenceEqual}).
\end{proof}

\subsection{Other Auxiliary Lemmas}\label{Sec:LDP:OtherAuxiliaryLemmas}
Here we provide a few more auxiliary lemmas that will be used throughout the proof of wave-propagation.

Recall that when $s>r$. the backward hitting time $T^s_r$ is defined in (\ref{Eq:HittingTimeSDE-RandomDrift:Backward}), and when $s<r$, the forward hitting time $T^s_r$ is defined in (\ref{Eq:HittingTimeSDE-RandomDrift:Forward}). Combining these two, we get the general definition of the hitting time $T^s_r$ which has the following property we show below:

\begin{lemma}\label{Lm:SuperExponentialSmallProbabilityOfHittingTimeAtLinearDistance}
For any $x,y\in \mathbb{R}$ such that $x\neq y$, there exists some $\varepsilon_0=\varepsilon_0(x, y, b(x))>0$ depending on $x, y$ and the drift term $b(x)$, such that for any $0<\varepsilon<\varepsilon_0$ and any $M>0$, we have
\begin{equation}\label{Lm:SuperExponentialSmallProbabilityOfHittingTimeAtLinearDistance:Eq:Estimate}
\limsup\limits_{t\rightarrow\infty} \dfrac{1}{t} \ln P^W\left(T^{xt}_{yt} < \varepsilon t\right)\leq -M \ ,
\end{equation}
almost surely with respect to $\mathbf{P}$.
\end{lemma}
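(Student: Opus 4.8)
The plan is to bound the probability that the diffusion $X^{xt}(\cdot)$ travels a macroscopic (order-$t$) distance in a short time $\varepsilon t$, by comparing with the drift-free Brownian motion using the boundedness of $b$ from Assumption (3), and then invoking a Gaussian (or reflection-principle) tail estimate that is superexponentially small when $\varepsilon$ is small. The key observation is that the drift contributes at most $B\varepsilon t$ to the displacement over a time interval of length $\varepsilon t$, so the ``hard part'' of the displacement must be done by the Brownian increment, and Brownian motion has exponentially small probability of a linear-in-$t$ displacement in time of order $\varepsilon t$, with the exponent blowing up as $\varepsilon\to0$.

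Concretely, I would first reduce to the case, say, $x>y$ (the case $x<y$ being symmetric, with the forward hitting time replaced by the backward one). By the spatial stationarity of $b$ — exactly as in the proof of Lemma \ref{Lm:WorstCaseEnvironmentEstimateForaInThmLDPProcessX} — the law of $X^{xt}(\cdot)-xt$ coincides with that of the process $X^0(\cdot)=\int_0^\cdot b(X^0(s))\,{\rm d}s+W_\cdot$ started at the origin. Hence
$$
P^W\!\left(T^{xt}_{yt}<\varepsilon t\right)
= P^W\!\left(\inf_{0\le s\le \varepsilon t} X^0(s)\le -( x-y)t\right)
\le P^W\!\left(\inf_{0\le s\le\varepsilon t}\big(W_s + B s\big)\le -(x-y)t\right),
$$
where in the last step I used $b\ge -B$ almost surely (Assumption (3)), so that $X^0(s)\ge W_s - Bs$ pointwise. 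Since $Bs\le B\varepsilon t$ on $[0,\varepsilon t]$, the right side is at most
$$
P^W\!\left(\inf_{0\le s\le\varepsilon t} W_s \le -(x-y)t + B\varepsilon t\right),
$$
and once $\varepsilon < \varepsilon_0 := \tfrac{|x-y|}{2B}$ the threshold is $\le -\tfrac{|x-y|}{2}t < 0$, so by the reflection principle this equals $2\,P^W\!\big(W_{\varepsilon t}\ge \tfrac{|x-y|}{2}t\big)$, which by the standard Gaussian tail bound is at most $2\exp\!\big(-\tfrac{(x-y)^2 t}{8\varepsilon}\big)$.

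Taking $\tfrac1t\ln(\cdot)$ and letting $t\to\infty$ gives
$$
\limsup_{t\to\infty}\dfrac{1}{t}\ln P^W\!\left(T^{xt}_{yt}<\varepsilon t\right)\le -\dfrac{(x-y)^2}{8\varepsilon},
$$
which is $\le -M$ as soon as $\varepsilon$ is further taken below $\min\{\varepsilon_0,\ (x-y)^2/(8M)\}$; shrinking $\varepsilon_0$ to this value (note it may now depend on $M$ as well as on $x,y,B$) yields (\ref{Lm:SuperExponentialSmallProbabilityOfHittingTimeAtLinearDistance:Eq:Estimate}). This bound holds for \emph{every} fixed environment $\omega$ — indeed it did not use randomness of $b$ beyond the deterministic bound $|b|\le B$ — so in particular it holds $\mathbf{P}$-almost surely; there is no environment-averaging step needed. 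The one point requiring a little care is the passage from the pathwise domination $X^0(s)\ge W_s - Bs$ to a statement about the running infimum: this is immediate since if $X^0(s)\ge W_s-Bs$ for all $s$ then $\inf_s X^0(s)\ge \inf_s(W_s-Bs)$, so the event $\{\inf X^0 \le -(x-y)t\}$ is contained in $\{\inf(W_s-Bs)\le-(x-y)t\}$. The main (mild) obstacle is bookkeeping the dependence of $\varepsilon_0$ on $M$, but since $M$ is fixed before $\varepsilon$ in the statement, this causes no circularity.
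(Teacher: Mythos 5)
Your argument is correct, and it takes a genuinely different route from the paper. The paper proceeds via Chebyshev applied to the negative exponential moment $E^W e^{-\lambda T^{xt}_{yt}}$, decomposes $T^{xt}_{yt}$ into hitting times across unit intervals, uses stationarity and the law of large numbers to identify $\lim_{t\to\infty}\frac{1}{t}\ln E^W e^{-\lambda T^{xt}_{yt}}=|y-x|\,\mathbf{E}\bigl[\ln E^W e^{-\lambda T^0_1}\bigr]$, and then concludes by asserting a deterministic lower bound $T^0_1\geq \kappa(b)>0$ holding $P^W$-a.s. You instead compare the diffusion pathwise with Brownian motion with constant drift, using only the uniform bound $|b|\leq B$ from Assumption (3): on $\{T^{xt}_{yt}<\varepsilon t\}$ (say $x>y$) one must have $\inf_{s\leq \varepsilon t}(W_s-Bs)\leq -(x-y)t$, and the reflection principle plus the Gaussian tail give the explicit, environment-uniform rate $-(x-y)^2/(8\varepsilon)$. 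Your route is more elementary (no ergodic theorem, and no almost-sure qualifier is even needed), quantitative, and it makes transparent that $\varepsilon_0$ must be allowed to depend on $M$.

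That last point is where your proof and the printed statement diverge, and you are on the right side. For fixed $\varepsilon>0$ the probability $P^W(T^{xt}_{yt}<\varepsilon t)$ decays only exponentially in $t$, with a rate of order $1/\varepsilon$ (a Girsanov/straight-line lower bound gives $\geq e^{-ct/\varepsilon}$), so the limsup cannot be $\leq -M$ for every $M$ at a fixed small $\varepsilon$; the statement is true only in the order ``for every $M$ there is $\varepsilon_0(M)$'', which is what you prove and is exactly how the lemma is invoked later (in (\ref{Thm:LDPProcessX:LowerBoundPositive:Proof:HittingTimeRemainderEstimate3}) one sends $\varepsilon\to0$, and in Lemma \ref{Lm:LowerBoundBehaviorSolutionInThePositiveBranch} the order is ``for any $M>0$ there exists $\widetilde{\kappa}_0$''). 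The paper's proof reaches the $M$-free form only through the claim $T^0_1\geq\kappa>0$ a.s., which fails for a nondegenerate diffusion (the law of the hitting time of level $1$ charges every neighborhood of $0$); your Gaussian estimate is the correct replacement for that step. Two minor remarks: as printed, the lemma quantifies $\varepsilon$ before $M$, so your aside that ``$M$ is fixed before $\varepsilon$'' misreads the statement even though the version you prove is the correct and sufficient one; and the identity ``$X^{xt}(\cdot)-xt$ has the same $P^W$-law as $X^0(\cdot)$'' is not right at fixed environment (it holds for the shifted environment), but it is dispensable, since $X^{xt}(s)-xt\geq W_s-Bs$ holds pathwise in every admissible environment, which is all your argument actually uses.
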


\begin{proof}
By Chebyshev's inequality, for any $\lambda>0$,
$$P^W(T^{xt}_{yt}<\varepsilon t)=P^W\left(e^{-\lambda T^{xt}_{yt}}>e^{-\lambda \varepsilon t}\right)\leq e^{\lambda \varepsilon t}E^W e^{-\lambda T^{at}_{bt}} \ ,$$
and therefore
\begin{equation}\label{Lm:SuperExponentialSmallProbabilityOfHittingTimeAtLinearDistance:Eq:EstimateViaExpMoment}
\dfrac{1}{t}\ln P^W\left(T^{xt}_{yt}<\varepsilon t\right)\leq \lambda \varepsilon + \dfrac{1}{t}\ln E^W e^{-\lambda T^{xt}_{yt}} \ .
\end{equation}

\noindent  We will show that, for some $C=C(b(x))>0$ depending on the drift term $b(x)$,
\begin{equation}\label{Lm:SuperExponentialSmallProbabilityOfHittingTimeAtLinearDistance:Eq:UpperBoundExpMoment}
\limsup\limits_{t\rightarrow\infty} \dfrac{1}{t}\ln E^W e^{-\lambda T^{xt}_{yt}}\leq -C\lambda |y-x| \ .
\end{equation}

\noindent  Assume (\ref{Lm:SuperExponentialSmallProbabilityOfHittingTimeAtLinearDistance:Eq:UpperBoundExpMoment}) holds, then by (\ref{Lm:SuperExponentialSmallProbabilityOfHittingTimeAtLinearDistance:Eq:EstimateViaExpMoment}) we can bound

\begin{equation}\label{Lm:SuperExponentialSmallProbabilityOfHittingTimeAtLinearDistance:Eq:EstimateViaLinearDistance}
\limsup\limits_{t\rightarrow\infty} \dfrac{1}{t} \ln P^W(T^{xt}_{yt}<\varepsilon t)\leq \lambda \varepsilon - \lambda C |y-x| = -\lambda [C|y-x| - \varepsilon] \ .
\end{equation}
We then pick $0<\varepsilon_0<\dfrac{1}{2}C|b-a|$ and choose $\lambda>\dfrac{2M}{C|b-a|}$ to conclude (\ref{Lm:SuperExponentialSmallProbabilityOfHittingTimeAtLinearDistance:Eq:Estimate}).

It remains to prove (\ref{Lm:SuperExponentialSmallProbabilityOfHittingTimeAtLinearDistance:Eq:UpperBoundExpMoment}). Without loss of generality we assume $x<y$. Suppose $t$ is large, we find $i\in \mathbb{Z}$ such that 
$$i-1 < xt \leq i < i+1 < ... < i+n-1 \leq yt < i+n$$
where $n=n(x,y,t)$ is the number of integer points between $xt$ and $yt$. This implies $n-1 \leq (y-x)t < n+1$, leading to 
\begin{equation}\label{Lm:SuperExponentialSmallProbabilityOfHittingTimeAtLinearDistance:Eq:LimitnOvert}
\lim\limits_{t\rightarrow\infty}\dfrac{n}{t}=|y-x| \ .
\end{equation}

\noindent The strong Markov property of $X^x(t)$  in (\ref{Eq:SDE-RandomDrift}), and the stationary assumption on $b(x)$, implies the i.i.d. decomposition (under the environment probability $\mathbf{P}$)
$$\ln E^W e^{-\lambda T^{xt}_{yt}}=\ln E^W e^{-\lambda T^{xt}_{i}} + \ln E^W e^{-\lambda T^{i}_{i+1}} + ... + \ln E^W e^{-\lambda T^{i+n-2}_{i+n-1}} + \ln E^W e^{-\lambda T^{i+n-1}_{yt}} \ ,$$
which, by the Law of Large Numbers and (\ref{Lm:SuperExponentialSmallProbabilityOfHittingTimeAtLinearDistance:Eq:LimitnOvert}), further imply that
\begin{equation}\label{Lm:SuperExponentialSmallProbabilityOfHittingTimeAtLinearDistance:Eq:UpperBoundExpMomentViaT01}
\lim\limits_{t\rightarrow\infty} \dfrac{1}{t} \ln E^W e^{-\lambda T^{xt}_{yt}}=|y-x|\cdot \mathbf{E}\left(\ln E^W e^{-\lambda T^{0}_{1}}\right) \ .
\end{equation} 
Since $X^x(t)$ defined in (\ref{Eq:SDE-RandomDrift}) is a diffusion process with unit diffusivity and the drift term $b(x)$, there exists some $\kappa=\kappa(b(x))>0$ such that $T^0_1\geq \kappa>0$ with $P^W$--probability 1. This means, from (\ref{Lm:SuperExponentialSmallProbabilityOfHittingTimeAtLinearDistance:Eq:UpperBoundExpMomentViaT01}), we get (\ref{Lm:SuperExponentialSmallProbabilityOfHittingTimeAtLinearDistance:Eq:UpperBoundExpMoment}).
\end{proof}

\begin{lemma}\label{Lm:ExpoentialSmall ProbabilityOfHittingTimeFromZeroToLinearDistance}
For any bounded set $C \subset \left\{c>0: cI\left(\dfrac{1}{c}\right)-\beta > 0\right\}$ and any small $\delta>0$, there is a finite constant $K>0$ such that
\begin{equation}\label{Lm:ExpoentialSmall ProbabilityOfHittingTimeFromZeroToLinearDistance:Eq:Estimate}
\liminf\limits_{t\rightarrow\infty} \ln\left(\inf\limits_{\widetilde{c}\in B_{\delta}(c)}P^W\left(X^{\widetilde{c}t}(t)\in B_\delta(0)\right)\right)\geq -K
\end{equation}
uniformly over all $c\in C$ such that $B_\delta(c)\subset \left\{c>0: cI\left(\dfrac{1}{c}\right)-\beta > 0\right\}$.

Similarly, for any bounded set $C^\fwd \subset \left\{c>0: cI^\fwd\left(\dfrac{1}{c}\right)-\beta > 0\right\}$ and any small $\delta>0$, there is a finite constant $K>0$ such that
\begin{equation}\label{Lm:ExpoentialSmall ProbabilityOfHittingTimeFromZeroToLinearDistance:Forward:Eq:Estimate}
\liminf\limits_{t\rightarrow\infty} \ln\left(\inf\limits_{\widetilde{c}\in B_\delta(c)}P^W\left(X^{-\widetilde{c}t}(t)\in B_\delta(0)\right)\right)\geq -K
\end{equation}
uniformly over all $c\in C^\fwd$ such that $B_\delta(c)\subset \left\{c>0: cI^\fwd\left(\dfrac{1}{c}\right)-\beta > 0\right\}$.
\end{lemma}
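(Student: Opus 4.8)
The plan is to show that there is a finite $K>0$ such that, $\mathbf P$-almost surely, $\liminf_{t\to\infty}\tfrac1t\ln\inf_{\widetilde c\in B_\delta(c)}P^W\big(X^{\widetilde c t}(t)\in B_\delta(0)\big)\ge -K$, uniformly over the admissible $c\in C$. To this end I would split a trajectory of $X^{\widetilde c t}(\cdot)$ at the deterministic time $(1-\varepsilon)t$, with $\varepsilon>0$ small and fixed: on $[0,(1-\varepsilon)t]$ the walker is brought from $\widetilde c t$ into the \emph{macroscopic} window $(-\eta t,\eta t)$, and on $[(1-\varepsilon)t,t]$ it is localised inside the \emph{fixed} window $B_\delta(0)$. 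The first step is precisely what the process-level LDP of Theorem \ref{Thm:LDPProcessX} controls; the second is the part that escapes any LDP and must be done by hand. The forward estimate is proved identically with $X^{\widetilde c t}$ replaced by $X^{-\widetilde c t}$, $I$ by $I^\fwd$, and \eqref{Thm:LDPProcessX:Eq:LowerBoundPositive} by \eqref{Thm:LDPProcessX:Eq:LowerBoundNegative}, so I discuss only the positive one.

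Concretely, by the Markov property at time $(1-\varepsilon)t$,
\[
P^W\big(X^{\widetilde c t}(t)\in B_\delta(0)\big)\ \ge\ P^W\big(X^{\widetilde c t}((1-\varepsilon)t)\in(-\eta t,\eta t)\big)\,\cdot\,\inf_{|y|\le\eta t}P^W\big(X^{y}(\varepsilon t)\in B_\delta(0)\big).
\]
For $\eta$ small the event in the first factor equals $\big\{\tfrac{\widetilde c t-X^{\widetilde c t}((1-\varepsilon)t)}{(1-\varepsilon)t}\in(\tfrac{\widetilde c-\eta}{1-\varepsilon},\tfrac{\widetilde c+\eta}{1-\varepsilon})\big\}$, a \emph{fixed} open subset of $(0,\infty)$, so \eqref{Thm:LDPProcessX:Eq:LowerBoundPositive} with $v=\widetilde c$ and $\kappa=1-\varepsilon$ gives, $\mathbf P$-a.s.,
\[
\liminf_{t\to\infty}\tfrac1t\ln P^W\big(X^{\widetilde c t}((1-\varepsilon)t)\in(-\eta t,\eta t)\big)\ \ge\ -(1-\varepsilon)\inf_{\tfrac{\widetilde c-\eta}{1-\varepsilon}<c'<\tfrac{\widetilde c+\eta}{1-\varepsilon}}c'I\!\Big(\tfrac1{c'}\Big)\ \ge\ -\sup_{c'\in\mathcal K}c'I\!\Big(\tfrac1{c'}\Big),
\]
where $\mathcal K$ is any fixed bounded subset of $(0,\infty)$ containing all these intervals as $c$ ranges over $C$ (available since $C$ is bounded), and where $c'I(1/c')\ge0$ was used. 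By Properties (1)--(7) of Lemma \ref{Lm:PropertiesOfIInOurCase} together with $\mu(\eta_c-)\in(-\infty,0]$ from Lemma \ref{Lm:BwdAndFwdCriticalEtaForMuAreTheSame}, the map $c'\mapsto c'I(1/c')$ is continuous and nonnegative on $(0,\infty)$ with finite limit $\eta_c$ at $0^+$; hence $S_{\max}:=\sup_{\mathcal K}c'I(1/c')<\infty$.

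The genuinely delicate factor is $\inf_{|y|\le\eta t}P^W\big(X^{y}(\varepsilon t)\in B_\delta(0)\big)$: since $B_\delta(0)$ does not scale with $t$, the LDP says nothing here. I would invoke the classical Gaussian (Aronson-type) lower bound for the transition density of the uniformly parabolic generator $\tfrac12\partial_{xx}+b(x)\partial_x$, whose diffusivity is $1$ and whose drift is bounded by $B$: there exist $c_1,c_2>0$ depending only on $B$ (in particular \emph{not} on the environment) with $p^W_s(y,z)\ge c_1 s^{-1/2}e^{-c_2(z-y)^2/s}$. Integrating over $z\in B_\delta(0)$ with $s=\varepsilon t$ and using $|y|\le\eta t$ yields $P^W\big(X^{y}(\varepsilon t)\in B_\delta(0)\big)\ge 2\delta c_1(\varepsilon t)^{-1/2}e^{-c_2(\eta t+\delta)^2/(\varepsilon t)}$, so $\liminf_{t\to\infty}\tfrac1t\ln\inf_{|y|\le\eta t}P^W\big(X^{y}(\varepsilon t)\in B_\delta(0)\big)\ge -c_2\eta^2/\varepsilon=:-K_2$, finite for each fixed $\varepsilon,\eta>0$. (A pathwise surrogate: first drive $X^y$ below $-\tfrac\delta2$ within time $\tfrac{\varepsilon t}{2}$ --- a finite-cost event via a stepwise decomposition together with Theorem \ref{Thm:LDPHittingTime} and $\mathbf E[\ln P^W(T^1_0<\infty)]>-\infty$ --- and then keep it inside $B_\delta(0)$ for the remaining time, paying the principal Dirichlet eigenvalue of the interval per unit time.) This is the main obstacle: one needs a return-localisation lower bound whose rate is uniform over the random, merely locally Lipschitz environment, which is exactly what the heat-kernel bound (or its pathwise surrogate) supplies.

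Combining the two factors, $\mathbf P$-a.s. and for all $\widetilde c\in B_\delta(c)$, $\liminf_{t\to\infty}\tfrac1t\ln P^W\big(X^{\widetilde c t}(t)\in B_\delta(0)\big)\ge -S_{\max}-K_2=:-K$, with $K<\infty$ independent of $c\in C$, which is the claim; the forward case is the same with $I^\fwd$ and \eqref{Thm:LDPProcessX:Eq:LowerBoundNegative} in place of $I$ and \eqref{Thm:LDPProcessX:Eq:LowerBoundPositive}. Two routine matters finish the argument: the $\mathbf P$-a.s. statements of Theorems \ref{Thm:LDPHittingTime} and \ref{Thm:LDPProcessX} are formulated for fixed parameters, so one intersects the corresponding full-measure events along a countable dense set of $(\widetilde c,\varepsilon,\eta)$ and extends by continuity/monotonicity; and the hypothesis $B_\delta(c)\subset\{c>0:cI(1/c)-\beta>0\}$ is not used above (boundedness of $C$ suffices) but is the form in which the lemma is later applied.
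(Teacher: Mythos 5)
Your route is genuinely different from the paper's, and in outline it can be made to work. The paper's own proof is much shorter: it bounds the infimum over $\widetilde c\in B_\delta(c)$ from below by the probability of a \emph{single} backward hitting-time event, $P^W\bigl(T^{(c+\delta)t}_{-\delta t}/t\le 1\bigr)$, and then applies the hitting-time LDP lower bound \eqref{Thm:LDPHittingTime:Eq:LowerBound} once, arriving at $K=\sup_{c\in C}(c+2)I\bigl(\tfrac{1}{c+2}\bigr)$ (note that both you and the paper actually prove the $\tfrac1t$-normalised version of \eqref{Lm:ExpoentialSmall ProbabilityOfHittingTimeFromZeroToLinearDistance:Eq:Estimate}, which is how the lemma is used). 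Your splitting at time $(1-\varepsilon)t$ — process-level LDP of Theorem \ref{Thm:LDPProcessX} to reach the macroscopic window $(-\eta t,\eta t)$, then a quantitative return/localisation bound into the fixed window $B_\delta(0)$ — buys a clear separation between what the LDP controls and what it cannot, but it is heavier machinery than the problem needs, and as written it has two weak points.

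First, the Gaussian lower bound you quote, $p^W_s(y,z)\ge c_1 s^{-1/2}e^{-c_2(z-y)^2/s}$ with $c_1,c_2$ depending only on $B$, is false: already for constant drift $b\equiv B$ one has $p_s(y,z)=(2\pi s)^{-1/2}\exp\bigl(-(z-y)^2/(2s)+B(z-y)-B^2s/2\bigr)$, which for $z=y$ and large $s$ is exponentially small in $s$, while your bound decays only polynomially. The correct drift-robust bound carries an extra factor of the form $\exp\bigl(-c_3B|z-y|-c_4B^2s\bigr)$ (obtainable by Girsanov plus Cauchy--Schwarz, or your eigenvalue surrogate); since in your application $|z-y|\le\eta t+\delta$ and $s=\varepsilon t$, this only changes $K_2$ to $c_2\eta^2/\varepsilon+c_3B\eta+c_4B^2\varepsilon$, still finite, so the step is repairable but the inequality must not be used as stated. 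Second, and more substantively, the passage from the fixed-$(\widetilde c,\varepsilon,\eta)$ quenched statements to the infimum over all $\widetilde c\in B_\delta(c)$ is not the routine matter you claim for your first factor: the fixed-time window event $\{X^{\widetilde c t}((1-\varepsilon)t)\in(-\eta t,\eta t)\}$ is not monotone in the starting point, and continuity of its quenched probability in $\widetilde c$ \emph{uniformly in $t$} (which is what transferring exponential lower bounds from a countable dense set requires) is not available without further argument; a sandwich between the extreme starting points $(c\pm\delta)t$ also needs a separation of decay rates that is not automatic. The natural repair is precisely the paper's device: phrase the first leg through hitting times, which are monotone in the starting point, so that one LDP application per $c$ (and a finite $\delta/2$-net of the bounded set $C$) handles the whole ball $B_\delta(c)$ at once. (A minor additional point: when $\widetilde c\le\eta$ your interval $F$ leaves $[0,\infty)$ and must be intersected with $(0,\infty)$ before invoking \eqref{Thm:LDPProcessX:Eq:LowerBoundPositive}; this is harmless.)
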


\begin{proof}
We first observe the event inclusion $\{T_{-\delta}^{(c+\delta)t}\leq t\}\subseteq \{X^{\widetilde{c}t}(t)\in B_\delta(0), \widetilde{c}\in B_\delta(c)\}$, which is because a trajectory starting from $(c+\delta)t$ and hitting $-\delta$ before time $t$, belongs to $\{X^{\widetilde{c}t}(t)\in B_\delta(0), \widetilde{c}\in B_\delta(c)\}$, due to the continuity of $X^x(t)$. Thus by this event inclusion we have 
$$\inf\limits_{\widetilde{c}\in B_\delta(c)}P^W\left(X^{\widetilde{c}t}(t)\in B_\delta(0)\right)\geq P^W\left(\dfrac{T^{(c+\delta)t}_{-\delta}}{t}\leq 1\right) \ .$$

\noindent From here, we estimate
$$\begin{array}{ll}
\liminf\limits_{t\rightarrow\infty} \dfrac{1}{t}\ln\left(\inf\limits_{\widetilde{c}\in B_\delta(c)}P^W\left(X^{\widetilde{c}t}(t)\in B_\delta(0)\right)\right)& \geq \liminf\limits_{t\rightarrow\infty}\dfrac{1}{t}\ln P^W\left(\dfrac{T_{-\delta}^{(c+\delta)t}}{t}\leq 1\right)
\\
& \geq \liminf\limits_{t\rightarrow\infty}\dfrac{1}{t}\ln P^W\left(\dfrac{T^{(c+\delta)t}_{-\delta t}}{t}\leq 1\right)
\\
& \geq -(c+2\delta)\inf\limits_{a\in (0,1)}I\left(\dfrac{a}{c+2\delta}\right) \ ,
\end{array}$$
where the last equality is due to (\ref{Thm:LDPHittingTime:Eq:LowerBound}) in Theorem \ref{Thm:LDPHittingTime}. By Lemma \ref{Lm:PropertiesOfIInOurCase}, we have $$\inf\limits_{a\in (0,1)}I\left(\dfrac{a}{c+2\delta}\right)\leq I\left(\dfrac{1}{c+2\delta}\right) \ .$$ Thus when $\delta<1$ we can set $K=\sup\limits_{c\in C}\left[(c+2)I\left(\dfrac{1}{c+2}\right)\right]>0$ to obtain (\ref{Lm:ExpoentialSmall ProbabilityOfHittingTimeFromZeroToLinearDistance:Eq:Estimate}), using the fact that $I(a)>0$ via Lemma \ref{Lm:FurtherPropertiesOfMuAndIUsedInWaveFrontShape}.

The proof of (\ref{Lm:ExpoentialSmall ProbabilityOfHittingTimeFromZeroToLinearDistance:Forward:Eq:Estimate}) follows a similar argument in parallel with the above, using (\ref{Thm:LDPHittingTime:Forward:Eq:LowerBound}) in Theorem \ref{Thm:LDPHittingTime:Forward}. We omit the details.
\end{proof}

\section{Wave Propagation}\label{Sec:WavePropagation}
Consider the following two equations:
\begin{equation}\label{Eq:IntersectionOfIandbetaWaveSpeed}
c I\left(\dfrac{1}{c}\right) = \beta \ ,
\end{equation}
and
\begin{equation}\label{Eq:IntersectionOfIandbetaWaveSpeed:Forward}
c I^\fwd\left(\dfrac{1}{c}\right) = \beta \ .
\end{equation}

\noindent Set 
\begin{equation}\label{Eq:PositiveAndNegativeDifferenceOfIandbetaWaveSpeed}
\Lambda_0 = \left\{c>0: cI\left(\dfrac{1}{c}\right)-\beta >0 \right\} \ , \ 
\Lambda_1 = \left\{c>0: cI\left(\dfrac{1}{c}\right)-\beta <0 \right\} \ ,
\end{equation}
and
\begin{equation}\label{Eq:PositiveAndNegativeDifferenceOfIandbetaWaveSpeed:Forward}
\Lambda_0^\fwd = \left\{c>0: cI^\fwd\left(\dfrac{1}{c}\right)-\beta >0 \right\} \ , \ 
\Lambda_1^\fwd = \left\{c>0: cI^\fwd\left(\dfrac{1}{c}\right)-\beta <0 \right\} \ .
\end{equation}

The following Theorem characterizes the sets $R_0$ and $R_1$ asked in the Wave Propagation Problem (see Section \ref{Sec:Introduction}), and the conclusion can be summarized as $R_0=\Lambda_0\cup (-\Lambda^\fwd_0)$ and $R_1=\Lambda_1\cup (-\Lambda^\fwd_1)$.

\begin{theorem}[Wave Propagation]\label{Thm:WavePropagation}
For any closed set $F\subset \Lambda_0$, any compact set $K\subset \Lambda_1$ we have
\begin{equation}\label{Thm:WavePropagation:Eq:Wave}
\lim\limits_{t\rightarrow\infty}\sup\limits_{c\in F} u(t,ct)=0 \ , \
\lim\limits_{t\rightarrow\infty}\inf\limits_{c\in K} u(t,ct)=1 \ ,
\end{equation}
almost surely with respect to $\mathbf{P}$. Similarly, for any closed set $F\subset \Lambda_0^\fwd$, any compact set $K\subset \Lambda_1^\fwd$ we have
\begin{equation}\label{Thm:WavePropagation:Forward:Eq:Wave}
\lim\limits_{t\rightarrow\infty}\sup\limits_{c\in F} u(t,-ct)=0 \ , \
\lim\limits_{t\rightarrow\infty}\inf\limits_{c\in K} u(t,-ct)=1 \ ,
\end{equation}
almost surely with respect to $\mathbf{P}$.
\end{theorem}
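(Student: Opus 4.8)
\textit{Overall strategy.} The plan is to read both claims off the Feynman--Kac representation \eqref{Eq:FeynmannKacRDERandomDrift} by balancing the reaction-driven exponential growth $e^{\beta t}$ (``heating'', since $0\le c(\cdot)\le\beta$) against the exponential cost $P^W(X^{ct}(t)\in\text{supp}\,u_0)$ of the diffusion returning near the origin (``cooling''), quantified by the process large deviation principle of Theorem~\ref{Thm:LDPProcessX}. The negative-direction statement \eqref{Thm:WavePropagation:Forward:Eq:Wave} is the mirror argument with $X^{-ct}$, the forward hitting times and $I^\fwd$, using $cI^\fwd(1/c)=cI(1/c)-2c\mathbf E[b]$ (from \eqref{Lm:PropertiesOfIInOurCase:Forward:Eq:DifferenceIIfwd}). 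One structural fact is used throughout: since $\mu\le 0$ on $\Lambda$ (Lemma~\ref{Lm:BwdAndFwdCriticalEtaForMuAreTheSame}), convex duality gives $\frac{d}{dc}\big[cI(1/c)\big]=-\mu\big(I'(1/c)\big)\ge 0$ because $I'(1/c)\le\eta_c$ (Lemma~\ref{Lm:PropertiesOfIInOurCase}(4)); hence $c\mapsto cI(1/c)$ is nondecreasing, so $\Lambda_0$ is a half-line $(c_\flat,+\infty)$ and $\Lambda_1$ a bounded interval, while $c\mapsto cI^\fwd(1/c)$ is only convex, so $\Lambda_0^\fwd$ may in addition have a bounded component near $0$. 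This monotonicity is what makes the uniform-in-$c$ estimates below tractable.

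\textit{The limit $0$.} From $c(\cdot)\le\beta$ and \eqref{Eq:FeynmannKacRDERandomDrift}, $u(t,ct)\le\|u_0\|_\infty e^{\beta t}P^W(X^{ct}(t)<\delta)$. Fix a closed $F\subset\Lambda_0=(c_\flat,+\infty)$ and set $\varepsilon_*:=\inf F\in F$. Coupling all $X^x$ to one Brownian motion and using the pathwise monotonicity $x\mapsto X^x(\cdot)$ (SDE comparison, $b$ locally Lipschitz), for $c\ge\varepsilon_*$ we get $\{X^{ct}(t)<\delta\}\subseteq\{X^{\varepsilon_*t}(t)<\delta\}$, hence $\sup_{c\in F}u(t,ct)\le\|u_0\|_\infty e^{\beta t}P^W(X^{\varepsilon_*t}(t)<\delta)$. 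For large $t$ the last event lies in $\{(\varepsilon_*t-X^{\varepsilon_*t}(t))/t\ge\varepsilon_*-\gamma\}$, so Theorem~\ref{Thm:LDPProcessX} and monotonicity of $cI(1/c)$ give $\limsup_t\frac1t\ln\sup_{c\in F}u(t,ct)\le\beta-(\varepsilon_*-\gamma)I\big(\tfrac1{\varepsilon_*-\gamma}\big)$, which is $<0$ after letting $\gamma\to0$. For the forward statement one again reduces $\sup_{c\in F}P^W(X^{-ct}(t)\in(-\delta,\delta))$ to a single process at $\sup F$ or $\inf F$ (according to which component of $\Lambda_0^\fwd$ contains $F$), and splits the deviation into ``moving slower than the mean drift'' (rate $cI^\fwd(1/c)>\beta$) and the ``wrong-sign'' event $\{X^0(t)<0\}$ (rate $\eta_c$, which exceeds $\beta$ precisely in the regime where $\Lambda_0^\fwd$ has a bounded component); both exceed $\beta$, so the same conclusion follows.

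\textit{The limit $1$.} Fix a compact $K\subset\Lambda_1$. \emph{Step 1 (uniform positive lower bound).} Choose $c_1$ with $\max_K cI(1/c)<c_1<\beta$ and then $\kappa_1\in(0,1)$ with $f(s)\ge c_1 s$ on $[0,\kappa_1]$ (possible since $f(s)/s\to\beta$). Then $\underline u:=\min(\kappa_1,w)$, with $w$ solving $w_t=\frac12 w_{xx}+bw_x+c_1 w$, $w(0,\cdot)=u_0$, is a generalized subsolution of \eqref{Eq:RDERandomDrift} dominated by $u_0$ at $t=0$, so $u\ge\underline u$. Since $w(t,ct)=e^{c_1 t}E^W[u_0(X^{ct}(t))]$ and (shrinking $u_0$ if needed, by monotonicity in the initial datum) $u_0\ge\varepsilon_0\mathbf 1_{(-\delta_1,\delta_1)}$, Lemma~\ref{Lm:ExpoentialSmall ProbabilityOfHittingTimeFromZeroToLinearDistance} bounds $P^W(X^{ct}(t)\in(-\delta_1,\delta_1))$ below by $e^{-Rt+o(t)}$ uniformly on $K$, with $R$ as close to $\max_K cI(1/c)<\beta$ as desired; since $c_1>R$, $w(t,ct)\to\infty$ uniformly, so there is $T_0$ with $u(t,ct)\ge\kappa_1$ for all $t\ge T_0$, $c\in K$. \emph{Step 2 (upgrade to $1$).} Enlarging $K$ to $K'$ with $K\Subset K'\Subset\Lambda_1$, Step 1 yields $u\ge\kappa_1$ on the cone $\{x/\tau\in K'\}$ for $\tau\ge T_0$; on the parabolic boundary of this cone $u$ dominates a standard expanding-plateau subsolution whose plateau level $\psi(\tau)$ solves $\dot\psi=f(\psi)$, $\psi(T_0)=\kappa_1$ (so $\psi(\tau)\to1$, as $f>0$ on $(0,1)$) and which vanishes at the lateral edges $x/\tau\in\partial K'$ (Aronson--Weinberger type; cf.\ \cite{FreidlinFunctionalBook}, \cite{NolenXinDCDS}; uses only $|b|\le B$). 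By comparison $u(t,ct)\ge\psi(t)\to1$ for $c\in K$, and $u\le1$ finishes the proof. The forward statement is identical with $I^\fwd$ and the forward hitting times.

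\textit{Main obstacle.} The ``$0$'' half is essentially immediate from the process LDP once the monotonicity of $cI(1/c)$ is in hand; the work is on $\Lambda_1$. The circularity in the heating term (the exponent in \eqref{Eq:FeynmannKacRDERandomDrift} contains the unknown $u$) is handled by the truncated linear subsolution $\min(\kappa_1,w)$, which converts the heuristic ``heating beats cooling on $\Lambda_1$'' into a rigorous lower bound through Theorem~\ref{Thm:LDPProcessX}/Lemma~\ref{Lm:ExpoentialSmall ProbabilityOfHittingTimeFromZeroToLinearDistance}. The genuinely non-automatic point is Step 2 --- propagating a uniform positive lower bound all the way up to the stable state $1$ in every moving frame $x=ct$ with $c\in\Lambda_1$ --- together with, on the technical side, the uniformity in $c$ for the forward ``$0$'' statement when $F$ sits in the bounded component of $\Lambda_0^\fwd$, where one cannot collapse to a single velocity without separately invoking the ``wrong-sign'' large deviation rate $\eta_c$.
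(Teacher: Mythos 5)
Your ``$\to 0$'' half is essentially the paper's own argument (Feynman--Kac upper bound plus Theorem \ref{Thm:LDPProcessX}), with a sensible monotone-coupling device to get uniformity in $c$ from a single velocity, and your ``$\to 1$'' half replaces the paper's stopping-time Feynman--Kac decomposition (Proposition \ref{Prop:AsymptoticSolutionOne} with the events $A_1,A_2,A_3$, built on Lemmas \ref{Lm:LowerBoundSolutionOnGammat} and \ref{Lm:LowerBoundBehaviorSolutionInThePositiveBranch}) by a PDE comparison route: the truncated linear subsolution $\min(\kappa_1,w)$ followed by an expanding-plateau bootstrap. The difficulty is that Step 1 of that route has a genuine gap at its pivotal quantitative step. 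You need, uniformly for $c\in K\subset\Lambda_1$,
\[
\liminf_{t\to\infty}\frac1t\ln P^W\bigl(X^{ct}(t)\in(-\delta_1,\delta_1)\bigr)\ \ge\ -R \qquad\text{with } R<c_1<\beta \ ,
\]
i.e.\ a lower bound at essentially the sharp rate $cI\left(\frac1c\right)$ for the event that the diffusion sits in a \emph{fixed} window at time exactly $t$. You attribute this to Lemma \ref{Lm:ExpoentialSmall ProbabilityOfHittingTimeFromZeroToLinearDistance}, but that lemma is stated only for bounded subsets of $\Lambda_0=\{c:cI(1/c)>\beta\}$, not for $K\subset\Lambda_1$, and it yields only a crude constant (of the form $\sup_{c}(c+2)I\left(\frac1{c+2}\right)$), which is in general larger than $\beta$ and certainly not ``as close to $\max_K cI(1/c)$ as desired''. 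Theorem \ref{Thm:LDPProcessX} does not supply it either: the process LDP controls windows of width proportional to $t$, not fixed $O(1)$ windows. So the inequality $R<c_1$, on which the divergence of $w(t,ct)=e^{c_1t}E^W[u_0(X^{ct}(t))]$ rests, is unsubstantiated as written.

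The missing estimate is plausibly provable --- e.g.\ force the hitting of a level near the origin into the time window $[(1-\epsilon)t,\,t]$ via the lower bound in Theorem \ref{Thm:LDPHittingTime}, then pay a quenched confinement cost of order $\epsilon t$ (principal Dirichlet eigenvalue on the fixed interval) to stay in $(-\delta_1,\delta_1)$ until time $t$, and let $\epsilon\to0$ --- but this local/confinement argument is exactly the extra work your route must add, and it is what the paper's route is engineered to avoid: Lemma \ref{Lm:LowerBoundBehaviorSolutionInThePositiveBranch} lower-bounds $u(t,ct)$ itself through the self-improving inequality $q\ge(1-\kappa)q+\kappa\beta(1-h)+\dots$, using the process LDP only on windows of width $\propto\kappa t$, and invokes Lemma \ref{Lm:ExpoentialSmall ProbabilityOfHittingTimeFromZeroToLinearDistance} merely to ensure $q>-\infty$. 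Until you prove the uniform fixed-window lower bound at a rate below $\beta$ (or restructure Step 1 along such a bootstrap), the ``$\to1$'' half is incomplete. The remaining ingredients --- the subsolution $\min(\kappa_1,w)$, the plateau construction of Step 2 under $|b|\le B$, and the structural observation that $c\mapsto cI(1/c)$ is nondecreasing because $\mu\le0$ on $\Lambda$ (established at the end of the proof of Lemma \ref{Lm:BwdAndFwdCriticalEtaForMuAreTheSame}) --- are acceptable as sketches and consistent with Lemma \ref{Lm:SolutionStructureBalanceEquation}.
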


\begin{proof}
The conclusion of (\ref{Thm:WavePropagation:Eq:Wave}) is a result of the combination of Propositions \ref{Prop:AsymptoticSolutionZero} and \ref{Prop:AsymptoticSolutionOne}. For (\ref{Thm:WavePropagation:Forward:Eq:Wave}), the proof is similar so we do not repeat it.
\end{proof}

\begin{proposition} \label{Prop:AsymptoticSolutionZero}
For any closed set $F\subset \Lambda_0$ we have
\begin{equation}\label{Prop:AsymptoticSolutionZero:Eq:Limit}
\lim\limits_{t\rightarrow \infty}\sup\limits_{c\in F} u(t, ct) = 0
\end{equation}
almost surely with respect to $\mathbf{P}$. 
\end{proposition}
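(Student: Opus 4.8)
The plan is to use the Feynman--Kac representation \eqref{Eq:FeynmannKacRDERandomDrift} together with the LDP for the process $X^x(t)$ (Theorem \ref{Thm:LDPProcessX}) to show that for $c$ in the closed set $F\subset \Lambda_0$ the ``cooling'' dominates the ``heating''. Since $c(u)\le \beta$, the crudest bound is
$$
u(t,ct)\le e^{\beta t}\,E^W\!\left[u_0(X^{ct}(t))\right]\le \|u_0\|_\infty\, e^{\beta t}\, P^W\!\left(X^{ct}(t)\in \operatorname{supp}u_0\right).
$$
Because $\operatorname{supp}u_0\subset(-\delta,\delta)$, the event $\{X^{ct}(t)\in\operatorname{supp}u_0\}$ forces $\tfrac{ct-X^{ct}(t)}{t}$ to be within $\delta/t$ of $c$, so for any small $\delta'>0$ and $t$ large it is contained in $\{\tfrac{ct-X^{ct}(t)}{t}\in(c-\delta',c+\delta')\}$. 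Applying the upper bound \eqref{Thm:LDPProcessX:Eq:UpperBoundPositive} with $v=c$, $\kappa=1$ and the closed set $G=[c-\delta',c+\delta']$ (intersected with $[0,\infty)$) gives, $\mathbf P$-a.s.,
$$
\limsup_{t\to\infty}\frac{1}{t}\ln P^W\!\left(X^{ct}(t)\in\operatorname{supp}u_0\right)\le -\inf_{c'\in[c-\delta',c+\delta']} c'\,I\!\left(\tfrac{1}{c'}\right).
$$
Combining with the Feynman--Kac bound, $\limsup_{t\to\infty}\tfrac1t\ln u(t,ct)\le \beta - \inf_{c'\in[c-\delta',c+\delta']}c'I(1/c')$. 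Since $c\in\Lambda_0$ means $cI(1/c)-\beta>0$, and $a\mapsto aI(1/a)$ is continuous (indeed convex, by Lemma \ref{Lm:PropertiesOfIInOurCase}(8)), for $\delta'$ small enough the right-hand side is strictly negative, so $u(t,ct)\to 0$ for each fixed $c\in\Lambda_0$.

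To upgrade pointwise convergence to $\sup_{c\in F}u(t,ct)\to 0$, I would use a covering argument: the map $c\mapsto cI(1/c)$ is continuous on $(0,\infty)$ and lower semicontinuous, and on the closed set $F\subset\Lambda_0$ we have $cI(1/c)>\beta$; but $F$ need not be compact, so one must also control the tails $c\to 0^+$ and $c\to\infty$. Near $c\to 0^+$ one can use continuity of $X^x(\cdot)$ and the fact that starting far on the positive axis the drift pushes the particle further right, so returning to a fixed neighborhood of the origin in one unit of (rescaled) time is super-exponentially unlikely; more precisely, for $c$ small, $\{X^{ct}(t)\in\operatorname{supp}u_0\}\subset\{T^{ct}_{\delta t}/t\le 1\}$ and property (3) of Lemma \ref{Lm:PropertiesOfIInOurCase} ($I(a)\to+\infty$ as $a\to 0^+$) combined with \eqref{Thm:LDPHittingTime:Eq:UpperBound} gives a uniform negative exponential rate. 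For $c\to\infty$, monotonicity of $I$ on $(\mu'(0),\infty)$ (Lemma \ref{Lm:PropertiesOfIInOurCase}(2)) makes $cI(1/c)$ eventually increasing, hence bounded below away from $\beta$ there as well; alternatively $cI(1/c)\ge \lim_{\delta\to0}\delta I(1/\delta)\cdot$(something) via convexity. Thus $F$ can be reduced to a compact set $F\cap[\epsilon,R]$ on which $cI(1/c)$ attains a minimum $m>\beta$; covering this compact set by finitely many intervals $(c_j-\delta'_j,c_j+\delta'_j)$ on each of which $\inf c'I(1/c')>\beta$, and noting that the finitely many $\mathbf P$-a.s. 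LDP estimates hold simultaneously, yields $\limsup_{t\to\infty}\tfrac1t\ln\sup_{c\in F}u(t,ct)<0$, hence \eqref{Prop:AsymptoticSolutionZero:Eq:Limit}.

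The main obstacle is the $\limsup$ versus uniformity issue: the LDP upper bound \eqref{Thm:LDPProcessX:Eq:UpperBoundPositive} holds $\mathbf P$-a.s. for each fixed closed set and each fixed $v,\kappa$, but in the Feynman--Kac bound one needs the estimate to hold uniformly in $c$ over $F$ (and with the correct handling of the time integral $\int_0^t c(u(t-s,X^{ct}(s)))\,ds$ rather than just the crude $e^{\beta t}$). The crude bound $c(u)\le\beta$ suffices here for the upper estimate (we only need $u\to0$), so no self-referential fixed-point argument is needed for this direction; the genuine difficulty is purely the passage from pointwise to uniform, which the finite-covering plus tail-control argument above resolves, using the a.s. simultaneity of countably many LDP statements and the continuity/convexity and blow-up properties of $a\mapsto aI(1/a)$ established in Lemma \ref{Lm:PropertiesOfIInOurCase}.
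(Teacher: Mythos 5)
Your proof follows essentially the same route as the paper's: the crude Feynman--Kac bound $u(t,ct)\le \|u_0\|\,e^{\beta t}\,P^W\!\left(X^{ct}(t)\in \mathrm{supp}\,u_0\right)$ followed by the LDP upper bound of Theorem \ref{Thm:LDPProcessX} with $v=c$, $\kappa=1$, yielding exponential decay at rate $cI\!\left(\tfrac1c\right)-\beta>0$. Your extra covering/tail discussion for uniformity over a possibly non-compact $F$ is in fact more careful than the paper's one-line appeal to closedness and continuity of $c\mapsto cI(1/c)$, and is sound in spirit (for the small-$c$ tail the correct input is the limit $cI(1/c)\to\eta_c$ as $c\to 0^+$ rather than the hitting-time inclusion with level $\delta t$ as written), so the proposal is correct.
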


\begin{proof}
Since $F$ is closed and $I(a)$ is a continuous function in $a$, there is some $\varepsilon=\varepsilon(F)>0$ such that for all $c\in F$ we have $cI\left(\dfrac{1}{c}\right)-\beta >\varepsilon>0$. We apply (\ref{Eq:FeynmannKacRDERandomDrift}) to get the estimate

$$u(t,x)=E^W\left[u_0(X^x(t))\exp\left(\int_0^t c(u(t-s, X^x(s))){\rm d}s \right)\right]\leq \exp(\beta t)E^W u_0(X^x(t)) \ .$$
Since we assume $\text{supp}u_0\subset (-\delta, \delta)$ for some $\delta>0$, from the above estimate we further conclude that 
$$\begin{array}{ll}
u(t,ct) & \leq \|u_0\|\exp(\beta t)P^W\left(-\delta \leq X^{ct}(t)\leq \delta\right)
\\
& = \|u_0\|\exp(\beta t)P^W\left(c+\dfrac{\delta}{t}\geq \dfrac{ct-X^{ct}(t)}{t}\geq c-\dfrac{\delta}{t}\right) \ ,
\end{array}$$
where $\|u_0\|=\sup\limits_{x\in \mathbb{R}} u_0(x)>0$.

Making use of Theorem \ref{Thm:LDPProcessX} with $\kappa=1$ and $v=c$, as $t$ is large, we get 
$$\limsup\limits_{t\rightarrow\infty}\dfrac{1}{t}\ln u(t,ct)\leq \left[\beta - cI\left(\dfrac{1}{c}\right)\right]+\dfrac{\varepsilon}{2} \leq  -\dfrac{\varepsilon}{2}$$ 
almost surely with respect to $\mathbf{P}$. This implies (\ref{Prop:AsymptoticSolutionZero:Eq:Limit}).
\end{proof}

\begin{proposition} \label{Prop:AsymptoticSolutionOne}
For any compact set $K\subset \Lambda_1$ we have
\begin{equation}\label{Prop:AsymptoticSolutionOne:Eq:Limit}
\lim\limits_{t\rightarrow \infty}\inf\limits_{c\in K} u(t, ct) = 1
\end{equation}
almost surely with respect to $\mathbf{P}$.
\end{proposition}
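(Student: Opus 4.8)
The plan is to prove \eqref{Prop:AsymptoticSolutionOne:Eq:Limit} in two stages. First I would establish a \emph{uniform positive lower bound}: there exist $\rho_0>0$ (depending only on $K$), $t_0<\infty$ and an interval $[c_-,c_+]\supset K$ contained in $\Lambda_1$ (which is an interval, being a sublevel set of the convex function $a\mapsto aI(1/a)$ — Lemma \ref{Lm:PropertiesOfIInOurCase}(8)) such that $u(t,x)\ge\rho_0$ whenever $t\ge t_0$ and $x/t\in[c_-,c_+]$. Then I would run a \emph{bootstrap} turning this seed of positivity into convergence to $1$.

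For the first stage I would argue from \eqref{Eq:FeynmannKacRDERandomDrift} from the lower side, the mirror image of the heating/cooling heuristic of Section \ref{Sec:BasicIdeaMainResults}. Since $I$ is continuous (Lemma \ref{Lm:PropertiesOfIInOurCase}) and $K\subset\Lambda_1$ is compact, fix $\varepsilon>0$ with $cI(1/c)\le\beta-3\varepsilon$ for all $c\in K$; and, using the FKPP property (recall $c(0)=\beta=\sup_{(0,1]}c$), fix $\rho\in(0,1)$ small enough that $c(u)\ge\beta-\varepsilon$ for $0<u\le\rho$. Replacing $u_0$ by $u_0\wedge\rho$ only decreases $u$ by the comparison principle, so assume $\|u_0\|\le\rho$. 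With $\theta=\inf\{s\ge0:u(t-s,X^{ct}(s))\ge\rho\}$, the semigroup form of \eqref{Eq:FeynmannKacRDERandomDrift} evaluated at $t\wedge\theta$, together with $c(u)\ge0$ everywhere and $c(u)\ge\beta-\varepsilon$ on $\{\theta\ge t\}$, gives
\[
u(t,ct)\ \ge\ \rho\,P^W(\theta<t)\ +\ e^{(\beta-\varepsilon)t}\,E^W\!\big[u_0(X^{ct}(t))\,\mathbf{1}_{\{\theta\ge t\}}\big].
\]
The large-deviation lower bound of Theorem \ref{Thm:LDPProcessX} (with $v=c$, $\kappa=1$, applied to a small open ball around $c$ in the variable $(ct-X^{ct}(t))/t$), refined by Lemma \ref{Lm:ExpoentialSmall ProbabilityOfHittingTimeFromZeroToLinearDistance} to get uniformity in $c$, yields $E^W[u_0(X^{ct}(t))]\ge e^{-(cI(1/c)+\varepsilon)t}$ for $t$ large. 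Now a dichotomy: either the trajectory enters $\{u\ge\rho\}$ before time $t$ with probability $P^W(\theta<t)$ bounded below uniformly for $c\in K$, in which case the first term already gives $u(t,ct)\ge\rho\,P^W(\theta<t)\ge\rho_0$; or $P^W(\theta<t)$ is negligible, in which case $E^W[u_0(X^{ct}(t))\mathbf{1}_{\{\theta\ge t\}}]$ is comparable to $E^W[u_0(X^{ct}(t))]\ge e^{-(cI(1/c)+\varepsilon)t}$, whence $u(t,ct)\gtrsim e^{(\beta-cI(1/c)-2\varepsilon)t}\ge e^{\varepsilon t}\to\infty$, contradicting $0\le u\le1$. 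So the first alternative must hold, giving $u(t,ct)\ge\rho_0>0$; re-running the estimate over a slightly enlarged compact set of slopes and translating the base point upgrades this to the region $\{x/t\in[c_-,c_+]\}$.

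For the bootstrap I would compare with the space-homogeneous reaction: the ODE $\dot\phi=f(\phi)$ carries any $\phi(0)=\rho_0>0$ strictly up to $1$ in a time depending only on $\rho_0$ and $f$, while \eqref{Eq:RDERandomDrift}, having bounded drift, has finite spreading speed, so a region of linearly growing width on which $u\ge\rho_0$ ``fills in'' to $u\ge1-o(1)$ on a concentric region after a further controlled time. Applying this at time $\lambda t$ (for a suitable $\lambda\in(0,1)$) on $\{x/t\in[c_-,c_+]\}$ and evolving to time $t$ covers every $x=ct$ with $c\in K$, so $\liminf_{t\to\infty}\inf_{c\in K}u(t,ct)\ge1$, and with $u\le1$ this is \eqref{Prop:AsymptoticSolutionOne:Eq:Limit}. (Equivalently, one may avoid PDE comparison and build compactly supported subsolutions out of the ODE profile and a narrow Gaussian, re-running the probabilistic argument once more, as in \cite{FanHuTerlovCMP}.) \textbf{The main obstacle} is the first stage: making the dichotomy precise requires controlling the ``restart'' contribution $\rho\,P^W(\theta<t)$ and the endpoint behaviour of the trajectory near $\text{supp}\,u_0$ (where $u_0$ need not lie below $\rho$ unless one has first truncated the datum), and it requires Theorem \ref{Thm:LDPProcessX} together with Lemma \ref{Lm:ExpoentialSmall ProbabilityOfHittingTimeFromZeroToLinearDistance} to hold \emph{uniformly} in $c$ up to the boundary of $\Lambda_1$ — which is exactly why the continuity and convexity properties in Lemma \ref{Lm:PropertiesOfIInOurCase} are invoked. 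Once the uniform positive lower bound on a linearly growing region is secured, the passage to $1$ is classical.
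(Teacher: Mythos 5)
Your second stage (ODE comparison/spreading bootstrap) is a plausible alternative to what the paper does, but the first stage---which you yourself flag as the main obstacle---contains a genuine gap, and it is precisely where the paper's proof does its real work. The dichotomy does not close: the negation of ``$P^W(\theta<t)$ bounded below uniformly'' is not that it is negligible \emph{on the relevant scale}. The benchmark $E^W[u_0(X^{ct}(t))]$ is itself exponentially small, of order $e^{-cI(1/c)t}$, so even if $P^W(\theta<t)\to 0$ (say like $e^{-\alpha t}$ with $\alpha$ tiny) the event $\{\theta<t\}$ can still contain essentially all of the paths that end in $\text{supp}\,u_0$---indeed those are exactly the paths that sweep back through the region where $u$ has already grown---in which case $E^W\bigl[u_0(X^{ct}(t))\mathbf{1}_{\{\theta\ge t\}}\bigr]$ can vanish and neither horn of your alternative yields anything. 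What is missing is quantitative control of what a path must ``earn'' before it can reach $\text{supp}\,u_0$ while avoiding $\{u\ge\rho\}$, and this is exactly what the paper supplies: (i) an a priori exponential lower bound $u\ge e^{-2\delta t}$ on the space--time cone $\Gamma_t$ whose slopes lie in $\Lambda_0$ just \emph{outside} $\Lambda_1$ (Lemma \ref{Lm:LowerBoundSolutionOnGammat}, resting on Lemma \ref{Lm:LowerBoundBehaviorSolutionInThePositiveBranch}, which is an LDP lower-bound statement on $\Lambda_0$, not $\Lambda_1$); (ii) the observation that while $u<h$ the Feynman--Kac weight accumulates at rate $c(h)>0$, so on the event $A_2$ (never reach level $h$, stay in the cone for time at least $rt$) the integrand is at least $e^{-2\delta t}e^{c(h)rt}$, which forces $P^W(A_2)\to 0$ once $\delta<\tfrac12 c(h)r$ because $u\le 1$; and (iii) the superexponential estimate of Lemma \ref{Lm:SuperExponentialSmallProbabilityOfHittingTimeAtLinearDistance}, which kills the event $A_3$ that the path exits the cone (a linear distance) within time $rt$. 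These give $P^W(A_1)\to 1$, hence $u(t,ct)\ge h\,P^W(A_1)\to h$ for every $h<1$, uniformly on compact subsets of $\Lambda_1$.

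Note also that once the argument is organized this way, your entire second stage is unnecessary: letting $h\uparrow 1$ inside the stopped Feynman--Kac decomposition already produces the limit $1$, so no PDE comparison with the ODE $\dot\phi=f(\phi)$ or ``finite spreading speed fills in'' step (which, as stated, is itself not rigorous for a parabolic equation with infinite propagation speed) is needed. The truncation $\|u_0\|\le\rho$ and the use of Theorem \ref{Thm:LDPProcessX} with Lemma \ref{Lm:ExpoentialSmall ProbabilityOfHittingTimeFromZeroToLinearDistance} for uniformity are fine, but without ingredient (i) above there is no mechanism preventing the exceptional event $\{\theta<t\}$ from carrying exactly the exponentially small mass your lower bound relies on, so the proposal as written does not prove the proposition.
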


\begin{proof}
The proof follows the argument used in the proof of Lemma 6.3 in \cite{FanHuTerlovCMP} or Lemma 7 in \cite{Nolen-XinCMP2007}. We will use the following stopping-time version of the Feynman-Kac formula (\ref{Eq:FeynmannKacRDERandomDrift}): if $\tau$ is any stopping time, then

\begin{equation}\label{Prop:AsymptoticSolutionOne:Eq:Feynmann-KacStoppingTime}
u(t,x)=E^W \left[u(t-t\wedge \tau, X^x(t\wedge \tau))\exp\left\{\int_0^{t\wedge\tau} c(u(t-s, X^x(s))){\rm d}s\right\}\right] \ .
\end{equation}
The validity of (\ref{Prop:AsymptoticSolutionOne:Eq:Feynmann-KacStoppingTime}) is justified in the same manner as \cite[Lemma 6.3]{FanHuTerlovCMP}, so we omit the details.

For any $s>0$, we construct the sets
$$\Psi(s)=\left\{c>0: cI\left(\dfrac{1}{c}\right)-\beta = s\right\}
\text{ and }
\underline{\Psi}(s)=\left\{c>0: cI\left(\dfrac{1}{c}\right)-\beta \leq  s\right\} \ .$$
For any $\delta>0$ and $T>1$ we define 
\begin{equation}\label{Prop:AsymptoticSolutionOne:Eq:GammaT}
\Gamma_T=\left([\{1\}\times \underline{\Psi}(\delta)]\cup 
\left[\bigcup\limits_{1\leq t \leq T}(\{t\}\times t \Psi(\delta))\right]\right) \ .
\end{equation}

\noindent Lemma \ref{Lm:LowerBoundSolutionOnGammat} will ensure that for sufficiently large $t$ we have 
\begin{equation}\label{Prop:AsymptoticSolutionOne:Eq:LowerBoundSolutionOnGammat}
u(s,\xi)\geq e^{-2\delta t} \text{ for all } (s, \xi)\in \Gamma_t \ .
\end{equation}
Let $c\in K\subset \Lambda_1$. By (\ref{Eq:PositiveAndNegativeDifferenceOfIandbetaWaveSpeed}) we know that $c>0$. Set $h\in (0,1)$ and $t>0$. Define the stopping times

$$\begin{array}{rcl}
\sigma_h(t) & = & \min\{s\in [0,t];  \ u(t-s, X^{ct}(s))\geq h\} \ ,
\\
\sigma_\Gamma(t) & = & \min\{s\in [0,t]; \ (t-s, X^{ct}(s))\in \Gamma_t\} \ ,
\\
\hat{\sigma}(t) & = & \sigma_h(t) \wedge \sigma_\Gamma(t) \ .
\end{array}$$

\noindent We apply (\ref{Prop:AsymptoticSolutionOne:Eq:Feynmann-KacStoppingTime}) with $\tau=\hat{\sigma}(t)$, so that we express $u(t,x)$ as
\begin{equation}\label{Prop:AsymptoticSolutionOne:Eq:Feynmann-KacStoppingTimeAppliedToHatSigma}
u(t,x)=E^W \left[u(t-t\wedge \hat{\sigma}, X^x(t\wedge \hat{\sigma}))\exp\left\{\int_0^{t\wedge \hat{\sigma}} c(u(t-s, X^x(s))){\rm d}s\right\}(\mathbf{1}_{A_1}+\mathbf{1}_{A_2}+\mathbf{1}_{A_3})\right] \ ,
\end{equation}
where 
$$\begin{array}{rcl}
A_1 & = & \{\sigma_h(t)\leq t\} \ ,
\\
A_2 & = & \{\sigma_h(t)>t, \sigma_\Gamma(t)\geq rt\} \ ,
\\
A_3 & = & \{\sigma_h(t)>t, \sigma_\Gamma(t)<rt\} \ ,
\end{array}$$
for some $r\in (0,1)$ to be chosen. Because $A_1, A_2, A_3$ are disjoint, the expectation (\ref{Prop:AsymptoticSolutionOne:Eq:Feynmann-KacStoppingTimeAppliedToHatSigma}) splits into three integrals, while the first two integrals are bounded from below respectively by

$$E^W \left[u(t-t\wedge \hat{\sigma}, X^x(t\wedge \hat{\sigma}))\exp\left\{\int_0^{t\wedge \hat{\sigma}} c(u(t-s, X^x(s))){\rm d}s\right\}\mathbf{1}_{A_1}\right] \geq h P^W(A_1) \ ,
$$
because on $A_1$ we have $u(t-t\wedge \hat{\sigma}, X^x(t\wedge \hat{\sigma}))\geq h$ and $\displaystyle{\exp\left\{\int_0^{t\wedge \hat{\sigma}} c(u(t-s, X^x(s))){\rm d}s\right\}}\geq 1$;

$$
E^W \left[u(t-t\wedge \hat{\sigma}, X^x(t\wedge \hat{\sigma}))\exp\left\{\int_0^{t\wedge \hat{\sigma}} c(u(t-s, X^x(s))){\rm d}s\right\}\mathbf{1}_{A_2}\right] \geq e^{-2\delta t}e^{c(h)rt} P^W(A_2) \ ,$$
because on $A_2$ we have $u(t-t\wedge \hat{\sigma}, X^x(t\wedge \hat{\sigma}))\geq e^{-2\delta t}$ due to (\ref{Prop:AsymptoticSolutionOne:Eq:LowerBoundSolutionOnGammat}), and $$\exp\left\{\int_0^{t\wedge \hat{\sigma}} c(u(t-s, X^x(s))){\rm d}s\right\}\geq e^{c(h) rt} \ .$$

The above two estimates combined, together with the fact that the third integral is non-negative, give
\begin{equation}\label{Prop:AsymptoticSolutionOne:Eq:LowerBoundSolutionViaParameterh}
u(t,x)\geq hP^W(A_1)+e^{-2\delta t}e^{c(h)rt}P^W(A_2) \ .
\end{equation}

\noindent Given $h,r>0$, we choose $\delta=\delta(h,r)>0$ such that $-2\delta t + c(h)rt>0$, i.e. $0<\delta<\dfrac{1}{2}c(h)r$. Since $u(t,x)\in (0,1)$, we must have $P^W(A_2)\rightarrow 0$ as $t\rightarrow \infty$ with the small $\delta>0$ chosen in our way. We will also show that $P^W(A_3)\rightarrow 0$ as $t\rightarrow \infty$. Since $P^W(A_1)+P^W(A_2)+P^W(A_3)=1$, we conclude that $P^W(A_1)\rightarrow 1$ as $t \rightarrow \infty$. This combined with (\ref{Prop:AsymptoticSolutionOne:Eq:LowerBoundSolutionViaParameterh}) gives $u(t,x)\geq h$ as $t\rightarrow\infty$ for any $h\in (0,1)$, which is (\ref{Prop:AsymptoticSolutionOne:Eq:Limit}). 

It remains to show that $P^W(A_3)\rightarrow 0$ as $t\rightarrow \infty$. Due to Lemma \ref{Lm:SolutionStructureBalanceEquation}, the set $\Lambda_1$ is either empty or an interval of the form $(L, R)$, $L< R$ (this also applies to $\Lambda^\fwd_1$), and we only have to consider the latter case. So $\Psi(0)=\{L, R\}$ and $\Psi(\delta)=\{L_{\Psi}(\delta), R_{\Psi}(\delta)\}$ where $L_{\Psi}(\delta)<L<R<R_{\Psi}(\delta)$. The initial point $x=ct$ for $c\in K\subset \Lambda_1=(L, R)$. Thus $$\sigma_\Gamma(t)\geq \min\{s\in [0,t]: X^{ct}(s)=L(t-s) \text{ or } X^{ct}(s)=R(t-s)\} \ ,$$
which leads to the inclusion of events
$$\{\sigma_\Gamma(t)\leq rt\}\subseteq \left\{\min\{s\in [0,t]: X^{ct}(s)=L(t-s) \text{ or } X^{ct}(s)=R(t-s)\}\leq rt\right\} \ .$$
Since $L<c<R$, when $r>0$ is small one can pick $\varepsilon_c>0$ independent of $r$ such that $(c-\varepsilon_c, c+\varepsilon_c)\subset \bigcap\limits_{0\leq \lambda \leq r} (L(1-\lambda), R(1-\lambda))$ . Then on the event $$\left\{\min\{s\in [0,t]: X^{ct}(s)=L(t-s) \text{ or } X^{ct}(s)=R(t-s)\}\leq rt\right\}$$ we have 
$$\min\{s\in [0,t]: X^{ct}(s)=L(t-s) \text{ or } X^{ct}(s)=R(t-s)\}\geq T^{ct}_{(c-\varepsilon_c) t}\wedge T^{ct}_{(c+\varepsilon_c) t} \ ,$$
so that
$$P^W(A_3)\leq
P^W(\sigma_\Gamma(t)\leq rt)\leq P^W\left(T^{ct}_{(c-\varepsilon_c) t}\wedge T^{ct}_{(c+\varepsilon_c) t}<rt\right)\rightarrow 0 $$ 
as $t\rightarrow\infty$, when $r>0$ is picked to be sufficiently small due to Lemma \ref{Lm:SuperExponentialSmallProbabilityOfHittingTimeAtLinearDistance}.
\end{proof}

\begin{lemma}\label{Lm:LowerBoundSolutionOnGammat}
Given any sufficiently small $\delta>0$, for sufficiently large $T>0$ we have
$$u(s,\xi)\geq e^{-2\delta T} \text{ for all } (s, \xi)\in \Gamma_T \ ,$$
where the set $\Gamma_T$ is defined in \emph{(\ref{Prop:AsymptoticSolutionOne:Eq:GammaT})}.
\end{lemma}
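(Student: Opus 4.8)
### Proof Plan for Lemma \ref{Lm:LowerBoundSolutionOnGammat}

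The goal is to show that the solution $u(t,x)$ is bounded below by $e^{-2\delta T}$ on the space-time set $\Gamma_T$, which consists of the "slice" $\{1\}\times\underline{\Psi}(\delta)$ together with the union of the slices $\{t\}\times t\Psi(\delta)$ for $1\le t\le T$. The strategy is to use the Feynman-Kac formula \eqref{Eq:FeynmannKacRDERandomDrift} together with the LDP for the process $X^x(t)$ (Theorem \ref{Thm:LDPProcessX}), this time to get a \emph{lower} bound instead of an upper bound. The key idea is that on the curve $t\Psi(\delta)$, by definition $cI(1/c)=\beta+\delta$ with $c$ the speed; so the "cooling" large-deviation cost is $\exp(-t(\beta+\delta))$ while the "heating" contributes at best $\exp(\beta t)$ (since $c(u)\le\beta$), leaving a net factor $\exp(-\delta t)$, which is $\ge e^{-2\delta T}$ once we use $t\le T$ — but this crude bound is not quite enough because we need a true lower bound on $u$, not an upper bound.

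First I would handle the forward/backward split: the set $\Psi(\delta)$ corresponding to backward propagation ($c>0$) is treated via Theorem \ref{Thm:LDPProcessX} equations \eqref{Thm:LDPProcessX:Eq:LowerBoundPositive}, and its mirror image $-\Psi^\fwd(\delta)$ via \eqref{Thm:LDPProcessX:Eq:LowerBoundNegative}; by symmetry it suffices to treat one of them. For a point $(t,ct)\in\{t\}\times t\Psi(\delta)$, the plan is: start the diffusion $X^{ct}(t)$ at $x=ct$; by the lower-bound LDP in Theorem \ref{Thm:LDPProcessX}, the probability that $X^{ct}(t)$ lands in a small ball $B_\eta(0)$ around the origin (where $\mathrm{supp}\,u_0$ lives, or more precisely where we have already established $u\ge$ some fixed positive constant) is at least $\exp(-t\,cI(1/c)\,(1+o(1)))=\exp(-t(\beta+\delta)(1+o(1)))$. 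Then, restricting the Feynman-Kac expectation to the event that the path stays near the region where $u$ is not yet small — this is where I would invoke the stopping-time Feynman-Kac formula and a bootstrapping argument along $\Gamma_T$ itself — one multiplies by the heating factor, which is bounded below by $1$ trivially, or by a better lower bound $e^{c(h)\cdot(\text{time spent where } u\ge h)}$. The delicate point is to choose $\eta$ small and invoke that near the origin $u$ stays bounded below by a constant for all sufficiently large times (this should follow from Proposition \ref{Prop:AsymptoticSolutionOne} applied to a fixed compact subset of $\Lambda_1$ containing a neighborhood of $0$, provided $0\in\Lambda_1$, which holds when $\beta>0$ since $cI(1/c)\to\infty$ as $c\to 0$).

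The main obstacle I anticipate is the circularity: Lemma \ref{Lm:LowerBoundSolutionOnGammat} is used in the proof of Proposition \ref{Prop:AsymptoticSolutionOne}, so one cannot simply invoke that Proposition to get the lower bound on $u$ near the origin. The resolution should be an induction (or continuity/bootstrap) argument on $T$: one proves the bound $u(s,\xi)\ge e^{-2\delta s}$ on $\Gamma_T$ by induction on the time level, where the inductive step uses the Feynman-Kac formula with a stopping time equal to the first hitting of the already-controlled part of $\Gamma_T$ (the lower-time slices), so that on that stopping event $u$ is known to be $\ge e^{-2\delta s'}$ for $s'<s$, and the large-deviation probability of reaching that slice before time $s$ gives the exponential factor $e^{-\delta s}$; combining, $u(s,\xi)\ge e^{-2\delta s'}\cdot e^{-\delta s}\cdot(\text{heating})\ge e^{-2\delta s}$ for the right choice of constants. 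I would set this up carefully: fix small $\delta>0$, let $T$ be large, parametrize $\Gamma_T$ by time, and show the estimate propagates from the base slice $\{1\}\times\underline\Psi(\delta)$ (where $u$ is bounded below by a fixed constant $\ge e^{-2\delta}$ for $T$ large, by a direct Feynman-Kac lower bound using that $\underline\Psi(\delta)$ contains a neighborhood of $0$ and $u_0$ is supported there) upward to time $T$.

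A secondary technical point: to run the LDP lower bound \eqref{Thm:LDPProcessX:Eq:LowerBoundPositive} one needs the target speed $1/c$ to be a point of continuity/finiteness of $aI(a)$ restricted to the relevant range, which Lemma \ref{Lm:PropertiesOfIInOurCase} guarantees (continuity of $I$, with $I(a)\to\infty$ as $a\to 0^+$). One also must control the behavior uniformly over the curve $\Psi(\delta)$, which is a single point or a pair of points (by Lemma \ref{Lm:SolutionStructureBalanceEquation}), so uniformity is automatic. I expect the write-up to be essentially a lower-bound mirror of the Feynman-Kac "heating vs.\ cooling" argument in Proposition \ref{Prop:AsymptoticSolutionZero}, but organized as an induction over the foliation of $\Gamma_T$ by time slices, with Lemma \ref{Lm:SuperExponentialSmallProbabilityOfHittingTimeAtLinearDistance} used to discard the event that the path wanders too far before reaching a lower slice.
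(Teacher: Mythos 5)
Your toolkit (stopping-time Feynman-Kac, the LDP lower bound of Theorem \ref{Thm:LDPProcessX}, heating vs.\ cooling, and avoiding circularity with Proposition \ref{Prop:AsymptoticSolutionOne}) is the right one, but the argument as sketched has a genuine gap in the heating accounting, and this is exactly the point the paper's proof is built around. To make any version of the bound close you need the weight $\exp\left(\int_0^{t\wedge\tau} c(u(t-s,X(s)))\,{\rm d}s\right)$ to grow at rate essentially $\beta$, i.e.\ $c(u)\ge\beta(1-h)$ along the path; since $c(0)=\beta=\sup c$, this holds precisely where $u$ is already known to be \emph{small}, which the paper gets from the already-proved upper bound, Proposition \ref{Prop:AsymptoticSolutionZero}, because the relevant paths stay in a neighborhood of a ray with speed in $\Lambda_0$. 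Your two heating bounds --- the trivial $\ge 1$, or $e^{c(h)\cdot(\text{time spent where }u\ge h)}$ --- are insufficient: with heating $\ge1$ the LDP cost $e^{-(\beta+\delta)(s-s')}$ of travelling along a $\Psi(\delta)$-ray is uncompensated, so the inductive inequality $e^{-2\delta s'}\,e^{-(\beta+\delta)(s-s')}\ge e^{-2\delta s}$ fails whenever $\beta>\delta$ (your ``$e^{-\delta s}$'' already presupposes the compensation); and for FKPP nonlinearities the region $\{u\ge h\}$ is where the heating rate is \emph{weak}, while along $\Lambda_0$-rays the time spent there is negligible anyway. Relatedly, your fallback that $u$ stays bounded below near the origin because $0\in\Lambda_1$ is false in the paper's main new regime: $cI\left(\frac{1}{c}\right)\to\eta_c$ (not $+\infty$) as $c\to0^+$ by Lemma \ref{Lm:PropertiesOfIInOurCase}, so for $\beta\le\eta_c$ one has $\Lambda_1=\emptyset$ (Lemma \ref{Lm:SolutionStructureBalanceEquation}) and $u(t,ct)\to0$ for every $c>0$, however small; that route is also circular, as you yourself note.

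There is a second, structural issue: your induction over the time slices of $\Gamma_T$ feeds the quenched LDP --- an asymptotic $t\to\infty$ statement at fixed ray parameters --- into steps whose scales and starting points vary, so you would have to address uniformity and the accumulation of the $o(1)$ errors over a growing number of steps; the sketch does not. The paper sidesteps the induction entirely: it first proves the ray-wise bound $\liminf_{t\to\infty}\frac1t\ln u(t,ct)\ge-\left[cI\left(\frac1c\right)-\beta\right]$ on compact subsets of $\Lambda_0$ (Lemma \ref{Lm:LowerBoundBehaviorSolutionInThePositiveBranch}) by a self-consistency argument: set $q=\liminf_{t\to\infty}\frac1t\ln\inf_{\tilde c\in B_\delta(c)}u(t,\tilde c t)$, show $q>-\infty$ via Lemma \ref{Lm:ExpoentialSmall ProbabilityOfHittingTimeFromZeroToLinearDistance}, derive $q\ge(1-\kappa)q+\kappa\beta(1-h)+\kappa\cdot(\text{LDP lower bound})$ from a single Feynman-Kac step over $[0,\kappa t]$ (with the heating rate $\beta(1-h)$ supplied by Proposition \ref{Prop:AsymptoticSolutionZero}), and cancel $q$. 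Lemma \ref{Lm:LowerBoundSolutionOnGammat} is then a short corollary: apply this at $c\in\Psi(\delta)$, where the decay rate is exactly $\delta$, to get $u(t,ct)\ge e^{-2\delta t}$ for $t\ge t_1(\delta)$, and cover $1\le t\le t_1$ by compactness and strict positivity of $u$, choosing $T=kt_1$ so that $e^{-2\delta T}$ lies below that minimum.
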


\begin{proof}
Given $\delta>0$, let $c\in \Psi(\delta)$, so that $c>0$ and
$cI\left(\dfrac{1}{c}\right)-\beta =\delta$. By Lemma \ref{Lm:LowerBoundBehaviorSolutionInThePositiveBranch} we know that for such $c\in \Psi(\delta)$ we have $\liminf\limits_{t\rightarrow\infty}\dfrac{1}{t}\ln u(t, ct)\geq -\left[cI\left(\dfrac{1}{c}\right)-\beta\right]=-\delta$. Thus we can find $t_1=t_1(\delta)>0$ such that for any $t>t_1$ we have
\begin{equation}\label{Lm:LowerBoundSolutionOnGammat:Eq:LowerBoundWhenTimeLarge}
u(t, ct)\geq e^{-2\delta t} \ .
\end{equation}
Let $m=\min\limits_{1\leq \widetilde{t}\leq t_1, c\in \Psi(\delta)}u(\widetilde{t}, c\widetilde{t})>0$. Pick $k\geq 2$ sufficiently large
such that $e^{-2\delta kt_1}\leq m$. Let $T=kt_1$. Then by (\ref{Lm:LowerBoundSolutionOnGammat:Eq:LowerBoundWhenTimeLarge}), we have, $$u(s,\xi)\geq e^{-2\delta T} \text{ for all } (s,\xi) \in \bigcup\limits_{1\leq \widetilde{t}\leq T}(\{\widetilde{t}\}\times \widetilde{t}\Psi(\delta)) \ ,$$
which concludes this Lemma.
\end{proof}

\begin{lemma}\label{Lm:LowerBoundBehaviorSolutionInThePositiveBranch}
For any compact set $K\subset \Lambda_0$ we have
\begin{equation}\label{Lm:LowerBoundBehaviorSolutionInThePositiveBranch:Eq:LowerBound}
\liminf\limits_{t\rightarrow\infty}\dfrac{1}{t}\ln \inf\limits_{c\in K}u(t,ct)\geq -\max\limits_{c\in K}\left[cI\left(\dfrac{1}{c}\right)-\beta\right] \ .
\end{equation} 
\end{lemma}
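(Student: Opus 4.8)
The plan is to obtain a lower bound on $u(t,ct)$ by keeping only one favorable trajectory in the Feynman-Kac representation \eqref{Eq:FeynmannKacRDERandomDrift}, namely the event that the diffusion $X^{ct}(t)$ stays in a small neighborhood of the support of $u_0$ for most of the time. First I would fix a compact $K\subset\Lambda_0$ and a small $\delta>0$ such that $B_\delta(c)\subset\Lambda_0$ for all $c\in K$; by continuity of $I(\cdot)$ and compactness, $\varepsilon_0:=\min_{c\in K}\big(cI(1/c)-\beta\big)>0$. Since $c(u)=f(u)/u\geq 0$ and $c(0)=\beta$, the heating factor satisfies $\exp\!\big(\int_0^t c(u(t-s,X^{ct}(s)))\,ds\big)\geq 1$ always, but to capture the full rate $\beta$ one wants $u$ to be close to $0$ along the path. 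Because the statement only asks for the rate $-\max_{c\in K}[cI(1/c)-\beta]$ and $u(t,x)\in(0,1)$ so that $c(u(t-s,X^x(s)))\geq 0$, the crude bound $\exp(\int_0^t c\,ds)\geq 1$ already suffices: the reaction contributes at least the trivial factor, and the cooling term is handled by the large-deviation lower bound. Concretely, using $u_0\geq c_0\mathbf{1}_{B_{\delta/2}(0)}$ for some $c_0>0$ (after shrinking $\delta$ so that $B_{\delta/2}(0)\subset\mathrm{supp}\,u_0$, possible since $\mathrm{supp}\,u_0$ contains a neighborhood of a point, or else replacing $u_0$ by a smaller bump — but in fact $\mathrm{supp}\,u_0\subset(-\delta_0,\delta_0)$ only guarantees the support is inside a ball; one instead uses continuity of the solution and a short-time argument, or appeals directly to the already-proven lower estimates for the hitting time), one writes
\[
u(t,ct)\;\geq\; E^W\!\Big[u_0(X^{ct}(t))\,\mathbf{1}_{\{X^{ct}(s)\in B_{\delta}(0)\ \forall s\in[0,t]\}}\Big]\cdot e^{0}\;,
\]
and bounds this below by a constant times $P^W\big(X^{ct}(t)\in B_{\delta/2}(0)\big)$ up to a sub-exponential correction, or more robustly by inserting the reaction factor $e^{\beta t}$ times a probability that $u$ stays small — but this latter route is circular.

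The clean route, which I would follow, is the bootstrap used in \cite{FanHuTerlovCMP}, \cite{Nolen-XinCMP2007}: I would NOT try to prove the full rate directly but instead combine (i) the trivial lower bound $u(t,ct)\geq E^W[u_0(X^{ct}(t))]$, which by Theorem \ref{Thm:LDPProcessX} (with $\kappa=1$, $v=c$) gives $\liminf\frac1t\ln u(t,ct)\geq -cI(1/c)$ uniformly over $c\in K$, with (ii) the heating mechanism: on the portion of the trajectory where $u(t-s,X^{ct}(s))$ is below some threshold $h$, the integrand $c(u)$ is at least $c(h)$, and $c(h)\to\beta$ as $h\to 0$. One iterates: after $n$ rounds of picking up a factor $e^{c(h)\,\tau_n t}$ for the fraction $\tau_n$ of time spent in the low-$u$ region, and letting $h\to0$ and the fractions accumulate to $1$, the recovered rate improves from $-cI(1/c)$ to $-(cI(1/c)-\beta)$. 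Uniformity over the compact set $K$ comes from the uniform versions of the LDP lower bound (Theorem \ref{Thm:LDPProcessX} is stated for fixed $v$, but a standard covering of $K$ by finitely many $\delta$-balls together with monotonicity in the endpoints reduces the uniform statement to finitely many pointwise ones) and from Lemma \ref{Lm:ExpoentialSmall ProbabilityOfHittingTimeFromZeroToLinearDistance}, which provides a uniform $-K$ lower bound for the probability of returning to $B_\delta(0)$.

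The main obstacle, as I see it, is making the heating bootstrap quantitative and uniform: one must show that the fraction of time the diffusion spends in the region where $u$ is small can be made arbitrarily close to $1$ with probability not decaying faster than the target exponential rate, and simultaneously that the threshold $h$ can be sent to $0$ so that $c(h)\uparrow\beta$, all while the estimates hold uniformly for $c$ ranging over the compact set $K$. This is precisely the content that mirrors \cite[Lemma 6.3 and surrounding analysis]{FanHuTerlovCMP}; the continuous-diffusion setting introduces no essentially new difficulty beyond what was handled there, since the LDP inputs (Theorems \ref{Thm:LDPHittingTime}, \ref{Thm:LDPProcessX}) and the auxiliary estimates (Lemmas \ref{Lm:SuperExponentialSmallProbabilityOfHittingTimeAtLinearDistance}, \ref{Lm:ExpoentialSmall ProbabilityOfHittingTimeFromZeroToLinearDistance}) are already established above in the forms required. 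Accordingly, I would organize the proof as: reduce to a pointwise-in-$c$ statement by compactness; invoke the stopping-time Feynman-Kac identity \eqref{Prop:AsymptoticSolutionOne:Eq:Feynmann-KacStoppingTime} with the stopping time being the first exit from a moving cone around the origin; lower-bound the heating integral by $c(h)$ times the time spent below level $h$; use the LDP lower bound of Theorem \ref{Thm:LDPProcessX} to bound below the probability of the relevant path event by $\exp(-t\,cI(1/c)+o(t))$; and finally optimize over $h\to0$ to upgrade the exponent to $-(cI(1/c)-\beta)$, reading off the uniform bound over $K$.
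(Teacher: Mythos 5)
Your overall strategy --- a bootstrap based on the stopping-time Feynman--Kac formula \eqref{Prop:AsymptoticSolutionOne:Eq:Feynmann-KacStoppingTime}, the LDP lower bound of Theorem \ref{Thm:LDPProcessX}, the heating factor with $c(u)\to\beta$ as $u\to 0$, and the auxiliary Lemmas \ref{Lm:SuperExponentialSmallProbabilityOfHittingTimeAtLinearDistance} and \ref{Lm:ExpoentialSmall ProbabilityOfHittingTimeFromZeroToLinearDistance} --- is indeed the one the paper follows. But the proposal leaves precisely the crux unproven. You name as ``the main obstacle'' showing that the path spends a fraction of time close to $1$ in the region where $u$ is small, and you defer it to \cite{FanHuTerlovCMP}. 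In the paper this is resolved by a concrete, non-circular device that you never invoke: the already-proved Proposition \ref{Prop:AsymptoticSolutionZero} (the $u\to 0$ half on $\Lambda_0$) gives, for $c\in\Lambda_0$, a $\delta>0$ and $t_0(h)$ such that $c(u(s,c's))\geq\beta(1-h)$ for all $c'\in B_{6\delta}(c)\subset\Lambda_0$ and $s\geq t_0$; hence any trajectory staying within $3\delta t$ of the ray $x=(t-s)c$ automatically satisfies \eqref{Lm:LowerBoundBehaviorSolutionInThePositiveBranch:Eq:SetACondition2:SolutionPDEControlledByh} except on a time window of bounded length. This is why the lemma is stated only for $K\subset\Lambda_0$, and it is exactly the step that breaks the circularity you correctly worry about; without it your ``fraction of time in the low-$u$ region'' has no justification.

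Two further steps of your outline would not go through as written. First, Theorem \ref{Thm:LDPProcessX} is an endpoint LDP, so it cannot by itself bound below the probability of the path event you describe (staying in a tube around the ray for all of $[0,t]$ and ending near $\mathrm{supp}\,u_0$) by $\exp(-t\,cI(1/c)+o(t))$; no path-level (Mogulskii-type) LDP is proved in the paper. The paper avoids this by a self-consistent (renewal) inequality: it sets $q=\liminf_{t\to\infty}\frac1t\ln\inf_{\widetilde c\in B_\delta(c)}u(t,\widetilde c t)$ --- the infimum over a ball of speeds, not a pointwise-in-$c$ statement, is essential so that the recursion closes after the short window (cf. Lemma \ref{Lm:TechnicalBoundOfcAndHatcInBall}) --- runs Feynman--Kac only over $[0,\kappa t]$, applies the endpoint LDP there, controls excursions out of the $3\delta t$-tube within time $\kappa t$ by the super-exponential estimate of Lemma \ref{Lm:SuperExponentialSmallProbabilityOfHittingTimeAtLinearDistance}, and obtains $q\geq(1-\kappa)q+\kappa\beta(1-h)+\liminf_t\frac1t\ln\inf P^W(A)$. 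Cancelling $q$ requires knowing $q>-\infty$ in advance, which is what Lemma \ref{Lm:ExpoentialSmall ProbabilityOfHittingTimeFromZeroToLinearDistance} supplies (you cite that lemma only for uniformity over $K$). Your ``$n$ rounds with fractions $\tau_n$ accumulating to $1$'' is a reasonable heuristic for this mechanism, but it is not a proof; the quantitative version is the $\kappa$-step self-consistency above, and supplying it (together with the use of Proposition \ref{Prop:AsymptoticSolutionZero}) is what is missing from the proposal.
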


\begin{proof} 
The compactness of $K$ implies that it suffices to show that given $\varepsilon>0$ and any $c>0$ for which $cI\left(\dfrac{1}{c}\right)-\beta>0$, we have

\begin{equation}\label{Lm:LowerBoundBehaviorSolutionInThePositiveBranch:Eq:ReductionTodeltaNeighborhood}
\liminf\limits_{t\rightarrow\infty} \left(\dfrac{1}{t}\ln \inf\limits_{\widetilde{c}\in B_\delta(c)}u(t,\widetilde{c}t)\right)\geq \beta - cI\left(\dfrac{1}{c}\right)-\varepsilon \ ,
\end{equation}
for $\delta>0$ sufficiently small. We define the limit on the left-hand of (\ref{Lm:LowerBoundBehaviorSolutionInThePositiveBranch:Eq:ReductionTodeltaNeighborhood}) as 
\begin{equation}\label{Lm:LowerBoundBehaviorSolutionInThePositiveBranch:Eq:ReductionTodeltaNeighborhood:LHS:q}
q=\liminf\limits_{t\rightarrow\infty} \left(\dfrac{1}{t}\ln \inf\limits_{\widetilde{c}\in B_\delta(c)}u(t,\widetilde{c}t)\right) \ .
\end{equation}

\noindent By Lemma \ref{Lm:ExpoentialSmall ProbabilityOfHittingTimeFromZeroToLinearDistance} we know that $q>-\infty$. If $q=+\infty$, (\ref{Lm:LowerBoundBehaviorSolutionInThePositiveBranch:Eq:ReductionTodeltaNeighborhood}) is automatically satisfied. So without loss of generality we assume for the moment that $q$ is finite. By the representation (\ref{Prop:AsymptoticSolutionOne:Eq:Feynmann-KacStoppingTime}) we have for any $\kappa\in (0,1]$ that

\begin{equation}\label{Lm:LowerBoundBehaviorSolutionInThePositiveBranch:Eq:LowerBoundvUsingkappa}
\inf\limits_{\widetilde{c}\in B_\delta(c)} u(t, \widetilde{c}t)\geq \inf\limits_{\widetilde{c}\in B_\delta(c)}E^W \left[u(t-\kappa t, X^{\widetilde{c}t}(\kappa t))\exp\left\{ \int_0^{\kappa t}c(u(t-s, X^{\widetilde{c}t}(s))){\rm d}s\right\}\cdot \mathbf{1}_A\right]
\end{equation}
for some $P^W$-adapted set $A$. We pick small $h>0$ and choose $A$ to be the set of paths satisfying that for all $\widetilde{c}\in B_\delta(c)$ we have both
\begin{equation}\label{Lm:LowerBoundBehaviorSolutionInThePositiveBranch:Eq:SetACondition1:ProcessInsideNeighborboodScaledByOneMinusKappa}
X^{\widetilde{c}t}(\kappa t)\in B_{(1-\kappa)\delta t}((1-\kappa)tc)
\end{equation}
and
\begin{equation}\label{Lm:LowerBoundBehaviorSolutionInThePositiveBranch:Eq:SetACondition2:SolutionPDEControlledByh}
c(u(t-s, X^{\widetilde{c} t}(s)))\geq \beta (1-h) \text{ for all } s\in [0, \kappa t] \ .
\end{equation}
Then 
$$\begin{array}{l}
\displaystyle{\inf\limits_{\widetilde{c}\in B_\delta(c)}E^W \left[u(t-\kappa t, X^{\widetilde{c}t}(\kappa t))\exp\left\{ \int_0^{\kappa t}c(u(t-s, X^{\widetilde{c}t}(s))){\rm d}s\right\}\cdot \mathbf{1}_A\right]
}\\
\qquad \geq \displaystyle{\inf\limits_{\widetilde{c}\in B_\delta(c)} u((1-\kappa)t, \widetilde{c}(1-\kappa)t)\cdot e^{\beta(1-h)\kappa t}\cdot \inf\limits_{\widetilde{c}\in B_\delta(c)}P^W(A) \ ,}
\end{array}$$
which gives
$$\begin{array}{l}
\dfrac{1}{t}\ln \inf\limits_{\widetilde{c}\in B_\delta(c)}u(t, \widetilde{c}t)
\\
\ \geq (1-\kappa)\dfrac{1}{(1-\kappa)t}\ln \inf\limits_{\widetilde{c}\in B_\delta(c)}u((1-\kappa)t, \widetilde{c}(1-\kappa)t)+\kappa\beta(1-h)+\dfrac{1}{t}\ln\inf\limits_{\widetilde{c}\in B_\delta(c)}P^W(A) \ .
\end{array}$$
Taking $t\rightarrow\infty$ in the above inequality gives 
$$q\geq (1-\kappa)q + \kappa \beta (1-h) + \liminf\limits_{t\rightarrow\infty}\dfrac{1}{t}\ln \inf\limits_{\widetilde{c}\in B_\delta(c)}P^W(A) \ ,$$
which is
\begin{equation}\label{Lm:LowerBoundBehaviorSolutionInThePositiveBranch:Eq:LowerBoundViaProbabilityOfA}
q\geq \beta(1-h)+\liminf\limits_{t\rightarrow\infty}\dfrac{1}{\kappa t}\ln\inf\limits_{\widetilde{c}\in B_\delta(c)} P^W(A) \ .
\end{equation}

\noindent By Proposition \ref{Prop:AsymptoticSolutionZero} and the FKPP property $\beta=c(0)=\sup\limits_{u>0}c(u)$ we know that there is a $\delta>0$ sufficiently small so that for any $h\in (0,1)$ there is a constant $t_0>0$ depending on $h$ such that
$$c(u(t,c't))\geq \beta(1-h) \text{ for all } c'\in B_{6\delta}(c) \text{ and all } t\geq t_0 \ .$$

\noindent Now if $0<\kappa<\dfrac{1}{2}$ and for any $\widetilde{c}\in B_\delta(c)$ we have
\begin{equation}\label{Lm:LowerBoundBehaviorSolutionInThePositiveBranch:Eq:DeviationEstimateThatLeadToSetACondition2}
\sup\limits_{s\in [0, \kappa t]}|X^{\widetilde{c}t}(s)-(t-s)c|\leq 3\delta t \ ,
\end{equation}
then when $t\geq 2t_0$ we have $t-s\geq t_0$, $\dfrac{t}{t-s}\leq 2$ and
$$(t-s)(c-6\delta)\leq (t-s)\left[c-3\delta\dfrac{t}{t-s}\right]\leq X^{\widetilde{c}t}(s)\leq(t-s)\left[c+3\delta\dfrac{t}{t-s}\right]\leq (t-s)(c+6\delta) \ ,$$
which means (\ref{Lm:LowerBoundBehaviorSolutionInThePositiveBranch:Eq:SetACondition2:SolutionPDEControlledByh}) is achieved along such paths when $t>2t_0$.

Lemma \ref{Lm:TechnicalBoundOfcAndHatcInBall} ensures that for sufficiently small $\kappa_0>0$ and any $0<\kappa<\kappa_0$, for each $\widetilde{c}\in B_\delta(c)$ there is a $\hat{c}\in B_{2\delta}(c)$ such that (\ref{Lm:LowerBoundBehaviorSolutionInThePositiveBranch:Eq:SetACondition1:ProcessInsideNeighborboodScaledByOneMinusKappa}) is achieved whenever

\begin{equation}\label{Lm:LowerBoundBehaviorSolutionInThePositiveBranch:Eq:DeviationBoundThatEnsuresSetACondition1}
\dfrac{\widetilde{c} t - X^{\widetilde{c} t}(\kappa t)}{\kappa t} \in B_\delta(\hat{c}) \ .
\end{equation}

\noindent By (\ref{Lm:LowerBoundBehaviorSolutionInThePositiveBranch:Eq:DeviationEstimateThatLeadToSetACondition2}) and (\ref{Lm:LowerBoundBehaviorSolutionInThePositiveBranch:Eq:DeviationBoundThatEnsuresSetACondition1}),
we can estimate
\begin{equation}\label{Lm:LowerBoundBehaviorSolutionInThePositiveBranch:Eq:LowerBoundProbabilityOfSetAViaDeviationBoundAndEstimate}
\begin{array}{l}
\inf\limits_{\widetilde{c}\in B_\delta(c)}P^W(A)
\\
\geq \inf\limits_{\hat{c}\in B_{2\delta}(c), \widetilde{c}\in B_\delta(c)}P^W\left(
\sup\limits_{s\in [0, \kappa t]}|X^{\hat{c}t}(s)-(t-s)c|\leq 3\delta t \text{ and } \dfrac{\widetilde{c}t-X^{\widetilde{c}t}(\kappa t)}{\kappa t}\in B_\delta(\hat{c})\right) \ .
\end{array}
\end{equation}

\noindent For $\kappa\in \left(0, \dfrac{2\delta}{3\max(1,c)}\right)$, we see that
\begin{align*}
\sup\limits_{\hat{c}\in B_{2\delta}(c)}P^W\left(\sup\limits_{s\in [0, \kappa t]}|X^{\hat{c}t}(s)-(t-s)c|>3\delta t\right)
& \leq \sup\limits_{\hat{c}\in B_{2\delta}(c)}P^W\left(\sup\limits_{s\in[0,\kappa t]}|X^{\hat{c}t}(s)-\hat{c} t|>\dfrac{\delta t}{3}\right)
\cr
& \leq \sup\limits_{\hat{c}\in B_{2\delta}(c)}P^W\left(T^{\hat{c}t}_{(\hat{c}-\delta/3)t}\wedge T^{\hat{c}t}_{(\hat{c}+\delta/3)t}<\kappa t\right) \ .
\end{align*}

\noindent We apply Lemma \ref{Lm:SuperExponentialSmallProbabilityOfHittingTimeAtLinearDistance}, so that for any $M>0$ there exist some $\widetilde{\kappa}_0$ such that for any $0<\kappa<\widetilde{\kappa}_0$ we have
$$\limsup\limits_{t\rightarrow\infty} \dfrac{1}{\kappa t}\ln \sup\limits_{\hat{c}\in B_{2\delta}(c)}P^W\left(T^{\hat{c}t}_{(\hat{c}-\delta/3)t}\wedge T^{\hat{c}t}_{(\hat{c}+\delta/3)t}<\kappa t\right)\leq -M  \ .$$ 

\noindent Combining the above two estimates we get
\begin{equation}\label{Lm:LowerBoundBehaviorSolutionInThePositiveBranch:Eq:DeviationEstimateOneStepToFinal}
\limsup\limits_{t\rightarrow\infty}\dfrac{1}{\kappa t}\ln\sup\limits_{\hat{c}\in B_{2\delta}(c)}P^W\left(\sup\limits_{s\in [0, \kappa t]}|X^{\hat{c}t}(s)-(t-s)c|>3\delta t\right)\leq -M \ . 
\end{equation}

\noindent Combining (\ref{Lm:LowerBoundBehaviorSolutionInThePositiveBranch:Eq:LowerBoundViaProbabilityOfA}), (\ref{Lm:LowerBoundBehaviorSolutionInThePositiveBranch:Eq:LowerBoundProbabilityOfSetAViaDeviationBoundAndEstimate}), (\ref{Lm:LowerBoundBehaviorSolutionInThePositiveBranch:Eq:DeviationEstimateOneStepToFinal}) we get, for $0<\kappa<\min\left(\dfrac{1}{2}, \kappa_0, \dfrac{2\delta}{3\max(1,c)}, \widetilde{\kappa}_0\right)$, that
\begin{equation}\label{Lm:LowerBoundBehaviorSolutionInThePositiveBranch:Eq:FinalEstimateLeadingToResult}
q\geq \beta(1-h)+\liminf\limits_{t\rightarrow\infty} \dfrac{1}{\kappa t} \inf\limits_{\hat{c}\in B_{2\delta}(c), \widetilde{c}\in B_{\delta}(c)} P^W\left(\dfrac{\widetilde{c} t - X^{\widetilde{c} t}(\kappa t)}{\kappa t}\in B_\delta(\hat{c})\right) \ .
\end{equation}
We apply the LDP \emph{lower bound} (\ref{Thm:LDPProcessX:Eq:LowerBoundPositive}), and setting $h>0$ and $\delta>0$ sufficiently small, to get that (\ref{Lm:LowerBoundBehaviorSolutionInThePositiveBranch:Eq:FinalEstimateLeadingToResult}) implies (\ref{Lm:LowerBoundBehaviorSolutionInThePositiveBranch:Eq:LowerBound}).
\end{proof}

\begin{lemma}\label{Lm:TechnicalBoundOfcAndHatcInBall}
Let $\widetilde{c}\in B_\delta(c)$. Define $\hat{c}=c+2(\widetilde{c}-c)$. Then for any $\Delta\in (-\delta, \delta)$, there exists small $\kappa_0>0$, for any $0<\kappa<\kappa_0$, we have
\begin{equation}\label{Lm:TechnicalBoundOfcAndHatcInBall:Eq:BallInclusion}
\widetilde{c}t-\kappa t \hat{c}+\kappa t\Delta\in B_{(1-\kappa)\delta t}((1-\kappa)ct) \ .
\end{equation}
\end{lemma}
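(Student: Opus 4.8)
The plan is to reduce the claimed ball membership to a single scalar inequality and then verify it by a one-line triangle-inequality estimate, with $\kappa_0$ read off directly from the algebra. Recall from the earlier notation that $B_\delta(c)=(c-\delta,c+\delta)$, so the hypothesis $\widetilde c\in B_\delta(c)$ means $|\widetilde c-c|<\delta$, and we are also given $|\Delta|<\delta$. First I would substitute $\hat c=c+2(\widetilde c-c)=2\widetilde c-c$ and simplify the deviation of the point in question from the center $(1-\kappa)ct$ of the target ball:
\[
\widetilde c t-\kappa t\hat c+\kappa t\Delta-(1-\kappa)ct
= t\bigl[(\widetilde c-c)-2\kappa(\widetilde c-c)+\kappa\Delta\bigr]
= t\bigl[(1-2\kappa)(\widetilde c-c)+\kappa\Delta\bigr].
\]
Hence the assertion $\widetilde c t-\kappa t\hat c+\kappa t\Delta\in B_{(1-\kappa)\delta t}((1-\kappa)ct)$ is equivalent to the scalar bound
\[
\bigl|(1-2\kappa)(\widetilde c-c)+\kappa\Delta\bigr|<(1-\kappa)\delta .
\]

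Second, I would simply take $\kappa_0=\tfrac12$ (indeed any value in $(0,\tfrac12]$ works, and it does not depend on $c$, $\delta$, $t$, or $\widetilde c$). For $0<\kappa<\kappa_0$ we then have $1-2\kappa>0$, so by the triangle inequality together with $|\widetilde c-c|<\delta$ and $|\Delta|<\delta$,
\[
\bigl|(1-2\kappa)(\widetilde c-c)+\kappa\Delta\bigr|
\le (1-2\kappa)|\widetilde c-c|+\kappa|\Delta|
< (1-2\kappa)\delta+\kappa\delta
= (1-\kappa)\delta ,
\]
which is precisely the displayed inequality, giving (\ref{Lm:TechnicalBoundOfcAndHatcInBall:Eq:BallInclusion}).

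There is essentially no obstacle here: the statement is a bookkeeping identity, and the only point worth a remark is that strictness of the final inequality is preserved because $1-2\kappa\ge 0$ while the two input bounds are strict, and that $\kappa_0$ may be chosen uniformly. One could even state the restriction $\kappa<1/2$ implicitly, since in the intended application (the proof of Lemma~\ref{Lm:LowerBoundBehaviorSolutionInThePositiveBranch}) the parameter $\kappa$ is eventually sent to $0$ anyway, so compatibility with all the other smallness constraints on $\kappa$ there is automatic.
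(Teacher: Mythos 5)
Your proof is correct. The algebraic reduction is exactly the paper's: both compute the deviation from the ball's center as $t\bigl[(1-2\kappa)(\widetilde c-c)+\kappa\Delta\bigr]$ (the paper writes it as $t[\Delta_1-\kappa(2\Delta_1-\Delta)]$ with $\Delta_1=\widetilde c-c$), so the lemma becomes the scalar bound $\bigl|(1-2\kappa)(\widetilde c-c)+\kappa\Delta\bigr|<(1-\kappa)\delta$. Where you differ is in the bounding step and the choice of $\kappa_0$: the paper picks $\kappa_0=\tfrac14-\tfrac{|\widetilde c-c|}{4\delta}$, which depends on $\widetilde c$ and degenerates as $|\widetilde c-c|\to\delta$, and then verifies the two one-sided inequalities by hand; you instead take the uniform $\kappa_0=\tfrac12$ and settle the bound by a single triangle-inequality line, using $1-2\kappa>0$ and the strictness of $|\widetilde c-c|<\delta$, $|\Delta|<\delta$, $\kappa>0$. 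Your version is not only shorter but also slightly stronger in a way that matters downstream: in the proof of Lemma \ref{Lm:LowerBoundBehaviorSolutionInThePositiveBranch} the threshold $\kappa_0$ is used uniformly over all $\widetilde c\in B_\delta(c)$, which your $\widetilde c$-independent choice delivers directly, whereas with the paper's choice one would have to note separately that a uniform threshold can be extracted. So: correct, same skeleton, cleaner and uniform estimate at the final step.
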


\begin{proof}
Write $\widetilde{c}=c+\Delta_1$ with $-\delta<\Delta_1<\delta$. Then $\hat{c}=c+2\Delta_1$. Pick $\kappa_0=\dfrac{1}{4}-\dfrac{|\widetilde{c}-c|}{4\delta}=\dfrac{1}{4}-\dfrac{|\Delta_1|}{4\delta}>0$ and any $0<\kappa<\kappa_0$. Since $-3\delta<2\Delta_1-\Delta<3\delta$, we have $-2\delta<\delta-(2\Delta_1-\Delta)<4\delta$ and $-2\delta<\delta+(2\Delta_1-\Delta)<4\delta$. So that  $$\kappa(\delta-(2\Delta_1-\Delta))<\delta-|\Delta_1|\leq \delta - \Delta_1 \ ,$$
and
$$\kappa(\delta+(2\Delta_1-\Delta))<\delta-|\Delta_1|\leq \delta + \Delta_1 \ .$$
This gives
\begin{equation}\label{Lm:TechnicalBoundOfcAndHatcInBall:Eq:BallInclusion:Eq:Step1}
-(1-\kappa)\delta<\Delta_1-\kappa(2\Delta_1-\Delta)<(1-\kappa)\delta \ .
\end{equation}
We then calculate
\begin{equation}\label{Lm:TechnicalBoundOfcAndHatcInBall:Eq:BallInclusion:Eq:Step2}
\begin{array}{ll}
& (\widetilde{c}t-\kappa t\hat{c}+\kappa t \Delta)-(1-\kappa)ct
\\
= & [(c+\Delta_1)t-\kappa t (c+2\Delta_1)+\kappa t \Delta]-(1-\kappa)ct 
\\
= & t[\Delta_1-\kappa(2\Delta_1-\Delta)] \ .
\end{array}
\end{equation}
Combining (\ref{Lm:TechnicalBoundOfcAndHatcInBall:Eq:BallInclusion:Eq:Step1}) and (\ref{Lm:TechnicalBoundOfcAndHatcInBall:Eq:BallInclusion:Eq:Step2}) we get (\ref{Lm:TechnicalBoundOfcAndHatcInBall:Eq:BallInclusion}).
\end{proof}

\section{Exact shape of the asymptotic wave front}\label{Sec:Shape}
In this section, by combining Theorem \ref{Thm:WavePropagation} with the properties of $\mu, \mu^\fwd, I, I^\fwd$ in Lemmas \ref{Lm:PropertiesMuAndTruncatedMu}, \ref{Lm:PropertiesMuFwd}, \ref{Lm:PropertiesOfIInOurCase}, \ref{Lm:PropertiesOfIInOurCase:Forward}, we present the exact possible shapes of the asymptotic wave front.

We recall the critical value $\eta_c$ defined in (\ref{Eq:CriticalEta:Backward}). The following Lemma investigates possible configurations of $I(a)$ and $I^\fwd(a)$.

\begin{lemma}\label{Lm:FurtherPropertiesOfMuAndIUsedInWaveFrontShape}
Assume $\mathbf{E}[b]>0$. Then if $\eta_c>0$, then we have
\begin{itemize}
\item[\emph{(a1)}] $I(a)$ is convex and monotonically decreasing on $(0, \mu'(0))$ and monotonically increasing on $(\mu'(0), +\infty)$, with its minimum $I(\mu'(0))=-\mu(0)>0$ and asymptotically $\lim\limits_{a\rightarrow\infty}[I(a)-(\eta_c a - \mu(\eta_c-))]=0$ from the above \emph{(}i.e. $I(a)\geq \eta_c a -\mu(\eta_c-)$ for all $a$, see \emph{Figure \ref{Fig:Ifwdbwd_case_ab} (a) (b))}. The intercept of the asymptotic line $\eta_c a - \mu(\eta_c-)$ is positive, i.e., $\mu(\eta_c-)<0$.

\item[\emph{(a2)}] $I^\fwd(a)= I(a)-2\mathbf{E}[b]$ is convex and monotonically decreasing on $(0, (\mu^\fwd)'(0))$ and monotonically increasing on $((\mu^\fwd)'(0), +\infty)$, with its minimum $I((\mu^\fwd)'(0))=0$ and asymptotically $\lim\limits_{a\rightarrow\infty}[I(a)-(\eta_c a - \mu^\fwd(\eta_c-))]=0$ from the above \emph{(}i.e. $I^\fwd(a)\geq \eta_c a -\mu^\fwd(\eta_c-)$ for all $a$, see \emph{Figure \ref{Fig:Ifwdbwd_case_ab} (a) (b)}. The intercept of the asymptotic line $\eta_c a - \mu^\fwd(\eta_c-)$ is negative, i.e., $\mu^\fwd(\eta_c-)>0$.
\end{itemize}

If $\eta_c=0$, then we have
\begin{itemize}
\item[\emph{(b1)}] $I(a)$ is convex and monotonically decreasing on $(0, +\infty)$ and asymptotically $\lim\limits_{a\rightarrow\infty}I(a)=-\mu(0)>0$ from the above \emph{(}i.e. $I(a)\geq -\mu(0)$ for all $a$, see \emph{Figure \ref{Fig:Ifwdbwd_case_cd} (c) (d))}.

\item[\emph{(b2)}] $I^\fwd(a)=I(a)-2\mathbf{E}[b]$ is convex and monotonically decreasing on $(0, +\infty)$ and asymptotically $\lim\limits_{a\rightarrow\infty}I(a)=0$ from the above \emph{(}i.e. $I(a)\geq 0$ for all $a$, see \emph{Figure \ref{Fig:Ifwdbwd_case_cd} (c) (d))}.
\end{itemize}
\end{lemma}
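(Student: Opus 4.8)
The plan is to deduce everything from the structural results already established for $I=I^\bwd$ and $I^\fwd$ — the convexity, the $V$-shape, the blow-up $\lim_{a\to 0+}I(a)=+\infty$, and the precise asymptotic-line behaviour in Lemmas~\ref{Lm:PropertiesOfIInOurCase} and~\ref{Lm:PropertiesOfIInOurCase:Forward} — together with the identity $I^\fwd(a)=I(a)-2\mathbf{E}[b]$ from~(\ref{Lm:PropertiesOfIInOurCase:Forward:Eq:DifferenceIIfwd}), Corollary~\ref{Corrollary:MuBwdMFwdDifferByConstant}, and the sign information on $\mu^\bwd,\mu^\fwd$ contained in Lemma~\ref{Lm:BwdAndFwdCriticalEtaForMuAreTheSame}. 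The only genuinely new quantities I need to pin down are the minimal values of the two curves and the intercepts of their asymptotic lines; once these are fixed, (a1), (a2), (b1), (b2) follow by inspection.

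First I would compute the minima. By part~(2) of Lemma~\ref{Lm:PropertiesOfIInOurCase}, $\min_{a>0}I(a)=I(\mu'(0))=-\mu(0)$, and by the same statement applied to $I^\fwd$ (Lemma~\ref{Lm:PropertiesOfIInOurCase:Forward}), $\min_{a>0}I^\fwd(a)=-\mu^\fwd(0)$, attained at $(\mu^\fwd)'(0)$; in the regime $\eta_c>0$ one has $(\mu^\fwd)'(0)=\mu'(0)$ since $\mu^\bwd-\mu^\fwd$ is constant on $\Lambda^\circ$ by Corollary~\ref{Corrollary:MuBwdMFwdDifferByConstant}. To evaluate $\mu(0)$ and $\mu^\fwd(0)$ I invoke the scale-function dichotomy~(\ref{eqn:probilityT_01}): since $\mathbf{E}[b]>0$, $\mathbf{P}$-a.s. the forward hitting time is finite ($P^W(T^0_1<\infty)=1$) while $P^W(T^1_0<\infty)<1$. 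Hence $\mu^\fwd(0)=\mathbf{E}[\ln 1]=0$, so $\min I^\fwd=0$; and $\ln P^W(T^1_0<\infty)<0$ $\mathbf{P}$-a.s., this random variable being integrable by Lemma~\ref{lem: basiccomp} (via $P^W(T^1_0<\infty)\ge P^W(T^1_0<T^1_2)$ and $\mathbf{E}[\ln P^W(T^1_0<T^1_2)]>-\infty$), so $\mu(0)\in(-\infty,0)$ and $\min I=-\mu(0)>0$.

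Next I would read off the asymptotic lines, recalling $\eta_c^\bwd=\eta_c^\fwd=\eta_c$ from Lemma~\ref{Lm:BwdAndFwdCriticalEtaForMuAreTheSame}. If $\eta_c>0$: parts~(5)--(7) of Lemma~\ref{Lm:PropertiesOfIInOurCase} give $I(a)\ge \eta_c a-\mu(\eta_c-)$ for all $a>0$ with $I(a)-[\eta_c a-\mu(\eta_c-)]\downarrow 0$ as $a\to\infty$, and likewise $I^\fwd(a)\ge \eta_c a-\mu^\fwd(\eta_c-)$ decreasing to $0$ from above; Lemma~\ref{Lm:BwdAndFwdCriticalEtaForMuAreTheSame} then supplies $\mu(\eta_c-)\le -\mu^\fwd(\eta_c-)<0$, so the $y$-intercept $-\mu(\eta_c-)$ of the asymptote of $I$ is positive and the $y$-intercept $-\mu^\fwd(\eta_c-)$ of that of $I^\fwd$ is negative. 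Combining this with the $V$-shape (parts (1),(2) of Lemmas~\ref{Lm:PropertiesOfIInOurCase}, \ref{Lm:PropertiesOfIInOurCase:Forward}) and the minima computed above yields (a1) and (a2). If $\eta_c=0$: the asymptotic line has slope $0$, i.e. a horizontal asymptote; writing $I(a)=\sup_{\eta\le 0}(a\eta-\mu(\eta))$, a supremum of functions non-increasing in $a$, shows $I$ is non-increasing on $(0,\infty)$, and parts~(5)--(7) with $\eta_c=0$ give $I(a)\downarrow-\mu(0-)=-\mu(0)$ from above; since $I^\fwd=I-2\mathbf{E}[b]$ this forces $I^\fwd(a)\downarrow-\mu(0)-2\mathbf{E}[b]=-\mu^\fwd(0)=0$ from above. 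Together with $-\mu(0)>0$ from the previous step this is exactly (b1) and (b2).

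The only point needing care is the sub-case $\eta_c=0$ with $\mu'(\eta_c-)<\infty$: there part~(6) of Lemma~\ref{Lm:PropertiesOfIInOurCase} makes $I$ constant equal to $-\mu(0)$ on $[\mu'(0-),\infty)$ rather than strictly decreasing, so ``monotonically decreasing on $(0,+\infty)$'' in (b1)--(b2) must be understood in the non-strict sense; analogously in (a1)--(a2) the curves coincide with their asymptotes on $[\mu'(\eta_c-),\infty)$ whenever that threshold is finite. Apart from this bookkeeping there is no real obstacle: every assertion reduces to Lemmas~\ref{Lm:PropertiesOfIInOurCase}, \ref{Lm:PropertiesOfIInOurCase:Forward}, \ref{Lm:BwdAndFwdCriticalEtaForMuAreTheSame}, Corollary~\ref{Corrollary:MuBwdMFwdDifferByConstant}, and the scale-function criterion~(\ref{eqn:probilityT_01}).
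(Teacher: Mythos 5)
Your proposal is correct and takes essentially the same route as the paper: everything is reduced to the structural Lemmas \ref{Lm:PropertiesOfIInOurCase} and \ref{Lm:PropertiesOfIInOurCase:Forward}, the identity $I^\fwd=I-2\mathbf{E}[b]$ from Corollary \ref{Corrollary:MuBwdMFwdDifferByConstant}, the signs $\mu(\eta_c-)<0<\mu^\fwd(\eta_c-)$ from Lemma \ref{Lm:BwdAndFwdCriticalEtaForMuAreTheSame}, and the scale-function dichotomy \eqref{eqn:probilityT_01} giving $\mu^\fwd(0)=0$ and $\mu(0)<0$. The only (immaterial) difference is that the paper evaluates $\mu(0)=-2\mathbf{E}[b]$ exactly by the reasoning of \eqref{eqn: disrho0a}, whereas you only use $\mu(0)\in(-\infty,0)$ together with the constant shift, which suffices for all the stated conclusions.
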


\begin{figure}[H]
\centering
\includegraphics[height=5.8cm, width=13cm]{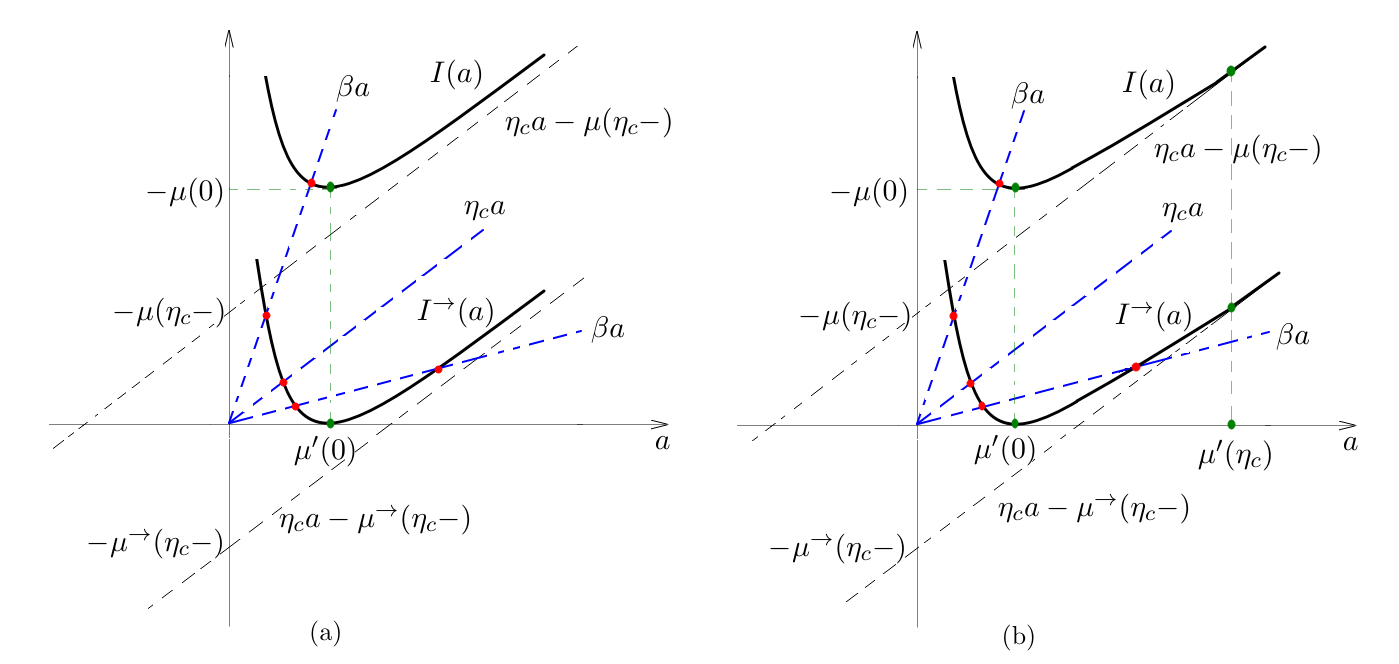}
\caption{Shape of $I(a)\equiv I^\bwd(a)$ and $I^\fwd(a)$. (a) when $\eta_c>0$ and $\mu'(\eta_c-)=+\infty$; (b) when $\eta_c>0$ and $\mu'(\eta_c)<+\infty$.}
\label{Fig:Ifwdbwd_case_ab}
\end{figure}

\begin{figure}[H]
\centering
\includegraphics[height=5.8cm, width=13cm]{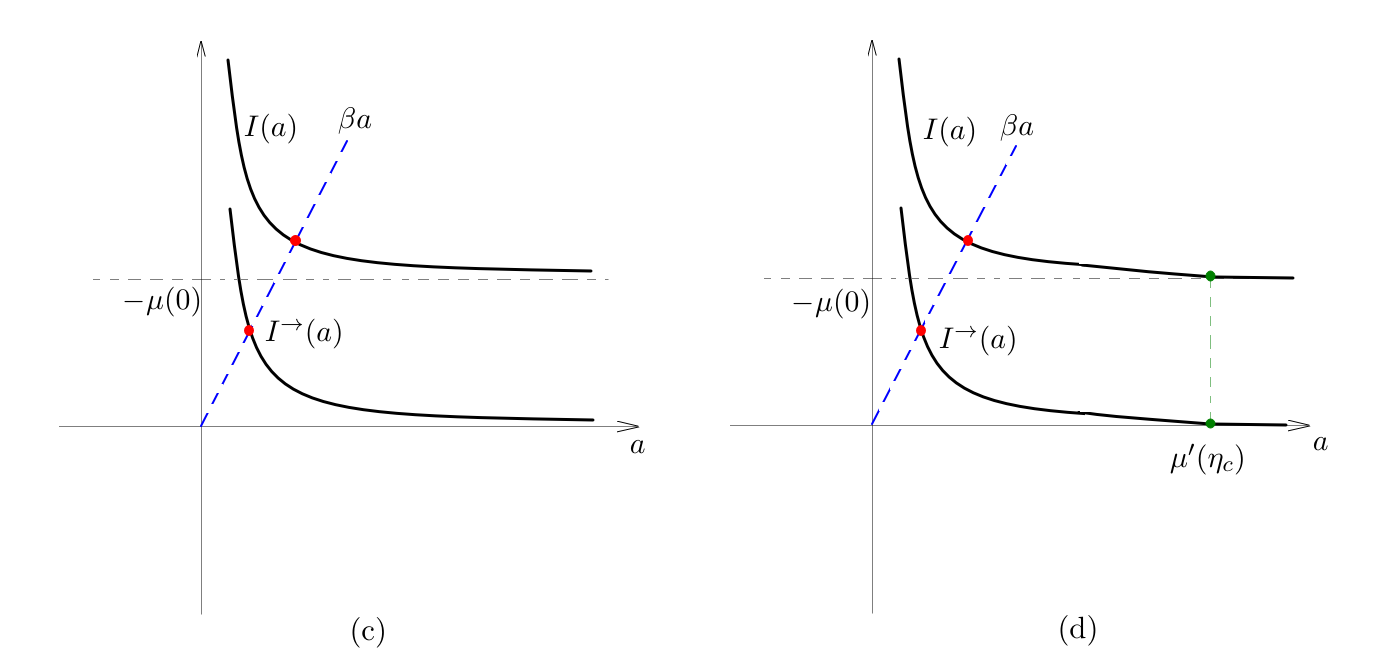}
\caption{Shape of $I(a)\equiv I^\bwd(a)$ and $I^\fwd(a)$. (c) when $\eta_c=0$ and $\mu'(\eta_c-)=+\infty$; (b) when $\eta_c=0$ and $\mu'(\eta_c)<+\infty$.}
\label{Fig:Ifwdbwd_case_cd}
\end{figure}

\begin{proof} Recall that by (\ref{Eq:SDE-RandomDrift}) we have
$$X^x(t)=x+\int_0^t b(X^x(s)){\rm d}s+W_t \ ,
$$
where $W_t$ is a standard Brownian motion, with given filtration $(\Omega^W, \mathcal{F}_t^W, P^W)$. Following (\ref{Eq:HittingTimeSDE-RandomDrift:Backward}), (\ref{Eq:HittingTimeSDE-RandomDrift:Forward}), we set
\[
T_0^1=\inf\{t > 0, X^1(t) \leq 0\} \ , \ 
T^1_0=\inf\{t>0, X^0(t) \geq 1\} \ .
\]

\noindent Recall the Lyapunov functions defined in (\ref{Eq:LyapunovFunctionHittingTime:Backward}), (\ref{Eq:LyapunovFunctionHittingTime:Forward}) as 
$$
\mu(\eta)=\mathbf{E}\left[\ln E^W\left(
e^{\eta T^1_0}\mathbf{1}_{\{T^1_0<+\infty\}}\right)\right] \ , \ 
\mu^\fwd(\eta)=\mathbf{E}\left[\ln E^W\left(
e^{\eta T^0_1}\mathbf{1}_{\{T^0_1<+\infty\}}\right)\right] \ ,
$$
and the critical values defined in (\ref{Eq:CriticalEta:Backward}), (\ref{Eq:CriticalEta:Forward}) as 
\[
\eta_c=\sup\{\eta\geq0: \mu(\eta)<+\infty\} \ , \ \eta_c^\fwd=\sup\{\eta\geq 0: \mu^\fwd(\eta)<+\infty\} \ .
\]

In Lemma \ref{Lm:BwdAndFwdCriticalEtaForMuAreTheSame} we have shown that $\eta_c=\eta_c^\fwd$. By the above definition it is also clear that $\eta_c\geq 0$. 

Let us first assume $\eta_c>0$. As we already discussed in Section 3.2, in this case of ${\mathbf E}[b]>0$, one always has 
  ${\mathbf P}(P^W(T_0^1<\infty)<1)=1$
 and then by the same reasoning as \eqref{eqn: disrho0a}, one has
${\mathbf E}[\ln P^W(T_0^1<\infty)]=-2{\mathbf E}[b].$ Therefore $\mu(0)=\mathbf{E} [\ln P^W(T^1_0<\infty)]=-2\mathbf{E}[b]<0$. Since $\mu^\fwd(0)=\mu(0)+2\mathbf{E}[b]$ from Lemma \ref{Lm:ExpectationLnRhoIsConstant}, we get $\mu^\fwd(0)=\mathbf{E} [\ln P^W(T^0_1<\infty)]=0$. By Lemma \ref{Lm:BwdAndFwdCriticalEtaForMuAreTheSame} we also see that $\mu(\eta_c-)<0$ and $\mu^\fwd(\eta_c-)>0$. Based on these facts, according to Lemma \ref{Lm:PropertiesOfIInOurCase}, we justified (a1) and (a2).

Now let us assume that $\eta_c=0$. In this case we have \[\inf_{a\in {\mathbb R}}I(a)=\mu(0)=-{\mathbf P}[\ln P^W(T_0^1<\infty) ]=-2\mathbf{E}[b]>0.\]
and so we justified properties (b1) and (b2).
\end{proof}

The next Lemma characterize the exact shapes of the sets $\Lambda_0,  \Lambda_1$ and $\Lambda^\fwd_0, \Lambda^\fwd_1$ that we introduced in (\ref{Eq:PositiveAndNegativeDifferenceOfIandbetaWaveSpeed}) and (\ref{Eq:PositiveAndNegativeDifferenceOfIandbetaWaveSpeed:Forward}).

\begin{lemma}[Configuration of $\Lambda_0$, $\Lambda_1$, $\Lambda_0^\fwd$, $\Lambda_1^\fwd$ under given $\beta$]\label{Lm:SolutionStructureBalanceEquation}
Let the function $\mu(\eta)$ be defined in \emph{(\ref{Eq:LyapunovFunctionHittingTime:Backward})}. Then we have
\begin{itemize}
\item[\emph{(1)}] When $\beta\in (0, \eta_c)$, $\Lambda_0 = (0, +\infty)$, $\Lambda_1 = \emptyset$, $\Lambda_0^\fwd = (0, -c_2^*)\cup (c_1^*, +\infty)$,  $\Lambda_1^\fwd = (-c_2^*, c_1^*)$ with $c_1^*>0, c_2^*<0, c_1^*+c_2^*>0$;
\item[\emph{(2)}] When $\beta=\eta_c$, $\Lambda_0=(0, +\infty)$, $\Lambda_1=\emptyset$, $\Lambda_0^\fwd = (c_1^*, +\infty)$, $\Lambda_1^\fwd=(0, c_1^*)$ with $c_1^*>0$;
\item[\emph{(3)}] When $\beta\in (\eta_c, +\infty)$, $\Lambda_0 = (c_2^*, +\infty)$, $\Lambda_1 = (0, c_2^*)$, $\Lambda_0^\fwd = (c_1^*, +\infty)$,  $\Lambda_1^\fwd = (0, c_1^*)$ with $c_1^*>0, c_2^*>0, c_1^*-c_2^*>0$;
\end{itemize}
\end{lemma}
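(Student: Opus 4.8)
The plan is to reduce the whole statement to the elementary geometry of the two one-variable functions $g(c):=cI(1/c)$ and $g^\fwd(c):=cI^\fwd(1/c)$, $c>0$. By (\ref{Eq:PositiveAndNegativeDifferenceOfIandbetaWaveSpeed}) and (\ref{Eq:PositiveAndNegativeDifferenceOfIandbetaWaveSpeed:Forward}) we have $\Lambda_0=\{c>0:g(c)>\beta\}$, $\Lambda_1=\{c>0:g(c)<\beta\}$, and likewise $\Lambda_0^\fwd,\Lambda_1^\fwd$ in terms of $g^\fwd$. By Property (8) of Lemmas \ref{Lm:PropertiesOfIInOurCase} and \ref{Lm:PropertiesOfIInOurCase:Forward}, $g$ and $g^\fwd$ are convex and continuous on $(0,\infty)$; hence $\Lambda_1$ and $\Lambda_1^\fwd$ are open intervals (possibly empty) and $\Lambda_0,\Lambda_0^\fwd$ are their complements in $(0,\infty)$, i.e. unions of at most two intervals. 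So the task reduces to locating endpoints, and for that it suffices to compare $\beta$ with the boundary values $g(0+),g(+\infty),g^\fwd(0+),g^\fwd(+\infty)$ and with $\inf_{(0,\infty)}g$, $\inf_{(0,\infty)}g^\fwd$.

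First I would extract these data. Since $I(a)\to+\infty$ as $a\to0+$ (Lemma \ref{Lm:PropertiesOfIInOurCase}(3)) and $I^\fwd=I-2\mathbf{E}[b]$, one gets $g(+\infty)=g^\fwd(+\infty)=+\infty$. From the asymptotic lower bounds in Lemma \ref{Lm:FurtherPropertiesOfMuAndIUsedInWaveFrontShape} (namely $I(a)-(\eta_c a-\mu(\eta_c-))\to0$, $I^\fwd(a)-(\eta_c a-\mu^\fwd(\eta_c-))\to0$ as $a\to+\infty$, and their $\eta_c=0$ analogues), one reads off $g(0+)=g^\fwd(0+)=\eta_c$; moreover, since $\mu(\eta_c-)<0$, the same bound gives $g(c)\ge\eta_c-c\,\mu(\eta_c-)>\eta_c$ for every $c>0$, so $\inf_{(0,\infty)}g=\eta_c$ and is not attained (equivalently, use the perspective identity $g(c)=\sup_{\eta\le\eta_c}(\eta-c\mu(\eta))$ and $\mu\le0$ on $(-\infty,\eta_c)$ from Lemma \ref{Lm:BwdAndFwdCriticalEtaForMuAreTheSame}). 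Finally, by the forward analogue of Lemma \ref{Lm:PropertiesOfIInOurCase}(2) (Lemma \ref{Lm:PropertiesOfIInOurCase:Forward}) and $\mu^\fwd(0)=0$, $I^\fwd\ge0$ with minimum value $0$ attained at $a^*=(\mu^\fwd)'(0)=\mu'(0)$ (the last equality by Corollary \ref{Corrollary:MuBwdMFwdDifferByConstant}); hence $g^\fwd\ge0$, $\inf_{(0,\infty)}g^\fwd=0$, attained at $c_0:=1/\mu'(0)$ when $\mu'(0)<\infty$ and only approached as $c\to0+$ when $\mu'(0)=+\infty$ (which can occur only for $\eta_c=0$, since $\eta_c>0$ makes $\mu$ differentiable at $0$ with finite derivative by Lemma \ref{Lm:PropertiesMuAndTruncatedMu}(1)).

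The three regimes then follow by inspection. If $\beta\le\eta_c$: then $g(c)>\eta_c\ge\beta$ on $(0,\infty)$, so $\Lambda_1=\emptyset$, $\Lambda_0=(0,\infty)$; for the forward side, if $\beta<\eta_c$, then $\{g^\fwd<\beta\}$ is nonempty ($\inf g^\fwd=0<\beta$) but lies strictly inside $(0,\infty)$ ($g^\fwd(0+)=\eta_c>\beta$ and $g^\fwd(+\infty)=+\infty$), so it equals some $(-c_2^*,c_1^*)$ with $0<-c_2^*<c_1^*<\infty$, whence $c_2^*<0$, $c_1^*>0$, $c_1^*+c_2^*>0$, $\Lambda_1^\fwd=(-c_2^*,c_1^*)$, $\Lambda_0^\fwd=(0,-c_2^*)\cup(c_1^*,\infty)$ (case (1)); if $\beta=\eta_c$ (so $\eta_c>0$), then $g^\fwd(0+)=\beta$ but convexity, comparing with the minimizer $c_0$ (where $g^\fwd(c_0)=0<\beta$), gives $g^\fwd<\beta$ on $(0,c_0)$, so $0$ is the left endpoint and $\Lambda_1^\fwd=(0,c_1^*)$, $\Lambda_0^\fwd=(c_1^*,\infty)$ (case (2)). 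If $\beta>\eta_c$: then $g(0+)=\eta_c<\beta<+\infty=g(+\infty)$ and convexity force $\{g<\beta\}=(0,c_2^*)$, so $\Lambda_1=(0,c_2^*)$, $\Lambda_0=(c_2^*,\infty)$ with $c_2^*>0$; likewise $g^\fwd(0+)=\eta_c<\beta$ gives $\Lambda_1^\fwd=(0,c_1^*)$, $\Lambda_0^\fwd=(c_1^*,\infty)$ with $c_1^*>0$; and since $g^\fwd(c)=g(c)-2\mathbf{E}[b]\,c<g(c)$ we get $g^\fwd(c_2^*)<g(c_2^*)=\beta$, so $c_2^*\in(0,c_1^*)$, i.e.\ $c_1^*-c_2^*>0$ (case (3)).

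The step I expect to be the real obstacle is the last assertion of the second paragraph in the boundary case $\eta_c=0$: one must keep separate the sub-case $\mu'(0)<\infty$ --- where $I^\fwd$ has a genuine flat segment $\{I^\fwd=0\}=[\mu'(0),+\infty)$, so that $g^\fwd\equiv0$ on $(0,1/\mu'(0)]$ --- from the sub-case $\mu'(0)=+\infty$ --- where $g^\fwd>0$ throughout $(0,\infty)$ but $g^\fwd(0+)=0$ --- and verify that in both sub-cases $\{g^\fwd<\beta\}$ is still an interval whose left endpoint is $0$; the same dichotomy governs how one reads $g(0+)=\eta_c$ off the asymptotics of $I$. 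Apart from this bookkeeping, the argument is just a faithful translation of the shape pictures in Figures \ref{Fig:Ifwdbwd_case_ab}--\ref{Fig:Ifwdbwd_case_cd} into statements about the sub- and super-level sets of $g$ and $g^\fwd$.
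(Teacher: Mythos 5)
Your proof is correct and follows essentially the paper's own route: both arguments reduce the lemma to the shape information in Lemma \ref{Lm:FurtherPropertiesOfMuAndIUsedInWaveFrontShape}, the paper by counting intersections of the line $a\mapsto\beta a$ with the graphs of $I$ and $I^\fwd$ in the variable $a=1/c$ (read off from Figures \ref{Fig:Ifwdbwd_case_ab}--\ref{Fig:Ifwdbwd_case_cd}), you by the equivalent level-set analysis of the convex perspective functions $g(c)=cI(1/c)$ and $g^\fwd(c)=cI^\fwd(1/c)$ using property (8) of Lemmas \ref{Lm:PropertiesOfIInOurCase} and \ref{Lm:PropertiesOfIInOurCase:Forward}. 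Your explicit boundary data ($g(0+)=g^\fwd(0+)=\eta_c$; $g>\eta_c$ on $(0,\infty)$ via $\mu(\eta_c-)<0$; $\inf g^\fwd=0$ via $\mu^\fwd(0)=0$; the shift $g^\fwd=g-2\mathbf{E}[b]\,c$ yielding $c_1^*>c_2^*$ in case (3)) is a faithful and, if anything, slightly more rigorous rendering of the paper's asymptote-and-intercept inspection, including the $\eta_c=0$ situation where only regime (3) occurs, so there is no gap.
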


\begin{proof}
Equations (\ref{Eq:IntersectionOfIandbetaWaveSpeed}), (\ref{Eq:IntersectionOfIandbetaWaveSpeed:Forward}) are equivalent to $I\left(\dfrac{1}{c}\right)=\dfrac{\beta}{c}$, $I^\fwd\left(\dfrac{1}{c}\right)=\dfrac{\beta}{c}$. Let $\dfrac{1}{c}=a$, so that we are solving $I(a)=\beta a, I^\fwd(a)=\beta a$ to determine $\Lambda_0, \Lambda_1, \Lambda_0^\fwd, \Lambda_1^\fwd$. The results of this Lemma then follows from our previous description of the configurations of $I(a)$ and $I^\fwd(a)$ in Lemma \ref{Lm:FurtherPropertiesOfMuAndIUsedInWaveFrontShape}. In fact, from Figures \ref{Fig:Ifwdbwd_case_ab}, \ref{Fig:Ifwdbwd_case_cd} we see that the red dots are intersection points of $I(a)$ with the line $\beta a$ for different $\beta$. Each intersection point $a^*$ correspond to a wave speed $c^*=\dfrac{1}{a^*}$. Assume $\eta_c>0$, then we have the following cases

\begin{itemize} 
\item[(1)] When $0<\beta<\eta_c$, the two intersection points are both lying on $I^\fwd(a)$. They are $((a^\fwd)^*_1, I^\fwd((a^\fwd)^*_1))$, $((a^\fwd)^*_2, I^\fwd((a^\fwd)^*_2))$ with $(a^\fwd)^*_1<(a^\fwd)^*_2$. So correspondingly $\Lambda_0^\fwd=(0, -c_2^*)\cup (c_1^*, +\infty) \ , \Lambda_1^\fwd=(-c_2^*, c_1^*)$, $c_1^*>0, c_2^*<0$ and $-c_2^*=\dfrac{1}{(a^\fwd)^*_2}$, $c_1^*=\dfrac{1}{(a^\fwd)^*_1}$, $c_1^*>-c_2^*$. Since there is no intersection point on $I(a)$, it is also easy to see that the sets $\Lambda_0=(0, +\infty), \Lambda_1=\emptyset$; 

\item[(2)] When $\beta=\eta_c$, there will be a unique intersecting point $((a^\fwd)^*, I^\fwd((a^\fwd)^*)$ lying on $I^\fwd(a)$, and we set $c_1^*=\dfrac{1}{(a^\fwd)^*}>0$. This gives $\Lambda_0^\fwd=(c_1^*, +\infty), \Lambda_1^\fwd=(0, c_1^*)$. Since $-\mu(\eta_c-)>0$, there is no intersection on $I(a)$ and thus $\Lambda_0=(0, +\infty), \Lambda_1=\emptyset$; 

\item[(3)] When $\beta>\eta_c$, there will be one intersecting point $(a^*, I(a^*))$ on $I(a)$ and another intersecting point $((a^\fwd)^*, I((a^\fwd)^*))$ on $I^\fwd(a)$, and due to the fact that $I(a)>I^\fwd(a)$ we also have $a^*>(a^\fwd)^*$, indicating that $c_2^*=\dfrac{1}{a^*}<\dfrac{1}{(a^\fwd)^*}<c_1^*$ and $\Lambda_0=(c_2^*, +\infty), \Lambda_1=(0, c_2^*), \Lambda_0^\fwd=(c_1^*, +\infty), \Lambda_1^\fwd=(0, c_1^*)$.
\end{itemize}
Now we assume $\eta_c=0$, then we have only the above case (3) happens for any $\beta\in (0, +\infty)$.
\end{proof}

Combining Lemma \ref{Lm:SolutionStructureBalanceEquation} with Theorem \ref{Thm:WavePropagation}, noting that $R_0=\Lambda_0\cup (-\Lambda_0^\fwd), R_1=\Lambda_1\cup (-\Lambda_1^\fwd)$, we obtain the following result describing the exact shape of the asymptotic wave corresponding to Figure \ref{Fig:TwoWaves}.

\begin{theorem}[Exact shape of the asymptotic wave]\label{Thm:ShapeAsymptoticWave}
The exact shapes of the asymptotic wave are given by \emph{Table \ref{Thm:ShapeAsymptoticWave}} that we demonstrated in \emph{Section \ref{Sec:BasicIdeaMainResults}}.
\end{theorem}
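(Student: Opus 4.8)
The plan is to obtain Theorem~\ref{Thm:ShapeAsymptoticWave} as a direct consequence of the two facts already in hand: the wave-propagation dichotomy of Theorem~\ref{Thm:WavePropagation}, which in the notation of (\ref{Eq:PositiveAndNegativeDifferenceOfIandbetaWaveSpeed}) identifies the sets of the Wave Propagation Problem as $R_0=\Lambda_0\cup(-\Lambda_0^\fwd)$ and $R_1=\Lambda_1\cup(-\Lambda_1^\fwd)$ (with convergence on compact subsets), together with the explicit description of $\Lambda_0,\Lambda_1,\Lambda_0^\fwd,\Lambda_1^\fwd$ provided by Lemma~\ref{Lm:SolutionStructureBalanceEquation}, which itself rests on the configuration of $I\equiv I^\bwd$ and $I^\fwd=I-2\mathbf{E}[b]$ in Lemma~\ref{Lm:FurtherPropertiesOfMuAndIUsedInWaveFrontShape} and pictured in Figures~\ref{Fig:Ifwdbwd_case_ab}--\ref{Fig:Ifwdbwd_case_cd}. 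First I would split into the three regimes $\beta\in(0,\eta_c)$, $\beta=\eta_c$, $\beta\in(\eta_c,+\infty)$ (the third being the only one possible when $\eta_c=0$), substitute the formulas of Lemma~\ref{Lm:SolutionStructureBalanceEquation}, and simplify. For $\beta\in(0,\eta_c)$ one has $\Lambda_0=(0,+\infty)$, $\Lambda_1=\emptyset$, $\Lambda_0^\fwd=(0,-c_2^*)\cup(c_1^*,+\infty)$ and $\Lambda_1^\fwd=(-c_2^*,c_1^*)$ with $c_1^*>0>c_2^*$ and $c_1^*+c_2^*>0$, so that $-\Lambda_0^\fwd=(-\infty,-c_1^*)\cup(c_2^*,0)$ and $-\Lambda_1^\fwd=(-c_1^*,c_2^*)$, giving $R_0=(-\infty,-c_1^*)\cup(c_2^*,+\infty)$ and $R_1=(-c_1^*,c_2^*)$; the rows $\beta=\eta_c$ and $\beta\in(\eta_c,+\infty)$ of Table~\ref{Table:BehaviorOfSolution} come out identically, and the sign constraints on $c_1^*,c_2^*$ recorded in the table are precisely those carried along in Lemma~\ref{Lm:SolutionStructureBalanceEquation}. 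This reproduces Table~\ref{Table:BehaviorOfSolution} and Figure~\ref{Fig:TwoWaves}.

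The one place where a little care is needed is the single speed $c=0$, which lies on the boundary between the positive and negative rays and is not directly covered by Theorem~\ref{Thm:WavePropagation} since $\Lambda_0,\Lambda_1,\Lambda_0^\fwd,\Lambda_1^\fwd$ are open sets excluding their endpoints. To fill in $\{0\}$ I would use that $c\mapsto cI(1/c)$ and $c\mapsto cI^\fwd(1/c)$ extend continuously to $c=0$ with common limiting value $\lim_{\delta\to0}\delta I(1/\delta)=\eta_c$ (the identity $\delta I^\fwd(1/\delta)=\delta I(1/\delta)-2\delta\mathbf{E}[b]\to\eta_c$ handling the forward side), which by property (5) of Lemma~\ref{Lm:PropertiesOfIInOurCase} together with $\mu(\eta_c-)\le0$ is in fact $\inf_{c>0}cI(1/c)$. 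When $\beta<\eta_c$, refining the Feynman--Kac bound of Proposition~\ref{Prop:AsymptoticSolutionZero} to $\sup_{|x|\le\varepsilon t}u(t,x)\le\|u_0\|\,e^{\beta t}\sup_{|x|\le\varepsilon t}P^W(X^x(t)\in(-\delta,\delta))$ and applying the large-deviation upper bounds of Theorem~\ref{Thm:LDPProcessX} in both directions (with $v=\varepsilon$, $\kappa=1$) yields $\limsup_{t\to\infty}\tfrac1t\ln\sup_{|x|\le\varepsilon t}u(t,x)\le\beta-\inf_{0<c\le\varepsilon}\min\{cI(1/c),cI^\fwd(1/c)\}$, which is strictly negative once $\varepsilon$ is small enough since the infimum tends to $\eta_c>\beta$; hence $u(t,0)\to0$, consistent with $0\in R_0$. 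When $\beta>\eta_c$ the stopping-time ``heating'' argument of Proposition~\ref{Prop:AsymptoticSolutionOne} applies verbatim with initial point $x=0$ (the exit-probability bound $P^W(A_3)\to0$ now using $T^{0}_{\pm\varepsilon_0 t}$ and Lemma~\ref{Lm:SuperExponentialSmallProbabilityOfHittingTimeAtLinearDistance}) to give $u(t,0)\to1$, consistent with $0\in R_1$; and when $\beta=\eta_c$ the speed $c=0$ is genuinely the stagnant front and belongs to neither $R_0$ nor $R_1$, matching the middle row of Table~\ref{Table:BehaviorOfSolution}.

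Since the entire analytic substance --- the quenched LDP of Section~\ref{Sec:LDP}, the cocycle identity $\mu^\bwd-\mu^\fwd=-2\mathbf{E}[b]$, the heating-versus-cooling competition of Section~\ref{Sec:WavePropagation}, and the case analysis of $I$ against $I^\fwd$ --- has already been discharged, there is no real obstacle left and the argument is essentially bookkeeping. The only thing genuinely worth double-checking is the set arithmetic: that in each of the three regimes the unions $\Lambda_0\cup(-\Lambda_0^\fwd)$ and $\Lambda_1\cup(-\Lambda_1^\fwd)$ really do collapse to the single intervals $(-\infty,-c_1^*)\cup(c_2^*,+\infty)$ and $(-c_1^*,c_2^*)$ with the signs of $c_1^*,c_2^*$ as stated --- which is immediate once one observes that the number and location of the intersection points of $a\mapsto\beta a$ with $I$ and with $I^\fwd$ are pinned down by $I>I^\fwd$ and by the monotonicity and asymptotic-line structure of Lemma~\ref{Lm:FurtherPropertiesOfMuAndIUsedInWaveFrontShape} --- together with the boundary check at $c=0$ above.
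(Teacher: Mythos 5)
Your proposal is correct and follows essentially the same route as the paper: the paper's entire proof is the one-line combination of Theorem~\ref{Thm:WavePropagation} with Lemma~\ref{Lm:SolutionStructureBalanceEquation} via $R_0=\Lambda_0\cup(-\Lambda_0^\fwd)$, $R_1=\Lambda_1\cup(-\Lambda_1^\fwd)$, which is exactly your main step. Your extra treatment of the boundary speed $c=0$ (which the open sets $\Lambda_0,\Lambda_1,\Lambda_0^\fwd,\Lambda_1^\fwd$ exclude but the table's intervals include in the regimes $\beta\neq\eta_c$) goes beyond what the paper writes and is a sensible, if only sketched, way to close that small gap.
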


\bibliographystyle{plain}
\bibliography{bibliography_RDE-drift}

\end{document}